\documentclass[12pt]{amsart} 
\usepackage{amsmath,amsthm, amsxtra, amsfonts, amssymb,color}
\usepackage{tikz}
\usetikzlibrary{calc,arrows.meta,decorations.pathreplacing}
	\usetikzlibrary{decorations.markings}

\usepackage{graphicx}
\usepackage[hidelinks,backref=page]{hyperref} 
\hypersetup{
	colorlinks,
	citecolor=magenta,
	filecolor=magenta,
	linkcolor=blue,
	urlcolor=black
}
\usepackage[top=2cm, bottom=2cm, right=2cm, left=2cm]{geometry}
\usepackage{thmtools}
\usepackage[capitalize]{cleveref}

\def\J{{\mathcal{J}}}
\def\K{{\mathcal{K}}}
\def\O{{\mathcal{O}}}

\def\std{{\rm std}}

\def\m{{\bf{m}}}
\def\cR{{\mathcal{R}}}
\def\QQ{{\mathbb Q}}
\def\R{{\mathbb R}}
\def\C{{\mathbb C}}
\def\Z{{\mathbb Z}}
\def\T{{\mathbb{T}}}
\def\A{{\mathcal{A}}}
\def\B{{\mathcal{B}}}

\def\a{{\mathbf{a}}}
\def\b{{\mathbf{b}}}
\def\bb{{\bar \b}}
\def\r{{\mathbf{r}}}
\def\v{{\mathbf{v}}}
\def\x{{\mathbf{x}}}
\def\ty{{\tilde \y}}
\def\y{{\mathbf{y}}}
\def\z{{\mathbf{z}}}
\def\h{{\mathfrak{h}}}
\def\t{{\mathfrak{t}}}
\def\bp{{\bar{\tropP}}}

\def\GL{{\rm GL}}

\def\conv{{\rm conv}}

\def\Trop{{\rm Trop}}
\def\P{{\mathbb{P}}}
\def\Gr{{\rm Gr}}
\def\Dr{{\rm Dr}}
\def\Q{{\vec{Q}}}
\def\SL{{\rm SL}}

\def\PGL{{\rm PGL}}
\def\sp{{\rm span}}

\def\minconv{{\rm minconv}}
\def\maxconv{{\rm maxconv}}
\def\Lin{{L}}
\def\Le{{\Sigma}}
\def\0{{\bf 0}}
\def\val{{\rm val}}
\def\vn{{\t}}

\def\pluck{p}
\def\dist{\delta}

\def\PK{{\rm PK}}
\def\ncyc{{\rm ncyc}}

\def\one{{\bf 1}}

\def\str{{\eta}}

\newcommand{\iotaFInc}{{\iota}}  
\newcommand{\iotaStd}{{\iota}}   
\newcommand{\tropP}{{\pi}}      

\newtheorem{theorem}{Theorem}[section]
\newtheorem{lemma}[theorem]{Lemma}
\newtheorem{proposition}[theorem]{Proposition}
\newtheorem{thm}[theorem]{Theorem}

\newtheorem{corollary}[theorem]{Corollary}

\newtheorem{example}[theorem]{Example}
\newtheorem{definition}[theorem]{Definition}

\newtheorem{rem}[theorem]{Remark}
\newtheorem{remark}[theorem]{Remark}
\newtheorem{convention}[theorem]{Convention}

\newenvironment{nouppercase}{%
	\renewcommand{\uppercasenonmath}[1]{}}{}
\newcommand\defn[1]{\emph{#1}}

\author{Nick Early}
\author{Thomas Lam}
\address{School of Natural Sciences, Institute for Advanced Study, 1 Einstein Dr. Princeton, NJ 08540, USA.}
\email{\href{mailto:earlnick@ias.edu}{earlnick@ias.edu}}
\address{Department of Mathematics, University of Michigan, 2074 East Hall, 530 Church Street, Ann Arbor, MI 48109-1043, USA}
\email{\href{mailto:tfylam@umich.edu}{tfylam@umich.edu}}

\numberwithin{equation}{section}

\begin{document}
	\title{Scaffolds for Higher Tropical Grassmannians: Foundations}
	\begin{nouppercase}
		\maketitle
	\end{nouppercase}
	
\begin{abstract}
\emph{Scaffolds} are the one-dimensional skeleta of high-dimensional flag simplicial complexes of nonpositive curvature. They generalize the phylogenetic trees of $\Trop\,\Gr(2,n)$  to arbitrary $k$, drawing together $\SL(k)$-web bases, affine buildings, the combinatorics of the positive tropical Grassmannian and low-dimensional topology. We prove that scaffolds model points in all tropical Grassmannians via a $k$-point distance function.

In this paper, we study in detail CAT(0) planar graphs, which are positive scaffolds for the tropical Grassmannian of three-planes.  CAT(0) planar graphs are directed versions of the diskoids of Fontaine-Kamnitzer-Kuperberg, planar dual to $\SL(3)$-webs.  Our main result is the construction of a unique representation of any given integer positive tropical Pl\"ucker vector by a normal CAT(0) planar graph.  We show that any normal CAT(0) planar graph embeds into the tropical linear space as a Lam-Postnikov membrane, and embeds into the Keel-Tevelev membrane within the affine building.  We show that Early's planar basis expansion can be computed directly from the strand combinatorics of the dual web, and connect this expansion to Petersen-Pylyavskyy-Speyer's noncrossing tableaux, explored further in our companion paper.  

\end{abstract}

	\tableofcontents

	\section{Introduction}
	In 2004, Speyer and Sturmfels \cite{SS} introduced the tropical Grassmannian, a $k(n-k)+1$ dimensional polyhedral fan in $\R^{\binom{[n]}{k}}$.  Just as the Grassmannian is ubiquitous in mathematics, the tropical Grassmannian has emerged as a fundamental object in tropical geometry, and more broadly in combinatorial algebraic geometry with applications to mathematical biology and theoretical physics.
		
	The tropical Grassmannian $\Trop\, \Gr(2,n)$ of two-planes can be identified with the moduli space of phylogenetic trees, and plays an important role in mathematical biology.  This moduli space can also be viewed as the tropicalization $\Trop \, M_{0,n}$ of the moduli space $M_{0,n}$ of $n$-pointed rational curves.  Whereas the combinatorial study of the tropical moduli spaces $\Trop\,M_{g,n}$ of higher genus curves has flourished in the last decade, the study of higher tropical Grassmannians $\Trop\, \Gr(k,n)$ in recent years has been relatively subdued, especially when it comes to explicit combinatorial models; see \cite{HJJS,CGUZ,FL}.  The aim of this work is to propose a new combinatorial model for higher tropical Grassmannians: \emph{scaffolds}, built from the bones of tropical linear spaces.
	
\medskip
Scaffolds are pairs $(Q,\z)$ consisting of a directed graph $Q$ equipped with $n$ distinguished vertices $\z = (z_1,\ldots,z_n)$, satisfying conditions spelled out in \cref{def:normalmodel,def:scaffold}.  The tropical Pl\"ucker vector $\tropP_\bullet$ of a point in $\Trop \, \Gr(k,n)$ is obtained from a scaffold $(Q,\z)$ by using a $k$-point distance function that we call the \emph{Fermat-Le distance function}:
$$
\Le_Q(i_1,\ldots,i_k) =\Le_Q(I) := \min_{x \in V(Q)} \left( \sum_{a=1}^k \delta(x,z_{i_a})\right),
$$
where $\delta(\cdot,\cdot)$ is the usual directed distance function of the digraph $Q$, illustrated in \cref{fig:Q1} below.  This distance function appears in the work of Le--O'Dorney, Le, and Fraser--Le \cite{LO,Le,FL} on higher laminations in affine buildings.

\medskip
The name \emph{scaffold} reflects multiple aspects of the story. First, we expect a scaffold to be the one-dimensional \emph{skeleton} of a $(k-1)$-dimensional simplicial complex of locally nonpositive curvature.  Second, as we discuss below, tropical Pl\"ucker vectors, matroid decompositions, and tropical linear spaces can all be \emph{reconstructed} from a scaffold.  Finally, scaffolds often embed inside the \emph{buildings} of Bruhat--Tits.

\medskip

In the case $k = 2$, the graphs $(Q,\z)$ are exactly the metric or phylogenetic trees, where the distinguished vertices $\z$ are the leaves of the tree.  The main result of this paper is a complete combinatorial classification of positive scaffolds for the positive tropical Grassmannian $\Trop_{>0}\Gr(3,n)$ of three-planes.  The positive tropical Grassmannian $\Trop_{>0} \Gr(k,n) \subset \Trop\,\Gr(k,n)$ is a subfan introduced by Speyer and Williams \cite{SW}, motivated by total positivity.  

For $k=3$, positive scaffolds take a concrete combinatorial form: they are the normal \emph{CAT(0) planar graphs} $(Q,\z)$ (\cref{def:normal}).

We summarize our main theorem as follows.
\begin{theorem}[{\cref{thm:main}}]\label{thm:uniquenormal}
Every integer point in $\Trop_{>0} \Gr(3,n)$, modulo lineality, is modeled by a unique cyclic-less normal CAT(0) planar graph $(Q,\z)$.
\end{theorem}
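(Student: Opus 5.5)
The plan is to prove existence and uniqueness separately. Both halves go through a combinatorial intermediary: reduced plabic graphs (equivalently $\SL(3)$-webs) with integer edge weights, whose planar duals are CAT(0) planar graphs. The dictionary is the one described in the abstract. A normal CAT(0) planar graph $(Q,\z)$ is a planar directed graph whose bounded faces are triangles, with every interior vertex of degree at least six (the nonpositive curvature condition), and whose boundary carries the distinguished vertices $z_1,\ldots,z_n$ in cyclic order. Its planar dual is a non-elliptic $\SL(3)$-web. Because the Fermat--Le function $\Le_Q$ is defined purely metrically, I would first check that $\Le_Q(I)$ for $I\in\binom{[n]}{3}$ satisfies the three-term positive tropical Plücker relations. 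This gives a well-defined map from normal CAT(0) planar graphs to integer points of $\Trop_{>0}\Gr(3,n)$ modulo lineality. For the check I would localize the minimum in $\Le_Q(I)$ to a ``Fermat point'' inside the geodesic triangle spanned by $z_{i_1},z_{i_2},z_{i_3}$. Nonpositive curvature makes this triangle a union of flat triangular pieces, so the computation reduces to the flat $A_2$ lattice, where the relation is a direct check.

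For existence I would argue by induction on $\sum_I \tropP_I$ (after normalizing by lineality so that all coordinates are nonnegative and minimal), or equivalently on $n$. The integer points of $\Trop_{>0}\Gr(3,n)$ are parametrized by the positive tropical cluster/web coordinates of Speyer--Williams. Concretely, every such point is a sum with nonnegative integer coefficients of rays in a common cone of the fan. I would use this to realize $\tropP$ as a tropical boundary measurement of a web with nonnegative integer edge weights. Dualizing gives a candidate $Q$: each edge of weight $w$ becomes $w$ parallel strips of unit triangles. I then take a normal form by contracting degree-$\le 5$ interior vertices and removing ``cyclic'' components, presumably what \cref{def:normal} rules out. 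Along the way I must check that $\Le_Q$ is unchanged under these local moves. Concretely, directed distances between boundary vertices are preserved under flattening a positively curved vertex into a flat patch.

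Uniqueness is where I expect the main difficulty. My plan is to reconstruct $Q$ intrinsically from $\tropP$. Using the Lam--Postnikov membrane embedding into the tropical linear space $L_\tropP$ (stated in the abstract), the vertices of a normal $Q$ should be exactly the lattice points of a canonical $2$-dimensional subcomplex determined by $\tropP$. This is the Keel--Tevelev membrane in the affine building of $\PGL_3$, which depends only on $\tropP$. The key lemma would be that $\delta(x,z_i)$ for $x\in V(Q)$ is recovered as a tropical ratio of Plücker coordinates. That forces the vertex set, and then the edges, since two vertices are adjacent iff their distance vectors differ by a unit vector. I would first try to prove this lemma by exhibiting, for each $x$, three boundary vertices whose geodesics meet at $x$; normality (no degree-$\le 5$ vertices, cyclic-lessness) is used to guarantee such $x$ is a Fermat point for some triple. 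Failure of this step for non-normal graphs explains why normality is needed, and controlling it is the hard part.
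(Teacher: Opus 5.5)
Your proposal has genuine gaps in both halves, and the missing ingredient is the same for both. For uniqueness, there is no ``canonical $2$-dimensional subcomplex'' of the kind you invoke. The Keel--Tevelev membrane is isomorphic to the \emph{entire} tropical linear space $L(\tropP_\bullet)$ (\cref{thm:KT}), which is infinite. Even the bounded complex $B(\tropP_\bullet)$ differs from $\mu(Q)$: it can contain extra fins (the $n=12$ example). Conversely, a vertex $v$ with disconnected $M_v$ lies in the relative interior of a positive-dimensional cell, where $\b_v$ is not determined by $M_v$. So knowing that every vertex is a minimizer for some triple (this is \cref{prop:normal}, which needs only normality) does not pin down $V(Q)$. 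Nor are edges detected by unit vectors: adjacent vertices have $\mu$-images differing by $e_S$ for a cyclic interval $S$.

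More decisively, your argument never uses cyclic-lessness to select a graph within a lineality class, and it must. Take the counterclockwise triangle $A\to B\to C\to A$ with $z_1=A$, $z_2=B$, $z_3=\cdots=z_n=C$. It is normal but cyclic, and every vertex is a minimizer. One computes $\Le_Q(I)=3,1,2,0$ according as $I\cap\{1,2\}$ is $\{1,2\},\{1\},\{2\},\emptyset$. Hence $\tropP_I=\sum_{i\in I}x_i$ with $x=(-\frac13,-\frac23,0,\ldots,0)$, which is the same class as the one-vertex graph. Your reconstruction would therefore ``prove'' uniqueness among all normal graphs, which is false.

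Existence has the parallel gap: nothing in your sketch computes $\Le_Q$ for the graph you build. The claim that integer points are nonnegative integer sums of rays of one cone is not available in Speyer--Williams coordinates; the true statement of this type is \cref{thm:ELnc}, in $\t$-coordinates. You also give no argument relating a tropical boundary measurement of a weighted web to the Fermat--Le sum of its strip-thickened dual, and that relation is the whole content of existence. The normal-form step is misdirected as well. Interior vertices of degree $\le 5$ are excluded by definition, and contracting them would change directed distances. Normality (\cref{def:normal}) means a counterclockwise boundary with all acute vertices labeled. Your flat-$A_2$ reduction of the Pl\"ucker relations also fails, since normal CAT(0) planar graphs need not lie in one apartment; the paper uses focal points instead.

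The idea you are missing is the paper's chain of bijections:
\begin{enumerate}
\item $\Psi\colon\h_J\mapsto\t_J$ identifies $\Trop_{>0}X(3,n)(\Z)$ with cyclic-less noncrossing tableaux (\cref{thm:ELnc}).
\item The semistandardized Tymoczko construction $\J\mapsto W(\J)$ is a bijection onto normal webs preserving cyclic-lessness (\cref{thm:WJnormal}, \cref{prop:cyclic-less}). The duals of these webs are exactly the cyclic-less normal CAT(0) planar graphs (\cref{thm:normalnormal}).
\item The strand-triple planar basis expansion (\cref{thm:pbexpansion}) together with \cref{lem:telescope} gives $\Psi(\tropP_\bullet(W(\J)))=\sum_{J\in\J}\t_J$.
\end{enumerate}
Existence and uniqueness then both follow from \cref{thm:ELnc}, i.e.\ bijectivity of $\Psi$ on integer points modulo lineality. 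No intrinsic reconstruction of $Q$ inside $L(\tropP_\bullet)$ is needed.
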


CAT(0) planar graphs are the dual quivers (cf. Postnikov \cite{Pos} and Shen--Sun--Weng \cite{SSW}) to the non-elliptic $\SL(3)$-webs in the sense of Kuperberg.  They are an orientation of the one-skeleton of the diskoids of Fontaine--Kamnitzer--Kuperberg \cite{FKK}.  

We demonstrate the utility of scaffolds through several structural results:
\begin{enumerate}
\item We show that the normal CAT(0) planar graphs of \cref{thm:uniquenormal} embed into the tropical linear spaces of Speyer \cite{Spe} as a membrane in the sense of Lam and Postnikov \cite{LP}.  This allows us to relate (\cref{thm:LPKT}) the membranes of Lam and Postnikov to the membranes of Keel and Tevelev \cite{KT}.
\item Associated to a point $\tropP_\bullet \in \Trop \, \Gr(k,n)$ is a matroid decomposition of the hypersimplex $\Delta(k,n)$.  We show (\cref{sec:LPKT}) that these matroids are recovered from the combinatorics of distance minimizers in $(Q,\z)$.
\item We give an explicit combinatorial formula (\cref{thm:pbexpansion}) for the expansion of $\tropP_\bullet(Q,\z)$ in terms of the planar basis of Early \cite{E22,Ear24}.  This solves an open problem concerning the construction of Feynman diagrams for the $\mathbb{P}^2$ case of generalized biadjoint scalar amplitudes $m^{(3)}_n$ of Cachazo-Early-Guevara-Mizera \cite{CEGM}, providing a path to the study of their singularities and residues. 
\item As another application, we prove (\cref{prop:RSV}) the conjectured recursion for the (tropical) symbol alphabet of Ren--Spradlin--Volovich \cite{RSV} in the case $k=3$.  Our results here rely on a duality theorem from our companion paper \cite{ELnc}.
\end{enumerate}
A normal CAT(0) planar graph representing a point in $\Trop_{>0} \, \Gr(3,12)$ is illustrated in \cref{fig:312canvas}.  We remark that our proof of the main theorem does not invoke the machinery of affine buildings.

\begin{figure}[h!]
	\centering
	\includegraphics[width=0.49\linewidth]{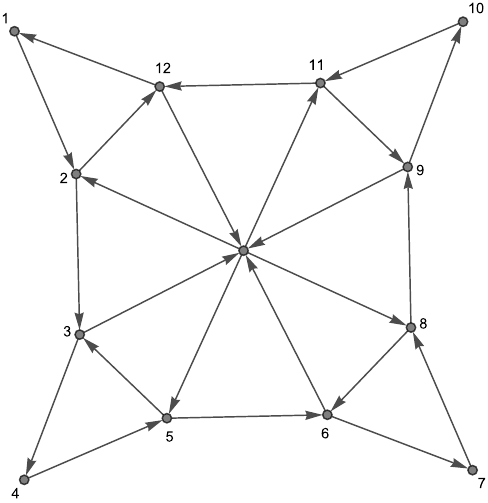}	\includegraphics[width=0.49\linewidth]{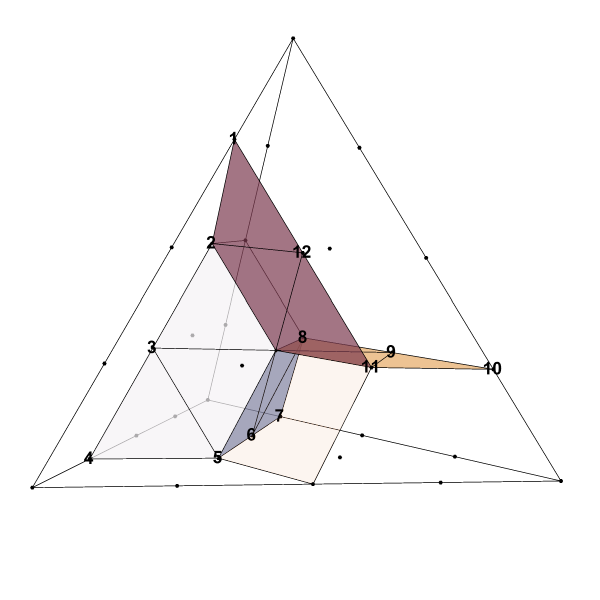}
	\caption{A scaffold embeds isometrically in the tropical linear space, illustrated on the right by an embedding in $\mathbb{R}^3$ with all triangles equilateral.  In particular, note that the edge length of the bounding simplex is four, which is the PK weight of the positive tropical Pl\"ucker vector, which is also the number of columns in the noncrossing tableau; see our companion paper \cite{ELnc}.  The scaffold on the left will be a running example through its dual web in \cref{fig:web-examples}(c), the planar basis expansion of its positive tropical Pl\"ucker vector in \cref{example: 12-gonPlucker} and the embedding into the $\PGL(3)$ affine building in \cref{fig:312buildingex}.}
\label{fig:312canvas}
\end{figure}

We turn now to discuss our combinatorial model in the context of the following set-theoretic inclusions of polyhedral fans:
\begin{equation}\label{eq:3fans}
\Trop_{>0} \Gr(k,n) \subset \Trop\, \Gr(k,n) \subset \Dr(k,n).
\end{equation}
The \emph{Dressian} $\Dr(k,n)$ is the space of all tropical Pl\"ucker vectors, also known as valuated matroids (see \cref{ssec:tropPluck}).

\smallskip
\noindent
(A) We expect that describing explicit scaffolds for $\Trop_{>0}\Gr(k,n)$ is equivalent to the combinatorial-representation-theoretic problem of constructing explicit $\SL(k)$-web bases.  In future work, we plan to describe explicitly scaffolds for $\Trop_{>0} \Gr(4,n)$, noting the recently discovered hourglass plabic graph $\SL(4)$-web basis \cite{GPPSS}.  The case $k = 4$ is of particular interest because of its applications to scattering amplitudes in physics \cite{ALS2,DFGK,HP}.  The positive tropical Grassmannians $\Trop_{>0} \Gr(4,n)$ have been conjectured to control the symbol of $N=4$ super Yang-Mills amplitudes.

We expect that in general scaffolds for $\Trop_{>0} \Gr(k,n)$ are one-skeleta of $(k-1)$-dimensional complexes satisfying a local curvature condition.  Explicit descriptions of these scaffolds could yield an $\SL(k)$-web basis for $k\geq 5$.

\smallskip
\noindent

(B) Whereas $\Trop_{>0} \Gr(2,n)$ is the moduli space of planar phylogenetic trees, the full tropical Grassmannian $\Trop \, \Gr(2,n)$ is the moduli space of all phylogenetic trees.  We establish an analogous (but partial) result for $k=3$: we show in \cref{thm:main} that every labeled CAT(0) planar graph $(\Q, \z)$ gives a point in $\Dr(3, n)$, where the labeled vertices $\z$ can now be placed anywhere in the graph - not necessarily in cyclic order, and not necessarily on the boundary.  

We show in \cref{thm:modelexists} that every integer point in $\Trop \, \Gr(k,n)$ has a weak scaffold, up to lineality.  This is based on the connection between the tropical Grassmannian $\Trop \, \Gr(k,n)$ and the affine building of $\PGL(k)$, and in particular on the results of Le \cite{Le}.  However, the scaffolds of \cref{thm:modelexists} are not combinatorially explicit.  In principle, they can be constructed algorithmically using the geometry of the affine building, but we leave as an open problem how to construct models that are canonical and combinatorially tractable.

\smallskip
\noindent
(C) We do not know whether every integer point $\tropP_\bullet \in \Dr(k,n)$ in the Dressian has a scaffold in our sense (\cref{sec:tropmodels}).  Nevertheless, we explain in \cref{subsec:decomposition} that the bounded subcomplex $B(\tropP_\bullet)$ of the tropical linear space $L(\tropP_\bullet)$ provides a natural minimizer description of $\tropP_\bullet$.  The bounded subcomplex captures half of the data in a scaffold $(Q,\z)$: it encodes the distance function but not the edge structure.

\medskip
Let us remark that our work has close relations to duality conjectures in cluster algebras, and to intersection pairings for higher laminations.  We hope to return to this in the future.

\medskip
Both authors of the present work were led to the study of tropical Grassmannians by questions from scattering amplitudes and related higher structures in theoretical physics.  We have focused on mathematical aspects in this work, and will return to physical applications in future works.  For a broad survey of the developing interface between combinatorial geometry and scattering amplitudes, see \cite{Lam}.

\medskip
\noindent
{\bf Organization.}
In \cref{part:main}, we state the main definitions and results.  In \cref{sec:trop,sec:tropmodels}, we recall the tropical Grassmannian and its properties, and introduce scaffolds.  In \cref{sec:definitions}, we introduce CAT(0) planar graphs and state our main theorem.  In \cref{sec:webs}, we introduce normal $\SL(3)$-webs, discuss their strand combinatorics, and state the planar basis expansion theorem (\cref{thm:pbexpansion}).  In \cref{sec:LPKT}, we recall the membranes of Lam--Postnikov and Keel--Tevelev and explain the relation with scaffolds.  In \cref{sec:RSV}, we apply our results to symbol alphabet recursion.

In \cref{part:CAT0}, we discuss the discrete and metric geometry of CAT(0) planar graphs.  We start by analyzing the behavior of the combinatorial distance function (\cref{sec:combinatorial-structure}) and directed distance function (\cref{sec:directed-distance}).  In \cref{sec:focal-points}, we define the crucial notion of \emph{focal point} which gives a local characterization of distance minimizers (\cref{thm:focal}).  In \cref{sec:mainproof}, we prove one of our main results that the Fermat-Le distance function of a CAT(0) planar graph is a tropical Pl\"ucker vector.  In \cref{sec:simplenormal}, we discuss normal CAT(0) planar graphs.

In \cref{part:webs}, we study strand combinatorics in non-elliptic $\SL(3)$-webs, and study properties of the class of normal webs.  We then prove the planar basis expansion theorem using a calculation involving tropical planar cross-ratios.  A key input is the duality theorem from \cite{ELnc}.

In \cref{part:noncrossing}, we establish a bijection between noncrossing tableaux and normal webs.  To do so, we recall then generalize Tymoczko's simple bijection for $\SL(3)$-webs \cite{Tym}.  In \cref{sec:proof-main-3} we complete the proof of our main result by establishing bijections between $\Trop_{>0} X(3,n)(\Z)$, the set of cyclic-less noncrossing tableaux, and the set of normal cyclic-less webs.

In \cref{part:buildings}, we turn to buildings.  After recalling results of Le \cite{Le}, we prove \cref{thm:modelexists} stating that any integer point in $\Trop \, \Gr(k,n)$ has a weak scaffold.  In \cref{sec:diskoid}, we explain how our construction is compatible with the works of Fontaine--Kamnitzer--Kuperberg \cite{FKK} and Akhmejanov \cite{Akh}.  In \cref{sec:KT} we recall Keel and Tevelev's isomorphism between tropical linear space and the membrane, and establish the compatibility with our construction.  In \cref{sec:positroids}, we give a more explicit description of the positroids that appear for normal CAT(0) planar graphs.  

In \cref{sec:appendix}, we tabulate the rays of $\Trop_{>0} X(3,n)$ for $ n = 7,8$.

\medskip
\noindent
{\bf Acknowledgements.}
We thank Ian Le for helpful conversations.  We thank Freddy Cachazo for his enthusiasm and interest in this work.  We thank the Erwin Schr\"odinger Institute and the organizers of the ESI program ``Amplitudes and algebraic geometry" for supporting a visit where part of this work was done.

N.E. was funded by the European Union (ERC, UNIVERSE PLUS, 101118787). \begin{tiny}
	Views and opinions expressed are however those of the author(s) only and do not necessarily reflect those of the European Union or the European Research Council Executive Agency. Neither the European Union nor the granting authority can be held responsible for them.
\end{tiny}
T.L. was supported by the National Science Foundation under Grant No. DMS-2348799.
	
	\part{Main results and definitions}\label{part:main}
	\section{Tropical Grassmannian}\label{sec:trop}
	Throughout, we fix integers $2 \le k < n$.  We write $[n] = \{1, \ldots, n\}$ and $\binom{[n]}{k}$ for the set of $k$-subsets of $[n]$.  Indices in $[n]$ are taken modulo $n$ when working with cyclic structures.  A \emph{cyclic interval} is a subset of the form $\{a, a+1, \ldots, a+m-1\} \pmod{n}$; a $k$-subset is \emph{non-cyclic} if it is not a cyclic interval.  We write $\binom{[n]}{k}^{\ncyc}$ for the set of non-cyclic $k$-subsets and note that $\left|\binom{[n]}{k}^{\ncyc}\right| = \binom{n}{k} - n$.  For $J \subseteq [n]$, we set $e_J := \sum_{j \in J} e_j \in \R^n$, where $e_1, \ldots, e_n$ are the standard basis vectors.  When there is no ambiguity, we write $k$-subsets without braces or commas (e.g., $ijk$ for $\{i,j,k\}$).  For $J \in \binom{[n]}{k}$, let $e^J \in \R^{\binom{[n]}{k}}$ denote the corresponding standard basis vector.
	
	\subsection{Tropical Grassmannian}
Let $\Gr(k,n)$ denote the Grassmannian of $k$-planes in $n$-space, and let $\Gr^\circ(k,n) \subset \Gr(k,n)$ denote the subspace where all Pl\"ucker coordinates are non-vanishing.  The torus $T \subset \GL(n)$ acts on $\Gr(k,n)$ preserving $\Gr^\circ(k,n)$.  The quotient $X(k,n):= \Gr^\circ(k,n)/T$ is free, called the configuration space of $n$ points in $\P^{k-1}$ in general linear position.

Let $\cR$ denote the ring of Puiseux series, with valuation $\val: \cR \to \QQ \cup \{\infty\}$.  The tropical Pl\"ucker vector $\tropP_\bullet(V)$ of $V \in \Gr^\circ(k,n)(\cR)$ is given by
$$
\tropP_I(V) =  \val (\pluck_I(V)) \in \QQ, \qquad I \in \binom{[n]}{k},
$$
where $\pluck_I(V)$ denotes a Pl\"ucker coordinate of $V$.  The \emph{tropical Grassmannian} $\Trop\, \Gr(k,n)$ is given by
$$
\Trop \, \Gr(k,n) := \overline{\{\tropP_\bullet(V) \mid V \in \Gr^\circ(k,n)(\cR) \}} \subset \R^{\binom{[n]}{k}},
$$
which we view as a polyhedral fan of dimension $k(n-k)+1$; strictly speaking, it is the tropicalization of the cone over $\Gr^\circ(k,n)$.  The positive tropical Grassmannian $\Trop_{>0} \Gr(k,n) \subset \Trop\, \Gr(k,n)$ is a subfan obtained as tropical Pl\"ucker vectors of points in $\Gr^\circ(k,n)(\cR_{>0})$, where $\cR_{>0}$ denotes a ``positive" semifield.  We will characterize this subfan in \cref{ssec:tropPluck}.
	
	The tropical configuration space $\Trop \, X(k,n)$ is the dimension $(k-1)(n-k-1)$ quotient of $\Trop\, \Gr(k,n)$ by its lineality space 
	\begin{equation}\label{eq:lin}
	\Lin_{k,n} := \sp( \sum_{I \ni i} e^I, i= 1,2,\ldots,n) \subset\R^{\binom{[n]}{k}},
	\end{equation}
	where $e^I$, $I \in \binom{[n]}{k}$ denote the standard basis of $\R^{\binom{[n]}{k}}$.  
	The positive tropical configuration space $\Trop_{>0} X(k,n)$ is the image of $\Trop_{>0} \Gr(k,n)$ in $\Trop \, X(k,n)$.  A point in $\Trop \, \Gr(k,n)$ is \emph{integral} if it belongs to $\Z^{\binom{[n]}{k}} \subset \R^{\binom{[n]}{k}}$.  A point in $\Trop \, X(k,n)$ is \emph{integral} if there exists a representative in $\Trop \, \Gr(k,n)$ that is integral.
	
	\subsection{Tropical Pl\"ucker vectors}\label{ssec:tropPluck}
	A vector $\tropP_\bullet \in \R^{\binom{[n]}{k}}$ is a \defn{tropical Pl\"ucker vector} if it satisfies the (3-term) \emph{tropical Pl\"ucker relation}:
	\begin{equation}\label{eq:tropPlucker}
		\min(\tropP_{Sab}+\tropP_{Scd}, \tropP_{Sac}+\tropP_{Sbd},\tropP_{Sad}+\tropP_{Sbc}) \text{ occurs twice}
	\end{equation}
	for any $S \in \binom{[n]}{k-2}$ and $a<b<c<d$ in $[n] \setminus S$. 
	
	A vector $\tropP_\bullet \in \R^{\binom{[n]}{k}}$ is a \defn{positive tropical Pl\"ucker vector} if it satisfies the \emph{positive tropical Pl\"ucker relation}:
	\begin{equation}\label{eq:postropPlucker}
		\tropP_{Sac}+\tropP_{Sbd} = \min(\tropP_{Sab}+\tropP_{Scd}, \tropP_{Sad}+\tropP_{Sbc}),
	\end{equation}
	for any $S \in \binom{[n]}{k-2}$ and $a<b<c<d$ in $[n] \setminus S$.  It is clear that positive tropical Pl\"ucker vectors are tropical Pl\"ucker vectors.
	
	The space of tropical Pl\"ucker vectors is called the \emph{Dressian} and denoted $\Dr(k,n)$.  We have the set-theoretic containment $\Trop\,\Gr(k,n) \subseteq \Dr(k,n)$ which is typically strict \cite{HJJS}.   In contrast, the space of positive tropical Pl\"ucker vectors is identical to the positive tropical Grassmannian $\Trop_{>0} \Gr(k,n)$ by \cite{ALS,SW20}.  Thus we obtain the inclusions of the three fans in \eqref{eq:3fans}.
	
	For a (positive) tropical Pl\"ucker vector $\tropP_\bullet$ and $\x \in \R^n$, define $\tropP^\x_\bullet$ by
	$$
	\tropP^\x_I := \tropP_I - \sum_{i \in I} x_i \qquad \text{for } I \in \binom{[n]}{k}, 
	$$
	which is again a (positive) tropical Pl\"ucker vector.  This is called the lineality action of $\x$ on $\tropP_\bullet$.  We use the notation $\tropP_\bullet \equiv \tropP_\bullet^\x$ to denote tropical Pl\"ucker vectors that are equal modulo lineality.
	
	\subsection{Matroid and positroid decompositions of the hypersimplex}
	Let $M$ be a matroid on $[n]$.  Recall that the matroid polytope $P_M$ is the convex hull in $\R^n$ of the vectors $e_I:=e_{i_1} + \cdots + e_{i_k}$ as $I = \{i_1,\ldots,i_k\}$ varies over bases of $M$.  A tropical Pl\"ucker vector (resp. positive tropical Pl\"ucker vector), when viewed as a height vector, induces a regular subdivision of the hypersimplex $\Delta(k,n)$ into matroid (resp. positroid) polytopes.  See \cite{Spe,ALS,SW20}.  This subdivision can be defined as follows.
	
	\begin{lemma}\label{lem:mintropvec}
		Let $\tropP_\bullet$ be a tropical Pl\"ucker vector (resp. positive tropical Pl\"ucker vector) and set $\alpha = \min_I \tropP_I$.  Then 
		$$
		M(\tropP_\bullet) := \{I \mid \tropP_I = \alpha\}
		$$
		is a matroid (resp. positroid).
	\end{lemma}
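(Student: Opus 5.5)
We prove the matroid statement through the basis exchange axiom, using the three-term relations \eqref{eq:tropPlucker} to obtain a local exchange property. We then deduce the positroid statement from the positive relations \eqref{eq:postropPlucker}. Write $\mathcal{B} := M(\tropP_\bullet)$, which is nonempty because the minimum $\alpha$ is attained.

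\textbf{Step 1: local exchange.} Suppose $Sab, Scd \in \mathcal{B}$ with $|S|=k-2$ and $a,b,c,d \notin S$ distinct. Then $\tropP_{Sab}+\tropP_{Scd} = 2\alpha$, and this is the smallest possible value of any of the three sums, since each sum is at least $2\alpha$. By \eqref{eq:tropPlucker} the minimum is attained at least twice, so one of $\tropP_{Sac}+\tropP_{Sbd}$ or $\tropP_{Sad}+\tropP_{Sbc}$ also equals $2\alpha$. Each summand is $\ge \alpha$, so both summands in that sum equal $\alpha$. Consequently $\{Sac,Sbd\}\subset\mathcal{B}$ or $\{Sad,Sbc\}\subset\mathcal{B}$. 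This is the symmetric exchange property for pairs with $|I\setminus J|=2$.

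\textbf{Step 2: from local to global exchange.} This is the main obstacle, because the axiom must be checked for $I,J$ at arbitrary distance. We use the standard fact (going back to Dress--Wenzel and used by Speyer \cite{Spe}) that a nonempty collection of $k$-subsets satisfying symmetric exchange for pairs with $|I\triangle J|=4$ is the set of bases of a matroid. Equivalently, its convex hull has only edges parallel to $e_i-e_j$ (Gelfand--Goresky--MacPherson--Serganova).

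If we prefer to avoid citing this, we argue by induction on $|I\setminus J|$. Let $I,J\in\mathcal{B}$ and $i\in I\setminus J$. We must find $j\in J\setminus I$ with $I-i+j\in\mathcal{B}$. Among all counterexamples, choose one with $|I\setminus J|$ minimal. First pick $j'\in J\setminus I$ and $J'$ obtained from $J$ by a local exchange moving $J$ closer to $I$ while keeping $i\notin J'$. Then apply induction to $(I,J')$. Any failure is repaired by one application of Step 1 to the $(k-2)$-set $S = I\setminus\{i,x\}$.

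The delicate point is controlling which elements survive in the intermediate sets. Proving the local-to-global lemma carefully is the technical core, and we would simply cite it.

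\textbf{Step 3: positivity.} For a positive tropical Plücker vector, $M(\tropP_\bullet)$ is a matroid by Steps 1–2. To see that it is a positroid, note that when $Sac,Sbd\in\mathcal{B}$ with $a<b<c<d$, relation \eqref{eq:postropPlucker} forces $\min(\tropP_{Sab}+\tropP_{Scd},\tropP_{Sad}+\tropP_{Sbc})=2\alpha$. So a "crossing" pair always has a noncrossing exchange in $\mathcal{B}$. The same applies to the other pairings, with the crossing sum always attaining the minimum.

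Geometrically, $P_{M}$ is the face of the regular subdivision of $\Delta(k,n)$ induced by the heights $\tropP_\bullet$. By \cite{ALS,SW20}, the cells of subdivisions induced by positive tropical Plücker vectors are positroid polytopes. Since $M(\tropP_\bullet)$ is the lowest face, the minimizing face of the lifted hypersimplex, it is one of these cells. We would invoke this result rather than reprove it; the alternative is to check the positroid criterion on 2-dimensional faces (octahedra) directly from \eqref{eq:postropPlucker}.
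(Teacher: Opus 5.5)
Your Step 1 is correct, and your Step 3 is the paper's argument: for the positive case the paper cites \cite{ALS,SW20}, and for the matroid case it cites \cite{Spe}, in each case observing that $P_{M(\pi)}$ is the cell of the regular subdivision of $\Delta(k,n)$ on which the height $\pi$ is minimized. The gap is in Step 2, the local-to-global passage. The ``standard fact'' you cite is false. Take $k=3$ and $\mathcal{B}=\{123,456\}$. It satisfies your exchange property vacuously, since no two of its members have $|I\triangle J|=4$. Yet it is not a basis family. Its convex hull is a segment in direction $e_{123}-e_{456}$, so the claimed GGMS equivalence fails as well. Your induction sketch fails on exactly this example, because no local exchange inside $\mathcal{B}$ moves $J$ closer to $I$. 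The root problem is that Step 2 uses only information about the set $\mathcal{B}$. The content of the lemma lies in how the relations \eqref{eq:tropPlucker} constrain the values of $\pi$ \emph{outside} $\mathcal{B}$, and neither your citation nor your induction ever uses those values.

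The local-to-global principle that does hold is a statement about the function, not the set. A finite-valued $\pi$ (so its support is the uniform matroid) that satisfies the three-term relations also satisfies the full valuated exchange axiom of Dress--Wenzel and Murota. That axiom says: for all $I,J$ and $i\in I\setminus J$ there is $j\in J\setminus I$ with $\pi_I+\pi_J\ge \pi_{I-i+j}+\pi_{J-j+i}$. Apply this to $I,J\in\mathcal{B}$. You get $2\alpha\ge \pi_{I-i+j}+\pi_{J-j+i}\ge 2\alpha$, which forces both exchanged sets into $\mathcal{B}$. Your Step 1 is exactly the case $|I\setminus J|=2$ of this argument. Alternatively, treat the matroid case the way you already treat the positroid case in Step 3: cite \cite{Spe} for the fact that the induced regular subdivision is matroidal, and note that $M(\pi)$ is its height-minimizing face.
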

	\begin{proof}
		A tropical Pl\"ucker vector $\tropP_\bullet$, viewed as a height function on the vertices $e_I$ of $\Delta(k,n)$, induces a regular subdivision into matroid polytopes \cite{Spe} (resp. positroid polytopes \cite{ALS,SW20}).  The set $M(\tropP_\bullet)$ corresponds to the face of this subdivision selected by the linear functional $\sum x_i = \alpha$.  Since every face of a matroid (resp. positroid) subdivision is a matroid (resp. positroid) polytope, the claim follows.
	\end{proof}
	\noindent Let $\Delta(\tropP_\bullet)$ denote the set of matroid polytopes $P_{M(\tropP_\bullet^\x)}$ as $\x$ varies over $\R^n$.  Geometrically, the height function $\tropP_\bullet$ is affine linear over each matroid polytope $P_M$ in the subdivision, and for any such $P_M$ there exists $\x \in \R^n$ such that $\tropP_I^\x = \tropP_I - \sum_{i \in I} x_i$ is constant on the bases of $M$ and achieves its minimum there.  Thus $M = M(\tropP_\bullet^\x)$, and every cell of the subdivision arises in this way.  This collection of matroids is finite and determines a regular matroid subdivision $\Delta(\tropP_\bullet)$ of $\Delta(k,n)$.  We have $\Delta(\tropP_\bullet) = \Delta(\tropP_\bullet^\x)$ for any $\x$.
	
	\subsection{Tropical linear space}
	Denote
	$
	\one := (1,1,\ldots,1) \in \Z^n.
	$
	Let $\tropP_\bullet \in \Dr(k,n)$ be a tropical Pl\"ucker vector.  
	
	\begin{definition}\label{def:tropical}
		The \emph{tropical linear space} $L(\tropP_\bullet)$ of $\tropP_\bullet$ is 
		$$
		L(\tropP_\bullet) :=\left\{\x \mid \text{ for all } \tau \in \binom{[n]}{k+1}, \; \min_i(\tropP_{\tau \setminus \tau_i} +x_i) \text{ is achieved at least twice} \right\}
		$$
		as a subspace of $\R^n/\one$.
	\end{definition}
	
	For $\tropP_\bullet$ integral, the integer points $L_\Z(\tropP_\bullet):= L(\tropP_\bullet) \cap \Z^n/\one \subset L(\tropP_\bullet)$ can be equipped with the structure of a flag simplicial complex, called the \emph{standard simplicial complex structure}; see for instance \cite{JSY}.  The edges are the pairs $(\x,\x')$ such that $\x-\x' = e_S \mod \one$ for some nonempty proper subset $S \subset [n]$.
	
	The stratification of $\R^n$ by the matroids $M(\tropP_\bullet^\x)$ induces a different stratification of $L(\tropP_\bullet)$ into faces.  We call this the \emph{matroid complex structure} of $L(\tropP_\bullet)$ to distinguish it from the standard simplicial complex structure on $L_\Z(\tropP_\bullet)$.
	
	Lineality action becomes translation on tropical linear spaces: we have $L(\tropP^\x_\bullet) = L(\tropP_\bullet) - \x$.  Indeed, $\y \in L(\tropP^\x_\bullet)$ if and only if $M(\tropP^{\x+\y}_\bullet)$ is loopless, which holds if and only if $\x + \y \in L(\tropP_\bullet)$.

	\begin{proposition}[{\cite[Proposition 2.3]{Spe}}]\label{prop:Spe}
		We have $\x \in L(\tropP_\bullet)$ if and only if $M(\tropP^\x_\bullet)$ is a loopless matroid.  We have that $\x$ belongs to a bounded face of the matroid complex structure of $L(\tropP_\bullet)$ if and only if $M(\tropP^\x_\bullet)$ is a coloopless matroid.
	\end{proposition}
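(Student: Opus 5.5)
The proof has two parts. The first characterizes membership in $L(\tropP_\bullet)$ via looplessness. The second characterizes bounded faces via colooplessness, using the fact that the matroid complex structure is dual to the regular subdivision $\Delta(\tropP_\bullet)$.

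For the first part, fix $\x$ and set $M = M(\tropP^\x_\bullet)$, with minimum value $\alpha = \min_I \tropP^\x_I$; by \cref{lem:mintropvec}, $M$ is a matroid. The defining condition for $\x \in L(\tropP_\bullet)$ is that for each $\tau \in \binom{[n]}{k+1}$ the minimum of $\tropP_{\tau\setminus\tau_i}+x_i$ over $i\in\tau$ is attained twice. Subtracting $\sum_{j\in\tau}x_j$ rewrites this as: the minimum of $\tropP^\x_{\tau\setminus \tau_i}$ over $i \in \tau$ is attained at least twice, for every $\tau$. The two directions then go as follows.

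\emph{Loopless implies $\x \in L$.} Suppose the condition fails for some $\tau$, with the minimum attained uniquely at $\tau\setminus i$. I would show this forces the element $i$ to be a loop of $M$. The route is the tropical Plücker exchange relations: for a valuated matroid, if $\tau \setminus i$ is a unique minimizer among the sets $\tau\setminus j$, then no minimal-value basis can contain $i$. I expect to pass through the valuated circuit/cocircuit description, or to run an induction using the 3-term relations \eqref{eq:tropPlucker}. Concretely, take a basis $B\in M$ containing $i$ with $|B\setminus\tau|$ minimal. Applying the exchange axiom between $B$ and $\tau\setminus j$ then yields a set containing $i$ that is closer to $\tau$ and still has value $\alpha$ (using $\tropP^\x_B=\alpha\le$ everything). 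This drives the argument down to $B\subset\tau$ containing $i$, i.e.\ $B=\tau\setminus j$ with $j\ne i$. That contradicts uniqueness once we check the values are comparable.

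\emph{$\x \in L$ implies loopless.} Suppose $i$ is a loop of $M$. Pick a basis $B\in M$ and set $\tau = B\cup i$. Then $\tropP^\x_B=\alpha$ is the minimum over the sets $\tau\setminus j$. Every other such set contains $i$, so it is not in $M$ and has value $>\alpha$. Hence the minimum is attained uniquely, contradicting $\x\in L$.

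For the second part, the faces of the matroid complex structure are the sets $\{\x : M(\tropP^\x_\bullet)=M\}$. Each such face is the relative interior of the normal cone to the cell $P_M$ of the lifted subdivision, intersected with $L$. Its recession directions are the $\v$ (modulo $\one$) such that $M(\tropP^{\x+t\v}_\bullet)=M$ for all large $t$. Equivalently, these are the directions in which the linear functional $\v$ is minimized on $P_M$ at a face that is all of $P_M$, meaning $\v$ is constant on $P_M$. Thus the face is bounded if and only if the only $\v$ constant on $P_M$ are multiples of $\one$, i.e.\ $\dim P_M=n-1$. It is standard that this holds exactly when $M$ is connected. Here the plan must be adjusted, since the claim is about points of bounded faces rather than interiors.

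Instead I would argue the second part directly. If $M$ has a coloop $c$, then $\x - te_c$ keeps $M(\tropP^{\x-te_c}_\bullet)\supseteq$ structure with $c$ still a coloop and stays in $L$ for all $t\ge0$. So $\x$ lies on an unbounded ray within its closed face, and that face is unbounded. Conversely, if $M$ is coloopless and $\x$ lay in an unbounded face, there would be a ray $\x+t\v$ inside that face. One then shows, using the first part, that $\v$ must decrease on some element lying in every basis of $M$, which is a coloop. I expect this converse to be the main obstacle. I would first try to handle it by showing that the set $\{i:v_i=\min \v\}$ restricted to $M$ behaves like a coloop set.
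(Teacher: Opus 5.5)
The paper does not prove this statement; it is quoted from Speyer. Your argument therefore has to stand on its own, and at present it does not. The converse in the second part is left open, and it only looks hard because you have misidentified the recession cone.

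\emph{Second part.} The $\v$ with $\sum_{a\in B}v_a$ constant over $B\in M$ form the direction space of the affine hull of the face $C_M=\{\y : M(\tropP^\y_\bullet)=M\}$. Modulo $\one$ this space is zero exactly when $M$ is connected, so it detects when the face is a point, not when it is bounded. Your interim conclusion ``bounded iff connected'' is refuted by the paper's own $k=2$ example, where $M_v$ is disconnected but labels a bounded edge.

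The closed face $\overline{C_M}=\{\y : M(\tropP^\y_\bullet)\supseteq M\}$ is cut out by $\tropP^\y_B=\tropP^\y_{B'}$ for $B,B'\in M$ and $\tropP^\y_I\ge\tropP^\y_B$ for all $I$. Its recession cone is therefore $\{\v : \sum_{a\in B}v_a=\max_I\sum_{a\in I}v_a \text{ for all } B\in M\}$, i.e.\ $P_M$ must lie in the face of $\Delta(k,n)$ on which $\v$ is maximized. If $\theta$ is the $k$-th largest coordinate of $\v$, that face is $\{x_a=1 \text{ for } v_a>\theta,\ x_a=0 \text{ for } v_a<\theta\}$. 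Hence each $a$ with $v_a>\theta$ is a coloop of $M$, and each $a$ with $v_a<\theta$ is a loop. For $\x\in L$ (so $M$ is loopless) with $M$ coloopless, this forces $\v\in\R\one$, which is the missing converse. Your worry about points versus relative interiors is moot, because $\overline{C_M}$ is contained in every closed face containing $\x$.

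The forward direction also has a sign error. With $\tropP^\x_I=\tropP_I-\sum_{i\in I}x_i$, the point $\x-te_c$ raises every basis of $M$ by $t$ and leaves sets avoiding $c$ unchanged. For large $t$ the coloop $c$ becomes a loop and the ray leaves $L$. The correct ray is $\x+te_c$, along which $M(\tropP^{\x+te_c}_\bullet)=M$ exactly.

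\emph{First part.} The implication $\x\in L\Rightarrow M$ loopless is fine. The descent you sketch for the converse does not go through: any exchange that moves an element of $B\setminus\tau$ into some $\tau\setminus j$ produces a partner set not contained in $\tau$. You cannot bound that set's value, so you cannot conclude that the new set containing $i$ still has value $\alpha$.

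No descent is needed. Take $B\in M$ with $i\in B$, put $T=\tau\setminus i$, and apply the valuated exchange axiom with $u=i\in B\setminus T$. This gives $v\in T\setminus B$ with $\tropP^\x_B+\tropP^\x_T\ge\tropP^\x_{B-i+v}+\tropP^\x_{\tau\setminus v}$. But $\tropP^\x_{B-i+v}\ge\alpha=\tropP^\x_B$, and by uniqueness of the minimum $\tropP^\x_{\tau\setminus v}>\tropP^\x_T$, a contradiction. This uses the multi-element exchange axiom. For finite-valued $\tropP_\bullet$ it follows from the three-term relations \eqref{eq:tropPlucker} by a standard theorem, so cite it rather than rebuilding it by induction.
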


\subsection{Tropical Pl\"ucker Vector Reconstruction}\label{subsec:support}

Let $\tropP_\bullet \in \Dr(k,n)$ be a tropical Plücker vector and let $B(\tropP_\bullet) \subset L(\tropP_\bullet)$ 
denote the subcomplex of bounded faces in the matroid complex structure on tropical linear space $L(\tropP_\bullet)$.  The vertices of 
$B(\tropP_\bullet)$ are in bijection with the top-dimensional matroid polytopes in the regular 
matroid subdivision $\Delta(\tropP_\bullet)$ of $\Delta(k,n)$.  Since top-dimensional matroid polytopes 
in $\Delta(k,n)$ have dimension $n-1$, the corresponding matroids are connected, hence both 
loopless and coloopless.

It will be convenient to lift $L(\tropP_\bullet)$ and $B(\tropP_\bullet)$ to $\R^n$.  For $\y \in L(\tropP_\bullet)$, we choose the lift $\ty \in \R^n$ to satisfy $\min_I(\tropP_I^\ty) = 0$, so that $M(\tropP_\bullet^\ty) = \{I \mid \tropP_I^\ty = 0\}$.

\begin{proposition}\label{prop:support}
	Let $\tropP_\bullet \in \Dr(k,n)$.  For every $I \in \binom{[n]}{k}$, we have
	$$
	\tropP_I = - \min_{\y \in {\rm Vert}(B(\tropP_\bullet))} \sum_{i \in I} -{\tilde y}_i,
	$$
	where $\ty = (\tilde y_1, \tilde y_2,\ldots \tilde y_n) \in \R^n$ is the lift of $\y$.
\end{proposition}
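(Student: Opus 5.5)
The identity says $\tropP_I = \max_{\y} \sum_{i\in I}\tilde y_i$ over vertices $\y$ of $B(\tropP_\bullet)$. I will prove the two inequalities separately.

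\emph{Upper bound.} For every $\y$ in $L(\tropP_\bullet)$, and in particular every vertex of $B(\tropP_\bullet)$, the lift $\ty$ satisfies $\min_J \tropP^{\ty}_J = 0$. Hence $\tropP_I - \sum_{i\in I}\tilde y_i = \tropP^{\ty}_I \ge 0$ for all $I$. This gives $\sum_{i\in I}\tilde y_i \le \tropP_I$, so the maximum over vertices is at most $\tropP_I$.

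\emph{Equality.} Fix $I$. I need a vertex $\y$ of $B(\tropP_\bullet)$ with $\tropP^{\ty}_I = 0$, that is, with $I \in M(\tropP^{\ty}_\bullet)$. Vertices of $B(\tropP_\bullet)$ correspond to the top-dimensional cells $P_M$ of the regular subdivision $\Delta(\tropP_\bullet)$. By the discussion following \cref{lem:mintropvec}, each such cell is $P_{M(\tropP^\x_\bullet)}$ for some $\x$, and $\tropP^\x_\bullet$ is constant and minimal exactly on the bases of $M$. After the normalization $\min_J \tropP_J^{\x}=0$, which only shifts $\x$ by a multiple of $\one$, this $\x$ is the lift $\ty$ of the corresponding vertex $\y$.

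Since the top-dimensional cells cover $\Delta(k,n)$, the vertex $e_I$ lies in some top-dimensional cell $P_M$. Because $P_M$ is a matroid polytope whose vertices are exactly the $e_J$ with $J$ a basis, and $e_I$ is a vertex of the hypersimplex, $e_I \in P_M$ forces $I$ to be a basis of $M$. Therefore $\tropP^{\ty}_I = 0$, which gives equality.

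The main obstacle is the identification of the top-dimensional cells with vertices of $B(\tropP_\bullet)$, together with the fact that the corresponding $\x$ is exactly the normalized lift. I would justify this via \cref{prop:Spe}. A top-dimensional matroid $M$ is connected, hence loopless and coloopless, so $\x$ lies in a bounded face of $L(\tropP_\bullet)$. It is in fact a vertex (a $0$-dimensional cell) because connectedness of $M$ pins $\x$ down modulo $\one$: the condition that $\tropP_J - \sum_{j\in J} x_j$ be constant on the bases of an $(n-1)$-dimensional matroid polytope determines $\x$ up to adding multiples of $\one$. This is the fact the paper states just before the proposition, so I would cite it there and only verify the normalization of the lift.
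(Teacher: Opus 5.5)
Your proposal is correct and follows essentially the same route as the paper: the upper bound $\sum_{i\in I}\tilde y_i \le \tropP_I$ comes from the normalization $\min_J \tropP^{\ty}_J = 0$ of the lift, and equality comes from a top-dimensional cell of $\Delta(\tropP_\bullet)$ that has $I$ as a basis, whose normalized $\x$ is the lift of a vertex of $B(\tropP_\bullet)$. Your covering argument (choosing a maximal cell containing the vertex $e_I$, which forces $I$ to be a basis) just spells out the step the paper asserts, and deriving the upper bound from the lift normalization is, if anything, more precise than the paper's appeal to looplessness.
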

\begin{proof}
	For any $\y \in L(\tropP_\bullet)$, the matroid $M(\tropP^\ty_\bullet)$ is loopless, so we have $\sum_{i \in I} \tilde y_i \leq \tropP_I$ for any $I$.  Furthermore, $I$ is a basis of $M(\tropP^\ty_\bullet)$ if and only if $\sum_{i \in I} \tilde y_i = \tropP_I$.  It remains 
	to show that equality is achieved at some $\y \in B(\tropP_\bullet)$.  By Lemma \ref{lem:mintropvec}, 
	$I$ is a basis of the matroid $M(\tropP^\x_\bullet)$ for some $\x \in \R^n$.  Since the height function 
	$\tropP_\bullet$ is affine linear over each matroid polytope in the subdivision, there exists $\ty$ 
	such that $M(\tropP^{\ty}_\bullet)$ is a top-dimensional matroid containing $I$ as a basis, and 
	$\tropP^{\ty}_I = 0$.  The corresponding point $\y \in B(\tropP_\bullet)$ achieves $\sum_{i \in I} {\tilde y}_i = \tropP_I$.
\end{proof}

\section{Scaffolds}\label{sec:tropmodels}
Fix an integer $k \geq 2$.

\subsection{The Fermat-Le distance function}
Let $Q$ be a finite loopless directed graph with vertex set $V(Q)$.  The directed distance $\dist(v,w)$ between vertices $v$ and $w$ is the minimum of the total directed distance over all paths from $v$ to $w$.  The directed distance is equal to $\infty$ if there is no (directed) path from $v$ to $w$.  We will generally assume in our graphs that directed distances between any two vertices are finite.  A \emph{labeled} graph $(Q,\z)$ is such a graph $Q$ together with a collection $\z = (z_1,z_2,\ldots,z_n) \in V(Q)^n$ of boundary vertices.  The $z_i$ are not assumed to be distinct.

\begin{definition}\label{defn:FL}
The \defn{Fermat-Le distance sum} is the function
$$
\Le_Q(i_1,\ldots,i_k) =\Le_Q(I) := \min_{x \in V(Q)} \left( \sum_{a=1}^k \delta(x,z_{i_a})\right)
$$
where $I = \{i_1,i_2,\ldots,i_k\}$ is a $k$-element (multi)subset of $[n]$.  A vertex $x$ that achieves the minimum is called a \emph{distance minimizer} for $I$ (or for $z_{i_1},\ldots,z_{i_k}$).
\end{definition}

See \cref{fig:Q1} for an example.

\begin{remark}
The \emph{Fermat point}, also called the \emph{Torricelli point}, of a triangle is the geometric median: the point that minimizes the distance sum to the three vertices.  In our current context, the function $\Le_Q$ appears in the works of Ian Le \cite{Le} in the setting of $\PGL(k)$ affine buildings; see \cref{sec:buildings}.
\end{remark}

\begin{definition}\label{def:model}
Let $\tropP_\bullet \in \R^{\binom{[n]}{k}}$.  We say that a directed graph $(Q,\z)$ is a \emph{model} for $\tropP_\bullet$ if $\tropP_I = -\frac{1}{k}\Le_Q(I)$ for all $I \in \binom{[n]}{k}$.  
\end{definition}

Any vector $\tropP_\bullet \in \Z^{\binom{[n]}{k}}$, up to a multiple of $\one \in \R^{\binom{[n]}{k}}$, has a model: start with the vertex set $\{z_1,\ldots,z_n\} \cup \{y_J \mid J \in \binom{[n]}{k}\}$ and construct directed paths from $y_J$ to $z_j$ for $j \in J$.  The vertex $y_J$ is the only vertex that can reach all $z_j$ for $j \in J$, so it must be the distance minimizer.  The lengths of the directed paths can be engineered to obtain the desired function.

For models to be interesting, we must impose conditions on the graph $Q$.

\subsection{Barycenter embedding}\label{subsec:barycenter}
Henceforth, we make the following assumption on our directed graphs: if $v \to w$ is a directed edge in $Q$, we declare the directed distances
\begin{equation}\label{eq:direct}
\dist(v,w) = 1 \qquad \text{and} \qquad \dist(w,v) = k-1.
\end{equation}
The directed distance $\dist(v,w)$ between arbitrary vertices $v$ and $w$ is again the minimum of the total directed distance over all paths from $v$ to $w$.  We always assume that adjacent vertices $v \to w$ have directed distances given by \eqref{eq:direct}, that is, there does not exist a different path from $w$ to $v$ with distance less than $k-1$.

\begin{definition}\label{def:color}
A \emph{coloring} of a directed graph $Q$ is an assignment of a color $c(v) \in \Z/k\Z$ to each vertex $v \in V(Q)$ so that if $v \to w$ then
$$
c(w) \equiv c(v) + 1 \in \Z/k\Z.
$$
\end{definition}

\begin{lemma}\label{lem:colordist}
Suppose that $c(\cdot)$ is a coloring of $Q$.  Then 
for any two vertices $v,w$, we have 
$$\dist(v,w)  \equiv c(w) - c(v)  \in \Z/k\Z.$$
\end{lemma}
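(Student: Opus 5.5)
The plan is to reduce to single edges and then sum along a shortest path. By definition, $\dist(v,w)$ is the minimum over directed paths from $v$ to $w$ of the total length, where each step is either a forward traversal of an edge $u \to u'$ (length $1$) or a backward traversal (length $k-1$). This is the convention \eqref{eq:direct}. Reading it this way, the paths are walks in the underlying undirected graph, with edges weighted according to the direction of traversal. So I will fix a path $v = u_0, u_1, \ldots, u_m = w$ realizing $\dist(v,w)$; such a path exists, since distances are assumed finite.

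The key step is a one-step congruence: for each consecutive pair, the length of the step from $u_{j}$ to $u_{j+1}$ is congruent to $c(u_{j+1}) - c(u_j)$ modulo $k$.
\begin{itemize}
\item If the step traverses an edge $u_j \to u_{j+1}$ forward, its length is $1$. By \cref{def:color}, $c(u_{j+1}) - c(u_j) \equiv 1$.
\item If it traverses an edge $u_{j+1} \to u_j$ backward, its length is $k-1 \equiv -1$. Also $c(u_j) - c(u_{j+1}) \equiv 1$, so $c(u_{j+1}) - c(u_j) \equiv -1$.
\end{itemize}

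Summing over $j = 0, \ldots, m-1$, the color differences telescope to $c(w) - c(v)$, while the lengths sum to $\dist(v,w)$. This gives the claimed congruence in $\Z/k\Z$. The case $v = w$ is trivial, since the distance is $0$.

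The main point to handle carefully is interpretive rather than technical: we must make sure that ``paths'' in the definition of $\dist$ under \eqref{eq:direct} are exactly sequences of forward and backward edge traversals with these weights. The standing assumption says adjacent vertices have $\dist(w,v) = k-1$, which is consistent with this reading. The argument in fact shows that every path from $v$ to $w$, not only a minimal one, has length congruent to $c(w)-c(v)$. So the choice of minimizing path is irrelevant, and no further obstacle arises.
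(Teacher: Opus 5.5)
Your proof is correct and is essentially the paper's argument: the paper proceeds by induction on the length of a shortest path realizing $\dist(v,w)$, using \cref{def:color} and \eqref{eq:direct}, and your telescoping sum of per-step congruences is exactly that induction unrolled. Your remark that every path, not just a minimal one, has length congruent to $c(w)-c(v)$ is a correct and harmless strengthening.
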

\begin{proof}
Follows from \cref{def:color} and \eqref{eq:direct} by induction on the length of the shortest path from $v$ to $w$ that computes $\dist(v,w)$.  
\end{proof}

\begin{definition}\label{def:barycenter}
	Given a labeled graph $(Q,\z)$, define the \defn{barycenter coordinates} of $v \in V(Q)$ to be
	$$
	\b_v:=  \frac{1}{k} (\dist(v,z_1),\ldots,\dist(v,z_n)) \in \frac{1}{k}\Z_{\geq 0}^n.
	$$
\end{definition}

\begin{lemma}\label{lem:adjcolor}
Suppose that $c(\cdot)$ is a coloring of $(Q,\z)$.  Then for adjacent vertices $v \to w$, we have that $\b_w - \b_v = e_S:= e_{s_1} + \cdots + e_{s_r} \mod \one$ for some subset $S \subseteq [n]$.
\end{lemma}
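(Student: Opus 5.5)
The plan is to compare $\dist(w,z_i)$ with $\dist(v,z_i)$ coordinate by coordinate, using the triangle inequality for the directed distance together with the color congruence of \cref{lem:colordist}. Fix $i \in [n]$ and set $d_i := \dist(w,z_i) - \dist(v,z_i)$. Since $v \to w$, we have $\dist(v,w)=1$ and $\dist(w,v)=k-1$ by \eqref{eq:direct}. The triangle inequality gives
$$
\dist(v,z_i) \le 1 + \dist(w,z_i), \qquad \dist(w,z_i) \le (k-1) + \dist(v,z_i).
$$
Together these say $-1 \le d_i \le k-1$.

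Next we pin down $d_i$ modulo $k$. By \cref{lem:colordist}, $\dist(w,z_i) \equiv c(z_i)-c(w)$ and $\dist(v,z_i)\equiv c(z_i)-c(v)$. Since $c(w)\equiv c(v)+1$, this gives $d_i \equiv -1 \pmod k$. The only integers in $[-1,k-1]$ congruent to $-1$ mod $k$ are $-1$ and $k-1$, so for each $i$ we have $d_i\in\{-1,\,k-1\}$.

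Now define $S := \{i \in [n] \mid d_i = k-1\}$. Then
$$
k(\b_w-\b_v) = (d_1,\ldots,d_n) = -\one + k\,e_S,
$$
so $\b_w - \b_v = e_S - \tfrac1k\one$. This equals $e_S$ modulo $\one$, reading ``mod $\one$'' as modulo real multiples of $\one$, as in $\R^n/\one$ in \cref{def:tropical}. The only step that needs any care is the observation that the two triangle inequalities confine $d_i$ to an interval of length exactly $k$, so the congruence leaves just two possible values. Everything else is direct. No genuine obstacle arises, because finiteness of all distances is part of the standing assumptions.
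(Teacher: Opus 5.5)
Your proof is correct and matches the paper's argument: the two triangle inequalities from \eqref{eq:direct} confine each coordinate difference to a window of length $k$, and \cref{lem:colordist} fixes its residue mod $k$, which leaves only two possible values (the paper writes this with the opposite sign, $\dist(v,z)-\dist(w,z)\in\{1,1-k\}$). Your explicit remark that $\b_w-\b_v = e_S-\frac{1}{k}\one$, so that ``mod $\one$'' has to be read in $\R^n/\one$, is a useful clarification that the paper leaves implicit.
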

\begin{proof}
By \cref{lem:colordist}, we have that $\dist(v,z) - \dist(w,z) \equiv c(w) - c(v) \in \Z/k\Z$.  On the other hand, using \eqref{eq:direct}, we have $1-k \leq \dist(v,z) - \dist(w,z) \leq 1$, so $\dist(v,z) - \dist(w,z) \in \{1,1-k\}$.  Thus $\left(\b_v - \frac{1}{k}\one\right) - \b_w \in \{-1,0\}^n$ and the claim follows.
\end{proof}

\begin{definition}\label{def:Mv}
	For a vertex $v \in V(Q)$, define the \defn{matroid of $v$} to be
	$$
	M_v:= \{I \mid v \text{ is a distance minimizer for } I\} \subseteq \binom{[n]}{k}.
	$$
\end{definition}

\begin{proposition}\label{prop:Mv}
	If $(Q,\z)$ is a model for a tropical Pl\"ucker vector $\tropP_\bullet$ then for any $v \in V(Q)$ such that $M_v$ is nonempty, we have that $M_v = M(\tropP^{-\b_v}_\bullet)$ is a matroid.
\end{proposition}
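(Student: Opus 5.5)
Unwinding the definitions reduces the claim to a direct computation followed by an appeal to \cref{lem:mintropvec}. Write $\x = -\b_v$, so that for every $I \in \binom{[n]}{k}$
$$
\tropP^{-\b_v}_I = \tropP_I + \sum_{i\in I} (\b_v)_i = -\frac{1}{k}\Le_Q(I) + \frac{1}{k}\sum_{i \in I}\dist(v,z_i),
$$
using that $(Q,\z)$ is a model for $\tropP_\bullet$ (\cref{def:model}) and the definition of barycenter coordinates (\cref{def:barycenter}).

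The key observation is that, by \cref{defn:FL}, $\Le_Q(I)$ is the minimum over all $x \in V(Q)$ of $\sum_{i\in I}\dist(x,z_i)$. Taking $x = v$ gives $\sum_{i\in I}\dist(v,z_i) \ge \Le_Q(I)$, so
$$
\tropP^{-\b_v}_I \ge 0 \quad \text{for all } I,
$$
with equality exactly when $v$ is a distance minimizer for $I$, i.e. when $I \in M_v$ (\cref{def:Mv}).

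Since $M_v$ is assumed nonempty, the value $0$ is attained. Hence $\alpha = \min_I \tropP^{-\b_v}_I = 0$, and the set where the minimum is achieved is precisely $M_v$. That is, $M(\tropP^{-\b_v}_\bullet) = M_v$ in the notation of \cref{lem:mintropvec}.

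Finally, $\tropP^{-\b_v}_\bullet$ is again a tropical Pl\"ucker vector, because the lineality action preserves tropical Pl\"ucker vectors (\cref{ssec:tropPluck}); this holds for any real vector $\x$, including $\x = -\b_v \in \frac1k\Z^n$. \cref{lem:mintropvec} then shows that $M_v$ is a matroid.

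The argument has no real obstacle. The only points needing care are the sign conventions ($\tropP^\x_I = \tropP_I - \sum_{i \in I} x_i$ with $\x = -\b_v$) and the factor $\frac1k$ appearing in both the model and the barycenter coordinates, which must match for the cancellation to occur. The nonemptiness hypothesis is exactly what guarantees that the minimum of $\tropP^{-\b_v}_\bullet$ equals $0$ rather than being strictly positive. If $M_v$ were empty, $M(\tropP^{-\b_v}_\bullet)$ would still be a matroid but would not coincide with $M_v$.
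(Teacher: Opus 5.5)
Your proposal is correct and follows essentially the same argument as the paper. The paper also rewrites $\tropP^{-\b_v}_I$ as $\frac{1}{k}\bigl(\sum_{i\in I}\dist(v,z_i) - \min_u \sum_{i\in I}\dist(u,z_i)\bigr) \ge 0$, notes that it vanishes exactly on $M_v$, and then invokes \cref{lem:mintropvec}.
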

\begin{proof}
	Let $(Q,\z)$ be a model for $\tropP_\bullet$, and consider the tropical Plücker vector $\tropP^{-\b_v}_\bullet$ 
	given by
	\begin{equation}\label{eq:modify}
	\tropP^{-\b_v}_I = \tropP_I + \frac{1}{k} \left(\sum_{i \in I} \dist(v, z_i)\right) 
	= \frac{1}{k}\left(\sum_{i \in I} \dist(v, z_i) 
	- \min_{u \in V(Q)} \sum_{i \in I} \dist(u, z_i)\right).
	\end{equation}
	Then $\tropP^{-\b_v}_\bullet \geq 0$, and $\tropP^{-\b_v}_I = 0$ if and only if $v$ is a distance 
	minimizer for $I$.  If $M_v$ is nonempty then $M_v = M(\tropP^{-\b_v}_\bullet)$, and the claim follows 
	from \cref{lem:mintropvec}.
\end{proof}

\begin{definition}\label{def:normalmodel}
	A labeled graph $(Q,\z)$ is a \emph{weak scaffold} if it is a model for a tropical Pl\"ucker vector $\tropP_\bullet \in \Dr(k,n)$ and it admits a coloring $c(\cdot)$.	  We call a weak scaffold \emph{positive} if it is a model for $\tropP_\bullet \in \Trop_{>0}\Gr(k,n)$.
\end{definition}

We say that $(Q,\z)$ is a weak scaffold if it is a weak scaffold for some tropical Pl\"ucker vector.  In this case, we call $\tropP_\bullet = \tropP_\bullet(Q,\z)$ the tropical Pl\"ucker vector of $(Q,\z)$.  If there is ambiguity in the value of $k$, we use the nomenclature $\PGL(k)$-weak scaffold.

Let $(Q,\z)$ be a weak scaffold for $\tropP_\bullet$.   Define 
$$\bp_\bullet := \tropP_\bullet^{-c(\z)/k}, \qquad c(\z) := (c(z_1),c(z_2),\ldots,c(z_n)) \in \{0,1,2\ldots,k-1\}^n,$$
identifying $\Z/k\Z$ with $\{0,1,2\ldots,k-1\}$.
We call $\bp_\bullet$ the \emph{modified tropical Pl\"ucker vector} of $(Q,\z)$.

	\begin{proposition}\label{prop:scaffoldint}
		Let $(Q,\z)$ be a weak scaffold for $\tropP_\bullet$.  Then we have $\bp_\bullet \in \Dr(k,n)(\Z)$.  
	\end{proposition}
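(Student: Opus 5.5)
Two things need checking: $\bp_\bullet$ lies in the Dressian, and all of its coordinates are integers. The first is immediate. $\tropP_\bullet \in \Dr(k,n)$ by \cref{def:normalmodel}, and $\bp_\bullet = \tropP_\bullet^{-c(\z)/k}$ is obtained from it by the lineality action with $\x = -c(\z)/k$. The lineality action preserves tropical Pl\"ucker vectors, since in each three-term relation \eqref{eq:tropPlucker} every term shifts by the same amount $\sum_{i \in Sabcd} x_i$. So all the work is in integrality.

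For integrality I will write out the coordinates. By \cref{def:model} and \cref{defn:FL},
$$
\bp_I = \tropP_I + \frac{1}{k}\sum_{i\in I} c(z_i) = \frac{1}{k}\Big(\sum_{i \in I} c(z_i) - \sum_{i\in I}\dist(x,z_i)\Big),
$$
where $x$ is any distance minimizer for $I$. It therefore suffices to prove the congruence $\sum_{i\in I}\dist(x,z_i) \equiv \sum_{i\in I} c(z_i) \pmod k$. By \cref{lem:colordist}, $\dist(x,z_i) \equiv c(z_i) - c(x) \pmod k$ for each $i$. Summing over the $k$ elements of $I$ gives
$$
\sum_{i\in I}\dist(x,z_i) \equiv \sum_{i\in I} c(z_i) - k\,c(x) \equiv \sum_{i\in I} c(z_i) \pmod k.
$$
Here it is essential that $|I| = k$ exactly: this is what makes the unknown color $c(x)$ of the minimizer drop out. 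Hence $k\,\bp_I \in k\Z$, so $\bp_I \in \Z$.

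The congruence requires the directed distances $\dist(x,z_i)$ to be finite. I would take this from the standing convention in \cref{sec:tropmodels} that directed distances in our graphs are finite. It is also forced here, because $\tropP_I = -\frac1k \Le_Q(I)$ is a real number. I would also note that identifying $\Z/k\Z$ with $\{0,\ldots,k-1\}$ is harmless: changing a representative of $c(z_i)$ shifts $\bp$ by an integer lineality vector. There is no real obstacle in this proof; the only point needing care is the cancellation of $k\,c(x)$, which depends on $I$ having exactly $k$ elements.
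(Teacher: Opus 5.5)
Your proof is correct and follows the paper's own argument: the paper also writes $\bp_I = -\frac{1}{k}\sum_{a}\bigl(\dist(v,z_{i_a}) - c(z_{i_a})\bigr)$ at a distance minimizer $v$ and concludes integrality from \cref{lem:colordist}. Your version simply spells out the cancellation of $k\,c(v)$ and the fact that the lineality action preserves the Dressian, both of which the paper leaves implicit.
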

	\begin{proof}
		Let $v$ be a distance minimizer for $I = \{i_1,i_2,\ldots,i_k\}$.  Then 
		$$
		\bp_I = - \frac{1}{k}\left( \dist(v,z_{i_1}) + \cdots + \dist(v,z_{i_k})- c(z_{i_1}) - \cdots - c(z_{i_k}) \right), 
		$$
		which belongs to $\Z$ by \cref{lem:colordist}.  
	\end{proof}

If $(Q,\z)$ is a weak scaffold, we define 
$$
\bb_v := \b_v - \frac{1}{k} c(\z).
$$

\begin{definition}\label{def:scaffold}
	A labeled graph $(Q,\z)$ is a \emph{(strong) scaffold} if it is a weak scaffold for $\tropP_\bullet$ and	the map 
	\begin{equation}\label{eq:embedding}
	\mu: V(Q) \longrightarrow \Z^n/\one, \qquad
	v \longmapsto -\bb_v \equiv -\bb_v + c(v) \one \mod \one
	\end{equation}
	defines an embedding of $Q$ into the one-skeleton of the standard simplicial complex structure on $L_\Z(\bp_\bullet)$.
\end{definition}
Let us spell out the conditions in \cref{def:scaffold}.  First note that $-\bb_v \in \Z^n/\one$ by \cref{lem:colordist}.  By \cref{prop:Mv}, we have that $M(\bp^{-\bb_v}_\bullet) = M_v$.  So by \cref{prop:Spe}, we have that $-\bb_v \in L_\Z(\bp_\bullet)$ if and only if $M_v$ is loopless.  The condition that the map is an embedding also requires that $\b_v \neq \b_w$ for all pairs of distinct vertices $v \neq w$.

By \cref{lem:adjcolor}, we have that $\b_w - \b_v = e_S$ for some subset $S$ whenever $v, w$ are adjacent.  If $\b_v \neq \b_w$ then $S$ must be proper and nonempty, and it follows that the edges of $Q$ are sent to edges of the standard simplicial complex structure on $L_\Z(\bp_\bullet)$.  We thus have the following.

\begin{proposition}\label{prop:embedding}
	Suppose that $(Q,\z)$ is a weak scaffold.  Then for $(Q,\z)$ to be a (strong) scaffold it suffices to check that 
	\begin{enumerate}
		\item for any vertex $v \in V(Q)$, the matroid $M_v$ is loopless, that is, 
		$\bigcup_{I \in M_v} I = [n]$;
		\item the map $v \mapsto \mu(v) \in \R^n/\one$ is injective on $V(Q)$.	
		\end{enumerate} 
\end{proposition}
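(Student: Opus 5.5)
The plan is to unpack \cref{def:scaffold} into three requirements and show that two of them are automatic for a weak scaffold. The requirements on $\mu$ are: (a) each $\mu(v)$ lies in $L_\Z(\bp_\bullet)$; (b) $\mu$ is injective on vertices; (c) each edge $v \to w$ of $Q$ maps to an edge of the standard simplicial complex structure. Condition (2) of the statement is exactly (b). So it remains to show that (1) implies (a), and that (c) follows from (b) together with the weak scaffold hypothesis.

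For (a), first check integrality. By \cref{lem:colordist}, $\dist(v,z_i) \equiv c(z_i) - c(v) \pmod k$. Hence $k\bb_v \equiv -c(v)\one \pmod k$, and so $-\bb_v + c(v)\one \cdot \tfrac{1}{k}\cdot k$ is integral. More precisely, $-\bb_v$ differs from an integer vector by a multiple of $\one$, so $\mu(v) \in \Z^n/\one$ is well defined.

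Next, compute the matroid of this point:
$$M(\bp_\bullet^{-\bb_v}) = M(\tropP_\bullet^{-c(\z)/k - \b_v + c(\z)/k}) = M(\tropP_\bullet^{-\b_v}).$$
By \cref{prop:Mv}, this equals $M_v$, provided $M_v$ is nonempty. Nonemptiness is implied by hypothesis (1), since a loopless $M_v$ has bases covering $[n]$.

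By \cref{prop:Spe}, applied to $\bp_\bullet$, which is a tropical Pl\"ucker vector by \cref{prop:scaffoldint}, the point $-\bb_v$ lies in $L(\bp_\bullet)$ if and only if $M(\bp_\bullet^{-\bb_v})$ is loopless. By (1) this holds, which gives (a).

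For (c), take an edge $v \to w$. By \cref{lem:adjcolor}, $\b_w - \b_v \equiv e_S \pmod{\one}$ for some $S \subseteq [n]$. The same then holds for $\bb$, since $\bb_w - \bb_v = \b_w - \b_v$, and hence $\mu(v) - \mu(w) \equiv e_S$. Both $S = \emptyset$ and $S = [n]$ give $\mu(v) = \mu(w)$ modulo $\one$. This is excluded by injectivity (2), because $v \neq w$: $Q$ is loopless. Therefore $S$ is a nonempty proper subset, and $(\mu(v),\mu(w))$ is an edge of the standard simplicial complex structure on $L_\Z(\bp_\bullet)$, both endpoints lying in $L_\Z$ by (a).

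Combining (a), (b) and (c), $\mu$ is an injective graph map into the one-skeleton, that is, an embedding, and $(Q,\z)$ is a strong scaffold.

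The only delicate point is the bookkeeping of the $\mod \one$ shifts and the $\tfrac1k$ normalizations. One must confirm that replacing $\tropP_\bullet$ by $\bp_\bullet$ shifts the relevant matroid argument consistently, so that the matroid at $\mu(v)$ is still $M_v$. I expect this to be the main, though routine, obstacle. It reduces to the identity $\bp^{-\bb_v}_\bullet = \tropP^{-\b_v}_\bullet$ checked above.
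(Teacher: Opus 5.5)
Your proposal is correct and follows essentially the same argument as the paper (the discussion just before the proposition). In both, integrality comes from \cref{lem:colordist}; membership of $\mu(v)$ in $L_\Z(\bp_\bullet)$ comes from the identity $\bp^{-\bb_v}_\bullet = \tropP^{-\b_v}_\bullet$ together with \cref{prop:Mv} and \cref{prop:Spe}; and \cref{lem:adjcolor} plus injectivity forces $S$ to be nonempty and proper. One cosmetic slip: the integral representative is $-\bb_v - \tfrac{c(v)}{k}\one \in \Z^n$, not the garbled expression in your text, though your next sentence ("$-\bb_v$ differs from an integer vector by a multiple of $\one$") is the correct conclusion.
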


Under this embedding, the matroid $M_v = M(\tropP^{-\b_v}_\bullet)$ is the matroid corresponding to the (relatively open) face of the matroid complex structure on $L(\tropP_\bullet)$ containing $-\b_v$.  

\begin{theorem}\label{thm:modelexists}
Suppose that $\tropP_\bullet \in \Trop \, X(k,n)(\Z)$.  Then $\tropP_\bullet$ has a weak scaffold.  In other words, every integral point of the tropical Grassmannian has a weak scaffold up to lineality.
\end{theorem}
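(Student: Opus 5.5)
We construct the weak scaffold inside the affine building of $\PGL(k)$ over the Puiseux series field $\cR$, following Le \cite{Le}.

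\textbf{Choosing the data.} Since $\tropP_\bullet$ is integral in $\Trop\,X(k,n)$, we pick an integral representative in $\Trop\,\Gr(k,n)$. We then pick $V \in \Gr^\circ(k,n)(\cR)$ whose tropical Pl\"ucker vector is $\tropP_\bullet$, modulo lineality. To do this, we first realize a rational point and then rescale the uniformizer so that the valuations become integral. The columns of a matrix representing $V$ give $n$ vectors $v_1,\ldots,v_n \in \cR^k$. Each $v_i$ spans a line, and that line defines a point at infinity of the building. The standard move is to replace it by a vertex: take $z_i$ to be the lattice class $[\O v_i + t^{N}\O^k]$ for $N \gg 0$.

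\textbf{Building the graph.} The vertices of the building are homothety classes of lattices $\Lambda \subset \cR^k$. There is a directed edge $\Lambda \to \Lambda'$ when $t\Lambda \subset \Lambda' \subset \Lambda$ and $\Lambda/\Lambda'$ is one-dimensional. With this convention the distance $\dist(\Lambda,\Lambda')$ is the length of $\Lambda/\Lambda''$, minimized over representatives $\Lambda'' \subseteq \Lambda$ of the class of $\Lambda'$. This is exactly the convention \eqref{eq:direct}: one step forward costs $1$, and a backward step costs $k-1$, because $t\Lambda \subset \Lambda'$ has colength $k-1$. The function $c(\Lambda) = \val\det\Lambda \bmod k$ is a coloring in the sense of \cref{def:color}.

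\textbf{The distance formula.} The key input from \cite{Le} is the identity
$$\min_{x} \sum_a \dist(x,z_{i_a}) = -k\,\val \det(v_{i_1},\ldots,v_{i_k}) + (\text{terms linear in the } i_a).$$
The minimizer is the lattice $\bigoplus_a \O v_{i_a}$, suitably normalized. Once this holds, the linear terms are absorbed by lineality, so $-\frac1k\Le = \tropP_\bullet$ up to lineality.

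\textbf{Reducing to a finite graph.} The building is infinite, so we restrict to a finite convex subcomplex $Q$. We take $Q$ to contain all the $z_i$, all the candidate minimizers $\bigoplus_{a}\O v_{i_a}$, and geodesics between these points, for instance the convex hull inside finitely many apartments. Then directed distances in $Q$ agree with those in the building. The minima over $V(Q)$ also agree with the minima over the whole building, because the global minimizers lie in $Q$. Adjacency costs are preserved, so the standing assumption after \eqref{eq:direct} holds.

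\textbf{Expected obstacle.} I expect the main difficulty to be the exact distance formula and the claim that $\bigoplus_a \O v_{i_a}$ is a global minimizer. For this I would invoke \cite{Le} directly. The proof reduces to an apartment containing all the relevant lattices, where it becomes a computation with elementary divisors via Smith normal form. One also has to check that truncating the points at infinity at depth $N$ only adds a constant multiple of $N$ to each sum, and that this constant is independent of $I$ once $N$ is large. That constant is then removed as a multiple of $\one$.
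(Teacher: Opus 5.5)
Your plan follows the paper's proof. You realize $\tropP_\bullet$ by vectors $v_1,\ldots,v_n$ and replace each column direction by a lattice far out along it; your $\O v_i+t^N\O^k$ plays the role of the paper's $\Lambda_i^a$, with basis $t^{-a}x_i,w_2^{(i)},\ldots,w_k^{(i)}$. You then identify the Fermat--Le minimum with $-k\,\val\det(v_{i_1},\ldots,v_{i_k})$ up to lineality, keep a finite subgraph of $G(\B)$ containing the targets, one minimizer per $I$ and geodesics between them, and color by $\val\det \bmod k$. Two steps need repair. Repairing the second shows that your explicit minimizer lets you avoid Le's theorems altogether.

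\textbf{The realization step fails as written.} Suppose the entries of $V$ lie in $\C((t^{1/m}))$ and you take $s=t^{1/m}$ as uniformizer. Then every valuation is multiplied by $m$, so the building of $\PGL(k)$ over $\C((s))$ produces a weak scaffold for $m\tropP_\bullet$, not for $\tropP_\bullet$. A substitution $t\mapsto t^{c}$ does not help either, since it rescales the field of definition and the tropical vector together. Over $\cR$ itself there is no simplicial building, because the value group is $\mathbb{Q}$. So your argument only proves the statement for some multiple $m\tropP_\bullet$. What you need is a point over a field with value group $\Z$ whose Pl\"ucker valuations are exactly $\tropP_\bullet$. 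The paper's proof simply starts from such an $X\in\Gr(k,n)(\C((t)))$.

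\textbf{The apartment reduction is not available.} An arbitrary competitor and the $k$ targets generally lie in no common apartment; already in a tree, three vertices spanning a tripod do not. So a Smith normal form computation cannot give the lower bound. The lower bound is, however, elementary. For any lattice $\Lambda$, let $m_a$ be minimal with $t^{m_a}\Lambda_a\subseteq\Lambda$, so that $\dist([\Lambda],[\Lambda_a])=km_a+\val\det\Lambda_a-\val\det\Lambda$. Since $t^{m_a}w_a\in\Lambda$ for every $w_a\in\Lambda_a$, you get $\sum_a m_a\ge\val\det\Lambda-\tilde\Delta(\Lambda_1,\ldots,\Lambda_k)$, hence
\[
\sum_{a=1}^k\dist([\Lambda],[\Lambda_a])\;\ge\;\sum_{a=1}^k\val\det\Lambda_a-k\,\tilde\Delta(\Lambda_1,\ldots,\Lambda_k)\qquad\text{for every }\Lambda .
\]
Now take $\Lambda_i=\O t^{-N}v_i+\O^k$ for $i\in[n]$. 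For $I=\{i_1,\ldots,i_k\}$ put $\Lambda^\ast_I=\bigoplus_a\O t^{-N}v_{i_a}$, with $N$ large enough that $\O^k\subseteq\Lambda^\ast_I$ for every $I$.

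\begin{itemize}
\item Since $\Lambda_{i_a}\subseteq\Lambda^\ast_I$, we get $\dist([\Lambda^\ast_I],[\Lambda_{i_a}])\le\val\det\Lambda_{i_a}-\val\det\Lambda^\ast_I$.
\item We have $\tilde\Delta(\Lambda_{i_1},\ldots,\Lambda_{i_k})=\val\det\Lambda^\ast_I=-kN+\val\det(v_{i_1},\ldots,v_{i_k})$. It is attained by $w_a=t^{-N}v_{i_a}$, and no choice of vectors inside $\Lambda^\ast_I$ has smaller determinant valuation.
\end{itemize}

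So $\Lambda^\ast_I$, which is your proposed minimizer, attains the bound. The building minimum is therefore $\sum_{i\in I}(\val\det\Lambda_i+kN)-k\tropP_I$, which is $-k\tropP_I$ up to lineality.

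This makes your route self-contained apart from the realization. The paper instead takes from Le (\cref{thm:Le1,thm:Le2}) both the value of the minimum and the existence of a minimizer, and only computes $\tilde\Delta$ for the truncated lattices.
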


We shall prove \cref{thm:modelexists} in \cref{sec:buildings}.  The weak scaffolds that we construct all embed into the affine building $\B$ of $\PGL(k)$.  We expect that our construction for \cref{thm:modelexists} actually produces a strong scaffold for every point in $\Trop \, X(k,n)(\Z)$.  However, we do not know how to give an explicit combinatorial description of the scaffolds we construct in \cref{thm:modelexists}.  The main result of this paper is to construct explicit (strong) scaffolds for all points in $\Trop_{>0} X(3,n)(\Z)$.

\begin{remark}\label{rem:canvas}
We expect scaffolds to be one-dimensional skeleta of a $(k-1)$-dimensional space tiled with simplices.  For $k = 3$, scaffolds would be the one-dimensional skeleton of a surface.  On the other hand, for any $k$, we expect that a positive scaffold $(Q,\z)$ contains natural two-dimensional surfaces $(C,\z)$ sitting inside it from which the entire structure of $(Q,\z)$ can be recovered.  We call these two-dimensional surfaces \emph{canvases}.  
\end{remark}

\subsection{Decomposition of the Fermat-Le distance sum}\label{subsec:decomposition}
Let $(Q,\z)$ be a (strong) scaffold.  The Fermat-Le distance sum can be compared with \cref{prop:support}.  On the one hand, we have by definition
$$
\tropP_I = -\frac{1}{k}\Le_Q(I) = -\min_{v \in V(Q)} \sum_{i \in I} (\b_v)_i.
$$
On the other hand, \cref{prop:support} states that
$$
\tropP_I = - \min_{\y \in {\rm Vert}(B(\tropP_\bullet))} \sum_{i \in I} -{\tilde y}_i.
$$
The embedding of \cref{def:scaffold} show that these formulae are very similar: they are both taken as minimums over certain points in $L(\tropP_\bullet)$.  Neither of the sets contain the other: the vertices $v \in V(Q)$ are not all sent to vertices of the bounded complex $B(\tropP_\bullet)$; conversely, not all vertices of the bounded complex need be in the image of $V(Q)$.

In \cref{prop:support} we can of course take the minimum over all points of $L(\tropP_\bullet)$ instead of just ${\rm Vert}(B(\tropP_\bullet))$.  We may then view the Fermat-Le distance sum as the composition of the embedding \eqref{eq:embedding} together with the optimization formula \cref{prop:support}.

\subsection{Tree scaffolds for $k = 2$}
Suppose that $k = 2$.  Then in \eqref{eq:direct} we get $\delta(v,w) = 1= \delta(w,v)$ for adjacent vertices $v,w$.  So we may view $Q$ as an undirected, connected graph.  The Fermat-Le distance sum reduces to $\Sigma_Q(i,j) = \dist(z_i,z_j)$.  The condition for a coloring $c(\cdot)$ to exist is that $Q$ is bipartite. 

\begin{proposition}
Assume that $n \geq 3$.
Suppose that $Q$ is a tree with at least one edge and let $\z = (z_1,\ldots,z_n)$ be vertices of $Q$ (repetitions are allowed), so that all leaves of $Q$ are labeled by at least two elements of $\z$.  Then $(Q,\z)$ is a scaffold and the embedding \eqref{eq:embedding} identifies $Q$ with the bounded complex $B(\bp_\bullet)$.  

If $Q$ is planar and $z_1,z_2,\ldots,z_n$ are encountered in order traversing the boundary of $Q$ counterclockwise then $\tropP_\bullet(Q,\z) \in \Trop_{>0}\Gr(2,n)$.
\end{proposition}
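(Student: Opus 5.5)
We need to show four things: $(Q,\z)$ is a weak scaffold, $M_v$ is loopless for every $v$, the map $\mu$ is injective, and the image of $Q$ is exactly $B(\bp_\bullet)$, with positivity in the planar case. For $k=2$ we have $\Le_Q(\{i,j\}) = \dist(z_i,z_j)$, because by the triangle inequality the minimum over $x$ is attained at any $x$ on the tree path $[z_i,z_j]$. So $\tropP_{ij} = -\tfrac12 \dist(z_i,z_j)$. A tree metric satisfies the four-point condition: for $a,b,c,d$, two of the three sums $\dist(z_a,z_b)+\dist(z_c,z_d)$, etc., are equal and at least the third. Negating and scaling by $-\tfrac12$ turns this into the tropical Pl\"ucker relation \eqref{eq:tropPlucker}, so $\tropP_\bullet \in \Dr(2,n)$. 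Trees are bipartite, so a coloring exists and $(Q,\z)$ is a weak scaffold.

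For looplessness, I will first characterize the matroid of a vertex: $M_v = \{ij : v \in [z_i,z_j]\}$. For a vertex $v$, removing it splits $Q$ into branches. Since every leaf carries at least two labels, each branch contains a labeled leaf. If $v$ is not a leaf, pick labels in two different branches to get $v\in[z_i,z_j]$. Then any label $m$ lies on the path from $v$ to some label in a different branch, or at $v$ itself, so $m$ is covered. If $v$ is a leaf, it carries two labels $i\neq i'$, and the pairs $\{i,m\}$ for $m\neq i$ together with $\{i',i\}$ cover $[n]$. Here $n\ge3$ and the presence of at least one edge ensure distinct labels exist. Hence $M_v$ is loopless.

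For injectivity, take $v\neq w$ adjacent along a path. There is a leaf beyond $w$ on the side away from $v$, and its label $z$ has $\dist(v,z)-\dist(w,z)=1$. There is also a label beyond $v$ with difference $-1$. So $\b_v-\b_w$ is not a multiple of $\one$; this argument works for any $v\neq w$ by extending the geodesic to labeled leaves at both ends. By \cref{prop:embedding}, $(Q,\z)$ is a scaffold.

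To identify the image with $B(\bp_\bullet)$, I will use \cref{prop:Spe}: a point lies in a bounded face iff its matroid is coloopless. I will check that $M_v$ has no coloops, using the same branch argument: every label $m$ is avoided by some basis. This places each vertex image in $B$. Edges map to segments along which the matroid is that of the edge, namely pairs separated by the edge, which is also coloopless, so the edges lie in $B$ too. Conversely, $B(\bp_\bullet)$ is the tree realizing the metric (the classical tight-span / tree-metric statement for $\Trop\,\Gr(2,n)$). A compatible $\tfrac12$-scaled metric tree with the same leaf distances is unique up to degree-2 subdivision. Our integral embedding then fills $B$, since $B$ is connected and contains the image of every geodesic between labels. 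The main obstacle is making this converse rigorous without simply citing tree-metric uniqueness. I would handle it by noting that any vertex of $B$ corresponds to a connected top-dimensional matroid, which in rank 2 is determined by a partition of $[n]$ into at least 3 parallel classes, and matching this partition with the branches at a vertex of $Q$.

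For positivity, if the labels appear in counterclockwise boundary order, then for $a<b<c<d$ the paths $[z_a,z_c]$ and $[z_b,z_d]$ cross. Planarity gives $\dist_{ac}+\dist_{bd}=\max(\dist_{ab}+\dist_{cd},\dist_{ad}+\dist_{bc})$, which after negation is exactly \eqref{eq:postropPlucker}.
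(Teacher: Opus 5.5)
Your proof is correct. Its core matches the paper's: you identify $M_v$ as the set of pairs whose tree path passes through $v$, show it is loopless and (because leaves carry two labels and $n\ge 3$) coloopless, and apply \cref{prop:Spe}.

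The two routes differ in how they handle the global structure. The paper calls membership in $\Dr(2,n)$ straightforward. It then describes the whole subdivision $\Delta(\tropP_\bullet)$ as the slicing of $\Delta(2,n)$ by the compatible split hyperplanes $H_e$, which gives $M_v=\{I : |I\cap A_i|\ge 1\}$. From this one reads off which vertices of $Q$ are genuine vertices of $B(\bp_\bullet)$, and which (unlabeled, degree two) land inside an edge. The paper never explicitly checks injectivity, the reverse inclusion of $B(\bp_\bullet)$ in the image, or the positivity clause.

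Your version supplies all three. You use the four-point condition for the Dressian, and get injectivity from labeled leaves at both ends of an extended geodesic. Along edge segments you identify the matroid $U_{1,A}\oplus U_{1,B}$. For positivity you use that $[z_a,z_c]$ and $[z_b,z_d]$ meet, so $\dist(z_a,z_c)+\dist(z_b,z_d)$ dominates both other sums and therefore equals the doubly attained maximum.

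One sentence in your converse points the wrong way. That $B(\bp_\bullet)$ is connected and contains the images of geodesics between labels only re-proves that the image of $Q$ lies in $B(\bp_\bullet)$. What you need is that every vertex of $B(\bp_\bullet)$ is the image of a vertex of $Q$, or equivalently that every leaf of the tree $B(\bp_\bullet)$ is a label point. Then the image of $Q$, being a connected subset of a tree that contains all of its vertices, must be all of it.

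Your matching idea gives this directly. Work in $L(\tropP_\bullet)$, which differs from $L(\bp_\bullet)$ by a lineality translation. Suppose a vertex $\y$ has parallel classes $A_1,\dots,A_p$ with $p\ge 3$.
\begin{itemize}
\item Pick $i\in A_1$, $j\in A_2$, $k\in A_3$, and let $v$ be the median of $z_i,z_j,z_k$.
\item The equalities $\tilde y_a+\tilde y_b=\tropP_{ab}$ on the bases $\{i,j\},\{i,k\},\{j,k\}$ give $\tilde y_i=-(\b_v)_i$, and likewise for $j$ and $k$.
\item For any other $m$, the two equations from bases $\{m,\cdot\}$ with two of $i,j,k$ force the projection of $z_m$ onto the corresponding path to be $v$. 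Hence $\tilde y_m=-(\b_v)_m$.
\end{itemize}
So $\ty=-\b_v$, and $\y$ is the image of $v$.
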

\begin{proof}
It is straightforward to check directly that $\tropP_\bullet(Q,\z) \in \Dr(2,n)$.  Since $Q$ is a tree, it is bipartite, and a coloring $c(\cdot) \in \Z/2\Z$ can be chosen.  For a vertex $v$, we have
$$
M_v = \{(i,j) \mid \text{ the path from $i$ to $j$ passes through $v$ }\}.
$$
It is not difficult to see directly that $M_v$ is a loopless matroid.  Given an edge $e = (v,w)$ of $Q$ let $A \subset [n]$ (resp. $B \subset [n]$) denote the labeled vertices on the side of $e$ that contains $v$ (resp. contains $w$).  By the assumption that leaves are labeled, $A$ and $B$ are nonempty and we have $A \sqcup B = [n]$. The matroid decomposition $\Delta(\tropP_\bullet)$ is obtained by slicing the hypersimplex $\Delta(2,n)$ by the hyperplanes
$$
H_e = \{\sum_{a \in A} x_a = \sum_{b \in B} x_b = 1\}
$$
as $e$ varies over the edges of $Q$.  If $v$ is incident to edges $(v,w_1), (v,w_2),\ldots, (v,w_d)$ with corresponding subsets $A_i,B_i$, then the matroid $M_v$ is given by 
$$
M_v = \{I \mid |I \cap A_i| \geq 1 \text{ for } i = 1,2,\ldots,d\}.
$$
The assumption that leaves of $Q$ are labeled by at least two elements of $\z$ implies that $M_v$ is always coloopless.  It follows from \cref{prop:Spe} that \eqref{eq:embedding} sends $V(Q)$ to points in $B(\bp_\bullet)$.
\end{proof}

\begin{example}
Consider the labeled tree $(Q,\z)$:
$$
\begin{tikzpicture}
\draw (0,0) -- (-1,0) -- (-2,0);
\draw (0,0) -- (60:1);
\draw (0,0) -- (-60:1);
\fill[black!60] (0,0) circle (1pt);
\fill[black!60] (-1,0) circle (1pt);
\fill[black!60] (-2,0) circle (1pt);
\fill[black!60] (60:1) circle (1pt);
\fill[black!60] (-60:1) circle (1pt);
\node at (-2.2,0.2) {$1$};
\node at (-2.2,-0.2) {$2$};
\node at (-0.1,-0.2) {$3$};
\node at ($(0.2,-0.2)+(-60:1)$) {$4$};
\node at ($(0.2,0.2)+(-60:1)$) {$5$};
\node at ($(0.2,-0.2)+(60:1)$) {$6$};
\node at ($(0.2,0.2)+(60:1)$) {$7$};
\end{tikzpicture}
$$
The matroids are 
\begin{align*}
M_{z_1} &= \{I \mid |I \cap [1,2]| \geq 1\}, \qquad M_v = \{(i,j) \mid i \in [1,2], j \in [3,7]\}, \\
M_{z_3} &= \binom{[7]}{2} \setminus \{12,45,67\}, \qquad M_{z_4} = \{I \mid |I \cap [4,5]| \geq 1\}, \qquad M_{z_6} = \{I \mid |I \cap [6,7]| \geq 1\},
\end{align*}
where $v$ is the unique unlabeled vertex.  All of these matroids are connected except for $M_v$, which corresponds to a codimension one matroid polytope in $\Delta(\tropP_\bullet)$.  
We have
$$
\b_{z_1} = \frac{1}{2}(0,0,2,3,3,3,3), \qquad \b_{v} = \frac{1}{2}(1,1,1,2,2,2,2), \qquad \b_{z_3} = \frac{1}{2}(2,2,0,1,1,1,1),
$$
so that $\b_v$ is on the open line segment $(\b_{z_1},\b_{z_3})$.  In $B(\tropP_\bullet)$ (or $B(\bp_\bullet)$) the face (in the matroid complex structure) corresponding to the matroid $M_v$ is exactly the image of this line segment under \eqref{eq:embedding}.  The subdivision $\Delta(\tropP_\bullet)$ has exactly four top-dimensional pieces, corresponding to the four matroids $M_{z_1},M_{z_3},M_{z_4},M_{z_6}$.
\end{example}

	\section{CAT(0) planar graphs}\label{sec:definitions}
	Henceforth we work with $k = 3$, and $n \geq k$.
	
	\subsection{Main definition}
	We introduce the main object of this work.  
		
	\begin{definition}\label{def:CAT0-planar}
		A \defn{CAT(0) planar graph} $Q$ is a nonempty connected directed planar graph with all interior faces being oriented triangles, such that
		\begin{enumerate}
			\item All interior vertices have degree at least $6$, and
			\item All edges and vertices are on the perimeter of some face.
		\end{enumerate}
	\end{definition}
	
	CAT(0) planar graphs are obtained from the one-skeleta of diskoids \cite{FKK} by equipping the edges with directions; see \cref{part:buildings}.

	The boundary $\partial Q$, when traversed counterclockwise is a closed curve in the plane.  A vertex $v \in V(Q)$ is called a \emph{cut vertex} if it is a self-intersection point of this curve.  An edge $e \in E(Q)$ is called a \emph{cut edge} if it is traversed twice by the curve, necessarily once in each direction.  Often we will restrict our attention to CAT(0) planar graphs without cut edges.  However, when cut edges do exist, we use the following convention.
	
	\begin{convention}
		If $v \to u$ is a cut edge of $Q$, then $\delta(v,u) = 1$ and $\delta(u,v) = 2$.  In other words, when computing directed distance we can use the cut edge backwards where it has distance 2.
	\end{convention}
	
	Thus, in all cases, the directed distance $\dist(v,u)$ is the minimum directed distance of a path from $v$ to $u$ where traversing an edge according to the orientation (resp. against the orientation) contributes $1$ (resp. $2$), agreeing with \eqref{eq:direct}.
	
	Since a CAT(0) planar graph $Q$ is assumed to be connected, it has a finite-valued directed distance function $\dist(\cdot,\cdot)$.  For vertices $v, w \in V(Q)$, the \emph{combinatorial distance} $\ell(v,w)$ is the minimal edge count in any path, ignoring directions of edges.

	\begin{definition}
		A \defn{labeled CAT(0) planar graph} is a pair $(Q,\z)$ consisting of a CAT(0) planar graph $Q$ and a collection $\z = 	(z_1,z_2,\ldots,z_n) \in V(Q)^n$ of vertices.  If the vertices $\z$ all lie on the boundary $\partial Q$, and are encountered in increasing order as we traverse the boundary counterclockwise, then we call $(Q,\z)$ a CCW-labeled CAT(0) planar graph.
	\end{definition}
	
	The Fermat-Le distance function is well-defined for a labeled CAT(0) planar graph.
	
	\begin{definition}\label{def:simple}
		A CAT(0) planar graph $Q$ is called \emph{simple} if the boundary of $Q$ is a simple closed counterclockwise oriented curve.
	\end{definition}	
	
	We allow a single vertex to be a simple CAT(0) planar graph.  However, a single edge is not.  The graph $Q$ in \cref{fig:Q1} is simple.  The CAT(0) planar graph in \cref{fig:notsimple} is not simple:
	
	\begin{figure}[!h]
			$$
		\begin{tikzpicture}
			\tikzset{->-/.style={decoration={markings, mark=at position #1 with {\arrow{stealth}}},postaction={decorate}}, ->-/.default=0.5}
			\draw[->-] (0:1) -- (45:1);
			\draw[->-] (90:1) -- (45:1);
			\draw[->-] (90:1) -- (135:1);
			\draw[->-] (180:1) -- (135:1);
			\draw[->-] (180:1) -- (225:1);
			\draw[->-] (270:1) -- (225:1);
			\draw[->-] (270:1) -- (315:1);
			\draw[->-] (0:1) -- (315:1);
			\draw[->-] (0,0) -- (0:1);
			\draw[->-] (45:1)--(0,0);
			\draw[->-] (0,0) -- (90:1);
			\draw[->-] (135:1)--(0,0);
			\draw[->-] (0,0) -- (180:1);
			\draw[->-] (225:1)--(0,0);
			\draw[->-] (0,0) -- (270:1);
			\draw[->-] (315:1)--(0,0);
			\draw[->-] (2,0) -- (1,0);
			\draw[->-] (2,0) -- (3,0);
			\draw[->-] (3,0) -- ($(2,0)+(60:1)$);
			\draw[->-] ($(2,0)+(60:1)$)--(2,0);
			\draw[->-] (2,0) -- ($(2,0)+(240:1)$);
			\draw[->-] ($(2,0)+(240:1)$)-- ($(2,0)+(300:1)$);
			\draw[->-] ($(2,0)+(300:1)$)-- (2,0);
		\end{tikzpicture}
		$$
		\caption{A non-simple CAT(0) planar graph.}
		\label{fig:notsimple}
	\end{figure}
	
	\begin{definition}\label{def:normalvertex}
		Let $(Q, \z)$ be a labeled CAT(0) planar graph.  We call a vertex $v \in V(Q)$ \defn{normal} if $v$ is a distance minimizer for some triple $\{z_i,z_j,z_k\}$ of labeled vertices.  
	\end{definition}

	\begin{figure}
	\begin{center}
	$$
	\begin{tikzpicture}
	\tikzset{->-/.style={decoration={markings, mark=at position #1 with {\arrow{stealth}}},postaction={decorate}}, ->-/.default=0.5}
	\coordinate (z4) at (0,0);
	\coordinate (y) at (1,0);
	\coordinate (z5) at (2,0);
	\coordinate (z6) at (3,0);
	\coordinate (z3) at (60:1);
	\coordinate (v) at ($(z3) + (1,0)$);
	\coordinate (u) at ($(v) + (1,0)$);
	\coordinate (x) at (60:2);
	\coordinate (z1) at ($(x) + (1,0)$);
	\coordinate (z2) at (60:3);
	\foreach \v in {z1,z2,z3,z4,z5,z6,v,u,x,y} {
				\fill[black!60] (\v) circle (1.3pt);
			}
	\draw[->-] (z1) -- (z2);
	\draw[->-] (z2) -- (x);
	\draw[->-] (x) -- (z3);
	\draw[->-] (z3) -- (z4);
	\draw[->-] (z4) -- (y);
	\draw[->-] (y) -- (z5);
	\draw[->-] (z5) -- (z6);
	\draw[->-] (z6) -- (u);
	\draw[->-] (u) -- (z1);
	\draw[->-] (x) -- (z1);
	\draw[->-] (z1) -- (v);
	\draw[->-] (v) -- (x);
	\draw[->-] (v) -- (u);
	\draw[->-] (v) -- (y);
	\draw[->-] (z3) -- (v);
	\draw[->-] (y) -- (z3);
	\draw[->-] (z5) -- (v);
	\draw[->-] (u) -- (z5);
	\node at ($(z1)+(0.2,0)$) {\small $1$};
	\node at ($(z2)+(0,0.2)$) {\small $2$};
	\node at ($(z3)+(-0.2,0)$) {\small $3$};
	\node at ($(z4)+(-0.1,-0.2)$) {\small $4$};
	\node at ($(z5)+(0,-0.2)$) {\small $5$};
	\node at ($(z6)+(0.1,-0.2)$) {\small  $6$};
	\node at ($(x)+(-0.2,-0)$) {\small  $x$};
	\node at ($(v)+(0.17,0.1)$) {\small  $v$};
	\node at ($(u)+(0.2,-0)$) {\small  $u$};
	\node at ($(y)+(-0,-0.2)$) {\small  $y$};
	
	\begin{scope}[shift={(6,0)}]
	\coordinate (z4) at (0,0);
	\coordinate (y) at (1,0);
	\coordinate (z5) at (2,0);
	\coordinate (z6) at (3,0);
	\coordinate (z3) at (60:1);
	\coordinate (v) at ($(z3) + (1,0)$);
	\coordinate (u) at ($(v) + (1,0)$);
	\coordinate (x) at (60:2);
	\coordinate (z1) at ($(x) + (1,0)$);
	\coordinate (z2) at (60:3);
\foreach \v in {z1,z2,z3,z4,z5,z6,v,u,x,y} {
				\fill[black!60] (\v) circle (1.3pt);
			}
	\draw[->-] (z1) -- (z2);
	\draw[->-] (z2) -- (x);
	\draw[->-] (x) -- (z3);
	\draw[->-] (z3) -- (z4);
	\draw[->-] (z4) -- (y);
	\draw[->-] (y) -- (z5);
	\draw[->-] (z5) -- (z6);
	\draw[->-] (z6) -- (u);
	\draw[->-] (u) -- (z1);
	\draw[->-] (x) -- (z1);
	\draw[->-] (z1) -- (v);
	\draw[->-] (v) -- (x);
	\draw[->-] (v) -- (u);
	\draw[->-] (v) -- (y);
	\draw[->-] (z3) -- (v);
	\draw[->-] (y) -- (z3);
	\draw[->-] (z5) -- (v);
	\draw[->-] (u) -- (z5);
	\draw[thick,red] (v)--(z1);
	\draw[thick,red] (v)--(x)--(z2);
	\draw[thick,red] (v)--(y)--(z4);
	\node at ($(z1)+(0.2,0)$) {\small $1$};
	\node at ($(z2)+(0,0.2)$) {\small $2$};
	\node at ($(z4)+(-0.1,-0.2)$) {\small $4$};
	\node at ($(v)+(0.17,0.1)$) {\small  $v$};
	\end{scope}
	\begin{scope}[shift={(12,0)}]
	\coordinate (z4) at (0,0);
	\coordinate (y) at (1,0);
	\coordinate (z5) at (2,0);
	\coordinate (z6) at (3,0);
	\coordinate (z3) at (60:1);
	\coordinate (v) at ($(z3) + (1,0)$);
	\coordinate (u) at ($(v) + (1,0)$);
	\coordinate (x) at (60:2);
	\coordinate (z1) at ($(x) + (1,0)$);
	\coordinate (z2) at (60:3);
\foreach \v in {z1,z2,z3,z4,z5,z6,v,u,x,y} {
				\fill[black!60] (\v) circle (1.3pt);
			}
	\draw[->-] (z1) -- (z2);
	\draw[->-] (z2) -- (x);
	\draw[->-] (x) -- (z3);
	\draw[->-] (z3) -- (z4);
	\draw[->-] (z4) -- (y);
	\draw[->-] (y) -- (z5);
	\draw[->-] (z5) -- (z6);
	\draw[->-] (z6) -- (u);
	\draw[->-] (u) -- (z1);
	\draw[->-] (x) -- (z1);
	\draw[->-] (z1) -- (v);
	\draw[->-] (v) -- (x);
	\draw[->-] (v) -- (u);
	\draw[->-] (v) -- (y);
	\draw[->-] (z3) -- (v);
	\draw[->-] (y) -- (z3);
	\draw[->-] (z5) -- (v);
	\draw[->-] (u) -- (z5);
	\draw[thick,red] (x)--(z1);
	\draw[thick,red] (x)--(z2);
	\draw[thick,red] (x)--(z3)--(z4);
	\node at ($(z1)+(0.2,0)$) {\small $1$};
	\node at ($(z2)+(0,0.2)$) {\small $2$};
	\node at ($(z4)+(-0.1,-0.2)$) {\small $4$};	
	\node at ($(x)+(-0.2,-0)$) {\small  $x$};
	\end{scope}

	\end{tikzpicture}
	$$
	\caption{Left: A CCW-labeled CAT(0) planar graph $Q$.  The boundary vertices $\z$ are simply labeled $1,\ldots,6$.  Middle: We have $\dist(v,1) = 2$, $\dist(v,2)= 3$, and $\dist(v,4)=3$.  Right: We have $\dist(x,1) = 1$, $\dist(x,2) = 2$, $\dist(x,4) = 2$.  The Fermat-Le distance function is $\Sigma_Q(1,2,4) =5$.  The vertex $x$ is a distance-minimizer but $v$ is not.}
	\label{fig:Q1}
	\end{center}
	\end{figure}

	Let $Q$ be a CAT(0) planar graph with boundary oriented counterclockwise.  Note that this implies that every vertex of $Q$ has even degree and that every vertex is on the boundary of a triangular face of $Q$.  A vertex $v \in \partial Q$ is called \defn{acute} if it has degree $2$, and is a vertex of exactly one triangular face of $Q$.
	
	\begin{definition}\label{def:normal}
		Let $(Q,\z)$ be a CCW-labeled CAT(0) planar graph.  Then $(Q,\z)$ is \defn{normal} if the boundary is oriented counterclockwise and every acute vertex of $\partial Q$ is labeled by (at least) one of the $\z$.  
	\end{definition}
	
	We allow the graph $Q$ with a single vertex labeled with all the $z_i$ to be considered a normal CAT(0) planar graph.  The graph $Q$ in \cref{fig:Q1} is a normal CAT(0) planar graph.  Note that normal CAT(0) planar graphs do not have cut edges.
	
	\begin{definition}
	Let $(Q,\z)$ be a normal CAT(0) planar graph.  We call $(Q,\z)$ \defn{cyclic} if it contains two acute labeled vertices $z_i$ and $z_{i+1}$ such that every intermediate vertex on the boundary path from $z_i$ to $z_{i+1}$ has degree four.  If $(Q,\z)$ is not cyclic, we call it \defn{cyclic-less}.
	\end{definition}
	
	\subsection{Main result}\label{sec:main}
	Let $(Q,\z)$ be a CAT(0) planar graph.  Since $Q$ is tiled by oriented triangles it is easy to see that the vertices of $Q$ can be colored by $\{0,1,2\}$ so that if $v \to w$ then $c(w) \equiv c(v) + 1 \mod 3$, satisfying \cref{def:color}.  Since $Q$ is connected, this coloring is unique up to a global shift by an element of $\Z/3\Z$.  In any triangle $T$ of $Q$, the three vertices use each of the colors once.  We assume such a \emph{coloring} has been fixed.  
	
	We can now state our main theorem.
	
	\begin{theorem}\label{thm:main}
		Let $(Q, \z)$ be a labeled CAT(0) planar graph.
		\begin{enumerate}
			\item
			Suppose that $\z$ are arbitrary.  Then $(Q,\z)$ is a weak scaffold.  Thus $\tropP_\bullet(Q,\z) \in \Dr(3,n)$ satisfies the tropical Pl\"ucker relations \eqref{eq:tropPlucker}.
			\item
			Suppose that $(Q,\z)$ is CCW-labeled.  Then $\tropP_\bullet(Q,\z) \in \Trop_{>0} \Gr(3,n)$ satisfies the positive tropical Pl\"ucker relations \eqref{eq:postropPlucker}.
			\item
			Suppose that $(Q,\z)$ is normal.  Then $(Q,\z)$ is a scaffold for $\tropP_\bullet(Q,\z)$.  Every point in $\Trop_{>0} X(3,n)(\Z)$ has a unique scaffold that is a cyclic-less normal CAT(0) planar graph $(Q,\z)$.
		\end{enumerate}
	\end{theorem}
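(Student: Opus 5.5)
The proof splits along the three parts of \cref{thm:main}, and the core of all of them is a local description of distance minimizers. I would first develop the metric geometry of CAT(0) planar graphs. Using Gauss--Bonnet with interior degree $\geq 6$, show that combinatorial geodesics are unique up to flipping across flat strips. Next, describe $\dist(x,\cdot)$ locally: for adjacent $x\to y$ one has $\dist(x,z)-\dist(y,z)\in\{1,-2\}$, as in \cref{lem:adjcolor}. Then define a \emph{focal point} condition for a triple $z_a,z_b,z_c$ at $x$: the directions in which each $\dist(\cdot,z_a)$ decreases, read around the link of $x$, are spread out so that no neighbor decreases the sum. The key theorem is that this local condition characterizes global minimizers; that is, the function $x\mapsto\sum\dist(x,z_{i_a})$ has no non-global local minima. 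I would prove this via a convexity property of $\dist(\cdot,z)$ along geodesics, which the nonpositive curvature is meant to supply.

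For part (1), existence of a coloring is immediate from the oriented triangular tiling, so only the Pl\"ucker relations remain. For each $S=\{s\}$ and $a,b,c,d$, I would show that the minimum of the three sums $\Le(sab)+\Le(scd)$, etc., is attained twice. Take minimizers $x$ for $sab$ and $y$ for $scd$. Using the geodesic triangle/tree-like structure (a quadrilateral of geodesics from a focal configuration), build minimizers for the other pairings whose total is at most $\Le(sab)+\Le(scd)$. I would reduce this to a four-point condition on a ``tripod'' picture in the flat strips between the minimizers. For part (2), the same construction in the CCW case uses planarity: the crossing pairing $ac|bd$ has geodesics that must intersect, which gives the inequality $\Le(sac)+\Le(sbd)\geq$ the other two sums (remembering $\tropP=-\frac13\Le$). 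This yields \eqref{eq:postropPlucker}.

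For part (3), apply \cref{prop:embedding}. Looplessness of $M_v$ requires that every vertex of a normal graph is normal, i.e.\ a minimizer for some triple. For a given $v$ and a given label $i$, one must also choose the triple to contain $i$, which I would obtain by extending geodesics from $v$ to acute boundary vertices, all of which are labeled. Injectivity of $v\mapsto\b_v$ follows by separating $v\ne w$ with a label $z_i$ beyond $w$ along an extended geodesic from $v$ through $w$. Existence and uniqueness of the cyclic-less normal model require a bijection. In one direction, take the dual non-elliptic web and relate it to noncrossing tableaux via a generalized Tymoczko bijection. In the other, use the PPS/companion-paper bijection between cyclic-less noncrossing tableaux and $\Trop_{>0}X(3,n)(\Z)$. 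Finally, check that the composite agrees with $Q\mapsto\tropP_\bullet(Q,\z)$, for instance through the planar basis expansion read off strands.

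I expect the main obstacle to be the focal point theorem, i.e.\ that local minimizers are global. Everything in parts (1) and (2) rests on it. I would attack it first by proving that $\dist(\cdot,z)$ restricted to any combinatorial geodesic is convex in an appropriate piecewise sense, using the degree $\geq 6$ condition at interior vertices and treating boundary and cut vertices separately.
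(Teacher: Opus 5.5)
The existence and uniqueness half of (3) follows the paper's route: dual web, generalized Tymoczko bijection to noncrossing tableaux, the companion-paper bijection, and matching via the strand planar basis expansion. The problems are in the foundation for (1) and (2).

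\textbf{Your key lemma is false.} Your focal condition amounts to ``no neighbour of $x$ decreases $D(x)=\sum_a\delta(x,z_{i_a})$''. You claim such vertices are global minimizers, and they need not be.

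Let $c$ be an interior vertex of degree $8$ whose link is the cycle $a,v,a',x_1,y_1,x_2,y_2,x_3$, with $v\to c$ and $x_2\to c$. Glue on the stars of $v$ and $x_2$ so that both become interior of degree $6$, with links $a,c,a',y,b,y'$ and $y_1,c,y_2,p_1,u,p_2$. The triangle orientations are forced, and the result is a CAT(0) planar graph on $15$ vertices. For the boundary triple $(b,u,p_1)$ a direct computation gives
\[
D(v)=2+5+4=11,\qquad D(c)=D(b)=11,\qquad D(a)=D(a')=D(y)=D(y')=14 .
\]
So $v$ is a local minimum, yet $D(x_2)=5+2+1=8$.

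The mechanism is that $u$ and $p_1$ are \emph{parallel} at $v$ in the paper's sense. Both canonical geodesics leave $v$ along the forward edge $v\to c$, then turn along the same backward edge $x_2\to c$ at the degree-$8$ vertex. Hence $D$ stays flat from $v$ to $c$ and drops by $3$ only one step later.

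This is exactly why \cref{def:focal} adds to the SDR condition that no two targets be parallel, a non-local condition tracked along witness paths. It is also why \cref{thm:focal} is proved by propagating ``directed backwards'' $F$-sets along a geodesic to an actual minimizer (\cref{prop:focal-a}, \cref{lem:backwards}), not by convexity. No convexity statement strong enough to give local-to-global can hold: $D$ along the geodesic $v,c,x_2$ is $11,11,8$. Even a single $\delta(\cdot,z)$ drops by $1$ or $2$ per step, in an order that depends on the geodesic.

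\textbf{The Pl\"ucker step lacks an equality mechanism.} Since $\tropP=-\frac13\Sigma$, you need the \emph{maximum} of the three $\Sigma$-sums to be attained twice; your part (1) has the sign reversed. Building competitor vertices only gives upper bounds on $\Sigma$, i.e.\ inequalities between pairings. Starting from minimizers of an arbitrary pairing, you cannot show the other two pairings are at most it.

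What is needed is a certificate that two pairings are equal. The paper examines the parallel classes of $b,c,d,e$ at the shared vertex $a$ and follows the common $F$-set until a class splits. This yields either one vertex that is a minimizer for four triples forming two pairings, or two vertices $p,q$ satisfying \eqref{eq:dprop}. Recognizing these as minimizers requires the correct focal point theorem.

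Your crossing argument for (2) is a sound complement once (1) is known. Legs at minimizers for $sac$ and $sbd$ must meet, and one swap shows the crossing sum dominates some non-crossing sum; (1) then forces it to be the maximum. But it cannot replace (1).

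Similarly, in (3), looplessness of $M_v$ needs minimizer certificates for triples through each label; the paper gets these from strand strips, \cref{lem:ubound}, together with \cref{thm:focal}. Injectivity in the paper comes from separating strands. Your geodesic-extension argument would still need a proof that geodesics extend in both directions to labeled vertices.
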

	
	In particular, we shall see that (3) implies that for a normal $(Q,\z)$, every vertex is normal.
	
	We shall prove \cref{thm:main} in \cref{sec:mainproof} and \cref{sec:proof-main-3}.  


	\begin{example}\label{example: non CCW 37}
		Let $Q$ be the CAT(0) graph 
		shown on the left in \cref{fig: 37 nonpos}.  This is a hexagon with alternating boundary and a seventh labeled point in the interior.  We have
		$$\tropP_\bullet(Q,\z) \equiv e^{234} + e^{456} + e^{126} \in \Trop\, X(3,7)$$
		modulo lineality, an instance of \cref{thm:main}(1).  Recall that $e^I$ denotes the standard basis vector of $\R^{\binom{[n]}{k}}$. 
	\end{example}
		
	\begin{example} In \cref{example: non CCW 37}, there is no permutation of $z_1,\ldots,z_7$ that gives a point in $\Trop_{>0}X(3,7)$.  However, if we do not label $z_7$, as in the middle of \cref{fig: 37 nonpos}, we get the point
	\begin{equation}
	\label{eq:36}
	\tropP_\bullet(Q,\z) \equiv e^{234} + e^{456} + e^{126} \in \Trop_{>0}X(3,6),
	\end{equation}
	an instance of \cref{thm:main}(2).  The hexagon $(Q,(z_1,\ldots,z_6))$ is however not normal because the boundary is not oriented counterclockwise.
	\end{example}
	
	\begin{figure}
	\begin{center}
			\begin{tikzpicture}[scale=0.5]
			\tikzset{->-/.style={decoration={markings, mark=at position 0.55 with {\arrow{stealth}}},postaction={decorate}}, ->-/.default=0.55}
			
			\coordinate (1) at (-60:2);
			\coordinate (2) at (0:2);
			\coordinate (3) at (60:2);
			\coordinate (4) at (120:2);
			\coordinate (5) at (180:2);
			\coordinate (6) at (240:2);
			\coordinate (7) at (0,0);
			
			\draw[->-] (1) -- (2);
			\draw[->-] (3) -- (2);
			\draw[->-] (3) -- (4);
			\draw[->-] (5) -- (4);
			\draw[->-] (5) -- (6);
			\draw[->-] (1) -- (6);
			
			\draw[->-] (7) -- (1);
			\draw[->-] (2) -- (7);
			\draw[->-] (7) -- (3);
			\draw[->-] (4) -- (7);
			\draw[->-] (7) -- (5);
			\draw[->-] (6) -- (7);
			
			\foreach \v in {1,2,3,4,5,6,7} {
				\fill[black!60] (\v) circle (2.5pt);
			}
			
			\node at ($(1)+(0.3,-0.2)$) {\small $1$};
			\node at ($(2)+(0.25,0)$) {\small $2$};
			\node at ($(3)+(0.15,0.35)$) {\small $3$};
			\node at ($(4)+(-0.15,0.35)$) {\small $4$};
			\node at ($(5)+(-0.25,0)$) {\small $5$};
			\node at ($(6)+(-0.25,-0.2)$) {\small $6$};
			\node at ($(7)+(0.2,0.15)$) {\small $7$};
		\end{tikzpicture}
		\hspace{30pt}
		\begin{tikzpicture}[scale=0.5]
			\tikzset{->-/.style={decoration={markings, mark=at position 0.55 with {\arrow{stealth}}},postaction={decorate}}, ->-/.default=0.55}
			
			\coordinate (1) at (-60:2);
			\coordinate (2) at (0:2);
			\coordinate (3) at (60:2);
			\coordinate (4) at (120:2);
			\coordinate (5) at (180:2);
			\coordinate (6) at (240:2);
			\coordinate (7) at (0,0);
			
			\draw[->-] (1) -- (2);
			\draw[->-] (3) -- (2);
			\draw[->-] (3) -- (4);
			\draw[->-] (5) -- (4);
			\draw[->-] (5) -- (6);
			\draw[->-] (1) -- (6);
			
			\draw[->-] (7) -- (1);
			\draw[->-] (2) -- (7);
			\draw[->-] (7) -- (3);
			\draw[->-] (4) -- (7);
			\draw[->-] (7) -- (5);
			\draw[->-] (6) -- (7);
			
			\foreach \v in {1,2,3,4,5,6,7} {
				\fill[black!60] (\v) circle (2.5pt);
			}
			
			\node at ($(1)+(0.3,-0.2)$) {\small $1$};
			\node at ($(2)+(0.25,0)$) {\small $2$};
			\node at ($(3)+(0.15,0.35)$) {\small $3$};
			\node at ($(4)+(-0.15,0.35)$) {\small $4$};
			\node at ($(5)+(-0.25,0)$) {\small $5$};
			\node at ($(6)+(-0.25,-0.2)$) {\small $6$};
		\end{tikzpicture}
		\hspace{30pt}
				\begin{tikzpicture}[scale=0.5]
			\tikzset{->-/.style={decoration={markings, mark=at position 0.55 with {\arrow{stealth}}},postaction={decorate}}}
			
			\pgfmathsetmacro{\h}{1.732}
			
			\coordinate (1) at (3, 3*\h);
			
			\coordinate (y) at (4, 2*\h);
			\coordinate (2) at (2, 2*\h);
			
			\coordinate (6) at (5, \h);
			\coordinate (w) at (3, \h);
			\coordinate (z) at (1, \h);
			
			\coordinate (5) at (6, 0);
			\coordinate (x) at (4, 0);
			\coordinate (4) at (2, 0);
			\coordinate (3) at (0, 0);
			
			\draw[->-] (5) -- (6);
			\draw[->-] (x) -- (5);
			\draw[->-] (4) -- (x);
			\draw[->-] (3) -- (4);
			\draw[->-] (6) -- (y);
			\draw[->-] (y) -- (1);
			\draw[->-] (1) -- (2);
			\draw[->-] (2) -- (z);
			\draw[->-] (z) -- (3);
			
			\draw[->-] (6) -- (x);
			\draw[->-] (x) -- (w);
			\draw[->-] (w) -- (4);
			\draw[->-] (4) -- (z);
			\draw[->-] (z) -- (w);
			\draw[->-] (w) -- (6);
			\draw[->-] (w) -- (2);
			\draw[->-] (y) -- (w);
			\draw[->-] (2) -- (y);
			
			\foreach \v in {1,2,3,4,5,6,x,y,w,z} {
				\fill[black!60] (\v) circle (2.5pt);
			}
			
			\node at ($(1)+(0,0.35)$) {\small $1$};
			\node at ($(2)+(-0.3,0.1)$) {\small $2$};
			\node at ($(3)+(-0.25,-0.35)$) {\small $3$};
			\node at ($(4)+(0,-0.35)$) {\small $4$};
			\node at ($(5)+(0.1,-0.35)$) {\small $5$};
			\node at ($(6)+(0.3,0)$) {\small $6$};
			\node at ($(x)+(0,-0.35)$) {\small $x$};
			\node at ($(y)+(0.3,0.1)$) {\small $y$};
			\node at ($(w)+(-0.15,-0.25)$) {\small $w$};
			\node at ($(z)+(-0.3,0)$) {\small $z$};
			
		\end{tikzpicture}

		\caption{Left: a labeled CAT(0) planar graph $(Q,\z)$ that is neither CCW-labeled nor normal.  \\ Middle: the same graph with the label $z_7$ removed is a model for  the positive tropical Pl\"ucker vector $\tropP_\bullet \equiv e^{234} + e^{456} + e^{126}$. \\Right: the normal cyclic-less CAT(0) planar graph for the positive tropical Pl\"ucker vector $\tropP_\bullet \equiv e^{234} + e^{456} + e^{126}$. }
		\label{fig: 37 nonpos}
		\end{center}
	\end{figure}
	
	\begin{example}\label{example: normal model 36}
		Let $(Q,\z)$ be the CAT(0) graph depicted on the right in \cref{fig: 37 nonpos}.  Then $(Q,\z)$ is normal since all acute vertices are labeled, and it is a scaffold for the positive tropical Pl\"ucker vector \eqref{eq:36}.  This $(Q,\z)$ is the unique normal cyclic-less CAT(0) planar graph that is a scaffold for \eqref{eq:36}, an instance of \cref{thm:main}(3).
	\end{example}	
	
	\begin{example}
	The fan $\Trop_{>0}X(3,6)$ has four rays up to cyclic rotation.  In \cref{fig:examples36} we present representative scaffolds for these rays. 
		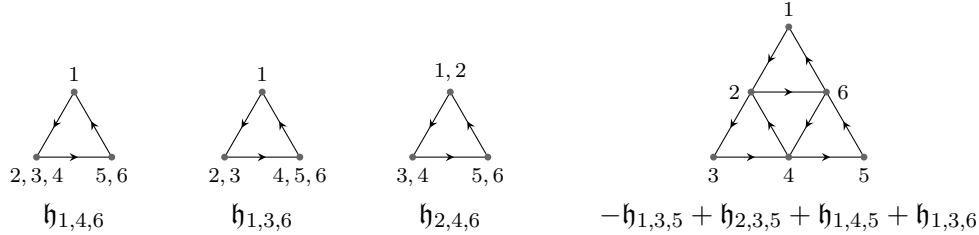
\begin{figure}[h!]
			\centering
			\begin{tikzpicture}
			\tikzset{->-/.style={decoration={markings, mark=at position 0.55 with {\arrow{stealth}}},postaction={decorate}}}
			\draw[->-] (0,0) -- (1,0);
			\draw[->-] (1,0) -- (60:1);
			\draw[->-] (60:1) -- (0,0);
			\node[above] at (60:1) {\tiny $1$};
			\node[below] at (0,0) {\tiny $2,3,4$};
			\node[below] at (1,0) {\tiny $5,6$};
			\node at (0.5,-0.8) {\small $\h_{1,4,6}$};
			\foreach \v in {(0,0),(1,0),(60:1)} {
				\fill[black!60] \v circle (1.3pt);
			}
			\begin{scope}[shift={(2.5,0)}]
			\draw[->-] (0,0) -- (1,0);
			\draw[->-] (1,0) -- (60:1);
			\draw[->-] (60:1) -- (0,0);
			\node[above] at (60:1) {\tiny $1$};
			\node[below] at (0,0) {\tiny $2,3$};
			\node[below] at (1,0) {\tiny$4,5,6$};
			\node at (0.5,-0.8) {\small $\h_{1,3,6}$};
			\foreach \v in {(0,0),(1,0),(60:1)} {
				\fill[black!60] \v circle (1.3pt);
			}
			\end{scope}
			\begin{scope}[shift={(5,0)}]
			\draw[->-] (0,0) -- (1,0);
			\draw[->-] (1,0) -- (60:1);
			\draw[->-] (60:1) -- (0,0);
			\node[above] at (60:1) {\tiny $1,2$};
			\node[below] at (0,0) {\tiny $3,4$};
			\node[below] at (1,0) {\tiny$5,6$};
			\node at (0.5,-0.8) {\small $\h_{2,4,6}$};
			\foreach \v in {(0,0),(1,0),(60:1)} {
				\fill[black!60] \v circle (1.3pt);
			}
			\end{scope}
			\begin{scope}[shift={(9,0)}]
			\draw[->-] (0,0) -- (1,0);
			\draw[->-] (1,0) -- (2,0);
			\draw[->-] (2,0) -- ($(2,0)+(120:1)$);
			\draw[->-] ($(2,0)+(120:1)$) -- (60:2);
			\draw[->-] (60:2) -- (60:1);
			\draw[->-] (60:1) -- ($(2,0)+(120:1)$);
			\draw[->-] (1,0) -- (60:1);
			\draw[->-] (60:1)--(0,0);
			\draw[->-] ($(2,0)+(120:1)$) -- (1,0);
			\node[above] at (60:2) {\tiny $1$};
			\node[left] at (60:1) {\tiny $2$};
			\node[below] at (0,0) {\tiny$3$};
			\node[below] at (1,0) {\tiny$4$};
			\node[below] at (2,0) {\tiny$5$};
			\node[right] at ($(2,0)+(120:1)$) {\tiny$6$};
			\node at (1,-0.8) {\small $-\h_{1,3,5} + \h_{2,3,5} + \h_{1,4,5} + \h_{1,3,6}$};
			\foreach \v in {(0,0),(1,0),(2,0),(60:1),(60:2),($(2,0)+(120:1)$)} {
				\fill[black!60] \v circle (1.3pt);
			}
			\end{scope}
			\end{tikzpicture}
			\caption{Scaffolds for the four rays of $\text{Trop}_{>0}X(3,6)$ up to cyclic rotation.  Below each scaffold we have recorded the planar basis expansion (\cref{thm:pbexpansion}).}
			\label{fig:examples36}
		\end{figure}
	\end{example}
	More rays are tabulated in \cref{sec:appendix}.
	\section{Webs and strands}\label{sec:webs}
	\subsection{Webs and their dual graphs}
	\begin{definition}\label{def:SL3-web}
		An \defn{$\mathrm{SL}_3$-web} $W$ is a planar graph embedded in 
		a disk with $n$ boundary vertices $b_1,b_2,\ldots,b_n$ in counterclockwise order on the boundary of the disk, such that:
		\begin{itemize}
			\item Every vertex is colored either black or white.
			\item Each internal vertex is trivalent.
			\item Each edge connects vertices of different colors (black and white).
		\end{itemize}
		The web $W$ is \defn{non-elliptic} if each internal face (that is, a face that has no boundary vertex) has at least 
		$6$ edges.
	\end{definition}
	
	In the rest of this paper, we refer to $\mathrm{SL}_3$-webs simply as webs.  
	%
	
	A non-elliptic web $W$ is called \defn{standard} if every boundary vertex is black and has 
	valence $1$.  
	
	In \cref{fig:web-examples}, we depict three webs of increasing complexity; the rightmost web will return several times later in the paper, including in the context of buildings, see  \cref{fig:312buildingex}.
\begin{figure}[ht]
	\centering
	\begin{minipage}[b]{0.28\textwidth}
		\centering
		\begin{tikzpicture}[scale=0.8,
			bvert/.style={circle, fill=black, inner sep=0, minimum size=5pt},
			wvert/.style={circle, draw=black, fill=white, inner sep=0, minimum size=5pt}
			]
			\node[wvert, minimum size=7pt] (w) at (0,0) {};
			\node[bvert] (b1) at (90:1.8) {};
			\node[bvert] (b2) at (210:1.8) {};
			\node[bvert] (b3) at (330:1.8) {};
			\node[bvert] at (40:1.8) {};
			\node[bvert] at (270:1.8) {};
			\draw (w) -- (b1);
			\draw (w) -- (b2);
			\draw (w) -- (b3);
			\node[above] at (b1) {$3$};
			\node[below left] at (b2) {$4$};
			\node[below right] at (b3) {$1$};
			\node[right] at (40:1.8) {$2$};
			\node[right] at (270:1.8) {$5$};
			\draw[thin, dashed, gray!40] (0,0) circle (1.8);
		\end{tikzpicture}
		
		\smallskip
		(a) $n=5$, non-standard
	\end{minipage}
	\hfill
	\begin{minipage}[b]{0.32\textwidth}
		\centering
		\begin{tikzpicture}[scale=0.8,
			bvert/.style={circle, fill=black, inner sep=0, minimum size=5pt},
			wvert/.style={circle, draw=black, fill=white, inner sep=0, minimum size=5pt}
			]
			\node[bvert] (bc) at (0,0) {};
			\node[wvert] (w1) at (90:1) {};
			\node[wvert] (w2) at (210:1) {};
			\node[wvert] (w3) at (330:1) {};
			\draw (bc) -- (w1);
			\draw (bc) -- (w2);
			\draw (bc) -- (w3);
			\node[bvert] (b1) at (60:2.2) {};
			\node[bvert] (b2) at (120:2.2) {};
			\draw (w1) -- (b1);
			\draw (w1) -- (b2);
			\node[bvert] (b3) at (180:2.2) {};
			\node[bvert] (b4) at (240:2.2) {};
			\draw (w2) -- (b3);
			\draw (w2) -- (b4);
			\node[bvert] (b5) at (300:2.2) {};
			\node[bvert] (b6) at (360:2.2) {};
			\draw (w3) -- (b5);
			\draw (w3) -- (b6);
			\node[above right] at (b1) {$1$};
			\node[above left] at (b2) {$2$};
			\node[left] at (b3) {$3$};
			\node[below left] at (b4) {$4$};
			\node[below right] at (b5) {$5$};
			\node[right] at (b6) {$6$};
			\draw[thin, dashed, gray!40] (0,0) circle (2.2);
		\end{tikzpicture}
		
		\smallskip
		(b) $n=6$, standard
	\end{minipage}
	\hfill
	\begin{minipage}[b]{0.36\textwidth}
		\centering
		\begin{tikzpicture}[scale=0.55,
			bvert/.style={circle, fill=black, inner sep=0, minimum size=4pt},
			wvert/.style={circle, draw=black, fill=white, inner sep=0, minimum size=4pt}
			]
			\def\R{2.2}
			\node[wvert, minimum size=5pt] (wT)  at (90:\R)  {};
			\node[bvert]                   (bTL) at (135:\R) {};
			\node[wvert, minimum size=5pt] (wL)  at (180:\R) {};
			\node[bvert]                   (bBL) at (225:\R) {};
			\node[wvert, minimum size=5pt] (wB)  at (270:\R) {};
			\node[bvert]                   (bBR) at (315:\R) {};
			\node[wvert, minimum size=5pt] (wR)  at (0:\R)   {};
			\node[bvert]                   (bTR) at (45:\R)  {};
			%
			\draw (wT) -- (bTR) -- (wR) -- (bBR) -- (wB) -- (bBL) -- (wL) -- (bTL) -- (wT) -- cycle;
			%
			\node[bvert] (d1)  at (90:4.2)  {};  \draw (wT) -- (d1);
			\node[bvert] (d4)  at (180:4.2) {};  \draw (wL) -- (d4);
			\node[bvert] (d7)  at (270:4.2) {};  \draw (wB) -- (d7);
			\node[bvert] (d10) at (0:4.2)   {};  \draw (wR) -- (d10);
			%
			%
			\node[wvert, minimum size=5pt] (eTL) at (135:3.2) {};
			\draw (bTL) -- (eTL);
			\node[bvert] (d2) at (112.5:4.2) {};
			\node[bvert] (d3) at (157.5:4.2) {};
			\draw (eTL) -- (d2);
			\draw (eTL) -- (d3);
			%
			\node[wvert, minimum size=5pt] (eBL) at (225:3.2) {};
			\draw (bBL) -- (eBL);
			\node[bvert] (d5) at (202.5:4.2) {};
			\node[bvert] (d6) at (247.5:4.2) {};
			\draw (eBL) -- (d5);
			\draw (eBL) -- (d6);
			%
			\node[wvert, minimum size=5pt] (eBR) at (315:3.2) {};
			\draw (bBR) -- (eBR);
			\node[bvert] (d8) at (292.5:4.2) {};
			\node[bvert] (d9) at (337.5:4.2) {};
			\draw (eBR) -- (d8);
			\draw (eBR) -- (d9);
			%
			\node[wvert, minimum size=5pt] (eTR) at (45:3.2) {};
			\draw (bTR) -- (eTR);
			\node[bvert] (d11) at (22.5:4.2) {};
			\node[bvert] (d12) at (67.5:4.2) {};
			\draw (eTR) -- (d11);
			\draw (eTR) -- (d12);
			%
			\node[above]       at (d1)  {$12$};
			\node[above left]  at (d2)  {$1$};
			\node[left]        at (d3)  {$2$};
			\node[left]        at (d4)  {$3$};
			\node[below left]  at (d5)  {$4$};
			\node[below left]  at (d6)  {$5$};
			\node[below]       at (d7)  {$6$};
			\node[below right] at (d8)  {$7$};
			\node[right]       at (d9)  {$8$};
			\node[right]        at (d10) {$9$};
			\node[above right] at (d11) {$10$};
			\node[above right] at (d12) {$11$};
			%
			\draw[thin, dashed, gray!40] (0,0) circle (4.2);
		\end{tikzpicture}
		
		\smallskip
		(c) $n=12$, standard
	\end{minipage}
	\caption{Examples of non-elliptic webs.  Each edge is incident to one white vertex ($\circ$) and one black vertex ($\bullet$).  All interior vertices are trivalent.  In~(a), the web is nonstandard since some boundary vertices have degree $0$.  In~(b) and~(c), the webs are standard: all boundary vertices are black with valence~$1$.  The interior face in~(c) is an octagon.}
	\label{fig:web-examples}
\end{figure}
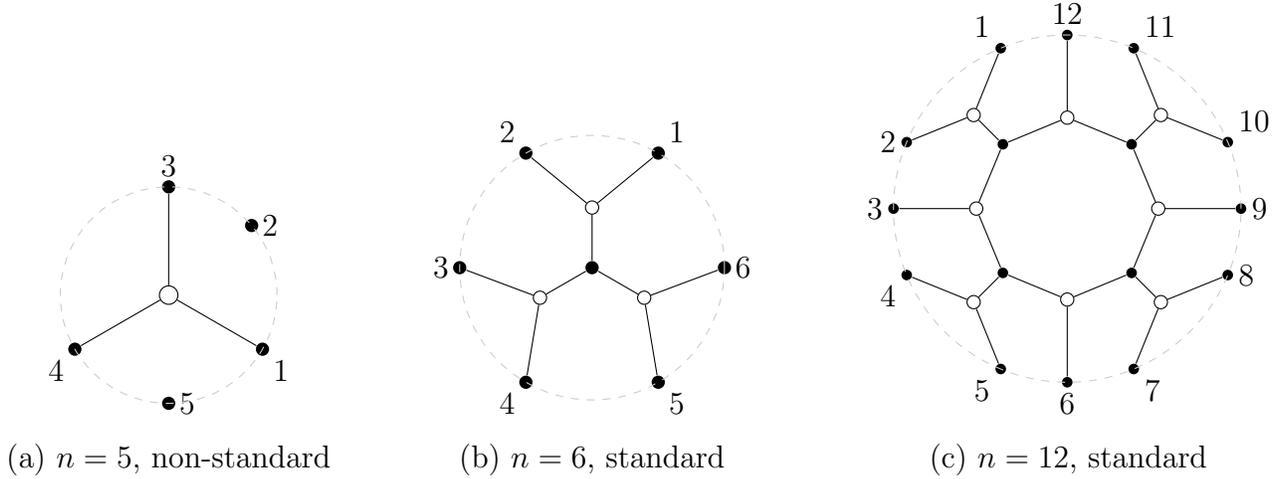

	For a non-elliptic web $W$, we can construct a \defn{dual CAT(0) planar graph} $Q = Q(W)$ with:
	\begin{itemize}
		\item Vertex set: the faces of $W$.
		\item Directed edges: $F \to F'$ if $F$ and $F'$ share an edge in $W$, 
		oriented so that black vertices of $W$ lie to the right of the directed 
		edge in $Q$.
	\end{itemize}
	This definition is called the ``dual quiver" in the work of Shen--Sun--Weng~\cite{SSW} and appeared earlier in the setting of plabic graphs \cite{Pos}.  Under the duality $W \to Q(W)$, white (resp. black) vertices become counterclockwise (resp. clockwise) oriented triangles.
	
	If $W$ is a labeled web, then we obtain a CCW-labeled CAT(0) planar graph $(Q,\z)$ so that:
	\begin{enumerate}
		\item if $b_a$ has degree $0$, then $z_a$ labels the face with $b_a$ on the boundary, and
		\item if $b_a$ has degree $\geq 1$, then $z_a$ labels the face that contains both $b_{a-1}$ and $b_{a}$.
	\end{enumerate}
	
	\cref{fig:web-dual-overlay} illustrates these conventions with a standard non-elliptic web $W$ and its dual standard CAT(0) graph $Q$. 
	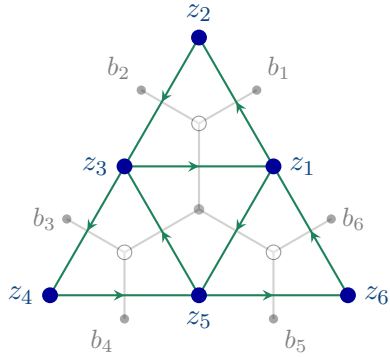
\begin{figure}[ht]
		\centering
		\begin{tikzpicture}[scale=1.1,
			bvert/.style={circle, fill=black, inner sep=0, minimum size=4.5pt},
			wvert/.style={circle, draw=black, fill=white, inner sep=0, minimum size=5.5pt},
			qvert/.style={circle, fill=blue!60!black, inner sep=0, minimum size=6pt},
			midarrow/.style={decoration={markings, mark=at position 0.5 with {\arrow{stealth}}}, postaction={decorate}},
			]
			\coordinate (z2) at (0, 2.078);
			\coordinate (z4) at (-1.8, -1.039);
			\coordinate (z6) at (1.8, -1.039);
			\coordinate (z1) at ($(z6)!0.5!(z2)$);
			\coordinate (z3) at ($(z2)!0.5!(z4)$);
			\coordinate (z5) at ($(z4)!0.5!(z6)$);
			
			\coordinate (m12) at ($(z1)!0.5!(z2)$);
			\coordinate (m23) at ($(z2)!0.5!(z3)$);
			\coordinate (m34) at ($(z3)!0.5!(z4)$);
			\coordinate (m45) at ($(z4)!0.5!(z5)$);
			\coordinate (m56) at ($(z5)!0.5!(z6)$);
			\coordinate (m61) at ($(z6)!0.5!(z1)$);
			
			\coordinate (w1) at ($(z1)!0.6667!($(z2)!0.5!(z3)$)$);
			\coordinate (w2) at ($(z3)!0.6667!($(z4)!0.5!(z5)$)$);
			\coordinate (w3) at ($(z5)!0.6667!($(z6)!0.5!(z1)$)$);
			\coordinate (bc) at ($(z1)!0.6667!($(z3)!0.5!(z5)$)$);
			
			\coordinate (b1) at ($(m12)!-0.55!(w1)$);
			\coordinate (b2) at ($(m23)!-0.55!(w1)$);
			\coordinate (b3) at ($(m34)!-0.55!(w2)$);
			\coordinate (b4) at ($(m45)!-0.55!(w2)$);
			\coordinate (b5) at ($(m56)!-0.55!(w3)$);
			\coordinate (b6) at ($(m61)!-0.55!(w3)$);
			
			\draw[gray!35, line width=0.8pt] (w1) -- (bc);
			\draw[gray!35, line width=0.8pt] (w2) -- (bc);
			\draw[gray!35, line width=0.8pt] (w3) -- (bc);
			\draw[gray!35, line width=0.8pt] (w1) -- (b1);
			\draw[gray!35, line width=0.8pt] (w1) -- (b2);
			\draw[gray!35, line width=0.8pt] (w2) -- (b3);
			\draw[gray!35, line width=0.8pt] (w2) -- (b4);
			\draw[gray!35, line width=0.8pt] (w3) -- (b5);
			\draw[gray!35, line width=0.8pt] (w3) -- (b6);
			
			\node[wvert, opacity=0.4] at (w1) {};
			\node[wvert, opacity=0.4] at (w2) {};
			\node[wvert, opacity=0.4] at (w3) {};
			\node[bvert, opacity=0.3, minimum size=4pt] at (bc) {};
			\node[bvert, opacity=0.35, minimum size=3.5pt] at (b1) {};
			\node[bvert, opacity=0.35, minimum size=3.5pt] at (b2) {};
			\node[bvert, opacity=0.35, minimum size=3.5pt] at (b3) {};
			\node[bvert, opacity=0.35, minimum size=3.5pt] at (b4) {};
			\node[bvert, opacity=0.35, minimum size=3.5pt] at (b5) {};
			\node[bvert, opacity=0.35, minimum size=3.5pt] at (b6) {};
			
			\node[above right, gray, font=\footnotesize] at (b1) {$b_1$};
			\node[above left, gray, font=\footnotesize] at (b2) {$b_2$};
			\node[left, gray, font=\footnotesize] at (b3) {$b_3$};
			\node[below left, gray, font=\footnotesize] at (b4) {$b_4$};
			\node[below right, gray, font=\footnotesize] at (b5) {$b_5$};
			\node[right, gray, font=\footnotesize] at (b6) {$b_6$};
			
			\definecolor{qedge}{RGB}{30,130,90}
			\definecolor{qlabel}{RGB}{12,68,124}
			
			\draw[qedge, thick, midarrow] (z1) -- (z2);
			\draw[qedge, thick, midarrow] (z2) -- (z3);
			\draw[qedge, thick, midarrow] (z3) -- (z4);
			\draw[qedge, thick, midarrow] (z4) -- (z5);
			\draw[qedge, thick, midarrow] (z5) -- (z6);
			\draw[qedge, thick, midarrow] (z6) -- (z1);
			\draw[qedge, thick, midarrow] (z3) -- (z1);
			\draw[qedge, thick, midarrow] (z5) -- (z3);
			\draw[qedge, thick, midarrow] (z1) -- (z5);
			
			\node[qvert] at (z1) {};
			\node[qvert] at (z2) {};
			\node[qvert] at (z3) {};
			\node[qvert] at (z4) {};
			\node[qvert] at (z5) {};
			\node[qvert] at (z6) {};
			
			\node[right=2pt, color=qlabel, font=\small\bfseries] at (z1) {$z_1$};
			\node[above=2pt, color=qlabel, font=\small\bfseries] at (z2) {$z_2$};
			\node[left=2pt, color=qlabel, font=\small\bfseries] at (z3) {$z_3$};
			\node[left=2pt, color=qlabel, font=\small\bfseries] at (z4) {$z_4$};
			\node[below=2pt, color=qlabel, font=\small\bfseries] at (z5) {$z_5$};
			\node[right=2pt, color=qlabel, font=\small\bfseries] at (z6) {$z_6$};
			
		\end{tikzpicture}
		\caption{A standard web $W$ with $n=6$ and its dual $Q(W)$.  The web $W$ (gray) has three white interior vertices and one black interior vertex; its dual $Q(W)$ (green edges, blue vertices) is a CCW-labeled CAT(0) planar graph.  Each face of $W$ contains a vertex $z_a$ of $Q$, with $z_a$ assigned to the face containing $b_{a-1}$ and $b_a$ on its boundary.  White vertices of $W$ become CCW-oriented triangles $z_1z_2z_3$, $z_3z_4z_5$, $z_5z_6z_1$, and the black vertex becomes the CW-oriented triangle $z_1z_5z_3$.}
		\label{fig:web-dual-overlay}
	\end{figure}

	By convention the $n$ boundary vertices are labeled $1, 2, \ldots,n$ counterclockwise around the boundary of the disk.  
	A web $W$ can be made standard by the following operations, producing a standard web $W^\std$, see \cref{fig:webops1}:
	\begin{enumerate}
		\item Valence $0$ boundary vertices can be removed.
		\item Valence $d > 1$ boundary vertices can be split into $d$ boundary vertices.
		\item A white boundary vertex can be pushed into the interior to become a white trivalent vertex resulting in two new black boundary vertices.
	\end{enumerate}

	\begin{figure}
		\centering
		\includegraphics[width=0.8\linewidth]{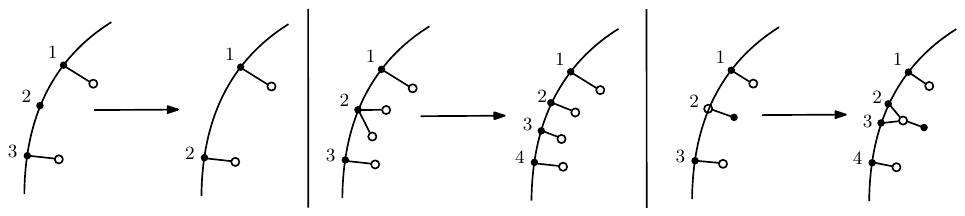}
		\caption{Three basic operations on non-elliptic webs produce standard webs.}
		\label{fig:webops1}
	\end{figure}
		\begin{figure}[h!]
		\centering
		\includegraphics[width=1.0\linewidth]{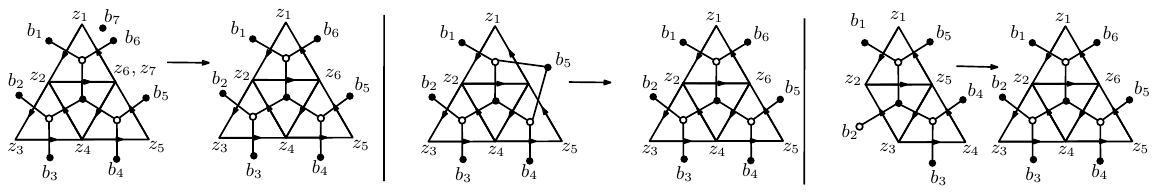}
		\caption{Basic operations for standardizing webs and scaffolds.}
		\label{fig:cat0qops}
	\end{figure}
	
	A standard CAT(0) planar graph $(Q,\z)$ is a CCW-labeled planar graph such that the boundary is oriented counterclockwise.  The standardization procedure above corresponds to the following three operations in the dual CAT(0) planar graph:
	\begin{enumerate}
		\item Boundary vertices labeled by a subset can be reduced to have a single label.
		\item Boundary vertices without a label are given labels.		
		\item A triangle can be attached to $Q$ whenever a boundary edge is in the clockwise direction.
	\end{enumerate}

	It is straightforward to see that a web $W$ is connected if and only if $Q(W)$ is simple.  If $W$ is disconnected with components $W_1,W_2,\ldots$, then $Q(W)$ is obtained by gluing $Q(W_1), Q(W_2), \ldots$ along vertices.		
		
	\subsection{Strands and the planar basis expansion}
	
	\begin{definition}
		Let $W$ be a web.  The \defn{strands} of $W$ are the directed paths (or cycles) in $W$ obtained by the rule:
		\begin{itemize}
			\item turn \textbf{left} at white interior vertices, and
			\item turn \textbf{right} at black interior vertices.
		\end{itemize}
	\end{definition}
	
	In a non-elliptic web $W$, all strands $\str$ are directed paths from a boundary vertex to a boundary vertex; we denote by $\str_i$ the strand that ends at vertex $b_i$.  If $b_i$ has degree greater than one, then there will be multiple such strands.  We typically use the notation $\str_i$ when $W$ is standard.

\begin{figure}
\begin{center}
$$
\begin{tikzpicture}[scale=0.4,
			bvert/.style={circle, fill=black, inner sep=0, minimum size=4pt},
			wvert/.style={circle, draw=black, fill=white, inner sep=0, minimum size=4pt},
			midarrow/.style={decoration={markings, mark=at position 0.5 with {\arrow{stealth}}}, postaction={decorate}},
			]
			\def\R{2.2}
			
			\node[wvert, minimum size=5pt] (wT)  at (90:\R)  {};
			\node[bvert]                   (bTL) at (135:\R) {};
			\node[wvert, minimum size=5pt] (wL)  at (180:\R) {};
			\node[bvert]                   (bBL) at (225:\R) {};
			\node[wvert, minimum size=5pt] (wB)  at (270:\R) {};
			\node[bvert]                   (bBR) at (315:\R) {};
			\node[wvert, minimum size=5pt] (wR)  at (0:\R)   {};
			\node[bvert]                   (bTR) at (45:\R)  {};
			
			%
			\draw (wT) -- (bTR) -- (wR) -- (bBR) -- (wB) -- (bBL) -- (wL) -- (bTL) -- (wT) -- cycle;
			%
			\node[bvert] (d1)  at (90:4.2)  {};  \draw (wT) -- (d1);
			\node[bvert] (d4)  at (180:4.2) {};  \draw (wL) -- (d4);
			\node[bvert] (d7)  at (270:4.2) {};  \draw (wB) -- (d7);
			\node[bvert] (d10) at (0:4.2)   {};  \draw (wR) -- (d10);
			
			%
			%
			\node[wvert, minimum size=5pt] (eTL) at (135:3.2) {};
			\draw (bTL) -- (eTL);
			\node[bvert] (d2) at (112.5:4.2) {};
			\node[bvert] (d3) at (157.5:4.2) {};
			\draw (eTL) -- (d2);
			\draw (eTL) -- (d3);
			%
			\node[wvert, minimum size=5pt] (eBL) at (225:3.2) {};
			\draw (bBL) -- (eBL);
			\node[bvert] (d5) at (202.5:4.2) {};
			\node[bvert] (d6) at (247.5:4.2) {};
			\draw (eBL) -- (d5);
			\draw (eBL) -- (d6);
			%
			\node[wvert, minimum size=5pt] (eBR) at (315:3.2) {};
			\draw (bBR) -- (eBR);
			\node[bvert] (d8) at (292.5:4.2) {};
			\node[bvert] (d9) at (337.5:4.2) {};
			\draw (eBR) -- (d8);
			\draw (eBR) -- (d9);
			%
			\node[wvert, minimum size=5pt] (eTR) at (45:3.2) {};
			\draw (bTR) -- (eTR);
			\node[bvert] (d11) at (22.5:4.2) {};
			\node[bvert] (d12) at (67.5:4.2) {};
			\draw (eTR) -- (d11);
			\draw (eTR) -- (d12);
			%
			\node[above]       at (d1)  {$12$};
			\node[above left]  at (d2)  {$1$};
			\node[left]        at (d3)  {$2$};
			\node[left]        at (d4)  {$3$};
			\node[below left]  at (d5)  {$4$};
			\node[below left]  at (d6)  {$5$};
			\node[below]       at (d7)  {$6$};
			\node[below right] at (d8)  {$7$};
			\node[right]       at (d9)  {$8$};
			\node[right]        at (d10) {$9$};
			\node[above right] at (d11) {$10$};
			\node[above right] at (d12) {$11$};
			%
			\draw[thin, dashed, gray!40] (0,0) circle (4.2);
			\draw[red,thick,midarrow] (d7)--(wB);
			\draw[red,thick,midarrow] (wB)--(bBL);
			\draw[red,thick,midarrow] (bBL)--(wL);
			\draw[red,thick,midarrow] (wL)--(d4);
			
			\draw[blue,thick,midarrow] (d12)--(eTR);
			\draw[blue,thick,midarrow] (eTR)--(d11);
		\end{tikzpicture}
		\qquad \qquad
\begin{tikzpicture}[scale=0.4,
			bvert/.style={circle, fill=black, inner sep=0, minimum size=4pt},
			wvert/.style={circle, draw=black, fill=white, inner sep=0, minimum size=4pt},
			midarrow/.style={decoration={markings, mark=at position 0.5 with {\arrow{stealth}}}, postaction={decorate}},
			]
			\def\R{2.2}
			
			\node[wvert, minimum size=5pt] (wT)  at (90:\R)  {};
			\node[bvert]                   (bTL) at (135:\R) {};
			\node[wvert, minimum size=5pt] (wL)  at (180:\R) {};
			\node[bvert]                   (bBL) at (225:\R) {};
			\node[wvert, minimum size=5pt] (wB)  at (270:\R) {};
			\node[bvert]                   (bBR) at (315:\R) {};
			\node[wvert, minimum size=5pt] (wR)  at (0:\R)   {};
			\node[bvert]                   (bTR) at (45:\R)  {};
			
			%
			\draw (wT) -- (bTR) -- (wR) -- (bBR) -- (wB) -- (bBL) -- (wL) -- (bTL) -- (wT) -- cycle;
			%
			\node[bvert] (d1)  at (90:4.2)  {};  \draw (wT) -- (d1);
			\node[bvert] (d4)  at (180:4.2) {};  \draw (wL) -- (d4);
			\node[bvert] (d7)  at (270:4.2) {};  \draw (wB) -- (d7);
			\node[bvert] (d10) at (0:4.2)   {};  \draw (wR) -- (d10);
			
			%
			%
			\node[wvert, minimum size=5pt] (eTL) at (135:3.2) {};
			\draw (bTL) -- (eTL);
			\node[bvert] (d2) at (112.5:4.2) {};
			\node[bvert] (d3) at (157.5:4.2) {};
			\draw (eTL) -- (d2);
			\draw (eTL) -- (d3);
			%
			\node[wvert, minimum size=5pt] (eBL) at (225:3.2) {};
			\draw (bBL) -- (eBL);
			\node[bvert] (d5) at (202.5:4.2) {};
			\node[bvert] (d6) at (247.5:4.2) {};
			\draw (eBL) -- (d5);
			\draw (eBL) -- (d6);
			%
			\node[wvert, minimum size=5pt] (eBR) at (315:3.2) {};
			\draw (bBR) -- (eBR);
			\node[bvert] (d8) at (292.5:4.2) {};
			\node[bvert] (d9) at (337.5:4.2) {};
			\draw (eBR) -- (d8);
			\draw (eBR) -- (d9);
			%
			\node[wvert, minimum size=5pt] (eTR) at (45:3.2) {};
			\draw (bTR) -- (eTR);
			\node[bvert] (d11) at (22.5:4.2) {};
			\node[bvert] (d12) at (67.5:4.2) {};
			\draw (eTR) -- (d11);
			\draw (eTR) -- (d12);
			%
			\node[above]       at (d1)  {$12$};
			\node[above left]  at (d2)  {$1$};
			\node[left]        at (d3)  {$2$};
			\node[left]        at (d4)  {$3$};
			\node[below left]  at (d5)  {$4$};
			\node[below left]  at (d6)  {$5$};
			\node[below]       at (d7)  {$6$};
			\node[below right] at (d8)  {$7$};
			\node[right]       at (d9)  {$8$};
			\node[right]        at (d10) {$9$};
			\node[above right] at (d11) {$10$};
			\node[above right] at (d12) {$11$};
			
			\node[above] at (wB) {$v$};
			%
			\draw[thin, dashed, gray!40] (0,0) circle (4.2);
		
			\draw[red,thick,midarrow] (wB)--(bBL);
			\draw[red,thick,midarrow] (bBL)--(wL);
			\draw[red,thick,midarrow] (wL)--(d4);
			\draw[blue,thick,midarrow] (wB)--(d7);
			
			\draw[green,thick,midarrow] (wB)--(bBR);
			\draw[green,thick,midarrow] (bBR)--(eBR);
			\draw[green,thick,midarrow] (eBR)--(d9);
			
		\end{tikzpicture}
$$
\end{center}
\caption{Left: A web $W$ and two strands $\eta_3$ and $\eta_{10}$.  Right: the vertex $v$ has strand triple $(3,6,8)$.}
\label{fig:strands}
\end{figure}
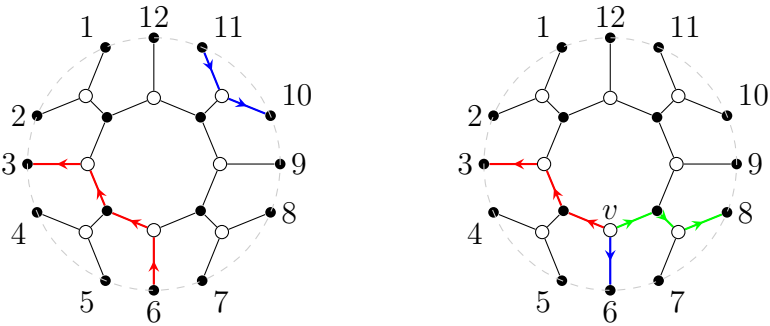
	
	We say that two strands $\str,\str'$ in a web $W$ \emph{intersect} if they intersect in the interior of the web $W$.  The following result supports the intuition that strands in non-elliptic webs behave like geodesics in nonpositive curvature.
	
	\begin{proposition}\label{prop:strandintersect}
		Any two strands in a non-elliptic web $W$ intersect at most once.  If two strands $S, S'$ intersect along an edge $e$ then the only faces that $S$ and $S'$ are both adjacent to are faces that are incident to one of the vertices of $e$.
	\end{proposition}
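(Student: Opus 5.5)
We prove both claims by a discrete Gauss--Bonnet argument, using that non-elliptic webs behave like graphs of nonpositive curvature.

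\textbf{Setup.} Suppose two distinct strands $S, S'$ intersect at least twice. A strand is determined by any one of its directed edges and the turning rule, so two strands sharing a directed edge coincide from there on. Hence every common part of $S$ and $S'$ is a maximal shared path (possibly a single vertex, or an edge traversed in opposite directions). Choose two consecutive intersection regions along $S$. The segments of $S$ and $S'$ between them bound a closed disk $D$ in the web disk. Choose the pair, and the intersections, so that $D$ is minimal, i.e.\ contains no further such bigon. Then the boundary of $D$ is a simple closed curve made of two strand segments, with two corner regions where the strands meet or separate.

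\textbf{Curvature count.} Apply Euler's formula to the subweb inside $D$, with faces contained in $D$. Each interior face of $W$ inside $D$ has at least $6$ edges, since it contains no boundary vertex of the disk, so it contributes nonpositive curvature in the count
\[
\sum_{F}(6-|F|) + \sum_{v\in\partial D}(\text{turning contribution}) = 6.
\]
Here interior trivalent vertices contribute zero. Along a strand segment the turning rule alternates: left at white vertices, right at black ones, and edges alternate in color since the web is bipartite. The boundary contributions along each side therefore cancel in pairs, up to a bounded error at the endpoints. So the total boundary turning is concentrated at the two corners. Each corner is a single trivalent vertex or a shared edge, and contributes at most a small fixed amount; one checks this locally from the colors. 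The total then falls short of $6$, a contradiction. We do this bookkeeping in the dual: $Q(W)$ restricted to $D$ is a triangulated disk with interior vertices of degree at least $6$. Its boundary vertices along a strand side have degree exactly $4$ in the local picture, because strands correspond to ``straight'' paths in $Q$. The combinatorial Gauss--Bonnet identity
\[
\sum_{v \text{ int}}(6-\deg v)+\sum_{v\in\partial}(4-\deg v)=6
\]
then forces positive contributions only at the two corners, each at most $2$. This gives total at most $4<6$, a contradiction.

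\textbf{Second claim.} Suppose $S, S'$ share the edge $e$, and both are adjacent to a face $F$ not incident to an endpoint of $e$. Then the two strands leave $e$ and both reach $\partial F$. Following them from $e$ to $F$ and closing up through $F$ (or through the dual vertex of $F$ in $Q$) gives a bigon-like disk with corners at $e$ and at $F$. The same curvature count rules this out. The count tolerates the corner at $F$ because $F$ is a single dual vertex, which contributes at most $2$.

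The main obstacle is the local corner analysis: verifying that strands correspond to boundary paths of degree-$4$ vertices in the dual triangulation, and that each corner contributes at most $2$ in every color/turn configuration. This includes the degenerate cases where the strands share an edge in opposite directions, and where $D$ touches the boundary of the web disk. In the latter case faces may have fewer than $6$ edges, and we handle it by restricting to the region cut off by the two strands, which lies away from boundary faces unless a strand ends there.
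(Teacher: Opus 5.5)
Your proposal is essentially the paper's argument: a discrete Gauss--Bonnet count (the planar dual of \cref{lem:graph}) on a disk bounded by strand segments, closed up through $F$ for the second claim, and the corner check you defer does go through. Score each boundary vertex of the restricted web $+1$ if its third edge points out of the region and $-1$ if it points in. The strand sides then alternate, and the total is $4$ when the two strands run around the bigon in the same rotational sense and $2$ when both run from one corner to the other, while the identity forces at least $6$ because every face inside has at least six sides. Some small corrections, none of which breaks the argument:
\begin{itemize}
\item A face $G$ along a strand side has restricted dual degree $|G|-2\ge 4$, not exactly $4$; this only gives a nonpositive contribution, which is all you need.
\item The dual vertex $F$ can contribute $3$, not $2$, when only one of its edges lies inside the region (as in \cref{fig:parallel}). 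The total is then at most $5<6$, so the conclusion survives.
\item For your bigon to bound a disk you need that strands do not self-intersect. The paper takes this from the fact that non-elliptic webs are reduced plabic graphs.
\end{itemize}
Doing all the bookkeeping in the primal, as the paper does, also spares you from proving that the restricted dual is a disk with simple boundary.
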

	\cref{prop:strandintersect} is proved in \cref{sec:strands}.

	\begin{definition}\label{def:strandtriple}
		If $a$ is an interior vertex of $W$, then there are three strands passing through $a$, ending at boundary vertices labeled $(i_a,j_a,k_a)$, called the \defn{strand triple} of $a$.  The interior vertex $a$ corresponds to a triangle $T = T_a$ of $Q(W)$, and we also call $(i_a,j_a,k_a)$ the strand triple of $T$.
	\end{definition}
	
	\subsection{Normal webs}
	
	\begin{definition}\label{def:normalweb}
		A non-elliptic web is \defn{normal} if whenever two strands $\eta, \eta'$ intersect they have different labels.
	\end{definition}
	
	It follows immediately that a non-elliptic web $W$ is normal if and only if all strand triples contain distinct elements.  A web $W$ has black boundary if all boundary vertices are black.  This is equivalent to $Q(W)$ having counterclockwise oriented boundary.
	
	\begin{definition}
	A normal web $W$ is \defn{cyclic} if it has a cyclic strand triple.  Otherwise $W$ is \emph{cyclic-less}.
	\end{definition}
	
	\begin{theorem}\label{thm:normalnormal}
		Let $W$ be a non-elliptic web with black boundary and $(Q(W),\z(W))$ be its dual CAT(0) planar graph.  Then $W$ is normal if and only if $Q(W)$ is normal.  If $W$ is normal, then $W$ is cyclic-less if and only if $Q(W)$ is cyclic-less.
	\end{theorem}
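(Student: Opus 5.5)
The plan is to translate both conditions into local statements about the faces of $W$ adjacent to the boundary of the disk. Throughout, $W$ is non-elliptic with black boundary, so $Q=Q(W)$ has counterclockwise boundary and is CCW-labeled.

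\textbf{Step 1: acute vertices.} A vertex of $Q$ is a face $F$ of $W$. It is acute exactly when $F$ touches the boundary of the disk and is incident to a single interior vertex $a$ of $W$. Then $\partial F$ consists of two edges $a\to b$, $a\to b'$ and a boundary arc. Because the boundary is black, $a$ is white and $b,b'$ are black boundary vertices. By the labeling convention, $F$ carries a label $z_c$ if and only if either the arc contains a valence-$0$ boundary vertex, or $b\neq b'$ (so $F$ contains consecutive $b_{c-1},b_c$). Hence $F$ is unlabeled if and only if $b=b'$ and the arc contains no valence-$0$ vertex. In other words, $F$ sits between two consecutive edges from the white vertex $a$ to the same boundary vertex $b$.

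\textbf{Step 2: $W$ normal $\Rightarrow$ $Q$ normal.} Suppose $F$ is an unlabeled acute face as in Step 1. The strands leaving $a$ along its two edges to $b$ both end at $b$, and both pass through $a$, so the strand triple of $a$ has a repeated entry. By the remark after \cref{def:normalweb}, $W$ is not normal. This is the contrapositive, so no unlabeled acute face exists when $W$ is normal.

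\textbf{Step 3: $Q$ normal $\Rightarrow$ $W$ normal (main obstacle).} Suppose $W$ is not normal. Then there are strands $\eta,\eta'$ that meet at an interior vertex $a$ and both end at the same boundary vertex $b$. By \cref{prop:strandintersect} they meet only once, so their segments from $a$ to $b$, together with $b$, bound a disk region $R\subseteq W$. Among all such pairs, I would choose one whose region $R$ is minimal (contains the fewest faces).

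Next I apply combinatorial Gauss--Bonnet to $R$. Since $W$ is non-elliptic, every interior face has at least $6$ sides and so contributes nonpositive curvature. Each side of $R$ is a strand, which turns left at white vertices and right at black ones. It therefore behaves as a geodesic, and I would check that a strand side contributes zero total turning once alternating colors are taken into account. The total curvature $2\pi$ must then be carried by the two corners at $a$ and $b$ and by faces of $R$ meeting the disk boundary near $b$. The corner at $a$ has angle at most $2\pi/3$, so $b$ together with the boundary faces at $b$ must carry positive curvature.

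By minimality, no other strand can cross both sides of $R$; otherwise it would cut off a smaller bigon with a repeated label. Using this, I would show that $R$ degenerates to a single face: the face between two consecutive edges of a white vertex going to $b$. That face is an unlabeled acute vertex of $Q$, so $Q$ is not normal. I expect the curvature bookkeeping near $b$ — where $b$ may have high valence and the region may contain faces touching the boundary — to be the delicate point. If the curvature argument does not close, I would fall back on induction on the number of faces in $R$, peeling off boundary hexagons.

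\textbf{Step 4: cyclic part.} Assume now that $W$ is normal. A maximal run of degree-$4$ boundary vertices of $Q$ corresponds to a fan of faces of $W$. Each face in the run is bounded by two interior vertices joined along a zigzag path near the disk boundary, and the run is flanked by two acute faces, labeled $z_i$ and $z_{i+1}$. I would trace the three strands through a vertex on this zigzag. Their triple consists of consecutive labels, so it is a cyclic interval. Conversely, I would show that a cyclic strand triple forces such a zigzag configuration, again using the minimal-region argument of Step 3 applied to the strands ending at $i$ and $i+1$.
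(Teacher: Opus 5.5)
Steps 1 and 2 are correct and match the easy half of the paper's argument. An unlabeled acute vertex of $Q(W)$ is a white vertex with two consecutive edges to one boundary vertex $b$, so the label of $b$ appears twice in its strand triple.

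\textbf{The gap is Step 3.} This is the substance of the theorem, and you do not carry it out.

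The one concrete deduction you make there is also unjustified. Minimality of $R$ does not stop a strand from entering $R$ across $\eta$ and leaving across $\eta'$. Such a strand generally ends at some boundary vertex other than $b$, so it produces no same-label pair, smaller or not, and minimality says nothing about it.

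What minimality does control is the corner at $b$. A strand ending at $b$ through an edge strictly between the final edges $e,e'$ of $\eta,\eta'$ must cross one of the sides of $R$. That gives a smaller same-label pair. This is the reduction the paper makes in \cref{prop:acuteweb}: one may assume $e,e'$ are cyclically adjacent at $b$.

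This reduction is essential. The faces of $R$ at $b$ contain a boundary vertex, so the six-sided condition does not apply to them. They can absorb all the positive curvature, and your Gauss--Bonnet count cannot close without the reduction.

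After the reduction, no new curvature bookkeeping is needed. Split $b$ and add edges so that the face between $e$ and $e'$ becomes an interior face with at least six sides. The second statement of \cref{prop:strandintersect} then rules out $\eta,\eta'$ intersecting elsewhere unless $e$ and $e'$ share their white endpoint. That shared endpoint is exactly an unlabeled acute vertex.

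\textbf{Step 4 has the same problem in its converse.} When $W$ is normal there are no intersecting same-label strands at all, so your Step 3 argument says nothing about $\eta_i,\eta_{i+1}$. The paper argues as follows.
\begin{enumerate}
\item Start from a vertex with strand triple $(i,i+1,i+2)$ and move along $\eta_{i+1}$.
\item At the next vertex, the new third strand cannot cross the old one by \cref{prop:strandintersect}. By normality the triple is therefore unchanged. Repeating, you reach the vertex attached to boundary vertex $i+1$.
\item The part $Q'$ of $Q$ cut off by $\eta_i$ carries only the label $z_i$ on its boundary. By \cref{lem:acute}, its acute vertices are exactly $z_i$ and the two ends of $\eta_i$.
\item The inequality $\sum_{x\in\partial Q'}(4-\deg x)\ge 6$ from the proof of \cref{lem:graph} then forces every other boundary vertex of $Q'$ to have degree $4$.
\item Repeating with $\eta_{i+2}$ gives the degree-$4$ run between $z_i$ and $z_{i+1}$.
\end{enumerate}

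In the forward direction of Step 4, be careful: not every vertex of your zigzag has a cyclic strand triple. At the interior zigzag vertices, one strand heads into the web and ends at an uncontrolled label. Take the white vertex of the acute triangle at $z_i$, as the paper does.
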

	
	\cref{thm:normalnormal} is proven in \cref{sec:normal}.

	\subsection{Planar basis expansion}
	Recall that the tropical Grassmannian is a polyhedral fan in the ambient space $\R^{\binom{[n]}{k}}$.  The planar basis $\{\h_J \mid J \in \binom{[n]}{k}\}$ (\cref{def:planar-basis}) is a basis of $\R^{\binom{[n]}{k}}$ adapted to the study of the tropical Grassmannian.  In particular, every element $\h_J$ belongs to $\Trop_{>0} \Gr(k,n)$.
	
	\begin{theorem}\label{thm:pbexpansion}
		Let $W$ be a non-elliptic web where all boundary vertices are black.
		Let $(Q,\z)$ be the dual CAT(0) planar graph. The Pl\"ucker function 
		$\tropP_\bullet = \tropP_\bullet(Q,\z)$ of $(Q,\z)$ has the planar basis expansion:
		\[
		\tropP_\bullet \equiv \sum_{T \text{ white}} \mathfrak{h}_{i_T,j_T, k_T} - 
		\sum_{T \text{ black}} \mathfrak{h}_{i_T, j_T, k_T} \pmod{\Lin_{3,n}},
		\]
		where the sums are over white and black interior triangles $T$ of $Q$, 
		respectively, and $(i_T, j_T, k_T)$ denotes the strand triple of $T$.
	\end{theorem}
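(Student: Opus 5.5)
The planar basis $\h_J$ has not yet been defined (\cref{def:planar-basis} comes later). I will assume its key property from Early's work: there is a dual family of linear functionals, the tropical planar cross-ratios $\omega_J$ for $J$ non-cyclic, such that $\omega_J(\h_{J'}) = \delta_{J,J'}$ for non-cyclic $J,J'$. Moreover, $\omega_J$ vanishes on $\Lin_{3,n}$, and $\h_J \in \Lin_{3,n}$ when $J$ is a cyclic interval. Granting this, the theorem is equivalent to a coefficient identity. For every non-cyclic $J=\{a<b<c\}$,
$$
\omega_J(\tropP_\bullet(Q,\z)) = \#\{T \text{ white}: (i_T,j_T,k_T)=J\} - \#\{T\text{ black}: (i_T,j_T,k_T)=J\}.
$$
Cyclic strand triples contribute only lineality terms. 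The plan is to prove this identity.

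\textbf{Step 1: reduce to the standard case.} I would first reduce to standard webs using the operations of \cref{fig:cat0qops}. Deleting a degree-$0$ boundary vertex, or merging labels, changes $\tropP_\bullet$ by restriction or duplication of indices. Splitting boundary vertices relabels strand endpoints compatibly. Pushing a white boundary vertex inward adds a CCW triangle whose strand triple consists of consecutive labels, hence is a cyclic interval, and it changes $\tropP_\bullet$ only by lineality. So both sides transform identically.

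\textbf{Step 2: induct on the number of interior vertices.} For a standard non-elliptic web, I would remove a white interior vertex adjacent to the boundary, say the one carrying two consecutive boundary legs. This is the dual of removing an acute-ear triangle of $Q$. The effect on the Fermat--Le function can be computed from the focal-point/minimizer description of \cref{thm:focal}. The resulting change in $\tropP_\bullet$ should be exactly $\h_{T}$ for the removed triangle, possibly together with corrections from an adjacent black vertex. The corrections arise because collapsing the ear may force the neighbouring black vertex to become a boundary vertex, and the strands through it get rerouted. To keep this local, I would phrase the step via the tropical cross-ratio: evaluate $\omega_J$ on $\tropP_\bullet$ as an alternating sum of $\Le_Q$ values on triples drawn from $\{a-1,a,b-1,b,c-1,c\}$-type neighbourhoods.

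\textbf{Step 3: evaluate the cross-ratio directly by strand combinatorics.} Alternatively, and I would try this first, compute $\omega_J(\tropP_\bullet)$ directly. Each $\Le_Q(I)$ decomposes as a sum over triangles separated from the minimizer. The difference of adjacent triples $\Le_Q(a,b,c)-\Le_Q(a-1,b,c)$ then measures which triangles the strand $\eta_a$ separates. Telescoping in all three indices, a triangle survives in the alternating sum exactly when its three strands are $\eta_a,\eta_b,\eta_c$. White and black triangles carry opposite signs because of the opposite turning rule. \cref{prop:strandintersect}, which says strands meet at most once, guarantees that each triangle is counted at most once. Normality is not needed here, since repeated labels give degenerate $J$ that are simply absent. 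The duality theorem of \cite{ELnc} would be invoked to identify minimizers of $\Le_Q(I)$ with the regions cut out by strands.

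\textbf{Main obstacle.} The hard step is the local identity in Step 3: showing that the second difference of $\Le_Q$ across consecutive labels is the indicator of a strand separating the minimizers. This requires controlling how distance minimizers move when one label shifts by one boundary step, presumably via the focal-point theory and the CAT(0) geodesic behaviour of strands.
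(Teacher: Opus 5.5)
\textbf{There is a genuine gap: the step that carries the theorem is left unproved.}

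Your framework matches the paper's. You use the duality $u^t_J(\h_I)=\delta_{I,J}$ of \cref{thm:udual} to reduce the expansion to $u^t_{ijk}(\tropP_\bullet)=\#\{\text{white}\}-\#\{\text{black}\}$. You then pass to standard webs and evaluate the eight-term alternating sum of $\Sigma$ over the cube $\{i,i+1\}\times\{j,j+1\}\times\{k,k+1\}$. But that evaluation is the entire content of the theorem, and you leave it as your ``main obstacle.'' The heuristic you offer for it is not a working mechanism:

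\begin{itemize}
\item $\Sigma(a,b,c)$ is a minimum over vertices, not a sum over triangles separated from a minimizer. Nothing telescopes until you know how the minimizers of the eight cube triples sit relative to one another.
\item The duality theorem only converts coefficients into cross-ratio values. It says nothing about where minimizers lie.
\end{itemize}

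What is missing is the pairing of two results. The first is \cref{thm:focal}: minimizers are exactly focal points, a purely local condition on the sets $F(v,z)$. The second is the strand-strip lemma, \cref{lem:strandstripF} together with \cref{lem:strandstrip}. Off the strip $S_a$ one has $F(v,z_a)=F(v,z_{a+1})$, and $\delta(\cdot,z_{a+1})-\delta(\cdot,z_a)$ is constant ($+1$ or $-2$) on each side of $\eta_a$. On $S_a$, one of the two sets is a singleton contained in the other. With these the paper shows two things.

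\begin{itemize}
\item If $T$ has strand triple $(i,j,k)$, every cube triple has a minimizer among the vertices of $T$ (\cref{prop:focalT}). This gives the explicit values of \cref{lem:main}, hence $u^t_{ijk}=\pm1$ according to the orientation of $T$.
\item If $(i,j,k)$ is not a strand triple, take a focal point $x^*$ for $(i,j,k)$ and do a case analysis on which strips contain $x^*$ and how their spines meet there. This produces one vertex ($x^*$ or a specific neighbour) that minimizes all eight triples. So $\Sigma$ is additive on the cube and $u^t_{ijk}=0$ by \cref{lem:linearity-vanishing}; one case instead uses two minimizers with equal $k$-differences.
\end{itemize}

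Your Step 2 does not fill this gap either. Deleting an ear removes a label or creates a clockwise boundary edge, and it reroutes other strands. You never say what the resulting change in $\tropP_\bullet$ is.

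\textbf{Smaller issues.}

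\begin{itemize}
\item \cref{prop:strandintersect} is not needed to count each triangle once; it is needed for injectivity. In a standard web a triple is the strand triple of at most one triangle (\cref{lem:strand-bijection}). In non-standard webs multiplicities really occur, for example the coefficient $-2$ in \cref{example: standardization 310 to 321}. So injectivity must be applied after standardization.
\item The reduction to standard webs is not automatic. ``Both sides transform identically'' rests on the restriction formula $\iota(\h_{i,j,k})=\h_{\iota^{-1}(i'),\iota^{-1}(j'),\iota^{-1}(k')}$ from \cite{ELnc}. Normality is what keeps the three relabelled indices distinct, so your claim that degenerate triples are ``simply absent'' needs proof.
\item There is an indexing slip. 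With the paper's labelling, $z_a$ is the face between $b_{a-1}$ and $b_a$, so $\eta_a$ separates $z_a$ from $z_{a+1}$. The relevant shift is therefore $a\to a+1$, as in the cubical array $D_J$, not $a-1\to a$.
\end{itemize}
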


	\begin{example}\label{example: 12-gonPlucker}
		Let $W$ be the web in \cref{fig:web-examples}(c) and \cref{fig:strands}, and $(Q,\z)$ be the dual CAT(0) planar graph.  Computing all the strand triples, we find
		\begin{eqnarray*}
			\tropP_\bullet(Q,\z) & = & (\h_{3,4,7}+\h_{1,4,12}+\h_{6,7,10}+\h_{1,9,10})+(\h_{2,4,11}+\h_{2,5,7}+\h_{5,8,10}+\h_{1,8,11})\\
							& - & (\h_{1,4,11}+\h_{1,8,10}+\h_{2,4,7}+\h_{5,7,10}),
		\end{eqnarray*}
		where the first four positive terms belong to the four white corners, the second four positive terms belong to the midpoints of the horizontal and vertical edges, and the four negative terms correspond to the four black corner vertices. 
		\end{example}
\cref{example: 12-gonPlucker} is continued in 
\cref{example: big 312}.
		\section{Membranes: LP vs. KT}\label{sec:LPKT}
	\def\membrane{{\mathcal{M}}}

	In this section, we discuss membranes in the sense of Lam-Postnikov (LP) \cite{LP} and in the sense of Keel-Tevelev (KT) \cite{KT}, respectively.  Roughly speaking, KT membranes are naturally identified with triangulated tropical linear spaces, while LP membranes are certain two-dimensional surfaces in the bounded complex of a \emph{positive} tropical linear space.  The map $\mu$ in \eqref{eq:embedding} sends a normal CAT(0) planar graph, or more generally a canvas in a scaffold (\cref{rem:canvas}), to a LP membrane.  The Keel-Tevelev isomorphism (\cref{thm:KT}) then maps the LP membrane into a KT membrane within the affine building $\B$ of $\PGL(k)$.

In this section, we let $W$ be a normal web and $(Q,\z)$ be the dual normal CAT(0) planar graph.  In particular, the boundary vertices $\z$ are arranged in counterclockwise cyclic order on the boundary $\partial Q$, and we always assume that $n \geq 3$.
	
	\subsection{Lam-Postnikov membranes}
	
	In \cite{LP}, membranes are defined in the general setting of plabic graphs.  We specialize to non-elliptic webs and change coordinates to our conventions.
	
	\begin{definition}\label{def:membrane}
		Let $W$ be a web where boundary vertices have degree one and $(Q,\z)$ its dual labeled CAT(0) planar graph.  A \defn{membrane} $\membrane(W)$ for $W$ is a lifting of each vertex $v \in V(Q)$ to an integer point $\mu(v) \in \Z^n/\one$ satisfying the following property: if $v \to w$ is an edge of $Q$ and $e$ denotes the dual edge of $W$, let $\eta_i$ be the strand passing through $e$ with $v$ on the right and $\eta_j$ be the strand passing through $e$ with $w$ on the right, then $\mu(w) - \mu(v) = e_{[i+1,j]}$.
	\end{definition}
	
	\begin{rem}
	The \emph{boundary loop} $L = \partial \membrane$ of the membrane $\membrane(W)$ is the restriction of $\mu$ to $\partial Q$.  In \cite{LP}, the problem of finding a membrane with a specified boundary loop is considered.  Under a unimodality condition \cite[Theorem 23.6]{LP} on the boundary loop $L$, membranes with minimal area with $L =  \partial \membrane$ are in bijection with reduced plabic graphs within a move equivalence class.
	\end{rem}	
	
	In a normal web, the strand triples $(i,j,k)$ consist of three distinct elements.
	
	\begin{proposition}[{\cite[Section 21]{LP}}]\label{prop:membrane-triangle}
		Suppose $\membrane(W)$ is a membrane for a normal web $W$ with dual CAT(0) planar graph $Q$.  For a white (resp. black) triangle $T$ of $Q$ with vertices $v,u,w$, the triangle $(\mu(v),\mu(u),\mu(w))$ is equal to $\conv(h_{i_T}, h_{j_T}, h_{k_T})$ (resp. $\conv(-h_{i_T}, -h_{j_T}, -h_{k_T})$) up to an affine translation, where $h_i := e_1 + e_2 + \cdots + e_i$.
	\end{proposition}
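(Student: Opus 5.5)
The plan is to prove the statement locally at a single trivalent vertex of $W$. Everything reduces to the edge rule in \cref{def:membrane} together with the identity
$$
e_{[i+1,j]} \equiv h_j - h_i \pmod{\one},
$$
which holds for cyclic intervals. When $i<j$ it is an equality. When $j<i$ we have $e_{[i+1,j]} = \one - (h_i - h_j)$, which is the same class modulo $\one$.

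Let $T$ be a white triangle of $Q$ and $a$ the dual white trivalent vertex of $W$, with incident edges $e_1,e_2,e_3$ in counterclockwise order. Under the duality convention (black vertices lie to the right of each directed edge of $Q$), the edges of $T$ are oriented counterclockwise, $v\to u\to w\to v$. Each $e_r$ is dual to one side of $T$, and I would fix the indexing so that $e_1$ is dual to $v\to u$, $e_2$ to $u\to w$, and $e_3$ to $w\to v$.

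The key combinatorial step is to track the three strands through $a$. Each strand turns left at $a$, so it enters along some $e_r$ and leaves along the neighbouring edge. Thus each strand traverses exactly two of the three edges, and each edge carries exactly two of the three strands, one in each direction. For each edge I will determine which of these two strands has the tail vertex of the dual $Q$-edge on its right and which has the head on its right. I expect this to give the following pattern, possibly after cyclically relabelling $(i,j,k)$. Across $v\to u$ the strands are $\eta_i$ (with $v$ on the right) and $\eta_j$ (with $u$ on the right). Across $u\to w$ they are $\eta_j$ and $\eta_k$. Across $w\to v$ they are $\eta_k$ and $\eta_i$. The reason is that the face $u$ between $e_1$ and $e_2$ lies to the right of the one strand turning through that corner, and similarly for the other two corners. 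Assigning each strand to the corner it turns around is the heart of the argument, and it must be checked against the orientation convention.

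\cref{def:membrane} then gives
$$
\mu(u)-\mu(v)\equiv h_j-h_i,\qquad \mu(w)-\mu(u)\equiv h_k-h_j,\qquad \mu(v)-\mu(w)\equiv h_i-h_k .
$$
These three differences sum to zero, which is consistent. It follows that $(\mu(v),\mu(u),\mu(w)) = (h_i,h_j,h_k)+c$ for the single translation $c = \mu(v)-h_i$, which is the white case of the statement. Normality of $W$ makes $i,j,k$ distinct, so each interval is a proper nonempty subset and the triangle is nondegenerate.

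For a black triangle the same argument applies with two changes. The triangle is now oriented clockwise, and the strands turn right instead of left. Together these swap which strand has the tail on its right, so each difference becomes $h_i-h_j$ rather than $h_j-h_i$. The three images are therefore $\conv(-h_i,-h_j,-h_k)$ up to translation. The main obstacle is the bookkeeping of left and right in the corner assignment. I would settle it first by checking against the triangle $z_1z_2z_3$ in \cref{fig:web-dual-overlay}, and then argue generally by rotational symmetry at $a$.
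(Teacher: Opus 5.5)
Your reduction $e_{[i+1,j]}\equiv h_j-h_i \pmod{\one}$ and the telescoping structure are fine. However, the step you call the heart of the argument is backwards. Strands turn \emph{left} at the white vertex $a$. So the face a strand turns around, the $120^\circ$ sector between its incoming and outgoing edges, lies on the inner side of the turn, which is its \emph{left}. In your indexing, the strand entering along $e_1$ and leaving along $e_3$ has $v$ on its left along both edges, $u$ on its right along $e_1$, and $w$ on its right along $e_3$.

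Write $\lambda(X)$ for the label of the strand turning around the face $X$. Across $v\to u$, the strand with $v$ on its right is $\eta_{\lambda(u)}$, and the strand with $u$ on its right is $\eta_{\lambda(v)}$. No cyclic relabelling produces your pattern, because it needs $u$ to lie to the right of a single strand along both $e_1$ and $e_2$. Feeding the correct assignment into \cref{def:membrane}, read literally with the stated conventions, gives $\mu(u)-\mu(v)\equiv h_{\lambda(v)}-h_{\lambda(u)}$ and cyclically. Hence $\mu(X)\equiv c-h_{\lambda(X)}$. White triangles therefore go to translates of $\conv(-h_i,-h_j,-h_k)$, and by your own swap argument black ones go to $\conv(h_i,h_j,h_k)$. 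Both are the opposite of the statement, so your proof reaches the stated signs only through an incorrect left/right step.

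The sanity check you proposed makes this concrete. In \cref{fig:web-dual-overlay}, let $a$ be the white vertex of $z_1z_2z_3$. Then $\eta_1$ runs $b_2\to a\to b_1$, $\eta_4$ runs from $b_1$ through $a$ and the central black vertex to $b_4$, and $\eta_2$ runs from $b_5$ through the central black vertex and $a$ to $b_2$. So the strand triple is $(1,2,4)$. Across $z_1\to z_2$, $\eta_1$ has $z_1$ on its right and $\eta_4$ has $z_2$ on its right. The literal rule therefore forces $\mu(z_2)-\mu(z_1)=e_{[2,4]}=h_4-h_1$. Likewise $\mu(z_3)-\mu(z_2)\equiv h_1-h_2$ and $\mu(z_1)-\mu(z_3)\equiv h_2-h_4$, which is a negative triangle. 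But \eqref{eq:mu} gives $\mu(z_1)=e_4$, $\mu(z_2)=(1,0,0,1,1,1)$ and $\mu(z_3)=(1,1,0,1,1,1)$, that is $(h_4,h_1,h_2)-h_3$, with $\mu(z_2)-\mu(z_1)=e_{[5,1]}$.

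So the statement, in the form the paper actually uses (consistent with \eqref{eq:mu} and \cref{example: big 312}), corresponds to the edge rule with ``left'' in place of ``right''. Equivalently, it is $\mu(w)-\mu(v)=e_{[j+1,i]}$ in the notation of \cref{def:membrane}. The same mirror slip occurs in the proof of \cref{thm:membrane}, which identifies the right side of $\eta_j$ with the $z'_j$ side. In fact $\eta_j$ arrives at $b_j$ with $z_j$ on its right and $z'_j$ on its left.

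To turn your argument into a proof, state that convention explicitly. Under the ``left'' rule, across $v\to u$ the strand with $v$ on its left is $\eta_{\lambda(v)}$ and the one with $u$ on its left is $\eta_{\lambda(u)}$. This gives $\mu(u)-\mu(v)\equiv h_{\lambda(u)}-h_{\lambda(v)}$, and your telescoping then yields $(h_i,h_j,h_k)+c$ with $(i,j,k)=(\lambda(v),\lambda(u),\lambda(w))$. The black case then flips exactly as you describe.
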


	\subsection{Membrane of a normal web}\label{sec:membrane}
	Recall that in \cref{sec:main} we have fixed a coloring $c(\cdot)$ on $V(Q)$ satisfying \cref{def:color}.  By \cref{lem:colordist}, we have 
	\begin{lemma}\label{lem:color}
		For any two vertices $v,w$, we have $\frac{1}{3}\left(\dist(v,w) +c(v) - c(w)\right) \in \Z$.
	\end{lemma}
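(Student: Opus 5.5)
This is a direct consequence of \cref{lem:colordist} with $k=3$: the integer $\dist(v,w) + c(v) - c(w)$ is divisible by $3$.

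\cref{lem:colordist} applies because the coloring $c(\cdot)$ fixed in \cref{sec:main} satisfies \cref{def:color}. The one point to check is that, for a CAT(0) planar graph, directed distance is computed exactly as in \eqref{eq:direct} with $k=3$: traversing an edge forwards costs $1$ and backwards costs $2$. This includes cut edges, by the Convention above. Also, since $Q$ is connected, $\dist(v,w)$ is finite.

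Granting this, the proof of \cref{lem:colordist} is an induction along a shortest path from $v$ to $w$. Each forward step $x \to y$ changes the color by $c(y)-c(x) \equiv 1 \pmod 3$ and adds $1$ to the length. Each backward step changes the color by $c(y) - c(x) \equiv -1 \equiv 2 \pmod 3$ and adds $2$ to the length. So every step preserves the congruence $\text{length} \equiv c(\text{current}) - c(v) \pmod 3$. Summing over the path gives $\dist(v,w) \equiv c(w) - c(v) \pmod 3$. Rearranging, $\dist(v,w) + c(v) - c(w) \equiv 0 \pmod 3$, so dividing by $3$ gives an integer.

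There is no real obstacle. The only detail to confirm is that backward cut edges are assigned length $2$ rather than something else, and this is exactly what the Convention on cut edges specifies.
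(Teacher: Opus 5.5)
Your proposal is correct and follows the paper's own route: the paper states this lemma as an immediate consequence of \cref{lem:colordist} with $k=3$, and that lemma is proved by the same induction along a shortest path that you write out. Your check that backward traversals, including backward traversals of cut edges, cost $2$ is the right compatibility check with \eqref{eq:direct}, and the paper confirms it in its remark immediately after the cut-edge convention.
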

	
	As in \eqref{eq:modify}, we write $c(\z):= (c(z_1),c(z_2),\ldots,c(z_n)) \in \{0,1,2\}^n$ and define $\bp_\bullet := \tropP_\bullet^{-c(\z)/3}$.  The following result combines \cref{prop:scaffoldint} with \cref{thm:main}(1,3).
		
	\begin{proposition}
		Let $(Q,\z)$ be a labeled CAT(0) planar graph.  Then we have $\bp_\bullet \in \Dr(3,n)(\Z)$.  If $(Q,\z)$ is normal, then $\bp_\bullet \in \Trop_{>0}\Gr(3,n)(\Z)$.
	\end{proposition}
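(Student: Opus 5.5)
The statement is a direct combination of earlier results, and the plan is as follows. First, by \cref{thm:main}(1), every labeled CAT(0) planar graph $(Q,\z)$ is a weak scaffold for $\tropP_\bullet = \tropP_\bullet(Q,\z) \in \Dr(3,n)$. The coloring $c(\cdot)$ required in \cref{def:normalmodel} is the one fixed in \cref{sec:main}, which exists because $Q$ is tiled by oriented triangles. Then \cref{prop:scaffoldint} (with $k=3$) applies verbatim and gives $\bp_\bullet \in \Dr(3,n)(\Z)$.

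For integrality itself, I would recall the computation behind \cref{prop:scaffoldint}. For $I = \{i_1,i_2,i_3\}$ with distance minimizer $v$, we have
$$\bp_I = -\tfrac{1}{3}\sum_{a=1}^3 \bigl(\dist(v,z_{i_a}) - c(z_{i_a})\bigr).$$
By \cref{lem:color}, each summand $\dist(v,z_{i_a}) + c(v) - c(z_{i_a})$ is divisible by $3$. Hence their sum, which is the bracketed quantity plus $3c(v)$, is divisible by $3$, and so $\bp_I \in \Z$. Membership in the Dressian is preserved, because $\bp_\bullet$ differs from $\tropP_\bullet$ by the lineality action of $-c(\z)/3$, and lineality preserves the tropical Pl\"ucker relations \eqref{eq:tropPlucker}.

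For the second sentence, suppose $(Q,\z)$ is normal. By definition it is then CCW-labeled, so \cref{thm:main}(2) gives $\tropP_\bullet \in \Trop_{>0}\Gr(3,n)$, that is, $\tropP_\bullet$ satisfies the positive relations \eqref{eq:postropPlucker}. The lineality action preserves these relations too: each side of \eqref{eq:postropPlucker} changes by the same amount $-(2x_S + x_a+x_b+x_c+x_d)$. Therefore $\bp_\bullet$ is a positive tropical Pl\"ucker vector. Combined with integrality, and with the identification of the positive Dressian with $\Trop_{>0}\Gr(3,n)$ from \cite{ALS,SW20}, this gives $\bp_\bullet \in \Trop_{>0}\Gr(3,n)(\Z)$. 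Only parts (1) and (2) of \cref{thm:main} are actually needed here. The reference to (3) in the text just records that normal graphs are moreover scaffolds.

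All of the mathematical content lies in \cref{thm:main}(1),(2), whose proofs are deferred to \cref{sec:mainproof}. Taking those as given, there is no genuine obstacle. The only point needing care is the divisibility bookkeeping, where one must use \cref{lem:color} with the common vertex $v$ so that the terms $c(v)$ contribute the harmless $3c(v)$.
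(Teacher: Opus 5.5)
Your proposal is correct and follows the paper's own justification, which simply combines \cref{prop:scaffoldint} with \cref{thm:main}: part (1) makes $(Q,\z)$ a weak scaffold, giving $\bp_\bullet \in \Dr(3,n)(\Z)$, and for normal $(Q,\z)$ positivity follows from the CCW-labeling together with lineality invariance. You are also right that part (2), not part (3) as the text cites, is what actually supplies positivity, since part (3) asserts only the scaffold property.
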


	\begin{theorem}\label{thm:membrane}
		Let $W$ be a normal web with dual $(Q,\z)$.  The formula (cf. \eqref{eq:embedding})
		\begin{equation}\label{eq:mu}
		\mu: V(Q) \to \Z^n/\one, \qquad \mu(v):= \frac{1}{3}\left((\dist(v,1),\dist(v,2),\ldots,\dist(v,n)) + c(v)\one - c(\z) \right).
	\end{equation}
		defines a membrane for $W$.
	\end{theorem}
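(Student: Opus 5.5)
We must show that for every edge $v\to w$ of $Q$ with dual edge $e$ of $W$, we have $\mu(w)-\mu(v)=e_{[i+1,j]}$ modulo $\one$. Here $\eta_i$ is the strand through $e$ with $v$ on its right and $\eta_j$ is the strand through $e$ with $w$ on its right. First, $\mu(v)\in\Z^n/\one$ by \cref{lem:color}. Next, for $v\to w$ we have $c(w)=c(v)+1$, so
$$\mu(w)-\mu(v)=\tfrac13\bigl(\dist(w,z_a)-\dist(v,z_a)+1\bigr)_{a\in[n]}.$$
As in the proof of \cref{lem:adjcolor}, each coordinate difference $\dist(v,z_a)-\dist(w,z_a)$ lies in $\{1,-2\}$. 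So $\mu(w)-\mu(v)=e_S$, where
$$S=\{a \mid \dist(w,z_a)=\dist(v,z_a)+2\}$$
is the set of labels for which going through $w$ does not help. It therefore suffices to prove the combinatorial statement: for the edge $v\to w$, $\dist(v,z_a)=1+\dist(w,z_a)$ if and only if $a\notin[i+1,j]$, where the interval is cyclic.

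The plan is to characterize geodesics via strands. Each strand $\eta$ of the normal web cuts the disk into two sides. The strands $\eta_i$ and $\eta_j$ through $e$ cross there, and by \cref{prop:strandintersect} they meet only there. Their four ends cut the boundary into four arcs, and the labels $z_a$ fall into these arcs. The boundary arc between the endpoints $b_i$ and $b_j$ lying on the side of $w$ should be exactly $[i+1,j]$; here we use the labeling convention that $z_a$ sits at the face between $b_{a-1}$ and $b_a$. The key claim is that a target $z_a$ in this wedge is reached more cheaply by leaving $v$ against the edge, which costs $2$, than by first stepping to $w$ at cost $1$. Equivalently, $z_a$ is then closer to $v$ than to $w$ by one, which matches $\dist(w,z_a)=\dist(v,z_a)+2$.

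To prove this I would establish a "strand-crossing" distance formula. For any face $x$, $\dist(x,z_a)$ should be computed by counting the strands separating $x$ from $z_a$, weighted $1$ or $2$ according to the orientation in which a path crosses them. Crossing a strand corresponds to traversing a dual edge, and the orientation of that edge is determined by which side has the black vertex, hence by the strand direction. The lower bound holds because any path must cross every separating strand at least once. For the upper bound I would do induction on $\dist$, using the CAT(0) property (interior degree at least $6$) to build a path crossing each separating strand exactly once, i.e.\ a geodesic. Normality is used here: because strands with the same label never cross, the weights depend only on labels and orientations, and no strand is forced to be crossed twice. Given the formula, comparing $v$ and $w$ reduces to checking which of $\eta_i,\eta_j$ separates them from $z_a$ and in which direction. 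That check yields precisely the wedge $[i+1,j]$.

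The main obstacle is this upper bound, namely showing that a path crossing every separating strand exactly once exists with the right orientations. I would try first to reduce it to the focal-point and directed-distance results of \cref{part:CAT0}. Failing that, I would check the claim locally on each triangle using \cref{prop:membrane-triangle} and propagate it along $Q$ by connectivity, since the membrane condition is local and $\mu$ is determined up to translation.
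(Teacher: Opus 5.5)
Your reduction is correct and matches how the paper's proof begins: $\mu(w)-\mu(v)\equiv e_S$ with $S=\{a \mid \dist(w,z_a)=\dist(v,z_a)+2\}$. So everything depends on proving $S=[i+1,j]$. That identification is the whole content of the theorem, and the proposal does not supply it.

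The strand-crossing distance formula cannot hold as you state it. Traversing the dual edge of $e$ crosses exactly the two strands through $e$: one from its right side to its left, the other from left to right. This is true both for the forward traversal $v\to w$ (cost $1$) and for the backward traversal $w\to v$ (cost $2$). So no weighting of separating strands by crossing direction alone can reproduce $\dist$. A strand's weight would have to depend on where its label sits cyclically relative to $a$, and that is exactly what you are trying to prove.

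The upper bound is then left open. The fallback via \cref{prop:membrane-triangle} is circular: that proposition describes the triangles of a membrane already known to exist, and identifying such a membrane with $\mu$ requires the edge condition itself. Your geometric picture is also inverted. $S$ is the arc on the side of the tail $v$, not of $w$: if $v=z_a$ then $\dist(v,z_a)=0$ and $\dist(w,z_a)=2$, so $a\in S$.

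The missing idea is to fix the edge $v\to w$ and vary the target, comparing $z_a$ with the next boundary vertex $z'_a$. You do not need to compute $\dist(\cdot,z_a)$ globally. The argument runs as follows.
\begin{enumerate}
\item By \cref{cor:localdistance}, whether $a\in S$ is determined by the neighbor set $F(v,z_a)$, since that set records which out-neighbor of $v$ lies at distance $\dist(v,z_a)-1$.
\item By \cref{lem:strandstripF}(1), if $v$ is off the strand strip of $\eta_a$, then $F(v,z_a)=F(v,z'_a)$. Hence $a$ and its successor have the same membership in $S$.
\item So membership can change only when the edge $v\to w$ crosses $\eta_a$, that is, only at $a\in\{i,j\}$.
\item The explicit distances on a strand strip (\cref{lem:stranddist}, \cref{lem:strandstrip}) show $j\in S$ and $i\notin S$, which forces $S=[i+1,j]$.
\end{enumerate}
This is the paper's argument. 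It uses only the local directed-distance lemmas and strand-strip lemmas already established, and needs no global distance formula.
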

	
	Recall that we defined a matroid $M_v$ for $v \in V(Q)$ in \cref{def:Mv}.
	\begin{proposition}\label{prop:Mv-positroid}
		Let $(Q,\z)$ be a normal CAT(0) planar graph.  For any $v \in V(Q)$, the matroid $M_v$ is a loopless positroid.
	\end{proposition}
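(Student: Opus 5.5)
The proof splits into three parts: first that $M_v$ is a positroid, then that it is nonempty, and finally that it is loopless. Positivity is taken from \cref{thm:main}, and the combinatorial content is concentrated in the last two parts.

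\emph{Positroid.} Since $(Q,\z)$ is normal, it is in particular CCW-labeled. So by \cref{thm:main}(2), $\tropP_\bullet = \tropP_\bullet(Q,\z)$ is a positive tropical Pl\"ucker vector. By \cref{prop:Mv}, whenever $M_v$ is nonempty we have $M_v = M(\tropP^{-\b_v}_\bullet)$. The lineality action preserves positivity, so $\tropP^{-\b_v}_\bullet$ is again a positive tropical Pl\"ucker vector, and \cref{lem:mintropvec} then shows that $M_v$ is a positroid.

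\emph{Nonemptiness.} This says every vertex of a normal $(Q,\z)$ is normal in the sense of \cref{def:normalvertex}. I would work through the dual normal web $W$, using its strands. Each vertex $v$ of $Q$ is a face of $W$, and $v$ lies on some triangle $T$, corresponding to an interior trivalent vertex $a$ of $W$ (the degenerate one-vertex case is trivial). Let $(i,j,k)$ be the strand triple of $a$; by normality these labels are distinct. The claim I would aim for is that every vertex of $T$, in particular $v$, is a distance minimizer for $\{z_i,z_j,z_k\}$. To prove it:
\begin{enumerate}
\item Each strand through $a$ runs along a boundary-to-boundary path of faces. This gives a directed path in $Q$ from $v$ to the corresponding labeled boundary vertex $z_i$, whose length is computable.
\item The focal point characterization of minimizers (\cref{thm:focal}) would be used to certify that the vertices of $T$ are local, hence global, minimizers.
\item \cref{prop:strandintersect}, which says strands meet at most once and so behave like geodesics, controls the distances along these paths.
\end{enumerate}

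\emph{Looplessness.} We need every $m\in[n]$ to lie in some $I\in M_v$. I would vary the triangle: as $T$ ranges over the triangles at $v$, the strands leaving those corners sweep out around $v$. The idea is to show that for each $m$ there is a triple containing $m$ that is still minimized at $v$. One version: take a geodesic path from $v$ to $z_m$ and extend it through $v$ into a tripod with two other labeled vertices, in the ``opposite cone'' at $v$. Such a cone exists because interior vertices have degree $\geq 6$ and acute boundary vertices are labeled. Then check via the focal criterion that $v$ is a minimizer for that triple.

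\emph{Alternative route.} One could instead use \cref{thm:membrane} and \cref{prop:membrane-triangle}. These identify $\mu(v)$ with a point of $L(\bp_\bullet)$, and by \cref{prop:Spe} looplessness of $M_v$ is equivalent to $\mu(v)$ lying in the tropical linear space. It then suffices to show the membrane lies in $L(\bp_\bullet)$. This can be checked triangle by triangle, since each triangle is a translate of $\conv(\pm h_i,\pm h_j,\pm h_k)$. I expect this local containment, or equivalently the opposite-cone construction, to be the main obstacle. Near the boundary, vertices of degree 2 or 4 leave little room to build the opposite cone, and this is exactly where normality, meaning that acute vertices are labeled, must be invoked.
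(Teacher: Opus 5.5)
Your positroid step is correct and differs from the paper. Since \cref{thm:main}(2) is proved in \cref{sec:mainproof} without using this proposition, $\tropP^{-\b_v}_\bullet$ is a positive tropical Pl\"ucker vector, and \cref{prop:Mv} with \cref{lem:mintropvec} shows that $M_v$ is a positroid once it is nonempty. The paper instead builds an explicit rank-three positroid from the sets $F(v,z_m)$ (parallel classes and rank-two cyclic intervals, via \cref{prop:rank3}) and identifies it with $M_v$ through \cref{thm:focal}. That costs more, but it delivers looplessness at the same time. For nonemptiness you need no new argument: it is exactly \cref{prop:normal}. Your strand-triple substitute is not established by the three listed steps, because it needs \emph{every} vertex of $T$ to minimize $\{z_i,z_j,z_k\}$. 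By contrast, \cref{prop:focalT} only asserts that some vertex of $T$ is a minimizer, and it is set up in the standard case. In a non-standard normal web, a strand ending at a split boundary vertex carries the label of the preceding labeled vertex, so the standard pictures do not transfer automatically.

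The genuine gap is looplessness. The ``opposite cone'' step assumes what has to be proved: that for each $m$ there are labeled $z_j,z_k$ such that $F(v,z_m),F(v,z_j),F(v,z_k)$ admit an SDR and no two of $z_m,z_j,z_k$ are parallel at $v$. You justify the cone by degree $\geq 6$ and labeled acute vertices, but the real input is that the sets $F(v,z)$, $z\in\z$, cover all of $F(v)$. This is what the proof of \cref{prop:normal} establishes. The strand-strip \cref{lem:ubound} shows that the $F$-sets of consecutive acute vertices overlap, and normality makes those acute vertices labeled. \cref{lem:cyclic} then arranges all $F$-sets as contiguous arcs in weak cyclic order. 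Only with this covering, and some care about parallel pairs, does \cref{thm:focal} give a basis of $M_v$ through each $m$.

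Your alternative route does not close the gap either. \cref{thm:membrane} is proved from \cref{thm:main}(3), and that proof invokes the present proposition to place $\mu(V(Q))$ inside $L(\bp_\bullet)$, so relying on it is circular. Moreover, \cref{prop:membrane-triangle} describes triangles only up to translation. Membership in $L(\bp_\bullet)$ (the minimum in \cref{def:tropical} attained twice for every $\tau$) depends on where each point actually sits, so no triangle-by-triangle shape check can certify it.
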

	
	We will prove \cref{thm:membrane} and \cref{prop:Mv-positroid} in \cref{sec:positroids}.

\begin{example}\label{example: big 312}
	
	Let us apply \cref{thm:membrane} to compute the image in the membrane and tropical linear space of the triangle $(v,z_3,z_5)$ in \cref{fig:bigexample312}.  We compute the vertices of the triangle $(v,z_3,z_5)$.  Setting $c(v)=0$, we have
	$$c(\z) = (0,1,2,0,1,2,0,1,2,0,1,2).$$
	By computing the directed distances we obtain 
	\begin{eqnarray*}
		\mu(v) & = & \frac{1}{3}\left((\delta(v,1),\ldots, \delta(v,12)) + 0\cdot \one - c(\z) \right)\\
		& = & \frac{1}{3}\left((3,1,2,3,1,2,3,1,2,3,1,2)- (0,1,2,0,1,2,0,1,2,0,1,2) \right)\\
		& = & (1,0,0,1,0,0,1,0,0,1,0,0).
	\end{eqnarray*}
	Similarly,
	\begin{eqnarray*}
		\mu(z_3) & = & \frac{1}{3}\left((4,2,0,1,2,3,4,2,3,4,2,3) + 2\cdot \one - (0,1,2,0,1,2,0,1,2,0,1,2)\right)\\
		& = & (2,1,0,1,1,1,2,1,1,2,1,1).
	\end{eqnarray*}
	Finally, $\mu(z_5) = (2,1,0,1,0,0,1,1,1,2,1,1)$, hence up to translation we have
	\begin{eqnarray*}
	(\mu(v),\mu(z_3),\mu(z_5)) & \equiv & (-h_{2},-h_4,-h_7) = (-e_{[1,2]},-e_{[1,4]},-e_{[1,7]}),
	\end{eqnarray*}
	agreeing with \cref{prop:membrane-triangle}.  
	For instance, the difference of the two is the indicator vector of the cyclic interval $[5,2]$:
	\begin{eqnarray*}
		\mu(z_3) - \mu(v) & = & (1,1,0,0,1,1,1,1,1,1,1,1).
	\end{eqnarray*}	
	One can check that the image of $\mu$ satisfies the axioms in \cref{def:membrane} of the membrane. 
	In \cref{example: octagon end} we return to compute the facet inequalities defining the matroid polytope $P_{M_v}$ corresponding to $v$. 


\end{example}
	\begin{figure}[h!]
	\centering
	\includegraphics[width=0.65\linewidth]{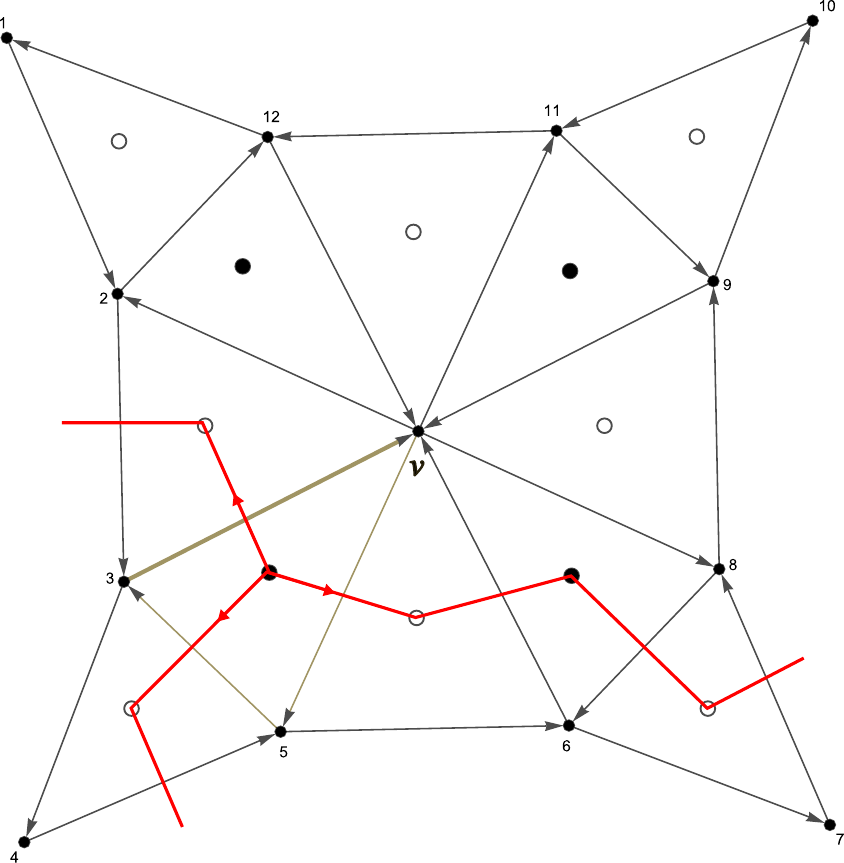}
	\caption{The CAT(0) planar graph in \cref{example: big 312}.  The brown edge connecting $z_3$ and $v$ is computed and is found to be equal to $e_{[5,2]}$.  The red strand triple $(2,4,7)$ encodes the vertices of the image under $\mu$ of the triangle $(v,z_3,z_5)$ in the membrane.  Additionally, all eight facets of $M_v$ are identified in \cref{example: octagon end}.}
	\label{fig:bigexample312}
\end{figure}


	\subsection{Keel-Tevelev membranes}
	Let $\K:= \C((t))$ and $\O:=\C[[t]]$.  A \emph{lattice} $\Lambda$ is a free $\O$-submodule of $\K^3$ of full rank.  We define an equivalence relation on lattices by $\Lambda \sim \Lambda'$ if $\Lambda = c \Lambda'$ for $c \in \K^\times$.  Let $\B$ denote the affine $\PGL(3)$-building (see \cref{sec:buildings}), which we view as a simplicial complex, on the set of equivalence classes $[\Lambda]$ of lattices.  
	
	\begin{definition}
		Let $M=(v_1,\ldots,v_n) \in (\K^3)^n$.  The \defn{membrane} $[M]$ is the set of lattices of the form
		$$
		{\rm span}_{\O}\left(t^{a_1} v_1, \ldots, t^{a_n} v_n\right)
		$$
		as $\a = (a_1,\ldots,a_n)$ varies over $\Z^n$.  The membrane $[M]$ has the structure of a simplicial complex inherited from $\B$.
	\end{definition}
	
	Let $\tropP_\bullet \in \Trop \, \Gr(3,n)(\Z)$ arise from a $3 \times n$ matrix $M$ with columns $v_1,\ldots,v_n \in \K^3$.  Keel and Tevelev \cite{KT} define an isomorphism of simplicial complexes (see \cref{thm:KT})
	$$
	\iota: L(\tropP_\bullet) \to [M].
	$$
	
	Using tropical linear spaces, we connect the notions of Lam-Postnikov membranes and Keel-Tevelev membranes.  Let $\K_{>0} \subset \K$ denote the semifield of formal power series with a positive leading coefficient.
	\begin{theorem}\label{thm:LPKT}
		Let $\bp_\bullet \in \Trop_{>0} \Gr(3,n)(\Z)$ be the (modified) positive tropical Pl\"ucker vector of a normal CAT(0) planar graph $(Q,\z)$, and suppose that $\bp_\bullet$ is the tropical Pl\"ucker vector of $V = (v_1,v_2,\ldots,v_n) \in \Gr(3,n)(\K_{>0})$.  The composition $\iota \circ \mu$ identifies $Q$ with the (one-skeleton of the) diskoid of Fontaine--Kamnitzer--Kuperberg, sitting inside the membrane $[M]$.  The map $\iota$ restricted to $\membrane(W) = \mu(Q)$ sends the LP membrane $\membrane(W)$ into the KT membrane $[M]$.
	\end{theorem}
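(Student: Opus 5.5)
The plan is to combine three ingredients: \cref{thm:membrane}, which says $\mu$ is an LP membrane; the explicit form of the Keel--Tevelev isomorphism $\iota$ (\cref{thm:KT}); and the description of the FKK diskoid as a subcomplex of the building $\B$. Since $\iota$ is an isomorphism of simplicial complexes $L(\bp_\bullet)\to[M]$, it suffices to do three things. First, show that $\mu(Q)$ is a subcomplex of the standard simplicial structure on $L_\Z(\bp_\bullet)$, i.e.\ that edges go to edges and triangles to 2-simplices. Second, check that $\mu$ is injective on vertices. Third, identify the image $\iota\circ\mu(Q)$ with the diskoid. The first two points are exactly the scaffold property, \cref{thm:main}(3) together with \cref{prop:embedding}: each $M_v$ is loopless (this is \cref{prop:Mv-positroid}), so $\mu(v)\in L_\Z(\bp_\bullet)$. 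Moreover, adjacent vertices differ by $e_S$ with $S$ proper and nonempty (\cref{lem:adjcolor}), and by \cref{def:membrane} this $S$ is the cyclic interval $[i+1,j]$ of the strands through the dual edge.

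For triangles I would invoke \cref{prop:membrane-triangle}. A white triangle maps to a translate of $\conv(h_i,h_j,h_k)$ and a black one to a translate of $\conv(-h_i,-h_j,-h_k)$, where $i,j,k$ are distinct by normality. Pairwise differences are then indicator vectors of nested intervals, so the three points form a 2-simplex of the flag complex on $\Z^n/\one$. Because $L_\Z(\bp_\bullet)$ is flag, the triangle lies in $L_\Z$. Hence $\iota$ carries each triangle of $\mu(Q)$ to a 2-simplex of $[M]\subset\B$. Concretely, $\iota(\x)=[\mathrm{span}_\O(t^{x_1}v_1,\ldots,t^{x_n}v_n)]$, possibly up to a sign convention on $\x$ that I would fix from \cref{thm:KT}. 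This already gives the second assertion: the LP membrane $\membrane(W)=\mu(Q)$ is sent into the KT membrane $[M]$.

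For the identification with the diskoid, I would use that the FKK diskoid associated to $W$ and a positive configuration $V$ is built inductively, face by face, as a CAT(0) triangulated disk mapping into $\B$. Its boundary lattices are $L_a=\mathrm{span}_\O$ of suitable partial flags of the $v_i$ (for example $t^{\ast}v_a$ together with the plane $\langle v_{a-1},v_a\rangle$), and it is canonically isometric to the dual of $W$. My approach is to check two things. First, on boundary vertices $z_a$, $\iota(\mu(z_a))$ is the FKK boundary lattice. This is a direct computation using $\dist(z_a,z_b)$ together with positivity of $V$, which guarantees the expected valuations of minors. Second, both $\iota\circ\mu$ and the FKK map are simplicial, isometric on each triangle, and $Q$ is CAT(0). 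A locally isometric simplicial map from a CAT(0) disk into the CAT(0) building is determined by its boundary values, by uniqueness of geodesics and a developing argument, so the two maps agree.

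The main obstacle is this last uniqueness step, and in particular matching the conventions of FKK with ours. I would first try to avoid the uniqueness argument entirely by verifying directly that the FKK recursive construction applied to $V$ produces lattices $\mathrm{span}_\O(t^{\mu(v)_i}v_i)$. This would be an induction over adding one triangle at a time, using \cref{def:membrane}: each step multiplies a cyclic interval of the coordinates by $t$, which matches the FKK local rule. If that bookkeeping becomes unwieldy, I would fall back to the CAT(0) uniqueness argument.
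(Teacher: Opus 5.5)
Your handling of the second assertion is fine: once the scaffold property (\cref{thm:main}(3)) places $\mu(V(Q))$ injectively in $L_\Z(\bp_\bullet)$, \cref{thm:KT} carries $\mu(Q)$ into $[M]$.

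The gap is in the rigidity step you use to identify $\iota\circ\mu$ with the FKK embedding $\theta$. Being simplicial and isometric on each triangle does not make either map a local isometry for the CAT(0) metric. At interior vertices of degree $\ge 8$, such as the octagon centre of \cref{fig:312buildingex}, neither map is one.

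The link of a vertex of the $\PGL(3)$ building is the incidence graph of $\P^2(\C)$, which has diameter $\pi$. An injective $8$-cycle in it has opposite vertices of the same type, so they sit at link distance $2\pi/3$. Hence a geodesic of $Q$ through $v$ with angle $4\pi/3$ on both sides is sent to a bent path. Concretely, two neighbours of $v$ at distance $2$ in the piecewise Euclidean metric on $Q$ land at distance $\sqrt3$ in $\B$. So ``uniqueness of geodesics'' cannot be applied to geodesics of $Q$ in general. A developing argument cannot substitute, since $\B$ branches along every edge.

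The rigidity statement you want is still true for simplicial maps injective on vertex stars. Both $\theta$ (it satisfies $\dist_Q=3d_1$, \cref{prop:dagree}) and $\iota\circ\mu$ have this property. The proof, however, must use special paths:
\begin{itemize}
\item Through an interior vertex $v$, take a path that turns by exactly $\pi$ on its left at every vertex it meets.
\item This path is a local geodesic of $Q$, so it reaches $\partial Q$ at both ends.
\item Its image is a local geodesic of $\B$, because a non-backtracking path of length $\le\pi$ in a link of girth $2\pi$ is a geodesic.
\item So the image is the unique geodesic between two boundary images, and this pins down the image of $v$.
\end{itemize}

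With that repair, your route is a genuine alternative to the paper's. The paper proves no rigidity statement. It uses Akhmejanov's theorem to get $\theta(Q)\subset\conv(P)\subset\maxconv(P)\subset[M]$, and then identifies each $\theta(v)$ through its Keel--Tevelev coordinates. Specifically, Le's formula expresses $d_1$ via $c(\cdot,\cdot)$ and $\val\det$; applied to a positive sub-polygon through $v$ and $z_j$, together with $\dist_Q=3d_1$, it gives $u_j(\theta(v))=\mu_j(v)$ as in \eqref{eq:muu}. Your approach avoids the convex-hull theorem and the positivity of sub-polygons, and it yields $\theta(Q)\subset[M]$ as a consequence rather than an input.

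It does not avoid the boundary matching, and there your ``direct computation'' lacks its key inputs. The distances $\dist(z_a,z_b)$ reach the building only through FKK's comparison $\dist_Q=3d_1$. Your partial-flag description of the boundary lattices is only a guess. In the paper, the polygon is a positive configuration $P\subset[M]$ with $v_i$ the leading generator of $\Lambda_i$. Positivity is what makes $v_j$ the tight generator, so that the $c$-term in Le's formula equals $-u_j(\Lambda_i)$; this is how one obtains $\iota\mu(z_i)=\Lambda_i$.
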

	We prove \cref{thm:LPKT} in \cref{sec:KT}.

	\section{Application to symbol alphabet recursion}\label{sec:RSV}
	
	In this section we explain how our results prove a conjecture of Ren--Spradlin--Volovich \cite{RSV} concerning symbol alphabets for $N=4$ super Yang-Mills amplitudes.  A full review of the physics background is beyond the scope of this work.  We will be content to show that our construction explains the conjectural recursion of \cite[(4.10)]{RSV}.
	
	In the following, we set $\tropP_\bullet(W) := \tropP_\bullet(Q(W),\z(W))$.
	\begin{proposition}\label{prop:RSV}
	We have the following identity:
	$$
		 \tropP_\bullet\left( \vcenter{\hbox{\begin{tikzpicture}
	[scale=0.6,bvert/.style={circle, fill=black, inner sep=0, minimum size=5pt},
			wvert/.style={circle, draw=black, fill=white, inner sep=0, minimum size=5pt}]
	\node[wvert] (x) at (0,0) {};
	\node[bvert] (y) at  (60:1) {};
	\node[wvert] (w) at  ($(y)+(120:1)$) {};
	\node[bvert] (a) at ($(w) + (180:1)$) {};
	\node[bvert] (b) at ($(w) + (60:1)$) {};
	\node[wvert] (z) at  ($(y)+(1,0)$) {};
	\node[bvert] (v) at  ($(z)+(-60:1)$) {};
	\node[bvert] (c) at ($(z)+(60:1)$) {};
	\node at (1,0.2) {$F$};
	\draw (x)--(y);
	\draw[red,thick] (y)--(z);
	\draw (z)--(v);
	\draw (y)--(w)--(a);
	\draw (z)--(c);
	\draw (w)--(b);
	\draw[dashed] (x) to[out = -60, in=240] (v);
	\node at ($(a) + (-0.3,0.2)$) {$a$};
	\node at ($(b) + (-0.3,0.2)$) {$b$};
	\node at ($(c) + (0,0.4)$) {$c$};
	\end{tikzpicture}}}\right) \equiv
	\iotaStd\left(\tropP_\bullet\left( \vcenter{\hbox{\begin{tikzpicture}
	[scale=0.6,bvert/.style={circle, fill=black, inner sep=0, minimum size=5pt},
			wvert/.style={circle, draw=black, fill=white, inner sep=0, minimum size=5pt}]
	\node[bvert] (x) at (2,0) {};
	\node[wvert] (y) at  ($(x)+(100:1)$) {};
	\node[bvert] (aa) at  ($(y)+(120:1)$) {};
	\node[bvert] (c) at  ($(y)+(60:1)$) {};
	\node[wvert] (v) at  ($(0,0)$) {};
	\node[bvert] (z) at  ($(v)+(90:1)$) {};
	\node[wvert] (w) at  ($(z)+(60:1)$) {};
	\node[wvert] (u) at  ($(z)+(160:1)$) {};
	\node[bvert] (cc) at  ($(w)+(130:1)$) {};
	\node[bvert] (m) at ($(w)+(70:1)$) {};
	\node[bvert] (b) at  ($(u)+(90:1)$) {};
	\node[bvert] (a) at ($(u)+(-1,0)$) {};
	\draw (x)--(y)--(c);
	\draw[red,thick] (y)--(aa);
	\draw (v)--(z)--(u)--(a);
	\draw (u)--(b);
	\draw[red,thick] (z)--(w)--(m);
	\draw[red,thick] (w)--(cc);
	\draw[dashed] (v) to[out = -60, in=240] (x);
	\node at ($(aa) + (-0.3,0.2)$) {$a'$};
	\node at ($(a) + (-0.3,0.2)$) {$a$};
	\node at ($(m) + (0,0.3)$) {$m$};
	\node at ($(b) + (-0.3,0.2)$) {$b$};
	\node at ($(c) + (0,0.3)$) {$c$};
	\node at ($(cc) + (0,0.3)$) {$c'$};
	\end{tikzpicture}}}\right) - \h_{m,a,c}
\right)
	$$
	up to lineality.  Here, $a,b,c,\ldots$ are boundary vertices and $F$ is an interior face.  The rest of the webs are unchanged.  The map $\iotaStd$ sends $a' \mapsto a$ and $c' \mapsto c$ when those letters index a planar basis element.
	\end{proposition}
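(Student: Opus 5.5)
Both sides can be computed with the planar basis expansion of \cref{thm:pbexpansion}. It expresses $\tropP_\bullet(W)$, modulo $\Lin_{3,n}$, as a signed sum of $\h_{i_T,j_T,k_T}$ over the interior vertices of $W$, i.e.\ over the triangles of $Q(W)$. Plus signs go with white vertices and minus signs with black vertices, and each term is indexed by a strand triple. So I will prove the identity term by term. The parts of the two webs that lie away from the pictured region agree, and I will show that each of their vertices has the same strand triple on both sides once the relabeling $\iotaStd$ is applied. The vertices inside the pictured region will then be compared directly.

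\textbf{Step 1 (strands outside the local picture).} The two local pictures must present the same boundary behaviour to the rest of the web, up to $\iotaStd$. Concretely, every strand that enters the region must leave it at the same exterior endpoint, or at the endpoint identified with it by $a'\mapsto a$, $c'\mapsto c$. I will check this by tracing the strands through each configuration with the rule: turn left at white vertices, turn right at black ones. On the right-hand web the red edges form a strand that is created at the new boundary vertex $m$. Its branches end at $a'$ and $c'$, which duplicate $a$ and $c$. Under $\iotaStd$ these endpoints become the labels used by the corresponding strands on the left-hand web. From this, every vertex outside the region has the same strand triple on both sides after relabeling. By \cref{prop:strandintersect}, strands intersect at most once, which keeps the tracing unambiguous.

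\textbf{Step 2 (local vertices).} Next I list the interior vertices inside the pictured region on each side, with their colours and strand triples. On the left: the white vertex $x$, the black vertex $y$, the white vertex $w$ with $a,b$ attached, and the white vertex $z$ with $c$ attached. On the right there are additional vertices: the white vertices $y$ and $w$ and the black vertex $z$ near $m$. I then compare the two signed sums. I expect all terms to cancel or match under $\iotaStd$ except one white contribution $\h_{m,a,c}$ on the right. That contribution comes from the vertex through which the strands ending at $m$, $a'$ (that is, $a$) and $c'$ (that is, $c$) pass. Subtracting it is exactly the correction in the statement.

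\textbf{Step 3 (lineality and labels).} The boundary vertex $m$ appears only on the right-hand side. So I must make sure that, after $\iotaStd$ and the removal of $\h_{m,a,c}$, no remaining term involves $m$; equivalently, $m$ occurs in the right-hand expansion only through that single term. I also need to fix how $\iotaStd$ acts on planar basis elements whose index sets collide after relabeling, and to confirm that both webs are normal. Normality is what makes \cref{thm:pbexpansion} applicable: by \cref{thm:normalnormal}, normal webs have dual graphs that are normal CAT(0) planar graphs. The companion duality theorem from \cite{ELnc} may be needed to justify identifying the strand-triple bookkeeping with the tropical Plücker vectors, rather than just with formal sums.

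\textbf{Main obstacle.} I expect the hardest part to be the strand tracing in Step 1. The face $F$ and the dashed remainder of the web are arbitrary, so the argument has to show that the local modification reroutes strands in a controlled way regardless of the surroundings. It also has to show that the terms created by the new vertices cancel except for $\h_{m,a,c}$. I would first settle this on the smallest instance, where the dashed part is a single edge, and then argue that the local computation is unaffected by what lies outside.
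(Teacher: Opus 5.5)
Your plan is the paper's proof. Apply \cref{thm:pbexpansion} to both webs, observe that every strand triple away from $F$ agrees after $\iotaStd$, and compare the local terms $\h_{x,a,b}-\h_{x,a,c}+\h_{a,c,v}$ on the left with $\h_{x,a,b}+\h_{a,c',m}-\h_{x,a,c'}+\h_{a',c,v}$ on the right, where $x,v$ are the targets of the strands running down the two sides of $F$.

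One correction to Step 1: the red edges on the right do not form a branching strand from $m$. Instead, cutting the edge $y$--$z$ of the left web makes the two strands that crossed it end at $c'$ and $a'$ instead of $c$ and $a$. The only strand ending at $m$ is $c'\to w\to m$, so $m$ appears only in $\h_{a,c',m}$.
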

	
	\begin{proof}
	Let the first diagram be $Q$ and the modified one be $Q'$.
	Then by \cref{thm:pbexpansion}, the planar basis expansions are
	\begin{align*}
	\tropP_\bullet(Q) &\equiv \h_{x,a,b} - \h_{x,a,c} + \h_{a,c,v} + \cdots \\
	\tropP_\bullet(Q') &\equiv \h_{x,a,b} + \h_{a,c',m} - \h_{x,a,c'} + \h_{a',c,v} + \cdots 
	\end{align*}
	where the $\cdots$ terms agree.  Here, $x$ (resp. $v$) are the targets of the strands that head down along the perimeter of $F$ on the left (resp. right) side.
	\end{proof}
	
	\begin{rem}
	The \emph{kinematic function} $X_R(W)$ of \cite{RSV} can be defined to be the lift of $\rho(\Psi(\tropP_\bullet(W)))$ where $\rho: {\mathbb T}^{3,n} \to \Trop_{>0} \Gr(3,n)$ denotes the positive parametrization of \cite{ELnc}, and $\Psi$ is defined in \cref{sec: planar basis}.
	
	\end{rem} 
	
	\part{CAT(0) planar graphs}\label{part:CAT0}	
	
	In this part we discuss the discrete geometry of CAT(0) 
	planar graphs.
	\section{Discrete Distance Geometry}\label{sec:combinatorial-structure}
	
	\subsection{The Discrete Gauss-Bonnet formula}\label{subsec:boundary-degree}
	\cref{lem:graph} is a discrete Gauss--Bonnet theorem for triangulated disks with nonpositive curvature.  The condition $\deg(v) \geq 6$ at interior vertices is the discrete analogue of nonpositive curvature; if $\deg(v) = 6$ then we are locally flat (curvature 0).  The local condition on curvature controls the global geometry of CAT(0) planar graphs throughout the paper: it guarantees the existence of at least three acute boundary vertices (\cref{lem:acute}), which is the starting point for the study of normal graphs (\cref{prop:normal}), and it provides the area-reduction arguments that establish unique minimizing neighbors (see \cref{lem:drop}) and the local structure of level sets (see \cref{lem:combdist}).
	\begin{lemma}\label{lem:graph}
		Let $H$ be an undirected connected planarly embedded graph whose 
		perimeter is a simple polygonal curve such that all interior vertices 
		have degree at least $6$, and all interior faces are triangles. For a 
		boundary vertex $x \in \partial H$, let $\deg(x) \geq 2$ denote its degree. Then
		\[
		\sum_{x \in \partial H} \max(4 - \deg(x), 0) \geq 6,
		\]
		where the summation is over all boundary vertices.
	\end{lemma}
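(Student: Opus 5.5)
The plan is a discrete Gauss--Bonnet / Euler characteristic count. Let $V$, $E$, $F$ be the numbers of vertices, edges and interior (triangular) faces of $H$. Write $V = V_\partial + V_\circ$ for the boundary and interior vertices, and let $b = V_\partial$. Because the perimeter is a simple closed curve, $H$ together with its interior is a closed disk, and the boundary cycle has exactly $b$ edges. Euler's formula for the disk gives $V - E + F = 1$. Counting edge--face incidences, each interior edge lies on two triangles and each boundary edge on one, so $3F = 2E - b$.

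Next we sum degrees. We have $\sum_v \deg(v) = 2E$. Eliminating $F$ gives $3V - 3E + (2E - b) = 3$, that is, $E = 3V - b - 3$. Therefore
\[
\sum_{x \in \partial H} \deg(x) + \sum_{v \text{ interior}} \deg(v) = 6V - 2b - 6 .
\]
Subtract $6V_\circ$ from both sides and use $V = V_\circ + b$:
\[
\sum_{x \in \partial H} (\deg(x) - 4) + \sum_{v \text{ interior}} (\deg(v) - 6) = -6 .
\]
Since every interior vertex has $\deg(v) \ge 6$, the second sum is nonnegative. Hence $\sum_{x \in \partial H}(4 - \deg(x)) \ge 6$.

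Finally, $4 - \deg(x) \le \max(4 - \deg(x), 0)$ for each boundary vertex, which gives the claimed inequality.

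The only step needing care is the topological setup. A simple perimeter means every boundary vertex lies on the boundary cycle exactly once, so the boundary cycle has $b$ edges and no edge is counted on both sides. The hypothesis that all interior faces are triangles gives $3F = 2E - b$. One should also note the degenerate case where $H$ is a single triangle: there each of the three boundary vertices has degree $2$, so the sum is exactly $6$, consistent with the formula. I do not expect a genuine obstacle; the main point is to justify the Euler formula and the edge--face incidence count under the simple-boundary hypothesis.
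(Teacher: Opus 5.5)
Your proof is correct and takes the same route as the paper. It uses Euler's formula for the disk, the incidence count $3F = 2E - b$ (the paper writes this as $3(F-1) = 2E - B$ because it counts the outer face in $F$), and the degree sum with $\deg \geq 6$ at interior vertices. Stating the exact identity $\sum_{x \in \partial H}(\deg(x) - 4) + \sum_{v \notin \partial H}(\deg(v) - 6) = -6$ before discarding the nonnegative interior term is slightly cleaner than the paper's inequality chain, but the argument is the same.
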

	
	\begin{proof}
		Let $V$, $E$, $F$ denote the number of vertices, edges, and faces 
		respectively, and let $B$ denote the number of boundary vertices. Since 
		all interior faces are triangles, we have
		\[
		3(F - 1) = 2E - B, \qquad 2E = \sum_{y \in V(H)} \deg(y),
		\]
		where the second summation is over all vertices.  Euler's formula gives
		\[
		2 = V - E + F = V - E + \left(\frac{2}{3}E - \frac{1}{3}B + 1\right) =1 + V - \frac{1}{3}B - 
		\frac{1}{6}\sum_{y} \deg(y),
		\]
		where the summation is over all vertices $y$.  Since interior vertices have degree at least $6$,
		\[
		1 \leq V - \frac{1}{3}B - (V - B) - \frac{1}{6}\sum_{x \in \partial H} 
		\deg(x) = \frac{2}{3}B - \frac{1}{6}\sum_{x \in \partial H} \deg(x).
		\]
		
		Since $\frac{2}{3}B = \frac{1}{6}\sum_{x \in \partial H} 4$, it follows that $\sum_{x \in \partial H} \max(4 - \deg(x), 0) \geq 6$.
	\end{proof}

	\subsection{Combinatorial Distance and Level Sets}\label{subsec:comb-distance}
	
	For vertices $v, w$ in a CAT(0) planar graph $Q$, the \emph{combinatorial 
		distance} $\ell(v, w)$ is the minimal number of edges in any path from 
	$v$ to $w$. A path achieving this minimum is called a \emph{combinatorial 
		geodesic}. The \emph{level set} $S_\ell(v)$ is the union of all vertices at combinatorial distance $\leq \ell$ from $v$, together with all edges and triangles involving those vertices only.
	
	\begin{lemma}[Combinatorial Distance in Triangles]\label{lem:combdist}
		Let $Q$ be a CAT(0) planar graph. Fix a vertex $v \in V(Q)$.
		\begin{enumerate}
			\item In any triangle $(a,b,c)$, the three values 
			$\ell(a,v), \ell(b,v), \ell(c,v)$ equal $\{\ell, \ell, \ell-1\}$ or 
			$\{\ell, \ell-1, \ell-1\}$ for some $\ell \geq 1$. The level set $S_\ell=S_\ell(v)$ 
			is a connected planar graph together with all its interior faces, hence 
			contractible as a two-dimensional cell complex.
			
			\item If $(a,b)$ is an edge with $\ell(a,v) = \ell(b,v) = \ell$, then 
			there exists a unique triangle $(a,b,c)$ with $\ell(c,v) = \ell - 1$.
			
			\item Let $a \neq v$ with $\ell(a,v) = \ell$. Among the neighbors of $a$:
			\begin{itemize}
				\item Exactly one or two have combinatorial distance $\ell - 1$ to $v$.
				\item At most two have combinatorial distance $\ell$ to $v$.
				\item The rest have combinatorial distance $\ell + 1$ to $v$.
			\end{itemize}
			Furthermore, the neighbors at distance $\ell - 1$ or $\ell$ are cyclically 
			consecutive around $a$, with those at distance $\ell - 1$ in the ``middle''.
		\end{enumerate}
	\end{lemma}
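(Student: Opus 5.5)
The plan is to prove the three parts in order, with part (1) as the base case. Everything rests on two facts about a CAT(0) planar graph $Q$ with the vertex $v$ fixed: every simple closed curve in $Q$ bounds a subdisk that is again a CAT(0) triangulated disk, and \cref{lem:graph} applies to every such subdisk.

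\textbf{Part (1).} For each edge $(a,b)$, the labels $\ell(a,v)$ and $\ell(b,v)$ differ by at most one. So in a triangle $(a,b,c)$ the three values lie in $\{m,m+1\}$ for some $m$. It remains to exclude the case where all three values are equal to some $\ell$. Suppose they were. Take combinatorial geodesics from $a$ and from $b$ to $v$, and let $w$ be the first vertex where they meet. Together with the edge $(a,b)$, these two geodesic segments form a simple closed curve bounding a subdisk $D$. I plan to apply \cref{lem:graph} to $D$ and derive a contradiction from the boundary degrees.
\begin{itemize}
\item Each boundary vertex lying in the interior of a geodesic segment must have degree at least $3$ in $D$ as seen from the inside. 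Otherwise there is a shortcut across a single triangle, which contradicts the geodesic being shortest, or else a parity mismatch.
\item Only $a$, $b$ and $w$ can contribute to the sum $\sum \max(4-\deg,0)$, and the contribution from $w$ is at most $2$.
\item The same holds for the triangle with all labels equal to $\ell$ glued onto the disk.
\end{itemize}
This bookkeeping should force a total strictly less than $6$, contradicting \cref{lem:graph}.

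\textbf{Contractibility of $S_\ell$.} I would argue by induction on $\ell$. Each vertex at distance $\ell$ is attached to $S_{\ell-1}$ along a consecutive fan of neighbors, using part (3) at the lower level. So each inductive step adds triangles along an arc, which preserves contractibility. Alternatively, a hole in $S_\ell$ would give a bounded complementary region whose boundary vertices all have distance at most $\ell$ while its interior vertices have larger distance. Applying \cref{lem:graph} to the closure of that region contradicts the fact that its boundary angles are all large.

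\textbf{Part (2).} Existence: the edge $(a,b)$ with both labels equal to $\ell$ lies on the boundary of $S_\ell$. It cannot lie on the boundary of $Q$ in a way that precludes the claim, because a geodesic argument places $v$ on one side. The triangle on that side must then have its third label $\ell-1$, by part (1) together with the fact that $(a,b)$ borders a region of smaller distance. Uniqueness: if both triangles on the edge $(a,b)$ had third vertices at distance $\ell-1$, call them $c$ and $c'$, then geodesics from $c$ and $c'$ to $v$, together with $a$, enclose a disk. Its corners at $a$ and at the merge point give too little curvature, again contradicting \cref{lem:graph}.

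\textbf{Part (3).} Fix $a$ and go cyclically around its neighbors (a path if $a$ lies on the boundary). By part (1), consecutive neighbors that form a triangle with $a$ cannot both have label $\ell+1$ next to an $\ell-1$ without passing through $\ell$. The labels therefore change by at most one along the link. Part (2) shows that each run of label $\ell$ is adjacent to the $\ell-1$ block. If there were two separate blocks of label $\ell-1$, their geodesics to $v$ would enclose a disk violating \cref{lem:graph}. Likewise, three consecutive $\ell-1$ neighbors would give a geodesic shortcut that makes $a$ a degree-$2$ interior corner of a disk. Together these give at most two neighbors at $\ell-1$, at most two at $\ell$ (one on each side), consecutive, with $\ell-1$ in the middle.

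\textbf{Main obstacle.} I expect the hardest part to be the curvature bookkeeping in these disk arguments, specifically making \cref{lem:graph} apply when the enclosed region is degenerate (not a simple curve) or touches $\partial Q$. To handle this, I would choose the geodesics to be innermost and reduce to a minimal counterexample.
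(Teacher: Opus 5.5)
Your route differs from the paper's. The paper inducts on $\ell$ and applies \cref{lem:graph} to regions bounded by an arc of $\partial S_\ell$ plus two edges into a vertex at level $\ell+1$. You apply it directly to disks cut out by pairs of geodesics to $v$. That route can work, but the central step of part (1) fails as you describe it.

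\textbf{The gap in part (1).} With the corners $a,b,w$ each contributing at most $2$, the count gives at most $6$, which is exactly what \cref{lem:graph} allows. A contradiction needs the corners at $a$ and $b$ to have degree at least $3$ in $D$. That is precisely what fails when $c$ lies on the far side of $ab$ from $D$. Then your count does not exclude $D$ being the single face $(a,b,w)$ with $\ell(w,v)=\ell-1$. In that case $D\cup T$ is a rhombus with boundary degrees $2,3,2,3$ at $c,a,w,b$, contributing $2+1+2+1=6$, so there is no contradiction. Hence ``the same holds for the triangle glued onto the disk'' is false. Part (1) is therefore not established, and your part (3) (no two cyclically adjacent neighbors at level $\ell$) and the contractibility of $S_\ell$ inherit the gap. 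There is a second, related slip: an intermediate geodesic vertex of degree $3$ contributes $1$. So ``degree at least $3$'' does not give ``only $a,b,w$ contribute''; you need degree at least $4$.

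\textbf{How to close it.} You need to choose the pair so that $T$ lies inside the disk, and to minimize area.
\begin{enumerate}
\item Take a shortest-path tree rooted at $v$. None of $ab,bc,ca$ is a tree edge, since each joins two vertices at the same level.
\item Their three fundamental cycles sum mod $2$ to $\partial T$, so $T$ lies inside an odd number of the three disks. Relabel so that $T$ lies in the disk $D$ for $(a,b)$.
\item Among all such geodesic pairs, take one with $D$ of minimal area. Suppose an intermediate vertex $u$ had degree $3$ in $D$. Its unique inner neighbor $x$ is adjacent to both geodesic neighbors of $u$, so $x$ is at the same level as $u$. Rerouting through $x$ removes two faces and keeps $T$ inside, contradicting minimality. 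So intermediate vertices have degree at least $4$.
\item Now $a$ and $b$ have degree at least $3$ in $D$: each has $c$, the other, and a geodesic successor at level $\ell-1$ as neighbors. The total is at most $1+1+2=4<6$, a contradiction.
\end{enumerate}

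\textbf{Existence in part (2).} Run the same minimal-area count without insisting that $T$ be inside. The total is at most $2+2+2$, so \cref{lem:graph} forces $\deg_D(a)=\deg_D(b)=\deg_D(w)=2$. Then $D$ is a single face $(a,b,w)$ with $\ell(w,v)=\ell-1$, which is exactly the existence claim. Your own existence argument asserts that $(a,b)$ lies on $\partial S_\ell$, which presupposes uniqueness. The phrase ``a geodesic argument places $v$ on one side'' is not an argument.

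\textbf{The obstacle you flagged.} It is not the real difficulty. Every bounded face of $Q$ is a triangle, so any simple closed curve in $Q$ bounds a union of faces whose interior vertices are interior vertices of $Q$. Hence \cref{lem:graph} applies to $D$ directly, and taking the first meeting point already makes the curve simple.
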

	See \cref{fig:combindist} for an illustration of \cref{lem:combdist}.
	
	\begin{figure}[h!]
		\centering
		\includegraphics[width=0.5\linewidth]{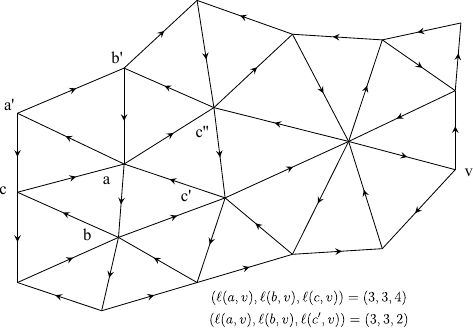}
		\caption{Illustrating \cref{lem:combdist}.  Claim (1): combinatorial distances from the vertices of a triangle are either $\{\ell,\ell,\ell+1\}$ or $\{\ell,\ell+1,\ell+1\}$. Claim (2): $(a,b)$ is an edge with $\ell(a,v) = \ell(b,v) = \ell$, and $(a,b,c')$ is the unique triangle with $\ell(c',v) = \ell-1$.  Claim (3): among the neighbors of $a$, (i) exactly the vertices $c'$ and $c''$ have combinatorial distance $\ell-1$ to $v$, (ii) exactly $b$ and $b'$ have combinatorial distance $\ell$ to $v$, and (iii) all others $a'$ and $c$ have combinatorial distance $\ell+1$ to $v$. }
		\label{fig:combindist}
	\end{figure}

	\begin{proof}
		We prove (1), (2), and (3) simultaneously by induction on $\ell$.
		
		\medskip
		\noindent\textbf{Base Case: $\ell \leq 1$.}
		
		For $\ell = 0$: The only vertex at distance $0$ from $v$ is $v$ itself. 
		The set $S_0 = \{v\}$ is trivially contractible.
		
		For $\ell = 1$: The vertices at distance $1$ from $v$ are precisely the 
		neighbors of $v$. Any triangle containing $v$ has vertex distances 
		$(0, 1, 1)$, fitting the pattern $\{\ell, \ell-1, \ell-1\}$ with $\ell = 1$. 
		The set $S_1$ consists of $v$, all neighbors of $v$, all edges incident 
		to $v$, and all triangles containing $v$. This is a star-like structure 
		with filled triangular faces around $v$, which is contractible. For 
		part (3): Each neighbor $a$ of $v$ has $\ell(a,v) = 1$. The unique 
		neighbor of $a$ at distance $0$ is $v$ itself. The other neighbors of 
		$a$ are either also neighbors of $v$ (distance $1$) or are at distance $2$.
		
		\medskip
		\noindent\textbf{Inductive Step.}
		
		Assume (1), (2), (3) hold for all distances up to $\ell$. We prove them 
		for $\ell + 1$.
		
		By the inductive hypothesis (1), $S_\ell$ is a connected planar graph (together with its interior faces).			
		The boundary $\partial S_\ell$ consists of vertices and edges that are 
		either on the boundary $\partial Q$ of the entire graph, or at distance 
		exactly $\ell$ from $v$. Let $\Gamma$ denote a connected component of 
		the part of $\partial S_\ell$ consisting of vertices and edges at 
		distance exactly $\ell$ from $v$. Each such $\Gamma$ is one of: a closed 
		cycle, a path with both endpoints on $\partial Q$, or a single vertex 
		(on $\partial Q$).
		
		
		Let $a$ be an interior vertex with $\ell(a, v) = \ell$. By the inductive 
		hypothesis (3), $a$ has $1$ or $2$ neighbors at distance $\ell - 1$, at 
		most $2$ neighbors at distance $\ell$ (call them $a'$ and $a''$), and 
		the remaining neighbors have distance $\ell + 1$. Since $a$ is interior, 
		it has degree $\geq 6$. Therefore:
		\[
		\text{(neighbors at distance } \ell+1) \geq 6 - 2 - 2 = 2.
		\]
		
		
		
		Let $b$ be a vertex with $\ell(b, v) = \ell + 1$. We establish three claims:
		
		\medskip
		\noindent\emph{Claim (a):} If $a, a'$ are distinct neighbors of $b$ with 
		$\ell(a,v) = \ell(a',v) = \ell$, then $(a, a', b)$ is a triangle. 
		Consequently, $b$ has at most two neighbors at distance $\ell$, and if 
		two, they are adjacent.
		
		\begin{proof}[Proof of Claim (a)]
			Since both $a$ and $a'$ are at distance $\ell$ and adjacent to $b$ (at 
			distance $\ell + 1$), they must lie on the same boundary component 
			$\Gamma$ of $S_\ell$.
			
			Suppose for contradiction that $a$ and $a'$ are not adjacent. Let 
			$\gamma$ be the path along $\Gamma$ from $a$ to $a'$. Let $R$ be the 
			region bounded by the path $\gamma$ and the edges $(a, b)$ and $(b, a')$.
			
			Since $a$ and $a'$ are not adjacent along $\gamma$, the region $R$ 
			contains at least one vertex of $\gamma$ other than $a, a'$.
			
			Consider any vertex $w$ on $\gamma$ strictly between $a$ and $a'$. This 
			vertex $w$ is at distance $\ell$ from $v$, so by the inductive 
			hypothesis (3), $w$ has exactly one or two neighbors at distance 
			$\ell-1$ and at most two at distance $\ell$. The neighbors at distance 
			$\ell+1$ lie on the exterior of $S_\ell$. The vertex $w$ 
			has at least two neighbors inside $R$, and two neighbors on $\gamma$,
			so $\deg_R(w) \geq 4$. 
			
			Apply Lemma~\ref{lem:graph} to $R$.  Boundary vertices $a$, $a'$, and $b$ each have $\deg \geq 2$, and the rest of the boundary vertices have $\deg \geq 4$.  We conclude that $\deg_R(a) = \deg_R(a') = \deg_R(b) = 2$ and it follows from this that $\gamma$ does not contain intermediate vertices. Therefore $(a, a', b)$ must be a triangle.
		\end{proof}
		
		\medskip
		\noindent\emph{Claim (b):} If $(b, b')$ is an edge with 
		$\ell(b, v) = \ell(b', v) = \ell + 1$, then there exists a unique triangle 
		$(a, b, b')$ with $\ell(a, v) = \ell$.
		
		\begin{proof}[Proof of Claim (b)]
			Let $a \in \partial S_\ell$ (resp. $a'$) be adjacent to $b$ (resp. $b'$) satisfying $
			\ell(a,v) = \ell(a',v) = \ell$.  If $\gamma$ is a path in $\partial S_\ell$ connecting $a$ and $a'$, then all vertices on $\gamma$ are at distance $\ell$ from $v$.  Using the same argument as in the proof of (a), we conclude that $a = a'$.
		\end{proof}
		
		\medskip
		\noindent\emph{Claim (c):} $b$ has at most two neighbors at distance 
		$\ell + 1$; if exactly two, they are not adjacent.
		
		\begin{proof}[Proof of Claim (c)]
			This follows from the cyclic structure around $b$ and Claim (a): there are $1$ or $2$ 
			neighbors at distance $\ell$, and if $2$ 
			they are adjacent.
		\end{proof}

		From Claims (a), (b), (c), we deduce (1), (2), (3) for $\ell + 1$:
		
		\begin{itemize}
			\item (1) for $\ell + 1$: Any triangle in $S_{\ell + 1}$ involving a vertex at 
			distance $\ell + 1$ has distances $\{\ell+1, \ell+1, \ell\}$ or 
			$\{\ell+1, \ell, \ell\}$ by Claim (a) and the structure of how 
			distance $(\ell+1)$ vertices attach to $S_\ell$. The set $S_{\ell+1}$ 
			remains connected with all interior faces filled.
			
			\item (2) for $\ell + 1$: Follows from Claim (b).
			
			\item (3) for $\ell + 1$: Follows from Claims (a) and (c), 
			plus the cyclic ordering from the planar embedding.
		\end{itemize}
		
		This completes the induction.
	\end{proof}

	\section{Directed Distance Geometry}\label{sec:directed-distance}
	
	The \emph{directed distance} $\delta(v, w)$ is the length of the shortest directed path from $v$ to $w$.  Recall our convention that traversing an edge against the orientation contributes 2 to the distance.  		
	
	This section establishes 
	how the function $\dist(\cdot, v)$ behaves locally.  The key results are~\cref{lem:drop} stating that for any vertex $u \neq v$, there 
	is a \emph{unique} neighbor $u'$ of $u$ minimizing $\delta(u', v)$, and \cref{cor:localdistance} which describes the behavior of $\dist(\cdot,v)$ when restricted to the neighbors of some vertex $u$. 
	
	Note that if $Q$ is a CAT(0) planar graph then so is the graph $Q^{\rm op}$ obtained from $Q$ by reversing all the orientations.  We have $\dist_Q(v,w) = \dist_{Q^{\rm op}}(w,v)$.  While many of the results are stated for the function $\dist(\cdot, v)$, they also hold for $\dist(v, \cdot)$ after reversing orientations. 
	
	\subsection{Local Distance Lemmas}\label{subsec:local-distance}
	
	\begin{lemma}\label{lem:distmod}
		Let $p, q$ be two walks from $v$ to $w$ (possibly traveling against orientations).  Then the directed distance along $p$ and the directed distance along $q$ are congruent modulo $3$.
	\end{lemma}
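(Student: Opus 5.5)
The plan is to reduce the statement to the existence of a coloring, and then appeal to the same mod-3 bookkeeping used in \cref{lem:colordist}. As observed in \cref{sec:main}, a CAT(0) planar graph $Q$ admits a coloring $c: V(Q)\to \Z/3\Z$ with $c(w)\equiv c(v)+1$ whenever $v\to w$. I will first make this precise, and then compute the length of an arbitrary walk modulo $3$.

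\textbf{Step 1: constructing the coloring.} Every interior face of $Q$ is an oriented triangle, so going around any face boundary the color increment totals $1+1+1\equiv 0$. Hence the $\Z/3\Z$-valued $1$-cochain that assigns $+1$ to each edge traversed forward is closed on every face. Since $Q$ together with its faces forms a simply connected planar complex, this cochain is exact. If $Q$ is not simple, it is a tree-like gluing of simple pieces along cut vertices and cut edges. Gluing along a vertex or an edge creates no new cycles, so the cochain is still exact. Integrating it from a base vertex produces the coloring $c$.

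\textbf{Step 2: walks modulo 3.} Let the walk $p$ traverse edges $e_1,\dots,e_m$. A forward traversal of $e_j$ contributes $1\equiv c(\text{head})-c(\text{tail})$. A backward traversal contributes $2\equiv -1$, which is again the color difference between its endpoints in the direction of travel. The same holds for cut edges, by the stated convention. Summing over the walk telescopes, giving
$$\text{len}(p)\equiv c(w)-c(v)\pmod 3.$$
The right-hand side is independent of $p$, so $\text{len}(p)\equiv \text{len}(q)$.

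\textbf{Main obstacle.} The only nontrivial point is the existence of the coloring, that is, the exactness in Step 1. If one prefers not to invoke simple connectivity, there is a direct alternative. Any two walks from $v$ to $w$ together form a closed walk, and a closed walk in a planar graph whose bounded faces are triangles is, modulo backtracking, a sum of face boundaries. Each face boundary has length $\equiv 0$. Backtracking along an edge costs $1+2=3\equiv 0$. Therefore every closed walk has length $\equiv 0 \pmod 3$, which gives the result.
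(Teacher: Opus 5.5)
Your proof is correct. Your fallback paragraph is essentially the paper's argument. The paper relates any two walks with the same endpoints by a finite sequence of local moves: replacing one edge of a triangle by the other two, or deleting a back-and-forth traversal of an edge. It then notes that each move preserves length modulo $3$.

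Your main route is the dual, cohomological version of the same fact. You first show that the $\Z/3\Z$-valued edge cochain is exact, i.e.\ that a coloring exists, and then telescope exactly as in the proof of \cref{lem:colordist}. Both arguments rest on the same input: $Q$ with its triangular faces filled in is simply connected. The paper's claim that any two walks ``can be related by local moves'' is precisely this fact, stated without proof, just as your exactness claim is.

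What your version buys is that it also justifies the existence of the coloring, which \cref{sec:main} asserts is ``easy to see.'' It also shows that \cref{lem:distmod} is a special case of \cref{lem:colordist}. The paper's version is purely combinatorial and avoids any appeal to cohomology.

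Your separate treatment of non-simple $Q$, by gluing along cut vertices and cut edges, is harmless but unnecessary. The filled complex is simply connected in every case, since all bounded faces are triangles.
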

	\begin{proof}
		The two walks $p$ and $q$ can be related by a finite sequence of local moves: (a) use one edge of a triangle instead of the other two, and (b) replace traversing an edge in both directions by not moving.  Both operations preserve directed distance modulo $3$.
	\end{proof}
	
	The following lemmas describe how directed distance changes across 
	single edges and within triangles.
	
	\begin{lemma}\label{lem:edgedist1}
		Let $Q$ be a CAT(0) planar graph. Fix a target vertex $v \in V(Q)$. 
		Suppose we have a directed edge $u \to w$. Then $\dist(u,v) \in \{\dist(w,v)+1,\dist(w,v)-2\}$.
	\end{lemma}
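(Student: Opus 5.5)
The plan is to combine a triangle-inequality bound with a congruence modulo $3$. The bound gives a window of width three for $\dist(u,v)$, and the congruence picks out the two allowed values in that window.

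First, the bound. Since $u \to w$ is an edge, $\dist(u,w) = 1$ and $\dist(w,u) \le 2$, where backward traversal costs $2$. This uses \eqref{eq:direct} together with the Convention for cut edges. The triangle inequality for the directed distance, which holds because $\dist$ is a minimum over concatenable paths, then gives
$$\dist(u,v) \le \dist(u,w) + \dist(w,v) = \dist(w,v)+1$$
and
$$\dist(w,v) \le \dist(w,u) + \dist(u,v) \le 2 + \dist(u,v).$$
Hence $\dist(u,v) \in \{\dist(w,v)-2,\ \dist(w,v)-1,\ \dist(w,v),\ \dist(w,v)+1\}$.

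Second, the congruence. Take a shortest walk $p$ from $w$ to $v$ and prepend the edge $u \to w$; this gives a walk from $u$ to $v$ of directed length $\dist(w,v)+1$. A shortest walk from $u$ to $v$ has length $\dist(u,v)$. By \cref{lem:distmod}, these two walks from $u$ to $v$ have congruent lengths modulo $3$, so $\dist(u,v) \equiv \dist(w,v)+1 \pmod 3$. Equivalently, one can use the coloring and \cref{lem:colordist}: $\dist(u,v)-\dist(w,v) \equiv (c(v)-c(u)) - (c(v)-c(w)) = c(w)-c(u) \equiv 1 \pmod 3$.

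Third, intersect the two conditions. Among the four candidates $\dist(w,v)+t$ with $t \in \{-2,-1,0,1\}$, only $t=1$ and $t=-2$ satisfy $t \equiv 1 \pmod 3$. This gives exactly $\dist(u,v) \in \{\dist(w,v)+1,\ \dist(w,v)-2\}$.

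The only point needing care is the step that relies on \cref{lem:distmod}, or equivalently on the existence of the coloring. The congruence has to hold for arbitrary walks, including those that traverse edges against their orientation. This is already supplied by \cref{lem:distmod}, since reversed edges cost $2 \equiv -1 \pmod 3$, so I expect no genuine obstacle.
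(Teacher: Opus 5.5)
Your proposal is correct and follows the paper's own argument: the triangle-inequality window $\dist(w,v)-2 \le \dist(u,v) \le \dist(w,v)+1$, combined with the congruence $\dist(u,v) \equiv \dist(w,v)+1 \pmod 3$ from \cref{lem:distmod}, which leaves only the two stated values. Your alternative derivation of the congruence via the coloring and \cref{lem:colordist} is an equally valid way to get the same step.
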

	\begin{proof}
		It is clear that $\dist(w,v) -2 \leq \dist(u,v) \leq \dist(w,v)+1$.  But by \cref{lem:distmod}, $\dist(u,v) \equiv \dist(w,v)+1$ modulo $3$, so we have $\dist(u,v) \in \{\dist(w,v)+1,\dist(w,v)-2\}$.
	\end{proof}
	
	\begin{lemma}\label{lem:triangledist}
		Let $Q$ be a CAT(0) planar graph. Fix a target vertex $v \in V(Q)$. 
		For any triangle with vertices $u, w, x$, the three values 
		$\delta(u, v), \delta(w, v), \delta(x, v)$ equal $d, d+1, d+2$ for 
		some integer $d$ (in some order).
	\end{lemma}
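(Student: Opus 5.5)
Orient the triangle so that its edges are $u \to w$, $w \to x$, $x \to u$. This is always possible, since every interior face is an oriented triangle and, for the colouring, the three vertices carry distinct colours increasing along the edges. Set $a = \dist(u,v)$, $b = \dist(w,v)$ and $c = \dist(x,v)$. The plan is to show that these three numbers are pairwise distinct and lie within a window of width $2$; then they must be $d, d+1, d+2$ in some order.

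For distinctness, apply \cref{lem:distmod}, or equivalently \cref{lem:edgedist1}, to each of the three edges. This gives
$$a \equiv b+1, \qquad b \equiv c+1, \qquad c \equiv a+1 \pmod 3.$$
Hence $a, b, c$ lie in three distinct residue classes modulo $3$. In particular they are pairwise distinct, and their residues are exactly $\{0,1,2\}$.

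For the width bound, use the triangle inequality for $\dist(\cdot, v)$ along single edges. The value can increase by at most $1$ when an edge is traversed with its orientation, and by at most $2$ when it is traversed against it. Concretely, \cref{lem:edgedist1} applied to $u \to w$ gives $a - b \in \{1, -2\}$, so $|a-b| \le 2$; the same holds for the other two edges. So the three values are pairwise within distance $2$. Three integers in distinct residue classes mod $3$ whose pairwise differences are at most $2$ in absolute value span an interval of length at most $2$. Since they are distinct, they fill $\{d, d+1, d+2\}$ with $d = \min(a,b,c)$.

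I do not expect a serious obstacle here. The only point needing care is that the convention for cut edges (backward distance $2$) and the assumption \eqref{eq:direct} ensure the edge estimates in \cref{lem:edgedist1} hold for every triangle edge. The congruence in \cref{lem:distmod} already covers all walks, so the argument is uniform.
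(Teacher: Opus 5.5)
Your proposal is correct and is essentially the paper's proof, which consists of the single instruction to apply \cref{lem:edgedist1} to the edges of the triangle. Your finish via distinct residues mod $3$ plus the width bound $|a-b|\le 2$ is equivalent to the observation that the three differences $a-b$, $b-c$, $c-a$ each lie in $\{1,-2\}$ and sum to $0$. That forces two of them to equal $1$ and one to equal $-2$.
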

	\begin{proof}
		Apply \cref{lem:edgedist1}.
	\end{proof}

	\begin{lemma}\label{lem:paths}
		Let $u$ and $v$ be distinct vertices and let $p,q$ be two paths from $u$ to $v$ with minimal directed distance, where the first step $u \to u'$ is outgoing.  Then either
		\begin{enumerate}
			\item both paths use the same initial edge $u \to u_1$, or
			\item the paths use initial edges $u \to u_1$ and $u \to u'_1$, and in cyclic order around $u$ these two edges are separated by a single incoming edge $w \to u$ such that both triangles $u\to u_1 \to w \to u$ and $u\to u'_1 \to w \to u$ are present in $Q$.  Furthermore, $\dist(w,v) = \dist(u,v) - 2$.
		\end{enumerate}
	\end{lemma}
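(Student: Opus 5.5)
Let $d = \dist(u,v)$ and let $u\to u_1$, $u\to u_1'$ be the initial edges of the two minimal paths $p,q$. Then $\dist(u_1,v)=\dist(u_1',v)=d-1$. If $u_1=u_1'$ we are in case (1), so assume they differ. The plan is to study the closed curve formed by $p$ and $q$, show that it encloses a region $R$ which is a triangulated disk with interior degrees at least $6$, and then use \cref{lem:graph} to force $R$ to be tiny near $u$.

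\textbf{Step 1 (reduction to a disk).} Let $x$ be the first vertex after $u$ at which $p$ and $q$ meet again. Replace $p,q$ by their initial segments up to $x$; both remain minimal paths from $u$ to $x$ (subpaths of geodesics are geodesics). Their union is a simple closed curve bounding a region $R$, which is a triangulated disk with all interior vertices of degree $\geq 6$ in $R$. (If the curve passes along the boundary of $Q$, nothing changes, since $R$ is still a subcomplex.) So \cref{lem:graph} applies to $R$.

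\textbf{Step 2 (boundary degrees of $R$).} We need to show that every boundary vertex of $R$ other than $u$ and $x$ has $\deg_R\geq 4$, and that $\deg_R(x)\ge 2$. Let $y$ be an intermediate vertex of $p$ with $\deg_R(y)\le 3$. Then the neighbors of $y$ in $R$ consist of its two path-neighbors $y^-,y^+$ together with at most one further vertex, so there is a triangle $y^-yy^+$ or a pair of triangles through a single extra vertex. In the first case, \cref{lem:triangledist} shows that going from $y^-$ directly to $y^+$ along the third edge costs at most the cost of the two-step route through $y$. I will then check the orientations: the two-edge route $y^-\to y\to y^+$ in an oriented triangle with both edges forward costs $2$, while the direct edge $y^+\to y^-$ is backward and also costs $2$. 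So the direct route is never strictly shorter, but the combination with the distance values $d,d+1,d+2$ forces a contradiction with minimality or with the parity of distances along the path. The case with two triangles is handled similarly. Establishing this inequality $\deg_R\geq 4$ at intermediate vertices, including the tie cases, is the main obstacle. I would try to show that geodesic directed paths never make a turn of angle $\le 2$ triangles, using \cref{lem:edgedist1} edge by edge.

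\textbf{Step 3 (conclusion).} Granting Step 2, \cref{lem:graph} gives $\max(4-\deg_R(u),0)+\max(4-\deg_R(x),0)\geq 6$. Hence one of $u,x$ has $\deg_R=2$ and the other has $\deg_R\leq 3$, with the bound tight. Applying the argument symmetrically, I expect the configuration at $u$ to be $\deg_R(u)=3$: the two edges $u\to u_1$ and $u\to u_1'$ separated by a single edge $wu$, with triangles $u u_1 w$ and $u u_1' w$ present. The orientations are forced by the triangle structure: $u\to u_1$ is outgoing, so the oriented triangle must be $u\to u_1\to w\to u$, making $w\to u$ incoming. The same reasoning gives the triangle $u\to u_1'\to w\to u$. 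Finally, \cref{lem:triangledist} applied to the triangle $u,u_1,w$ with $\dist(u,v)=d$ and $\dist(u_1,v)=d-1$ gives $\dist(w,v)\in\{d-2,d+1\}$. Since $w\to u$ is an edge, \cref{lem:edgedist1} gives $\dist(w,v)\in\{d+1,d-2\}$ as well. To rule out $d+1$ I will note that it would make $w$ lie on neither geodesic, and then the region structure contradicts the tightness in \cref{lem:graph}. Hence $\dist(w,v)=d-2$.
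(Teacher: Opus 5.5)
Your proposal has a genuine gap. The key claim of Step~2 is false, and Step~3 would not follow from it even if it were granted.

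Step~2 asserts that every intermediate vertex of $p$ or $q$ has $\deg_R\ge 4$. Test it on the very configuration of conclusion~(2). Take $v=w$, $p=(u\to u_1\to w)$ and $q=(u\to u'_1\to w)$. Both paths are minimal, of length $2=\delta(u,w)$. Then $R$ is the union of the triangles $uu_1w$ and $uu'_1w$, and $\deg_R(u_1)=\deg_R(u'_1)=2$.

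In general a minimal path can pass $y^-\to y\to y^+$ with the triangle $y^-\to y\to y^+\to y^-$ inside $R$. As you computed yourself, the detour and the backward edge both cost $2$. The values $\delta(y^+,v)=e$, $\delta(y,v)=e+1$, $\delta(y^-,v)=e+2$ are consistent with \cref{lem:edgedist1} and \cref{lem:triangledist}, so there is no minimality or parity contradiction to be found.

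Step~3 also miscounts. Every vertex of $R$ has $\deg_R\ge 2$, so $u$ and $x$ together contribute at most $2+2=4<6$ to the sum in \cref{lem:graph}. Granting Step~2 would therefore only give a contradiction (no region at all). It would not give the configuration $\deg_R(u)=3$ that you describe, which contributes only $1$. Your concluding exclusion of $\delta(w,v)=d+1$ is also not an actual argument.

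The missing idea is to exploit the degree-$2$ vertices rather than rule them out.
\begin{enumerate}
\item Because all faces are oriented triangles, edges alternate in/out around each vertex of $R$. After reducing to paths that follow orientations (e.g.\ by adding triangles at cut edges), an intermediate vertex of $p$ or $q$ therefore has even $\deg_R$.
\item Since $u$ and $x$ contribute at most $4$, \cref{lem:graph} forces an intermediate $y$ with $\deg_R(y)<4$, hence $\deg_R(y)=2$. This is a triangle $y^-\to y\to y^+\to y^-$ in $R$.
\item Replacing $y$ by the apex of the other triangle on $y^+\to y^-$ gives another minimal path and strictly shrinks $R$.
\item Iterating, eventually the first vertex, say $u_1$, is flipped. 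If it flips to $u'_1$, you are in case~(2). Then $\delta(w,v)=d-2$ is automatic, because $w=u_2$ lies on a minimal path.
\item Otherwise there are further edges $x_1\to u$, $u\to y_1,\dots$ between $u_1$ and $u'_1$ at $u$, and this must be excluded separately. The paper shows that all $y_i$ lie on minimal paths and that $\delta(x_i,v)=d-2$. It then applies the inductive hypothesis (induction on $\delta(u,v)$) at $y_1$ to obtain $z\neq u$ with triangles $y_1\to x_1\to z\to y_1$ and $y_1\to x_2\to z\to y_1$. This makes the interior vertex $y_1$ have degree $4$, contradicting the degree-$\ge 6$ condition.
\end{enumerate}
Neither the flip/area-reduction step nor this final fan argument appears in your proposal.
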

	
	\begin{proof}
		By adding some triangles to $Q$ to avoid cut edges, we may assume that $p,q$ are paths following orientations of edges.   We proceed by induction on $\dist(u,v)$.  If $\dist(u,v) = 1$, the claim is clear.  Now, suppose $\dist(u,v) > 1$, and suppose that $p$, $q$ are two shortest directed paths 
		$$
		p = (u = u_0 \to u_1 \to \cdots \to u_d = v), \qquad q = (u = u'_0 \to u'_1 \to \cdots \to u'_d = v)
		$$
		such that $u_1 \neq u'_1$.  If $p$ and $q$ intersect before reaching $v$, then the result holds by induction (replacing $v$ by the point of intersection), so we may assume that $p$ and $q$ together bound a simple polygonal region $R$.  Let $Q(R)$ be our graph restricted to $R$.  Then $Q(R)$ is a connected planar graph.
		
		Let us consider one of the vertices $u_i$ on the path $p$ that is neither the start nor the end.  Since we have $u_{i-1} \to u_i \to u_{i+1}$, either $\deg_{Q(R)}(u_i) = 2$ pointing into $R$, or $\deg_{Q(R)}(u_i) \geq 4$.  It follows from \cref{lem:graph} that at least one internal vertex of $p$ or $q$, say $u_i$ on $p$, satisfies $\deg_{Q(R)}(u_i) = 2$.  In other words, we have a triangle $u_{i-1} \to u_i \to u_{i+1} \to u_{i-1}$ in $R$.  The edge $u_{i+1} \to u_{i-1}$ cannot be on $q$, so there is another triangle $u_{i-1} \to w \to u_{i+1} \to u_{i-1}$ on the other side of the edge $u_{i+1} \to u_{i-1}$, and this triangle also belongs to $R$.  We obtain another shortest path from $u$ to $v$ by using $w$ instead of $u_{i}$.  This modification strictly reduces the area of the region $R$.
		
		Repeating this argument, eventually one of the vertices $u_1$ or $u'_1$ will be the vertex to be modified.  Without loss of generality, we assume it to be $u_1$.  If the modification changes $u \to u_1$ to $u \to u'_1$, then we are in situation (2).  We assume otherwise to obtain a contradiction.
		
		Let the vertices adjacent to $u$ belonging to $R$ be $u_1, x_1, y_1,\ldots,y_{s-1},x_s,u'_1$ in order, and assume that $s \geq 1$.  Then the path $p$ must start $u \to u_1 \to x_1 \to \cdots$.  It follows that $\dist(x_1,v) = \dist(u,v) - 2$ and $\dist(y_1,v) = \dist(u,v) -1$.  Thus there is also a shortest path from $u$ to $v$ passing through $y_1$.  Repeating this argument, we see that there are shortest paths passing through all of $u_1,y_1,y_2,\ldots,y_{s-1},u'_1$, and $\dist(x_i,v) = d(u,v) -2$ for $i =1,2,\ldots,s$.
		
		We now apply the inductive hypothesis to $y_1$.  There are shortest paths from $y_1$ to $v$ using both the edges $y_1 \to x_1$ and $y_1 \to x_2$, and we conclude that there exists a vertex $z$, not equal to $u$, such that we have two triangles $y_1 \to x_1 \to z \to y_1$ and $y_1 \to x_2 \to z \to y_1$.  This contradicts that $y_1$, if interior, should have degree $\geq 6$.
	\end{proof}

	\subsection{Unique Minimizing Neighbors}\label{subsec:unique-minimizer}
	
	The following lemma establishes that for any vertex $u \neq v$, there 
	is a unique neighbor minimizing directed distance to $v$. 
	
	\begin{lemma}\label{lem:drop}
		Let $Q$ be a CAT(0) planar graph. Fix a target vertex $v \in V(Q)$. 
		For any vertex $u \neq v$, there exists a unique neighbor $u'$ of $u$ 
		that minimizes $\delta(u', v)$ among all neighbors of $u$. Moreover, 
		$\delta(u, v) - \delta(u', v) \in \{1, 2\}$.
	\end{lemma}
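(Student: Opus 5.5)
We argue by induction on $d = \dist(u,v)$, using \cref{lem:edgedist1}, \cref{lem:triangledist} and \cref{lem:paths}. First, the bound $\dist(u,v)-\dist(u',v) \in \{1,2\}$ is immediate. A shortest path from $u$ to $v$ starts with some step $u \to w$ (cost 1) or $u \leftarrow w$ traversed backwards (cost 2). In either case $\dist(w,v) \le d-1$. By \cref{lem:edgedist1}, every neighbor $w$ satisfies $\dist(w,v) \ge d-2$. Moreover, the possible values of $\dist(w,v)$ are $d-1$ or $d+2$ for outgoing edges $u\to w$, and $d+1$ or $d-2$ for incoming edges $w \to u$. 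So the minimum over neighbors equals $d-2$ if some in-neighbor $w$ has $\dist(w,v)=d-2$, and $d-1$ otherwise. Hence only uniqueness requires work.

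\emph{Case A: the minimum is $d-1$.} Every minimizing neighbor $w$ is then an out-neighbor $u \to w$ beginning a shortest path. Suppose there are two such, $u_1 \neq u_1'$. Apply \cref{lem:paths} to the two shortest paths $u\to u_1\to\cdots$ and $u\to u_1'\to\cdots$. We are in alternative (2), which produces an in-neighbor $w \to u$ with $\dist(w,v)=d-2 < d-1$. This contradicts the assumption that the minimum is $d-1$.

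\emph{Case B: the minimum is $d-2$.} Suppose two in-neighbors $w \to u$ and $w' \to u$ both satisfy $\dist(w,v)=\dist(w',v)=d-2$. Around $u$, consider the edges between $w$ and $w'$ on one side, and use \cref{lem:triangledist} on consecutive triangles. Adjacent neighbors of $u$ form triangles with $u$, and the distances on each triangle are $\{d',d'+1,d'+2\}$. Walking from $w$ to $w'$ through the fan of triangles at $u$, the neighbors must alternate between outgoing edges with value $d-1$ and incoming edges with value $d-2$: an incoming value $d+1$ or an outgoing value $d+2$ next to an incoming $d-2$ is impossible in a triangle together with $u$ (value $d$). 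So, as in the last paragraph of the proof of \cref{lem:paths}, we obtain an out-neighbor $y$ between $w$ and $w'$ with two in-neighbors at distance $d-2$ adjacent to it. Each of these is also adjacent to an out-neighbor $y$ of $u$ with $\dist(y,v)=d-1$. Applying the induction hypothesis at $y$ (where $\dist(y,v)=d-1$) gives a contradiction, since $y$ would have two minimizing neighbors at distance $d-2 = (d-1)-1$. Such a $y$ exists unless $w$ and $w'$ are separated only by a single out-neighbor $y$, or $w,w'$ are adjacent. Adjacent in-neighbors are impossible: the edge $w\,w'$ would be oriented one way, and the triangle $u,w,w'$ must be an oriented triangle with both edges into $u$. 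So consider the single-$y$ case: we have triangles $u\to y\to w\to u$ and $u \to y \to w' \to u$ with $\dist(y,v)=d-1$, and $y$ has two neighbors $w,w'$ at distance $d-2$. This contradicts the induction hypothesis at $y$. Since $y \ne v$ as $d-1>0$ here (if $d-1=0$, then $y=v$ is itself the neighbor at distance $0$ with value below $d-2$), we must treat $d\le 2$ separately as a base case.

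\emph{Main obstacle.} The delicate point is making the fan argument at $u$ rigorous when $u$ is a boundary vertex. In that case, the two minimizers might lie on "opposite sides" of $u$, with no fan of triangles joining them inside $Q$. To handle this, I would instead take the two shortest paths from $w$ and $w'$ to $v$, form the closed region $R$ they bound with $u$, and apply \cref{lem:graph} to $R$ as in \cref{lem:paths}. The area-reduction argument there pushes the configuration into the local fan at $u$ after finitely many triangle flips, and the case analysis above then applies. Cut vertices and cut edges are handled by the same device as in \cref{lem:paths}, namely adjoining triangles.
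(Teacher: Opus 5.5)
\textbf{Gap in Case B.} Your Case A is the paper's argument. The paper adjoins triangles to remove cut edges, so every shortest path leaves $u$ along an outgoing edge, and then reads off the minimizer from the two alternatives of \cref{lem:paths}. The problems are in Case B.

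First, the alternation claim is false. \cref{lem:triangledist} only forces the out-neighbors flanking an in-neighbor at distance $d-2$ to be at $d-1$. An in-neighbor $x$ that follows an out-neighbor $y$ at $d-1$ can have $\dist(x,v)=d+1$, since the triangle then carries $d-1,d,d+1$. So walking from $w$ to $w'$ need not produce your vertex $y$.

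Second, and more seriously, the single-$y$ configuration does not contradict the induction hypothesis. There $\dist(y,v)=d-1$ and $w,w'$ are out-neighbors of $y$ at $d-2$. Uniqueness at $y$ fails only if $d-2$ is the minimum over the neighbors of $y$, and nothing excludes an in-neighbor of $y$ at $d-3$. In fact one is forced: $y\to w$ and $y\to w'$ begin two shortest paths from $y$ with different first edges. So \cref{lem:paths}(2) applied at $y$ gives an edge $z\to y$ with $\dist(z,v)=d-3$ and triangles $y\to w\to z\to y$ and $y\to w'\to z\to y$. The induction hypothesis is then satisfied, with $z$ as the unique minimizer at $y$.

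\textbf{The missing idea} is the curvature contradiction from the last paragraph of the proof of \cref{lem:paths}. Since $\dist(z,v)=d-3\neq d=\dist(u,v)$, we have $z\neq u$. Hence the four triangles $(y,u,w)$, $(y,u,w')$, $(y,z,w)$, $(y,z,w')$ close up around $y$, and $y$ is an interior vertex of degree $4<6$.

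To reach this configuration, use \cref{lem:paths} at $u$ instead of the alternation argument:
\begin{enumerate}
\item Let $y$ and $y'$ be the out-neighbors adjacent to $w$ and $w'$ on the arc of the fan between them. Both lie at $d-1$ and start shortest paths.
\item If $y\neq y'$, \cref{lem:paths}(2) says they are separated by a single in-edge $x\to u$ with $\dist(x,v)=d-2$.
\item That in-edge must lie on the arc between $w$ and $w'$, because the complementary arc contains the two distinct in-edges from $w$ and $w'$.
\item Then $w,y,x$ is the required consecutive triple, and the degree argument at $y$ applies.
\end{enumerate}

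Your paragraph on boundary vertices is still only a plan. The paper sidesteps it by adjoining triangles to remove cut edges, which changes no distances among the original vertices, before invoking \cref{lem:paths}.
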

	
	\begin{proof}
		By adding triangles, we can assume that $Q$ has no cut edges without changing the validity of the statement.  Then there is always at least one path $u \to u_1 \to \cdots \to v$, following orientations of edges, of minimal directed distance from $u$ to $v$.  Apply \cref{lem:paths}.  If \cref{lem:paths}(1) holds then $u_1$ is the unique neighbor of $u$ that minimizes $\delta(u',v)$, and we have $\dist(u,v) - \dist(u_1,v) = 1$.  If \cref{lem:paths}(2) holds then $w$ is the unique neighbor of $u$ that minimizes $\dist(u',v)$, and we have $\dist(u,v) - \dist(w,v) = 2$.		
	\end{proof}

	\begin{corollary}\label{cor:localdistance}
		Fix $v$ and consider the distance function $\delta(\cdot, v)$ directed towards $v$.  Let $u$ be an interior vertex such that $\dist(u,v) = d > 0$, and let the vertices incident to $u$ be $x_1,y_1,x_2,y_2,\ldots$ where there are edges $x_i \to u$ and $u \to y_i$.  Then the function $\dist(\cdot,v)$ restricted to $\ldots,x_i,y_i,x_{i+1},y_{i+1},\ldots$ is, up to cyclic rotation, given by
		$$
		\ldots, d+2,d+1,d-1,d+1,d+2,d+1,d+2, \ldots,
		$$
		or
		$$
		\ldots, d+2,d+1,d-1,d-2,d-1,d+1,d+2,d+1,d+2,\ldots,
		$$
		corresponding to the two cases of \cref{lem:drop}.
		If instead, $u$ is a boundary vertex, then these distance values could be truncated, however the value $d-1$ is always present in the first case, and the value $d-2$ is always present in the second case.
	\end{corollary}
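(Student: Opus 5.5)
We will derive the corollary directly from \cref{lem:drop} and \cref{lem:paths}, combined with the local constraints of \cref{lem:edgedist1} and \cref{lem:triangledist}.

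\textbf{Setup.} Around $u$ the neighbors alternate between in-neighbors $x_i$ (with $x_i \to u$) and out-neighbors $y_i$ (with $u \to y_i$). Consecutive neighbors are joined by a triangle, and in particular we have $y_i \to x_{i+1}$ or the analogous edge. By \cref{lem:edgedist1} applied to the edge $u \to y_i$, we get $\dist(y_i,v) \in \{d-1, d+2\}$. Applied to the edge $x_i \to u$, we get $\dist(x_i,v) \in \{d+1, d-2\}$. So every out-neighbor takes a value in $\{d-1,d+2\}$ and every in-neighbor a value in $\{d-2,d+1\}$. Moreover, within each triangle $u, x, y$ the three values must be $\{d', d'+1, d'+2\}$ by \cref{lem:triangledist}; this forbids the pairs $(d+2, d-2)$ and $(d-1, d-2)$ only in the appropriate combinations.

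\textbf{Case analysis.} By \cref{lem:drop}, there is a unique neighbor minimizing $\dist(\cdot,v)$, and the drop is $1$ or $2$.

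In the first case of \cref{lem:paths}, some unique out-neighbor $y_j$ has value $d-1$. No in-neighbor has value $d-2$, since such a neighbor would be a strictly smaller minimizer. Hence every $x_i$ has value $d+1$, every other $y_i$ has value $d+2$, and we obtain the pattern $d+2, d+1, d-1, d+1, d+2, \ldots$.

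In the second case, the unique minimizer is an in-neighbor $w = x_j$ with value $d-2$, and \cref{lem:paths}(2) says that both adjacent out-neighbors $y_{j-1}$ and $y_j$ carry shortest paths, hence have value $d-1$. Every other out-neighbor must then have value $d+2$: if some other $y_i$ had value $d-1$, a shortest path would leave $u$ through it, and \cref{lem:paths} would force it to be adjacent to $w$ via the two-triangle configuration, which is impossible. All remaining $x_i$ are not minimizers, so they have value $d+1$. This gives the second pattern.

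\textbf{Boundary case and main obstacle.} For boundary $u$ the neighbor list is merely truncated, but the minimizer supplied by \cref{lem:drop} still exists. In the second case, $w$ and its two flanking edges exist by \cref{lem:paths}(2). This shows that $d-1$, respectively $d-2$, appears.

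The main subtlety is excluding a second out-neighbor at $d-1$ that is not adjacent to $w$, or in the first case a second $d-1$ value. We handle both by the uniqueness statements: two out-neighbors at $d-1$ would give two shortest paths with different first edges, so \cref{lem:paths}(2) forces them to be exactly the pair flanking a $d-2$ in-neighbor.
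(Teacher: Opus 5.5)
Your proposal is correct and follows the same route as the paper. The paper's proof identifies the two cases with those of \cref{lem:paths} and reads the pattern off from \cref{lem:drop} and \cref{lem:triangledist}. Your write-up also makes explicit a step the paper leaves implicit: in the second case, a non-flanking out-neighbor at value $d-1$ is excluded by \cref{lem:paths}(2) together with uniqueness of the neighbor at $d-2$.

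One small slip: \cref{lem:triangledist} forbids only an in-neighbor at $d-2$ adjacent to an out-neighbor at $d+2$. The triple $(d,d-2,d-1)$ is allowed, contrary to what you wrote. That single exclusion is what directly forces both out-neighbors flanking $w$ to have value $d-1$.
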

	\begin{proof}
		The two cases correspond to (1) and (2) of \cref{lem:paths} respectively.  The pattern of distances follows from \cref{lem:drop} and \cref{lem:triangledist}.
	\end{proof}

	\subsection{Combinatorial geodesics achieve minimum directed distance}\label{subsec:geodesics-shortest}

	We establish that combinatorial geodesics are also paths of 
	shortest directed distance.
	
	\begin{proposition}\label{prop:short}
		Let $Q$ be a CAT(0) planar graph. Any combinatorial geodesic is a path 
		of shortest directed distance between its endpoints. That is, if 
		$\gamma$ is a path from $u$ to $v$ achieving the minimum number of 
		edges $\ell(u,v)$, then $\gamma$ also achieves the minimum directed 
		distance $\delta(u,v)$.
	\end{proposition}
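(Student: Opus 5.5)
We argue by induction on the combinatorial length $\ell(u,v)$, using the unique-minimizing-neighbor property of \cref{lem:drop} and the level set structure of \cref{lem:combdist}. It suffices to show that for every combinatorial geodesic $\gamma = (u=u_0,u_1,\ldots,u_\ell=v)$ the directed length of $\gamma$ equals $\dist(u,v)$. Equivalently, each step $u_i \to u_{i+1}$ (traversed in whatever direction) decreases $\dist(\cdot,v)$ by exactly the cost of that step: $1$ if the edge is traversed along its orientation and $2$ if against it. By \cref{lem:edgedist1}, stepping along an edge $u_i\to u_{i+1}$ changes $\dist(\cdot,v)$ by $+1$ or $-2$ when traversed forward in the "wrong" sense. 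So the claim reduces to a local statement. For a vertex $u\neq v$ and a neighbor $u'$ with $\ell(u',v)=\ell(u,v)-1$, we must show that $\dist(u,v)-\dist(u',v)$ equals the cost of the edge from $u$ to $u'$, namely $1$ if $u\to u'$ and $2$ if $u'\to u$. Then induction along $\gamma$ finishes, since every suffix of a geodesic is a geodesic.

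To prove the local statement, combine \cref{lem:combdist}(3) with \cref{cor:localdistance}. By \cref{lem:combdist}(3), the neighbors of $u$ closer to $v$ form one vertex or two adjacent vertices, flanked by at most two neighbors at the same level. By \cref{cor:localdistance}, the neighbors with $\dist(\cdot,v)<\dist(u,v)$ form either a single out-neighbor at $d-1$, or a consecutive block $y,x,y'$ with values $d-1,d-2,d-1$, where $x\to u$ and $u\to y,y'$. The goal is to show that every combinatorially closer neighbor lies in this "descending block". Once that holds, the edge-cost identity is automatic: an out-neighbor in the block has value $d-1$ (cost $1$), and an in-neighbor in the block has value $d-2$ (cost $2$).

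To place the combinatorially closer neighbors inside the descending block, I would induct on $\ell(u,v)$ and use the fact that both structures are "cones" pointing toward $v$. Suppose $u'$ is combinatorially closer but has $\dist(u',v)\ge d$. Take a shortest directed path from $u$ through its unique minimizing neighbor $w$ (\cref{lem:drop}) and a combinatorial geodesic through $u'$. By the inductive hypothesis the geodesic from $u'$ is directed-shortest, so the two paths bound a region $R$. Each interior boundary vertex of $R$ has degree at least $4$ in $R$: geodesic vertices by \cref{lem:combdist}(3), and directed-shortest-path vertices by the argument in \cref{lem:paths}. Applying \cref{lem:graph} to $R$, as in Claim (a) of \cref{lem:combdist}, should force $R$ to be degenerate. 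That would place $u'$ adjacent to $w$ within the triangle fan of the descending block, contradicting the value pattern in \cref{cor:localdistance}.

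I expect this region and curvature step to be the main obstacle. The difficulty is to control the degrees at the two corners $u$ and $v$, and at points where the two paths touch, so that the bound of \cref{lem:graph} (at least $6$) yields a contradiction. Boundary vertices $u$ of $Q$, and cut edges, should be handled as in \cref{lem:drop} by first attaching triangles so that $Q$ has no cut edges, which does not change distances along the relevant paths.
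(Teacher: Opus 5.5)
Your reduction is sound. By \cref{lem:edgedist1} and \cref{cor:localdistance}, it suffices to show that every neighbor $u'$ of $u$ with $\ell(u',v)=\ell(u,v)-1$ satisfies $\delta(u',v)<\delta(u,v)$; once that holds, the drop automatically equals the cost of the step. But that local claim is the entire content of the proposition, and your argument for it has a genuine gap.

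\textbf{The curvature step does not go through.} It is only asserted (``should force $R$ to be degenerate''), and the degree bounds it rests on are false.
\begin{enumerate}
\item A vertex $g$ on a combinatorial geodesic can have degree $3$ in $R$. In the flat triangular lattice, take $g$ in the interior of a side of the hexagonal level set at level $m$. Its geodesic neighbors at levels $m+1$ and $m-1$ are separated on one side by a single neighbor at level $m$, exactly as \cref{lem:combdist}(3) allows.
\item A vertex on a directed-shortest path can have degree $2$ in $R$. The proof of \cref{lem:paths} produces exactly such a vertex from \cref{lem:graph} and flips the path across it, so you cannot cite that argument for a bound $\geq 4$.
\end{enumerate}
With vertices of degree $2$ or $3$ possible on both sides, \cref{lem:graph} gives no contradiction of the kind in Claim (a) of \cref{lem:combdist}. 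The corners and touching points, which you flag yourself, also remain unresolved.

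\textbf{The missing idea.} Apply the induction hypothesis at $u_1$, where it gives complete local control, rather than at $u$. Call a step tight if its cost equals the drop in $\delta(\cdot,v)$, and let $d_1=\delta(u_1,v)$.

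First, the unique minimizing neighbor $x=f_v(u_1)$ of \cref{lem:drop} satisfies $\ell(x,v)=\ell-2$. Each neighbor $w$ of $u_1$ with $\ell(w,v)=\ell-2$ starts a geodesic of length $\ell-1$, so the step from $u_1$ to $w$ is tight by induction. An in-neighbor among these $w$ would therefore have value $d_1-2$ and be the minimizer. So if none of them is the minimizer, they are out-neighbors at value $d_1-1$ flanking an in-neighbor minimizer $x$. Then $w\to x$ and $\ell(x,v)=\ell-1$, and induction along the geodesic $x,w,\ldots,v$ gives $\delta(x,v)=d_1+1\neq d_1-2$, a contradiction.

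Now suppose the first step of $\gamma$ is not tight.
\begin{enumerate}
\item If $u\to u_1$, \cref{lem:edgedist1} gives $\delta(u,v)=d_1-2$. Then $u$ would be the minimizer of $u_1$, although $\ell(u,v)=\ell$.
\item If $u_1\to u$, it gives $\delta(u,v)=d_1-1$. Then \cref{cor:localdistance} forces $u$ to flank the minimizer, which is therefore adjacent to $u$ and has combinatorial distance at least $\ell-1$, a contradiction.
\end{enumerate}
This is essentially the paper's argument (Subcases 2a and 2b), and it needs no region at all.
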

	
	\begin{proof}
		We proceed by induction on the length $\ell$ of the combinatorial 
		geodesic $\gamma$.  The base cases $\ell \leq 1$ are clear.

		Assume the statement holds for all combinatorial geodesics of length at 
		most $\ell - 1$, where $\ell \geq 2$. Let 
		$\gamma = (u = u_0, u_1, \ldots, u_\ell = v)$ be a combinatorial geodesic 
		of length $\ell$ from $u$ to $v$.	  It suffices to show that 
		\begin{equation}\label{eq:deltaadd}
			\delta(u, v) = \delta(u, u_1) + \delta(u_1, v).
		\end{equation}

		\medskip
		\noindent{Case 1:} Suppose that every path from $u$ to $v$ passes 
		through $u_1$.
		
		In this case, $u_1$ is a cut vertex separating $u$ from $v$. Any path 
		from $u$ to $v$ decomposes as a path from $u$ to $u_1$ followed by a 
		path from $u_1$ to $v$. Therefore, \eqref{eq:deltaadd} holds.
		
		\medskip
		\noindent{Case 2:} There exists a path from $u$ to $u_1$ that does 
		not pass through $u_1$.  There must a triangle $(u, u_1, u')$ for 
		some vertex $u'$ adjacent to both $u$ and $u_1$.  Let $d:= \dist(u_1,v)$.
		
		\medskip
		\noindent{Subcase 2a:} The triangle is oriented 
		$u \to u' \to u_1 \to u$.  If \eqref{eq:deltaadd} does not hold, then by \cref{lem:edgedist1}, we have $\dist(u,v) = d- 1$ and $\dist(u',v)= d -2$.  We have $\ell(u',v) \in \{\ell-1, \ell\}$.  Either way, applying the inductive hypothesis and \cref{cor:localdistance} to $u_1$ we deduce that the unique neighbor of $u_1$ that minimizes $\dist(\cdot, v)$ should have combinatorial distance $\ell-2$ from $v$.  This contradicts $\dist(u',v)= d -2$.
		
		\medskip
		\noindent{Subcase 2b:} The triangle is oriented 
		$u \to u_1 \to u' \to u$.  If \eqref{eq:deltaadd} does not hold, then by \cref{lem:edgedist1}, we have $\dist(u,v) = d- 2$ and $\dist(u',v)= d -1$.  Applying the inductive hypothesis and \cref{cor:localdistance} to $u_1$ we deduce that the unique neighbor of $u_1$ that minimizes $\dist(\cdot, v)$ should have combinatorial distance $\ell-2$ from $v$.  This contradicts $\dist(u,v)= d -2$.
		
		This proves \eqref{eq:deltaadd}, the inductive step, and the claim.
	\end{proof}
	The following result follows immediately.
	
	\begin{corollary}\label{cor:edge-count}
		Let $\gamma$ be a combinatorial geodesic from $u$ to $v$. The number 
		of edges traversed in the forward direction, and the number traversed 
		in the backward direction, are invariants of the endpoints $(u, v)$.
	\end{corollary}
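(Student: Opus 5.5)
The plan is to derive the corollary directly from \cref{prop:short}. Let $\gamma$ and $\gamma'$ be two combinatorial geodesics from $u$ to $v$. For each, let $f$ and $b$ denote the number of edges traversed forward (with the orientation) and backward (against it). Then $f+b$ and $f'+b'$ are both equal to $\ell(u,v)$, since both paths are combinatorial geodesics.

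Next we use the directed lengths. By our convention, a forward step costs $1$ and a backward step costs $2$, so the directed length of $\gamma$ is $f+2b$. By \cref{prop:short}, every combinatorial geodesic achieves the minimal directed distance, so
\[
f+2b=\dist(u,v)=f'+2b'.
\]
Subtracting the edge-count equation from this one gives
\[
b=b'=\dist(u,v)-\ell(u,v),
\]
and therefore
\[
f=f'=2\ell(u,v)-\dist(u,v).
\]
Both quantities depend only on the endpoints, which proves the claim.

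The only point needing care is cut edges. A cut edge traversed backward contributes $2$ under the stated convention, so the accounting $f+2b$ still holds, and \cref{prop:short} was proved under the same convention. There is no real obstacle here: all the substantive work is contained in \cref{prop:short}.
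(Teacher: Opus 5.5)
Your proposal is correct and is exactly the argument the paper has in mind when it says the corollary ``follows immediately'' from \cref{prop:short}. Combining $f+b=\ell(u,v)$ with $f+2b=\dist(u,v)$ determines $b=\dist(u,v)-\ell(u,v)$ and $f=2\ell(u,v)-\dist(u,v)$.
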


	We may now sharpen \cref{lem:edgedist1}.
	\begin{lemma}\label{lem:edgedist}
		Let $Q$ be a CAT(0) planar graph. Fix a target vertex $v \in V(Q)$. 
		Suppose we have a directed edge $u \to w$. Then 
		\[
		\delta(u, v) - \delta(w, v) = \begin{cases}
			+1 & \text{if } \ell(u, v) \geq \ell(w, v), \\[4pt]
			-2 & \text{if } \ell(u, v) = \ell(w, v) - 1.
		\end{cases}
		\]
	\end{lemma}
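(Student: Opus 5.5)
By \cref{lem:edgedist1}, we already know that $\delta(u,v)-\delta(w,v)\in\{+1,-2\}$ for an edge $u\to w$. It therefore suffices to show that the value $-2$ occurs exactly when $\ell(u,v)=\ell(w,v)-1$. Since $u,w$ are adjacent, $|\ell(u,v)-\ell(w,v)|\le 1$. This leaves three cases: $\ell(u,v)=\ell(w,v)+1$, $\ell(u,v)=\ell(w,v)$, and $\ell(u,v)=\ell(w,v)-1$. The plan is to handle each case using \cref{prop:short} (combinatorial geodesics are directed-distance geodesics) together with \cref{cor:edge-count}.

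\textbf{First case: $\ell(u,v)=\ell(w,v)+1$.} Concatenating the edge $u\to w$ with a combinatorial geodesic from $w$ to $v$ gives a path of $\ell(u,v)$ edges, hence a combinatorial geodesic from $u$ to $v$. By \cref{prop:short} it realizes $\delta(u,v)$, so $\delta(u,v)=1+\delta(w,v)$.

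\textbf{Third case: $\ell(u,v)=\ell(w,v)-1$.} Symmetrically, the backward edge $w\to u$ (cost $2$) followed by a combinatorial geodesic from $u$ gives a combinatorial geodesic from $w$ to $v$. By \cref{prop:short}, $\delta(w,v)=2+\delta(u,v)$, i.e.\ the difference is $-2$.

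\textbf{Second case: $\ell(u,v)=\ell(w,v)=\ell$.} This is the main obstacle, and I would treat it with \cref{lem:combdist}(2). It provides a unique triangle $(u,w,c)$ with $\ell(c,v)=\ell-1$. Since faces are oriented triangles and $u\to w$ is an edge, the triangle is $u\to w\to c\to u$.
\begin{itemize}
\item The path $u\leftarrow c$ traversed backward from $u$ to $c$ costs $2$. The path $w\to c$ costs $1$.
\item Applying the first case to the edge $w\to c$ (note $\ell(w,v)=\ell(c,v)+1$) gives $\delta(w,v)=\delta(c,v)+1$.
\item Applying the third case to the edge $c\to u$ (note $\ell(c,v)=\ell(u,v)-1$) gives $\delta(c,v)-\delta(u,v)=-2$, i.e.\ $\delta(u,v)=\delta(c,v)+2$.
\end{itemize}
Hence $\delta(u,v)-\delta(w,v)=(\delta(c,v)+2)-(\delta(c,v)+1)=+1$, as claimed.

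\textbf{Degenerate situations.} I would check that \cref{lem:combdist}(2) applies, in particular that the triangle exists when $u,w$ lie on the boundary. The lemma is stated for any edge at equal levels, so this should be fine. If cut edges cause trouble, I would use the paper's device of adding triangles without changing distances, as in the proof of \cref{lem:drop}.
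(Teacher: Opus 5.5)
Your proposal is correct and follows essentially the same route as the paper: you split into the same three cases, settle $\ell(u,v)=\ell(w,v)\pm 1$ by \cref{prop:short}, and handle the equal-level case with the triangle $u\to w\to c\to u$ supplied by \cref{lem:combdist}(2). The only difference is cosmetic: the paper gets $\delta(u,v)=\delta(c,v)+2$ from \cref{lem:triangledist}, whereas you obtain it by applying your third case to the edge $c\to u$, which is the same computation.
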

	\begin{proof}
		The cases $\ell(u,v) = \ell(w,v) \pm 1$ follow immediately from \cref{lem:edgedist1} and \cref{prop:short}.  When $\ell(u,v) = \ell(w,v)$, by \cref{lem:combdist}, there exists a triangle $u \to w \to x \to u$ such that $\ell(x,v) = \ell(u,v) -1$.  We have that $\dist(x,v) = \dist(w,v) -1$ so by \cref{lem:triangledist} and \cref{prop:short} we have $\dist(u,v) = \delta(w,v)+1$.
	\end{proof}

	\section{Focal Points and Distance Minimizers}\label{sec:focal-points}
	
	Let $(Q,\z)$ be a labeled CAT(0) planar graph.  Fix three not necessarily distinct vertices $z_i, z_j, z_k$ in $\z$.		
	The \emph{distance sum} from a vertex $x$ to this triple is
	\begin{equation}\label{eq:Dx}
		D(x) := \delta(x, z_i) + \delta(x, z_j) + \delta(x, z_k),
	\end{equation}
	and the \emph{Fermat-Le distance sum} (\cref{defn:FL}) is
	\[
	\Le_Q(z_i,z_j,z_k) = \Sigma_Q(i, j, k) := \min_{x \in V(Q)} D(x).
	\]
	A vertex achieving this minimum is called a \emph{distance minimizer} 
	for $(z_i, z_j, z_k)$ (or for $(i,j,k)$).  We may omit the subscript $Q$ and write $\Le(i,j,k)$ for brevity.
	
	Our goal in this section is to characterize distance minimizers in terms 
	of \emph{focal points} (\cref{def:focal}). The main idea of a focal point is that the three paths
	from a minimizer to the given triple of boundary vertices can be chosen to
	be sufficiently spread out and non-intersecting.  This choice is made by the
	construction of the three canonical geodesics.  The main result is:

	\begin{theorem}[Minimizers and Focal Points Coincide]\label{thm:focal}
		Let $Q$ be a CAT(0) planar graph and let $(z_i, z_j, z_k)$ 
		be three boundary vertices. A vertex $v$ is a distance minimizer for 
		$(z_i, z_j, z_k)$ if and only if $v$ is a focal point for 
		$(z_i, z_j, z_k)$.
	\end{theorem}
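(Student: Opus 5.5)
The definition of focal point (\cref{def:focal}) is not yet visible here, so I will work with the description given just before the theorem: $v$ is focal for $(z_i,z_j,z_k)$ if the three canonical geodesics from $v$ to $z_i,z_j,z_k$ leave $v$ in directions that are sufficiently spread out around $v$, and are pairwise non-intersecting away from $v$. By \cref{lem:drop}, each geodesic's first step is the unique minimizing neighbor. The plan is to prove the two implications separately. Minimizer $\Rightarrow$ focal will be a local argument. Focal $\Rightarrow$ minimizer will be a global area-reduction argument.

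\emph{Minimizer $\Rightarrow$ focal.} Let $v$ be a minimizer and fix a neighbor $u$ of $v$. By \cref{lem:edgedist1}, each term of $D$ changes by $+1$ or $-2$ (or by $-1$ or $+2$ across an oriented edge in the other direction) when we move from $v$ to $u$. By \cref{cor:localdistance}, the neighbors at which $\dist(\cdot,z_a)$ drops form a short consecutive arc: either a single vertex at $d-1$, or $d-1,d-2,d-1$. All other neighbors increase. The condition $D(u)\ge D(v)$ for every neighbor $u$ then says that no neighbor lies in the decreasing arcs of two of the targets in a way that makes the total drop exceed the increase. Equivalently, the three ``descent arcs'' at $v$ must be pairwise separated in the cyclic order around $v$, which is exactly the spreading condition at $v$. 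The non-intersection condition should follow from the same kind of argument applied further along. If two canonical geodesics met again at a vertex $w\neq v$, I would use \cref{lem:paths} and \cref{prop:short} to reroute both through $w$. This produces a neighbor of $v$ with strictly smaller $D$, or forces an interior vertex of degree $<6$ via \cref{lem:graph}, contradicting minimality.

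\emph{Focal $\Rightarrow$ minimizer.} Take a focal $v$ and any other vertex $x$. Concatenating canonical geodesics gives three paths from $v$ that, together with geodesics from $x$ to the $z_a$, bound regions in the plane. The plan is to show $D(x)\ge D(v)$ by induction on the area enclosed between $x$'s geodesics and $v$'s tripod, in the spirit of the proof of \cref{lem:paths}. Apply \cref{lem:graph} to this region. Because the tripod at $v$ is spread out, the vertices along it have large degree inside the region. So the region must have a degree-$2$ (acute) corner on the side of $x$'s paths. Flipping that corner across a triangle, as in \cref{lem:paths}, does not increase the relevant distance sum, by \cref{lem:triangledist}, and it strictly shrinks the area. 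Eventually $x$'s tripod coincides with $v$'s, which gives $D(x)\ge D(v)$.

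An alternative route to this direction is to show that $D$ has no non-global local minima. I would show that the set of focal points is connected and that $D$ is constant on it, and combine this with the local characterization above. I would try the area argument first. The main obstacle is this direction, specifically controlling the geometry when the three geodesics from $x$ cross $v$'s tripod several times, or pass through cut vertices. There one must split into regions and apply \cref{lem:graph} to each simple polygonal piece separately, keeping careful track of boundary degrees at the crossing points.
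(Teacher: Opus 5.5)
\textbf{There is a genuine gap, and it starts with the definition.} You work with a notion of focal point that is not \cref{def:focal}, and for your notion the implication minimizer $\Rightarrow$ focal is false.

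In \cref{def:focal}, $v$ is focal if $F(v,z_i),F(v,z_j),F(v,z_k)$ admit an SDR and no two targets are \emph{parallel} at $v$. Parallelism is tested at the end of a witness path, so it is a nonlocal condition. Canonical geodesics from a minimizer can share an initial run of outgoing edges. If $f_z(v)=f_{z'}(v)=v_1$ with $v\to v_1$, then by \cref{lem:edgedist} the move to $v_1$ changes $D$ by $-2+\epsilon$ with $\epsilon\in\{-1,2\}$. So at a minimizer your rerouting gives $D(v_1)=D(v)$, not a strictly smaller value, and nothing is contradicted.

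Here is a concrete instance. Take a large flat triangle and put $z_k,v,z_i,z_j$ in this order on a side consisting of forward edges. Then $D$ is constant on the whole segment from $z_k$ to $z_i$, and this constant is the global minimum. Yet the canonical geodesics from $v$ to $z_i$ and $z_j$ coincide up to $z_i$, and their descent arcs at $v$ coincide. So ``pairwise separated descent arcs'' is not necessary either; compare $F(v,z_2)=F(v,z_{17})$ in \cref{fig:examplefocalpoint}.

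What a minimizer must actually exclude is the following:
\begin{enumerate}
\item two targets with the same singleton $F$-set $\{x\}$, which gives $D(x)\le D(v)-3$;
\item two targets whose common two-element $F$-sets persist along outgoing steps until a witness where they become the same singleton; along the witness path $D$ is non-increasing, and then it drops by $3$;
\item failure of the SDR, which puts all three $F$-sets inside some $F(v,z)=\{a,a'\}$, so that $f_z(v)$ lowers all three distances.
\end{enumerate}
The second item is a nonlocal plateau phenomenon. An analysis of $D$ on the neighbors of $v$ alone cannot detect it.

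\textbf{The converse, as planned, never compares $D(x)$ with $D(v)$.} The corner flips of \cref{lem:paths} replace a shortest path from $x$ to $z_a$ by another shortest path with the same endpoints. So they never change $D(x)$ and never move $x$, and the two tripods can only coincide if $x=v$. Moreover, the degree bound you want along $v$'s tripod does not follow from a condition imposed only at $v$.

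The working route is to move from $v$ toward a minimizer instead:
\begin{enumerate}
\item Fix a minimizer $w$ and a combinatorial geodesic $v=v_0,v_1,\dots,v_r=w$.
\item If $v_1$ were not focal, its $F$-sets would be directed backwards (\cref{lem:backwards}(1)). This property propagates along a combinatorial geodesic (\cref{lem:backwards}(2)) and rules out focality, so $w$ would not be focal, contradicting the first implication.
\item Inductively, every $v_m$ is focal.
\item Across an edge joining two focal vertices, $D$ is constant (\cref{lem:Dsame}). Any other pattern of distance changes would produce two equal singleton $F$-sets at the head of the edge, or confine all three $F$-sets at its tail to a two-element set.
\item Hence $D(v)=D(w)$.
\end{enumerate}
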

	
	\cref{thm:focal} is established in \cref{subsec:minimizers-focal} and \cref{subsec:focalproof}.
	
	\subsection{Definition of focal point}\label{subsec:focal-definitions}
	
	We introduce the neighbor sets that encode how geodesics from $v$ to 
	different boundary targets depart from $v$.
	
	\begin{definition}[Neighbor Sets $E(v)$ and $F(v)$]
		For a vertex $v \in V(Q)$:
		\begin{itemize}
			\item $E(v)$ denotes the set of neighbors of $v$ connected by an 
			\emph{outgoing} edge from $v$.
			\item $F(v)$ denotes the set of neighbors of $v$ connected by an 
			\emph{incoming} edge to $v$.
		\end{itemize}
		For technical reasons, if $v$ is a boundary vertex and there is a 
		boundary edge $v \to u$, we augment $F(v)$ by adding vertices $u^-$ and/or 
		$u^+$ (if necessary) so that $u^-, u, u^+$ are in cyclic order around $v$.  That is, we pretend that there are fake edges $u^- \to v$ and $u^+ \to v$.
		If $|F(v)| < 3$, we add auxiliary elements $\tilde v$ (and $\tilde v'$ if necessary) so that $|F(v)| = 3$.
	\end{definition}
	
	Ignoring the auxiliary elements $\tilde v$, $\tilde v'$, the set $F(v)$ comes equipped with a cyclic ordering, see \cref{fig:fcyclic}.

	\begin{figure}[h!]
		\centering
		\includegraphics[width=0.15\linewidth]{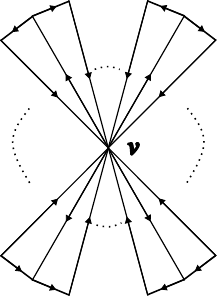}
		\caption{Cyclic ordering on neighbors of $v$; in particular on the set $F(v)$ with an edge directed to $v$.}
		\label{fig:fcyclic}
	\end{figure}

	\begin{definition}[Minimizing Neighbor $f_z(v)$]
		For vertices $v \neq z$, define $f_z(v)$ to be the unique neighbor of 
		$v$ (guaranteed by \cref{lem:drop}) that minimizes $\delta(\cdot, z)$ among all neighbors of $v$.
	\end{definition}
	
	\begin{definition}\label{def:F-set}
		For vertices $v, z \in V(Q)$, define:
		\[
		F(v, z) = \begin{cases}
			F(v) & \text{if } v = z, \\[4pt]
			\{a, a'\} & \text{if } v \neq z, \text{ there is an edge } v \to f_z(v), \\
			& \quad \text{and } a, f_z(v), a' \text{ are consecutive in cyclic order around } v, \\[4pt]
			\{f_z(v)\} & \text{if } v \neq z \text{ and there is an edge } f_z(v) \to v.
		\end{cases}
		\]
		When $F(v, z) = \{a, a'\}$, we write $c \triangleleft F(v, z)$ if 
		$a, c, a'$ are in cyclic order around $v$ with $c$ strictly between 
		$a$ and $a'$.
	\end{definition}
	
	\begin{definition}[Parallel Vertices]
		We say that vertices $z$ and $z'$ are \defn{parallel at $v$}, written 
		$z \parallel_v z'$, if $F(v, z) = F(v, z')$ and one of the following holds:
		\begin{enumerate}
			\item $|F(v, z)| = 1$, or
			\item $|F(v, z)| = 2$ and $z, z'$ are parallel at $f_z(v) = f_{z'}(v)$, or
			\item $z = z' \neq v$.
		\end{enumerate}
	\end{definition}
	
	\begin{definition}[Witness Path]
		When $F = \{a, a'\} = F(v, z) = F(v, z')$, a 
		\defn{witness path} is a sequence $v = v_0 \to v_1 \to \cdots \to v_r = w$ 
		such that $|F(v_i, z)| = |F(v_i, z')| = 2$ for $i \in [0, r-1]$ and 
		$|F(v_r, z)| = |F(v_r, z')| = 1$. The vertex $w = v_r$ is called the 
		\textbf{witness}.  We have $z \parallel_v z'$ if and only if $z \parallel_w z'$ if and only if $F(w,z) = F(w,z')$.
	\end{definition}
	
	\begin{definition}[Focal Point]\label{def:focal}
		A vertex $v$ is a \defn{focal point} for $(z_i, z_j, z_k)$ if:
		\begin{enumerate}
			\item The sets $F(v, z_i), F(v, z_j), F(v, z_k)$ admit a system of 
			distinct representatives (SDR).
			\item No two of $z_i, z_j, z_k$ are parallel at $v$.
		\end{enumerate}
	\end{definition}
	
	\begin{example}
		In \cref{fig:v124}, $v$ is not a focal point for $(z_1,z_2,z_4)$ but in \cref{fig:x124}, $x$ is a focal point for $(z_1,z_2,z_4)$.  In \cref{fig:examplefocalpoint}, the vertex $v$ is a focal point for any of $(z_2,z_8,z_{13}), (z_2,z_8,z_{17}), (z_2,z_{13},z_{17}),(z_8,z_{13},z_{17})$.
	\end{example}

	\begin{figure}
	\begin{center}
	$$
	\begin{tikzpicture}
	\tikzset{->-/.style={decoration={markings, mark=at position #1 with {\arrow{stealth}}},postaction={decorate}}, ->-/.default=0.5}
	\coordinate (z4) at (0,0);
	\coordinate (y) at (1,0);
	\coordinate (z5) at (2,0);
	\coordinate (z6) at (3,0);
	\coordinate (z3) at (60:1);
	\coordinate (v) at ($(z3) + (1,0)$);
	\coordinate (u) at ($(v) + (1,0)$);
	\coordinate (x) at (60:2);
	\coordinate (z1) at ($(x) + (1,0)$);
	\coordinate (z2) at (60:3);
	\draw[->-] (z1) -- (z2);
	\draw[->-] (z2) -- (x);
	\draw[->-] (x) -- (z3);
	\draw[->-] (z3) -- (z4);
	\draw[->-] (z4) -- (y);
	\draw[->-] (y) -- (z5);
	\draw[->-] (z5) -- (z6);
	\draw[->-] (z6) -- (u);
	\draw[->-] (u) -- (z1);
	\draw[->-] (x) -- (z1);
	\draw[->-] (z1) -- (v);
	\draw[->-] (v) -- (x);
	\draw[->-] (v) -- (u);
	\draw[->-] (v) -- (y);
	\draw[->-] (z3) -- (v);
	\draw[->-] (y) -- (z3);
	\draw[->-] (z5) -- (v);
	\draw[->-] (u) -- (z5);
	\node[red] at ($(z1)+(0.2,0)$) {\small $1$};
	\node at ($(z2)+(0,0.2)$) {\small $2$};
	\node at ($(z4)+(-0.1,-0.2)$) {\small $4$};
	\node at ($(v)+(0.17,0.1)$) {\small  $v$};
	\draw[blue, thick] (v) -- (z1);
	
	\begin{scope}[shift={(6,0)}]
	\coordinate (z4) at (0,0);
	\coordinate (y) at (1,0);
	\coordinate (z5) at (2,0);
	\coordinate (z6) at (3,0);
	\coordinate (z3) at (60:1);
	\coordinate (v) at ($(z3) + (1,0)$);
	\coordinate (u) at ($(v) + (1,0)$);
	\coordinate (x) at (60:2);
	\coordinate (z1) at ($(x) + (1,0)$);
	\coordinate (z2) at (60:3);
	\draw[->-] (z1) -- (z2);
	\draw[->-] (z2) -- (x);
	\draw[->-] (x) -- (z3);
	\draw[->-] (z3) -- (z4);
	\draw[->-] (z4) -- (y);
	\draw[->-] (y) -- (z5);
	\draw[->-] (z5) -- (z6);
	\draw[->-] (z6) -- (u);
	\draw[->-] (u) -- (z1);
	\draw[->-] (x) -- (z1);
	\draw[->-] (z1) -- (v);
	\draw[->-] (v) -- (x);
	\draw[->-] (v) -- (u);
	\draw[->-] (v) -- (y);
	\draw[->-] (z3) -- (v);
	\draw[->-] (y) -- (z3);
	\draw[->-] (z5) -- (v);
	\draw[->-] (u) -- (z5);
	\node at ($(z1)+(0.2,0)$) {\small $1$};
	\node[red] at ($(z2)+(0,0.2)$) {\small $2$};
	\node at ($(z4)+(-0.1,-0.2)$) {\small $4$};
	\node at ($(v)+(0.17,0.1)$) {\small  $v$};
	\draw[blue, thick] (v) -- (z1);
	\draw[red] (z1)--(z2);
	\end{scope}
	
		\begin{scope}[shift={(12,0)}]
	\coordinate (z4) at (0,0);
	\coordinate (y) at (1,0);
	\coordinate (z5) at (2,0);
	\coordinate (z6) at (3,0);
	\coordinate (z3) at (60:1);
	\coordinate (v) at ($(z3) + (1,0)$);
	\coordinate (u) at ($(v) + (1,0)$);
	\coordinate (x) at (60:2);
	\coordinate (z1) at ($(x) + (1,0)$);
	\coordinate (z2) at (60:3);
	\draw[->-] (z1) -- (z2);
	\draw[->-] (z2) -- (x);
	\draw[->-] (x) -- (z3);
	\draw[->-] (z3) -- (z4);
	\draw[->-] (z4) -- (y);
	\draw[->-] (y) -- (z5);
	\draw[->-] (z5) -- (z6);
	\draw[->-] (z6) -- (u);
	\draw[->-] (u) -- (z1);
	\draw[->-] (x) -- (z1);
	\draw[->-] (z1) -- (v);
	\draw[->-] (v) -- (x);
	\draw[->-] (v) -- (u);
	\draw[->-] (v) -- (y);
	\draw[->-] (z3) -- (v);
	\draw[->-] (y) -- (z3);
	\draw[->-] (z5) -- (v);
	\draw[->-] (u) -- (z5);
	\node at ($(z1)+(0.2,0)$) {\small $1$};
	\node at ($(z2)+(0,0.2)$) {\small $2$};
	\node[red] at ($(z4)+(-0.1,-0.2)$) {\small $4$};
	\node[red] at ($(v)+(0.17,0.1)$) {\small  $v$};
	\draw[blue, thick] (v) -- (z3);
	\draw[red] (z3)--(z4);
	\end{scope}
\end{tikzpicture}
$$
\end{center}
\caption{The canonical geodesics from $v$ to $z_1$, $z_2$, $z_4$ respectively have been indicated; the sets $F(v,z_1), F(v,z_2), F(v,z_4)$ have been indicated in blue.  Since $z_1$ and $z_2$ are parallel at $v$, we have that $v$ is not a focal point.}
\label{fig:v124}
\end{figure}	

	\begin{figure}
	\begin{center}
	$$
	\begin{tikzpicture}
	\tikzset{->-/.style={decoration={markings, mark=at position #1 with {\arrow{stealth}}},postaction={decorate}}, ->-/.default=0.5}
	\coordinate (z4) at (0,0);
	\coordinate (y) at (1,0);
	\coordinate (z5) at (2,0);
	\coordinate (z6) at (3,0);
	\coordinate (z3) at (60:1);
	\coordinate (v) at ($(z3) + (1,0)$);
	\coordinate (u) at ($(v) + (1,0)$);
	\coordinate (x) at (60:2);
	\coordinate (z1) at ($(x) + (1,0)$);
	\coordinate (z2) at (60:3);
	\draw[->-] (z1) -- (z2);
	\draw[->-] (z2) -- (x);
	\draw[->-] (x) -- (z3);
	\draw[->-] (z3) -- (z4);
	\draw[->-] (z4) -- (y);
	\draw[->-] (y) -- (z5);
	\draw[->-] (z5) -- (z6);
	\draw[->-] (z6) -- (u);
	\draw[->-] (u) -- (z1);
	\draw[->-] (x) -- (z1);
	\draw[->-] (z1) -- (v);
	\draw[->-] (v) -- (x);
	\draw[->-] (v) -- (u);
	\draw[->-] (v) -- (y);
	\draw[->-] (z3) -- (v);
	\draw[->-] (y) -- (z3);
	\draw[->-] (z5) -- (v);
	\draw[->-] (u) -- (z5);
	\node[red] at ($(z1)+(0.2,0)$) {\small $1$};
	\node at ($(z2)+(0,0.2)$) {\small $2$};
	\node at ($(z4)+(-0.1,-0.2)$) {\small $4$};
	\node at ($(x)+(-0.2,-0)$) {\small  $x$};
	\draw[red] (x) -- (z1);
	\draw[blue, thick] (x) -- (z2);
	\draw[blue, thick] (x) -- (v);
	
	\begin{scope}[shift={(6,0)}]
	\coordinate (z4) at (0,0);
	\coordinate (y) at (1,0);
	\coordinate (z5) at (2,0);
	\coordinate (z6) at (3,0);
	\coordinate (z3) at (60:1);
	\coordinate (v) at ($(z3) + (1,0)$);
	\coordinate (u) at ($(v) + (1,0)$);
	\coordinate (x) at (60:2);
	\coordinate (z1) at ($(x) + (1,0)$);
	\coordinate (z2) at (60:3);
	\draw[->-] (z1) -- (z2);
	\draw[->-] (z2) -- (x);
	\draw[->-] (x) -- (z3);
	\draw[->-] (z3) -- (z4);
	\draw[->-] (z4) -- (y);
	\draw[->-] (y) -- (z5);
	\draw[->-] (z5) -- (z6);
	\draw[->-] (z6) -- (u);
	\draw[->-] (u) -- (z1);
	\draw[->-] (x) -- (z1);
	\draw[->-] (z1) -- (v);
	\draw[->-] (v) -- (x);
	\draw[->-] (v) -- (u);
	\draw[->-] (v) -- (y);
	\draw[->-] (z3) -- (v);
	\draw[->-] (y) -- (z3);
	\draw[->-] (z5) -- (v);
	\draw[->-] (u) -- (z5);
	\node at ($(z1)+(0.2,0)$) {\small $1$};
	\node[red] at ($(z2)+(0,0.2)$) {\small $2$};
	\node at ($(z4)+(-0.1,-0.2)$) {\small $4$};
	\node at ($(x)+(-0.2,-0)$) {\small  $x$};
	\draw[blue, thick] (x) -- (z2);
	\end{scope}
	
		\begin{scope}[shift={(12,0)}]
	\coordinate (z4) at (0,0);
	\coordinate (y) at (1,0);
	\coordinate (z5) at (2,0);
	\coordinate (z6) at (3,0);
	\coordinate (z3) at (60:1);
	\coordinate (v) at ($(z3) + (1,0)$);
	\coordinate (u) at ($(v) + (1,0)$);
	\coordinate (x) at (60:2);
	\coordinate (z1) at ($(x) + (1,0)$);
	\coordinate (z2) at (60:3);
	\draw[->-] (z1) -- (z2);
	\draw[->-] (z2) -- (x);
	\draw[->-] (x) -- (z3);
	\draw[->-] (z3) -- (z4);
	\draw[->-] (z4) -- (y);
	\draw[->-] (y) -- (z5);
	\draw[->-] (z5) -- (z6);
	\draw[->-] (z6) -- (u);
	\draw[->-] (u) -- (z1);
	\draw[->-] (x) -- (z1);
	\draw[->-] (z1) -- (v);
	\draw[->-] (v) -- (x);
	\draw[->-] (v) -- (u);
	\draw[->-] (v) -- (y);
	\draw[->-] (z3) -- (v);
	\draw[->-] (y) -- (z3);
	\draw[->-] (z5) -- (v);
	\draw[->-] (u) -- (z5);
	\node at ($(z1)+(0.2,0)$) {\small $1$};
	\node at ($(z2)+(0,0.2)$) {\small $2$};
	\node[red] at ($(z4)+(-0.1,-0.2)$) {\small $4$};
	\node at ($(x)+(-0.2,-0)$) {\small  $x$};
	\draw[red] (x) -- (z3);
	\draw[red] (z3)--(z4);
	\draw[blue, thick] (x) -- (v);
	\draw[blue,thick] (x) -- ($(x)+(-180:1)$);
	\end{scope}
\end{tikzpicture}
$$
\end{center}
\caption{The three sets $F(x,z_1), F(x,z_2), F(x,z_4)$ have been indicated in blue.  Since none of them coincide, and they have an SDR, $x$ is a focal point for $(z_1,z_2,z_4)$.}
\label{fig:x124}
\end{figure}	

\begin{figure}[h!]
	\centering
	\includegraphics[width=0.5\linewidth]{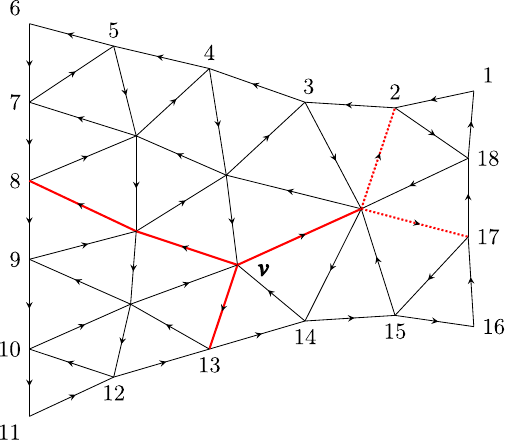}
	\caption{The red edges are the canonical geodesics from $v$ to $z_2,z_8, z_{13},z_{17}$.  The vertex $v$ is a focal point for $(z_2,z_8,z_{13}), (z_2,z_8,z_{17}), (z_2,z_{13},z_{17}),(z_8,z_{13},z_{17})$.  Note that $F(v,z_{2}) = F(v,z_{17})$ but $z_2$ and $z_{17}$ are not parallel at $v$.  All of $\{2,8,13\},\{2,8,17\}, \{2,13,17\},\{8,13,17\}$ are among the bases for the matroid $M_v$ (\cref{def:Mv}).}
	\label{fig:examplefocalpoint}
\end{figure}

	\subsection{Canonical geodesic}
	Let $v$ and $w$ be two vertices of $Q$.  The \defn{canonical geodesic} from $v$ to $w$ is the path
	$$
	\gamma_w(v) = (v, f_w(v), f_w(f_w(v)), \ldots, w)
	$$
	such that every step maximally reduces directed distance.  It follows from \cref{prop:short} (and its proof) that this is both a combinatorial geodesic and a path of shortest directed distance.

	\subsection{Cyclic Ordering of Neighbor Sets}\label{subsec:cyclic-ordering}
	For concreteness, \emph{cyclic ordering} in this section can be taken to mean \emph{counterclockwise ordering}.

	Subets $F_1, F_2, \ldots, F_m$ of $F$ are in \emph{weak cyclic order} around 
	$v$ if, after removing those $F_i$ that are equal to $F$, in the cyclic ordering of the neighbors of $v$ we have
	that each $F_i$ is a contiguous arc, and the 
	arcs appear in the cyclic sequence $F_1, F_2, \ldots, F_m$ (wrapping around $v$ at most once), allowing 
	for overlaps and coincidences between consecutive sets.

	\begin{lemma}\label{lem:cyclic}
		Let $Q$ be a CAT(0) planar graph. Let $v$ be a vertex and let 
		$z_{i_1}, z_{i_2}, \ldots, z_{i_m}$ be boundary vertices listed in 
		cyclic order around $\partial Q$. Then the neighbor sets 
		$F(v, z_{i_1}), F(v, z_{i_2}), \ldots, F(v, z_{i_m})$ are arranged in 
		weak cyclic order around $v$.
	\end{lemma}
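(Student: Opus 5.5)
The plan is to reduce the statement to a planarity argument: canonical geodesics from $v$ to boundary vertices in different directions cannot cross. We first dispose of the degenerate cases. If $v = z_{i_a}$, then $F(v,z_{i_a}) = F(v)$ is discarded by the definition of weak cyclic order. If $F(v,z)$ is a singleton $\{f_z(v)\}$, it is trivially a contiguous arc. If $F(v,z)=\{a,a'\}$, then $a,a'$ are consecutive elements of $F(v)$ flanking the outgoing edge $v\to f_z(v)$, so it is again a contiguous arc in $F(v)$. It remains to show that the arcs occur in the order $F(v,z_{i_1}),\ldots,F(v,z_{i_m})$.

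For each $z = z_{i_a}$, we attach the direction $\theta_a$ in which the canonical geodesic $\gamma_{z}(v)$ leaves $v$, namely its first edge $v \to f_z(v)$ or $f_z(v)\to v$. Its position in the cyclic order around $v$ determines the arc $F(v,z)$.

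\textbf{Step 1 (monotonicity of arcs in the direction).} Using \cref{cor:localdistance}, we check that as the first step of the geodesic rotates counterclockwise around $v$, the arc $F(v,z)$ rotates counterclockwise weakly. This is immediate from the definitions: the arc is either the first vertex itself (incoming edge) or the two incoming neighbours flanking it (outgoing edge).

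\textbf{Step 2 (directions are in the cyclic order of the targets).} We show that the first edges $\theta_1,\ldots,\theta_m$ appear around $v$ in the same cyclic order as $z_{i_1},\ldots,z_{i_m}$ appear around $\partial Q$. The idea is to choose the geodesic paths so that they are pairwise non-crossing.

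Take two targets $z,z'$. If $\gamma_z(v)$ and $\gamma_{z'}(v)$ meet again at some vertex $u\neq v$ after separating, then both subpaths from $v$ to $u$ are shortest directed paths, since subpaths of geodesics are geodesics (\cref{prop:short}). They bound a disk region, and \cref{lem:paths} applied from $v$ to $u$ forces one of two outcomes. Either the two first edges coincide, or they are separated by a single incoming edge $w\to v$ with $\dist(w,u)=\dist(v,u)-2$. In the latter case $w$ would be the unique minimizing neighbour, contradicting that both paths were canonical, i.e.\ that they follow $f$. Hence canonical geodesics with distinct first steps meet only at $v$.

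Because of the uniqueness in \cref{lem:drop}, two canonical geodesics that share a vertex $u$ towards different targets either coincide from $v$ up to the branching point or share nothing beyond $v$. So the union of the canonical geodesics from $v$ to the $z_{i_a}$ forms a tree $\mathcal{T}$ rooted at $v$ and embedded in the planar disk $Q$. Its leaves lie on $\partial Q$ in their boundary order. For a plane tree inside a disk with leaves on the boundary circle, the cyclic order of the leaves along the circle agrees with the cyclic order of the branches at the root; this is a standard planarity fact. Hence the first edges are in weak cyclic order, with ties allowed when geodesics share a first edge.

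\textbf{Step 3.} Combining Steps 1 and 2 gives weak cyclic order of the sets $F(v,z_{i_a})$, with overlaps allowed between consecutive sets. The condition that the arcs wrap around $v$ at most once follows because the branches of $\mathcal{T}$ at $v$ wind once.

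The main obstacle is Step 2 when $v$ or some $z$ lies on $\partial Q$, or when $Q$ is not simple (it has cut vertices). In that case the disk picture fails and the plane-tree argument must be applied separately in each block of $Q$. The fake edges $u^\pm$ added to $F(v)$ are exactly what is needed to place boundary directions correctly. I would handle this by first attaching triangles along clockwise boundary edges, as in the proof of \cref{lem:drop}, to reduce to a case with no cut edges. I would then check cut vertices directly: targets in a different block all have $f_z(v)$ equal to the neighbour of $v$ through the cut vertex, so their arcs coincide and are consecutive in the boundary order.
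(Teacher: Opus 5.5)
Your proposal is correct and follows the paper's route. The paper argues that $F(v,z)$ is determined by the first step of the canonical geodesic $\gamma_z(v)$, that two canonical geodesics from $v$ travel together for a while and never touch again once they diverge, and that weak cyclic order then follows by planarity; your treatment of boundary vertices and cut vertices goes beyond what the paper writes.

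One small streamlining: get the non-rejoining directly from the uniqueness in \cref{lem:drop}, not from \cref{lem:paths}.
\begin{itemize}
\item If $\gamma_z(v)$ passes through $u$, then $\dist(x,z)\le\dist(x,u)+\dist(u,z)$ for every neighbour $x$ of $v$, with equality at $x=f_z(v)$.
\item Hence $f_z(v)=f_u(v)$, and likewise $f_{z'}(v)=f_u(v)$, so $\gamma_z(v)$ and $\gamma_{z'}(v)$ agree up to $u$.
\end{itemize}
\cref{lem:paths} assumes both first steps are outgoing, whereas a canonical geodesic may start backwards along an incoming edge. Moreover, it yields your contradiction only after this identification $f_z(v)=f_u(v)$ has been made anyway.
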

	
	\begin{proof}
		By definition of weak cyclic order, we may assume that $v$ is not equal to any of the $z_i$.
		By definition, each of the sets $F(v,z_{i_j})$ is a one or two-element set, and if it has size two, it consists of two cyclically consecutive elements of $F$.
		Now we note that two canonical geodesics $\gamma_{z_i}(v)$ and $\gamma_{z_j}(v)$ cannot cross.  Indeed, the two paths can travel together for some time and diverge, but once they diverge they will not touch again.
		Since $F(v,z_i)$ is determined by the first step of $\gamma_{z_i}(v)$, we conclude that $F(v, z_{i_1}), F(v, z_{i_2}), \ldots, F(v, z_{i_m})$ are arranged in 
		weak cyclic order around $v$.
	\end{proof}
	
	\subsection{Minimizers are Focal Points}\label{subsec:minimizers-focal}
	
	\begin{proposition}[Minimizers are Focal Points]\label{prop:focal-a}
		Let $(Q,\z)$ be a CAT(0) planar graph. For any three boundary vertices 
		$(z_i, z_j, z_k)$, any distance minimizer is a focal point.
	\end{proposition}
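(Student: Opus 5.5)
We argue by contraposition: if $v$ is not a focal point for $(z_i,z_j,z_k)$, we exhibit a neighbor $v'$ of $v$ with $D(v')<D(v)$, where $D$ is as in \eqref{eq:Dx}. The tool is \cref{cor:localdistance}, which gives the change $\dist(v',z)-\dist(v,z)$ for each neighbor $v'$ of $v$ and each target $z$. Reading that pattern, and using \cref{lem:edgedist} for boundary vertices, we get the following. If $|F(v,z)|=1$, the unique incoming neighbor $f_z(v)$ decreases $\dist(\cdot,z)$ by $2$. Every other incoming neighbor increases it by $1$, except the two incoming neighbors flanking $f_z(v)$ in the second pattern, which decrease it by $1$. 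If $|F(v,z)|=\{a,a'\}$, moving to $a$ or $a'$ decreases $\dist(\cdot,z)$ by $1$, and every other incoming neighbor increases it by $1$. Moving along an outgoing edge changes $\dist(\cdot,z)$ by $-1$ or $+2$, and the $-1$ occurs only toward $f_z(v)$. The case $v=z$ contributes $+2$ to every incoming neighbor, which is consistent with $F(v,v)=F(v)$ containing all of them.

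Condition (1) of \cref{def:focal} fails when the three sets have no SDR. By Hall's theorem, two of them, say $F(v,z_i)$ and $F(v,z_j)$, are then equal singletons $\{w\}$, or all three lie in a common $2$-element set. In the first case, moving to $w$ changes $D$ by $-2-2+\Delta_k$. Since $\Delta_k\le 2$, we have $D(w)\le D(v)$, with equality only when $\Delta_k=2$. To rule out equality we need $\Delta_k\le 1$, which holds unless $v=z_k$. In that subcase I expect a direct comparison along the canonical geodesics, or a neighbor adjacent to $w$, to give the decrease.

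In the second case all three sets lie in $\{a,a'\}$. Moving to $a$ decreases each distance whose set contains $a$, while any singleton $\{a'\}$ contributes a $-1$ flank change. So every term is at most $-1$, and $D$ strictly drops.

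Condition (2) fails when two targets are parallel at $v$. Case (1) of parallelism is the equal-singleton case above. Case (3), $z=z'$ with both coincident targets, is again handled by moving to $f_z(v)$. Case (2) is the real obstacle. Here $F(v,z)=F(v,z')=\{a,a'\}$, and $z\parallel z'$ at the witness $w$ at the end of a witness path $v=v_0\to\cdots\to v_r=w$. If the third target $z_k$ satisfies $\dist(v_1,z_k)=\dist(v,z_k)-1$, moving to $v_1$ gives $-1-1-1$. Otherwise we move along the outgoing edge $v\to v_1$, which costs $+2$ for $z_k$, and we need to show $D(w)<D(v)$. I would prove this by induction along the witness path. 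Each step $v_{s}\to v_{s+1}$ decreases both $\dist(\cdot,z)$ and $\dist(\cdot,z')$ by $1$, using that $v_{s+1}=f_z(v_s)=f_{z'}(v_s)$. The increase in $\dist(\cdot,z_k)$ along the whole path is bounded by $\dist(v,w)$-type estimates, via the triangle inequality and \cref{lem:cyclic} (the geodesic to $z_k$ departs on the other side). At $w$ the common singleton $F(w,z)=F(w,z')$ then gives a further $-2-2$ gain, enough to beat the accumulated cost.

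I expect the hardest part to be making this bookkeeping precise: showing that the $+2$ costs for $z_k$ along the path do not outweigh the savings. A cleaner alternative I would try first is to replace the path by the combinatorial geodesic $\gamma$ from $v$ to $w$ and use \cref{prop:short}. This would give $\dist(w,z_k)\le \dist(w,v)+\dist(v,z_k)$, with $\dist(w,v)$ controlled through \cref{lem:edgedist}, while $\dist(v,z)-\dist(w,z)=\dist(v,w)$ exactly for both $z$ and $z'$.
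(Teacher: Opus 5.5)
Your overall strategy (exhibit a vertex with strictly smaller $D$ whenever $v$ is not focal) is the paper's, and your Hall-type reduction is fine. The local distance table you build everything on is wrong, however, and one case breaks because of it.

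By \cref{lem:edgedist}, an incoming neighbor $x\to v$ always satisfies $\dist(x,z)-\dist(v,z)\in\{+1,-2\}$. An outgoing neighbor $v\to y$ satisfies $\dist(y,z)-\dist(v,z)\in\{-1,+2\}$. So an incoming neighbor never lowers a distance by $1$. Concretely:
\begin{itemize}
\item In the first pattern of \cref{cor:localdistance} (the case $F(v,z)=\{a,a'\}$), the flanking incoming neighbors $a,a'$ carry $d+1$, not $d-1$.
\item In the second pattern (the case $|F(v,z)|=1$), the two entries $d-1$ flanking $f_z(v)$ are outgoing neighbors.
\item For $v=z$, an incoming neighbor has $\dist(x,v)=1$, a change of $+1$, not $+2$.
\end{itemize}
Consequently your case ``all three $F$-sets lie in $\{a,a'\}$'' fails. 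With $F$-sets $\{a,a'\},\{a\},\{a'\}$, moving to $a$ changes the three distances by $+1,-2,+1$, so $D(a)=D(v)$. If all three sets equal $\{a,a'\}$, then $D(a)=D(v)+3$. The missing idea, which is what the paper does, is to move to the outgoing neighbor $c$ between $a$ and $a'$. Then $c=f_z(v)$ for every target with $F(v,z)=\{a,a'\}$, and $c$ is an outgoing flank of $f_z(v)$ for every target with $F(v,z)=\{a\}$ or $\{a'\}$. Hence all three distances drop by $1$ and $D(c)=D(v)-3$.

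The remaining cases are easier than you make them. For two equal singletons $\{w\}$ there is no exceptional subcase: $w\to v$ gives $\dist(w,z_k)\le \dist(w,v)+\dist(v,z_k)=1+\dist(v,z_k)$ for every $z_k$, including $z_k=v$. So $D(w)\le D(v)-3$, which is the paper's Case 1a.

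Along a witness path the bookkeeping is a per-step estimate, with no need for \cref{lem:cyclic} or geodesic comparisons. The parallel pair each drop by $1$, and $\dist(v_{s+1},z_k)\le \dist(v_{s+1},v_s)+\dist(v_s,z_k)=2+\dist(v_s,z_k)$. So $D(v_{s+1})\le D(v_s)$, and the singleton estimate at the witness gives the strict drop. The paper phrases this as propagating minimality along the path and then applying Case 1a at $w$.

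Finally, your claim that parallelism of type (3), $z_j=z_k\neq v$, is ``handled by moving to $f_z(v)$'' is not right. When $|F(v,z_j)|=2$ this step only gives $D\le D(v)$. The canonical geodesic can then run into $z_j$ itself, where $|F(z_j,z_j)|\ge 3$ and no witness exists. This case cannot be rescued. In a single oriented triangle $A\to B\to C\to A$ with targets $(C,A,A)$, both $A$ and $C$ give $D=2$ (while $D(B)=5$), yet $C$ is not focal because of clause (3). So the statement has to be read with coinciding targets excluded; the paper's Case 1b tacitly makes the same assumption.
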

	
	\begin{proof}
		Let $v$ be a distance minimizer for $(z_i, z_j, z_k)$, meaning $v$ 
		achieves the minimum value of \eqref{eq:Dx} over all vertices $x \in V(Q)$.
		
		\medskip
		\noindent\textbf{Step 1: No two targets are parallel at $v$.}
		
		Suppose for contradiction that two of the targets, say $z_j$ and $z_k$, 
		are parallel at $v$.
		
		\emph{Case 1a:} $F(v, z_j) = F(v, z_k) = \{x\}$ where $x \to v$.
		
		By Lemma~\ref{lem:edgedist}, since $x \to v$ and $x = f_{z_j}(v) = f_{z_k}(v)$:
		\[
		\delta(x, z_j) = \delta(v, z_j) - 2, \quad \delta(x, z_k) = \delta(v, z_k) - 2.
		\]
		
		For the third target $z_i$, the vertex $x$ is a neighbor of $v$, so 
		by Lemma~\ref{lem:edgedist}:
		\[
		\delta(x, z_i) \leq \delta(v, z_i) + 1.
		\]
		
		Therefore:
		\[
		D(x) = \delta(x, z_i) + \delta(x, z_j) + \delta(x, z_k) \leq 
		(\delta(v, z_i) + 1) + (\delta(v, z_j) - 2) + (\delta(v, z_k) - 2) = D(v) - 3.
		\]
		
		This contradicts the assumption that $v$ is a distance minimizer.
		
		\emph{Case 1b:} $F(v, z_j) = F(v, z_k) = \{x, y\}$ and $z_j \parallel_v z_k$.
		
		Let $v = v_0 \to v_1 \to \cdots \to v_r = w$ be a witness path. Consider 
		the vertex $v_1 = f_{z_j}(v) = f_{z_k}(v)$.
		
		Since $v \to v_1$, by Lemma~\ref{lem:edgedist}:
		\[
		\delta(v_1, z_j) = \delta(v, z_j) - 1, \quad \delta(v_1, z_k) = \delta(v, z_k) - 1.
		\]
		
		For $z_i$, we have:
		\[
		\delta(v_1, z_i) \leq \delta(v, z_i) + 2.
		\]
		
		Therefore:
		\[
		D(v_1) \leq (\delta(v, z_i) + 2) + (\delta(v, z_j) - 1) + (\delta(v, z_k) - 1) = D(v).
		\]
		
		Since $v$ is a distance minimizer, we must have equality, so $v_1$ is 
		also a distance minimizer.
		
		Repeating this argument along the witness path, all vertices 
		$v_0, v_1, \ldots, v_r = w$ are distance minimizers. At the witness $w$, 
		we have $|F(w, z_j)| = |F(w, z_k)| = 1$, reducing to Case 1a. This gives 
		a contradiction.
		
		We conclude that no two of $z_i, z_j, z_k$ are parallel at $v$.
		
		\medskip
		\noindent\textbf{Step 2: The sets $F(v, z)$ admit an SDR.}
		
		If $v = z \in \{z_i,z_j,z_k\}$, then $F(v, z) = F(v)$ has at least three 
		elements (by the technical convention for boundary vertices), so an 
		SDR trivially exists.
		
		Suppose $v \notin \{z_i,z_j,z_k\}$, so $|F(v, z)| \in \{1, 2\}$ for each $z \in  \{z_i,z_j,z_k\}$.  Since no two targets are parallel at $v$ (by Step 1), the only way an 
		SDR could fail to exist is if $|\bigcup_z F(v, z)| \leq 2$ and it follows that not 
		all $F(v, z)$ have size one.
		
		We may thus suppose that $\bigcup_z F(v, z) = F(v, z_i) = \{a, a'\}$ 
		for some cyclically adjacent pair $\{a, a'\}$, with $F(v, z_j), F(v, z_k) \subseteq \{a, a'\}$.
		
		Let $c \triangleleft F(v, z_i)$ be the vertex such that $a, c, a'$ are in cyclic order around 
		$v$ (i.e., $c = f_{z_i}(v)$, with $v \to c$). By Lemma~\ref{lem:edgedist},
		$\delta(c, z_i) = \delta(v, z_i) - 1$.  For $z_j$ and $z_k$, since $F(v, z_j), F(v, z_k) \subseteq \{a, a'\}$, we have			
		\[
		\delta(c, z_j) < \delta(v, z_j), \quad \delta(c, z_k) < \delta(v, z_k).
		\]
		It follows that $D(c) < D(v)$, contradicting the assumption that $v$ is a distance minimizer.
	\end{proof}
	
	\subsection{Focal Points are Distance Minimizers}\label{subsec:focalproof}
	
	We now prove the ``if" part of \cref{thm:focal}.
	
	\begin{lemma}\label{lem:Dsame}
		Let $(z_i, z_j, z_k)$ be fixed and recall \eqref{eq:Dx}
		$D(v) := \delta(v, z_i) + \delta(v, z_j) + \delta(v, z_k)$.
		\begin{enumerate}
			\item If $v \to u$ is a directed edge, then $D(u) = D(v)$ if and only 
			if exactly two of the three distances $\delta(\cdot, z)$ decrease 
			(by $1$ each) when moving from $v$ to $u$. In particular, in this case, 
			$v$ is a distance minimizer if and only if $u$ is a distance minimizer.
			
			\item If $w \to v$ is a directed edge, then $D(w) = D(v)$ if and only 
			if exactly one of the three distances $\delta(\cdot, z)$ decreases 
			(by $2$) when moving from $v$ to $w$. In particular, in this case, $v$ 
			is a distance minimizer if and only if $w$ is a distance minimizer.
		\end{enumerate}
	\end{lemma}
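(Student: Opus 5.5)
The whole argument rests on \cref{lem:edgedist1}, which says how each of the three distances can change across a single edge. For any fixed target $z$, if $v \to u$ is a directed edge then
$$\delta(u,z) - \delta(v,z) \in \{-1,+2\},$$
and this is just \cref{lem:edgedist1} with $v$ in the role of the source. Reversing the edge gives the companion fact: if $w \to v$ is a directed edge, then $\delta(w,z) - \delta(v,z) \in \{+1,-2\}$. So we never need to locate minimizers or use geodesic structure. We only need to add up the three per-target increments.

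For part (1), fix the edge $v \to u$. Let $m$ be the number of targets $z \in \{z_i,z_j,z_k\}$ for which $\delta(\cdot,z)$ decreases (necessarily by exactly $1$). The remaining $3-m$ targets each increase by $2$, so
$$D(u) - D(v) = -m + 2(3-m) = 6 - 3m.$$
This vanishes exactly when $m = 2$, which is the claimed characterization. In that case $D(u) = D(v)$. Since being a distance minimizer means $D$ attains the global minimum $\Le(i,j,k)$, $v$ is a minimizer if and only if $u$ is.

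For part (2), fix the edge $w \to v$. Let $m$ be the number of targets for which moving from $v$ to $w$ decreases the distance, necessarily by $2$. The other targets each increase by $1$, so
$$D(w) - D(v) = -2m + (3-m) = 3 - 3m.$$
This vanishes exactly when $m = 1$. The minimizer equivalence then follows as in part (1).

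The only point needing care is cut edges, where the graph's edge structure is nonstandard. By the stated convention, a cut edge traversed backwards has length $2$, which is exactly the situation treated in \cref{lem:edgedist1}. Its proof used only the bounds $\delta(w,z) - 2 \le \delta(u,z) \le \delta(w,z) + 1$ together with \cref{lem:distmod}. Both still hold here, because the modular invariance survives if we add triangles to remove cut edges, as was done in the proof of \cref{lem:paths}. I would check this compatibility explicitly, and otherwise I expect no real obstacle.
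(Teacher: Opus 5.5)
Your proof is correct and is essentially the paper's argument: the paper likewise records that each of the three distances changes across the edge by an amount in $\{-1,+2\}$ (resp.\ $\{+1,-2\}$), and observes that the sum vanishes exactly in the stated case. The only difference is cosmetic: the paper cites the sharpened \cref{lem:edgedist} rather than \cref{lem:edgedist1}, which does not matter since only the two-valued dichotomy is used, and your extra check on cut edges is already covered by those lemmas.
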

	
	\begin{proof}
		(1) By Lemma~\ref{lem:edgedist}, for each $z \in \{z_i,z_j,z_k\}$:
		\[
		\delta(u, z) - \delta(v, z) \in \{-1, +2\}.
		\]
		We have $D(u) = D(v)$ if and only if the sum of changes is zero. This 
		occurs precisely when two distances decrease by $1$ (contributing $-2$) 
		and one increases by $2$ (contributing $+2$).
		
		(2) By Lemma~\ref{lem:edgedist} applied to $w \to v$:
		\[
		\delta(w, z) - \delta(v, z) \in \{+1, -2\}.
		\]
		We have $D(w) = D(v)$ if and only if two distances increase by $1$ 
		(contributing $+2$) and one decreases by $2$ (contributing $-2$).
	\end{proof}
	
	\begin{definition}
		Let $\mathbf{v} = (v_0, v_1, \ldots, v_r)$ be a walk in $Q$. The three 
		$F$-sets $F(v_m, z_i), F(v_m, z_j), F(v_m, z_k)$ are 
		\emph{directed backwards} at $v_m$ (for $m \geq 1$) if:
		\begin{enumerate}
			\item $v_{m-1} \to v_m$ and at least two of the $F$-sets equal 
			$\{v_{m-1}\}$, or
			\item $v_{m-1} \leftarrow v_m$ and all three $F$-sets are contained 
			in $\kappa(v_{m-1})$,
		\end{enumerate}
		where $\kappa(v_{m-1})$ denotes the two outgoing neighbors of $v_i$ 
		that are adjacent to $v_{m-1}$ in the cyclic order around $v_i$.
	\end{definition}
	
	\begin{lemma}\label{lem:backwards}
		Let $Q$ be a CAT(0) planar graph. Fix three boundary vertices 
		$(z_i, z_j, z_k)$. Let $\mathbf{v} = (v_0, v_1, v_2, \ldots, v_r)$ be 
		a walk in $Q$.
		\begin{enumerate}
			\item If $v_0$ is a focal point but $v_1$ is not a focal point, then 
			$F(v_1, z_i), F(v_1, z_j), F(v_1, z_k)$ are directed backwards at $v_1$.
			
			\item If $\mathbf{v}$ is a combinatorial geodesic and 
			$F(v_m, z_i), F(v_m, z_j), F(v_m, z_k)$ are directed backwards at $v_m$, 
			then $F(v_{m+1}, z_i), F(v_{m+1}, z_j), F(v_{m+1}, z_k)$ are directed 
			backwards at $v_{m+1}$.
		\end{enumerate}
	\end{lemma}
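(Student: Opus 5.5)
Both parts will be proved by unwinding the definition of focal point (SDR plus no parallel pair) against the local distance pattern of \cref{cor:localdistance}, tracking how each $F(\cdot,z)$ moves along one edge. The basic tool is a transport rule: if $v_{m-1}$ and $v_m$ are adjacent and $z \neq v_m$, then $F(v_m,z)$ is determined by whether $v_{m-1}$ lies on the ``minimizing side'' for $z$. Concretely, I will use \cref{lem:edgedist} together with \cref{lem:combdist}(3) to show the following. If $v_{m-1}\to v_m$ and $\delta(v_{m-1},z)=\delta(v_m,z)-2$, then $f_z(v_m)=v_{m-1}$ and $F(v_m,z)=\{v_{m-1}\}$. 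If $v_{m-1}\leftarrow v_m$ and $\delta(v_{m-1},z)=\delta(v_m,z)-1$, then $f_z(v_m)=v_{m-1}$ and $F(v_m,z)=\kappa(v_{m-1})$. Otherwise $F(v_m,z)$ lies away from $v_{m-1}$.

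For part (1), I take $v_0$ focal and $v_1$ not focal, and split by the orientation of the edge. In the case $v_0\to v_1$, I compare $F(v_0,z)$ with $F(v_1,z)$ for each of the three targets. A target whose $F(v_0,z)$ points toward $v_1$ (that is, $f_z(v_0)=v_1$) has $F(v_1,z)$ pointing forward, away from $v_0$. A target whose $F(v_0,z)$ points elsewhere has $F(v_1,z)=\{v_0\}$ by the transport rule. Failure of focality at $v_1$ means either an SDR failure or a parallel pair. In both cases I will show that at least two targets must have $F(v_1,z)=\{v_0\}$. The argument is that if at most one target has $F(v_1,z)=\{v_0\}$, then the other two have $F(v_0,z)$ containing or equal to the neighbor $v_1$. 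Non-parallelism at $v_0$, propagated through witness paths (parallelism at $v_0$ through $v_1$ equals parallelism at $v_1$), then shows that these two are not parallel at $v_1$. The weak cyclic order of \cref{lem:cyclic} supplies an SDR. The case $v_0\leftarrow v_1$ is dual: I show that all three $F(v_1,z)$ must lie in $\kappa(v_0)$, since otherwise some target splits off and the SDR and non-parallel conditions are inherited from $v_0$. Targets with $v_1=z$ are handled separately, because then $F(v_1,z)=F(v_1)$ and focality is easy.

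For part (2), I assume $F(v_m,\cdot)$ is directed backwards at $v_m$ and that $v_{m+1}$ continues a combinatorial geodesic. Then $\ell(v_{m+1},v_{m-1})=2$, so $v_{m+1}$ is not adjacent to $v_{m-1}$ and lies outside the closed ``backwards cone'' at $v_m$. By \cref{prop:short}, the canonical geodesics from $v_m$ for the targets in question go through $v_{m-1}$ or $\kappa(v_{m-1})$. Hence the geodesics from $v_{m+1}$ to those targets can be chosen to pass through $v_m$. This gives $\delta(v_m,z)<\delta(v_{m+1},z)$ with the appropriate drop ($2$ or $1$, depending on the orientation of $v_m v_{m+1}$), and the transport rule then yields the backwards condition at $v_{m+1}$. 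In the subcase $v_m\to v_{m+1}$ I need two targets with $F(v_{m+1},z)=\{v_m\}$; in the subcase $v_m\leftarrow v_{m+1}$ I need all three targets in $\kappa(v_m)$. The uniqueness of minimizing neighbors (\cref{lem:drop}) and the distance pattern of \cref{cor:localdistance} ensure that no target switches sides.

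I expect the main obstacle to be part (1) in the SDR-failure case with two-element $F$-sets. There one must check carefully that the witness-path notion of parallelism transfers across the edge $v_0v_1$, and that the boundary conventions (fake neighbors $u^\pm$, auxiliary $\tilde v$) do not create spurious failures. I would first settle the interior case using the explicit distance sequences of \cref{cor:localdistance}, and then treat boundary vertices by adding triangles to remove boundary effects, as in the proof of \cref{lem:drop}.
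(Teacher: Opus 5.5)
\textbf{Part (1).} Your part (1) follows the paper's own split on the orientation of $v_0v_1$, and it is fine in substance. There is one slip, which the paper shares in its claim ``if $v_1\neq f_z(v_0)$ then $F(v_1,z)=\{v_0\}$''. For $v_0\to v_1$, the set $F(v_0,z)$ consists of incoming neighbours, so it never contains $v_1$. The targets escaping $F(v_1,z)=\{v_0\}$ are exactly those with $\delta(v_1,z)=\delta(v_0,z)-1$. These include the case $F(v_0,z)=\{a\}$ with $v_1$ one of the two outgoing neighbours flanking $a$. In that case parallelism at $v_0$ tells you nothing about $v_1$, and you must check directly that $v_1$ is focal (it is).

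\textbf{Part (2) is false as stated.} So your argument for it must break, and it breaks at ``the geodesics from $v_{m+1}$ to those targets can be chosen to pass through $v_m$''. There are two failure modes.
\begin{enumerate}
\item Under clause (1) at $v_m$ only two targets are controlled. Yet if $v_{m+1}\to v_m$, you need all three $F(v_{m+1},z)$ inside $\kappa(v_m)$.
\item Under clause (2) a target may have $F(v_m,z)=\{w\}$ with $w\in\kappa(v_{m-1})$. The other outgoing neighbour of $v_m$ flanking $w$ is at combinatorial distance $2$ from $v_{m-1}$ but adjacent to $w$. So it is not ``outside the backwards cone'', and the distance to $z$ drops there.
\end{enumerate}

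\textbf{Counterexample.} Take a large hexagonal patch of the flat triangular lattice (all interior degrees $6$), with unit vectors $e_1,e_2$ at angles $0,\pi/3$. Orient it by $x\to x+e_1$, $x\to x-e_2$, $x\to x+e_2-e_1$, so a step costs $1$ along these directions and $2$ against them.
\begin{itemize}
\item First failure mode. Let $v_{m-1}=-e_1$, $v_m=0$, $v_{m+1}=e_2$; then $v_{m-1}\to v_m\leftarrow v_{m+1}$ and $\ell(v_{m-1},v_{m+1})=2$. Take boundary targets $z_i=-Ne_1$, $z_j=-Ne_1+e_2$, $z_k=Ne_2$ with $N\gg 0$. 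One has $\delta(0,z)-\delta(-e_1,z)=2$ for $z=z_i,z_j$, so $F(v_m,z_i)=F(v_m,z_j)=\{v_{m-1}\}$ and the $F$-sets are directed backwards at $v_m$. But $\delta(0,z_k)=2N$, $\delta(e_2,z_k)=2N-2$ and $\delta(2e_2,z_k)=2N-4$. Hence $F(v_{m+1},z_k)=\{2e_2\}\not\subseteq\kappa(v_m)=\{e_1,e_2-e_1\}$, and since the edge is $v_{m+1}\to v_m$ the $F$-sets are not directed backwards at $v_{m+1}$.
\item Second failure mode. Take $v_{m-1}=e_1$, $v_m=0$, $v_{m+1}=e_2-e_1$, with $z_i=Ne_2$, $z_j=Ne_2-e_1$, $z_k=Ne_1$. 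This satisfies clause (2) at $v_m$, since all three $F$-sets lie in $\{e_2,e_1-e_2\}$. But only $z_k$ has $F(v_{m+1},z)=\{v_m\}$, because $z_i,z_j$ drop by $1$ at $v_{m+1}$.
\end{itemize}

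\textbf{What a fix needs.} The paper disposes of (2) with ``a similar case analysis'', so it does not supply the missing idea either. What \cref{thm:focal} actually needs is only that non-focality persists along the geodesic. In both examples $v_{m+1}$ is still non-focal, through a parallel pair of singleton $F$-sets. So a correct proof must replace ``directed backwards'' by an invariant that is genuinely preserved along combinatorial geodesics, or prove ``focal $\Rightarrow$ minimizer'' by a different route.
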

	
	\begin{proof}
		We prove (1).  Assume $v_0$ is a focal point for $(z_i, z_j, z_k)$ 
		but $v_1$ is not.  Since $v_0$ is a focal point,
		\begin{itemize}
			\item the sets $F(v_0, z_i), F(v_0, z_j), F(v_0, z_k)$ admit an SDR, and
			\item no two of $z_i, z_j, z_k$ are parallel at $v_0$.
		\end{itemize}
		Since $v_1$ is not a focal point, at least one of these conditions 
		fails at $v_1$. We consider the two cases for the edge between $v_0$ 
		and $v_1$.
		
		\medskip
		\noindent{Case 1: The edge is directed $v_0 \to v_1$.}  By Definition~\ref{def:F-set}, for each $z$:
		if $v_1 \neq f_{z}(v_0)$ and $v_0 \to v_1$, then 
		$F(v_1, z) = \{v_0\}$.  Since $v_0$ is a focal point and the $F$-sets admit an SDR, there can 
		be at most one $z \in \{z_i,z_j,z_k\}$ for which $v_1 = f_{z}(v_0)$.
		
		\noindent {Subcase 1a:} For all three targets, $v_1 \neq f_{z}(v_0)$.  Then $F(v_1, z) = \{v_0\}$ for all $z \in \{z_i,z_j,z_k\}$. This means 
		all three $F$-sets at $v_1$ equal $\{v_0\}$, so the directed backwards 
		condition (1) is satisfied.
		
		\noindent {Subcase 1b:} For exactly one target, say $z_i$, we have 
		$v_1 = f_{z_i}(v_0)$.  Then $F(v_1, z_j) = F(v_1, z_k) = \{v_0\}$. At least two $F$-sets equal 
		$\{v_0\}$, so the directed backwards condition (1) is satisfied.
		
		\medskip
		\noindent{Case 2: The edge is directed $v_1 \to v_0$.} Since $v_0$ is a focal point and $v_1$ is not, and the edge is 
		$v_1 \to v_0$, the $F$-sets at $v_1$ must fail the focal point conditions. By a detailed analysis of Definition~\ref{def:F-set} (omitted for 
		brevity), when $v_1 \to v_0$ and $v_1$ fails to be a focal point, all 
		three $F$-sets $F(v_1, z)$ are contained in $\kappa(v_0)$, satisfying 
		condition (2) of directed backwards.
		
		We have now proven (1).
		
		The proof of (2) follows a similar case analysis to (1).		
	\end{proof}
	
	\begin{proof}[Proof of \cref{thm:focal}]
		After \cref{prop:focal-a}, it suffices to show that focal points are distance minimizers.
		Let $v$ be a focal point for $(z_i, z_j, z_k)$. Let $w$ be a distance 
		minimizer (which exists since $Q$ is finite). Let 
		$\mathbf{v} = (v = v_0, v_1, \ldots, v_r = w)$ be a combinatorial 
		geodesic from $v$ to $w$.
		
		\medskip
		\noindent\textbf{Claim:} One of the following holds:
		\begin{itemize}
			\item[(A)] $v_1$ is also a focal point, or
			\item[(B)] $v_1$ is not a focal point, and none of $v_2, \ldots, v_r$ 
			are focal points.
		\end{itemize}
		
		\begin{proof}[Proof of Claim]
			Suppose $v_1$ is not a focal point. By Lemma~\ref{lem:backwards}(1), 
			the $F$-sets at $v_1$ are directed backwards. By 
			Lemma~\ref{lem:backwards}(2), the $F$-sets at $v_i$ are directed 
			backwards for all $i = 1, 2, \ldots, r$. A vertex with $F$-sets directed 
			backwards cannot be a focal point (the SDR condition or non-parallelism 
			fails). Thus (B) holds.
		\end{proof}
		
		Option (B) implies that $w = v_r$ is not a focal point. But by 
		Proposition~\ref{prop:focal-a}, any distance minimizer is a focal point. 
		This is a contradiction.  Therefore, option (A) must always hold: $v_1$ is a focal point whenever 
		$v$ is a focal point.  Applying this inductively along the geodesic, all of $v_0, v_1, \ldots, v_r$ 
		are focal points.
		
		By Lemma~\ref{lem:Dsame}, when moving between adjacent focal points, 
		the total distance $D$ is preserved (the changes balance out). Therefore:
		\[
		D(v) = D(v_1) = \cdots = D(v_r) = D(w),
		\]
		and $v$ is also a distance minimizer.
	\end{proof}
	
	\subsection{Focal points exist}\label{subsec:focal-summary}
	
	\begin{corollary}[Existence of Focal Points]\label{cor:focal-exist}
		For any labeled CAT(0) planar graph $(Q,\z)$ and any three boundary vertices 
		$(z_i, z_j, z_k)$, a focal point exists.
	\end{corollary}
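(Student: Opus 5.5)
This is immediate from \cref{thm:focal} and the finiteness of $Q$. Since $Q$ is a nonempty finite graph and, as noted in \cref{sec:definitions}, directed distances in a connected CAT(0) planar graph are finite, the function $D(x) = \delta(x,z_i)+\delta(x,z_j)+\delta(x,z_k)$ of \eqref{eq:Dx} takes finitely many integer values on $V(Q)$. It therefore attains its minimum at some vertex $w$, which is a distance minimizer for $(z_i,z_j,z_k)$.

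Next we apply the ``only if'' direction of \cref{thm:focal}, that is, \cref{prop:focal-a}: every distance minimizer is a focal point. Hence $w$ is a focal point for $(z_i,z_j,z_k)$, and the corollary follows. This argument also handles repeated targets such as $z_i=z_j$, because \cref{prop:focal-a} is stated for not necessarily distinct vertices. In that case parallelism $z\parallel_v z$ is excluded only when $z\neq v$ by clause (3), and Step 1 of that proof then forces the minimizer to be $z$ itself.

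There is no real obstacle here. The one point to check is that \cref{prop:focal-a} uses only the minimality of $D(w)$ and the local lemmas (\cref{lem:edgedist}, \cref{lem:drop}). It does not depend on the converse direction, whose proof relies on \cref{lem:backwards}. So existence rests only on the self-contained half of \cref{thm:focal}.
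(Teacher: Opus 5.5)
Your core argument is exactly the paper's: finiteness of $Q$ gives a distance minimizer, and \cref{prop:focal-a} (the forward half of \cref{thm:focal}) makes it a focal point. For pairwise distinct targets nothing more is needed.

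Your paragraph on repeated targets, however, is wrong: Step~1 of \cref{prop:focal-a} does not force the minimizer to be $z$. Take $Q$ to be a single oriented triangle $a\to b\to c\to a$ and the triple $(a,b,b)$. With $D(x)=\delta(x,a)+2\delta(x,b)$ one gets $D(a)=0+2=2$, $D(b)=\delta(b,a)=2$ and $D(c)=1+4=5$. So $a$ is a minimizer, but $b\parallel_a b$ by clause (3) since $b\neq a$, and hence $a$ is not a focal point.

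Case~1b breaks down here because it needs a witness path. From $a$ you follow $f_b$ directly to $b$, where $|F(b,b)|=|F(b)|\ge 3$. You therefore never reach a vertex with $|F|=1$, and the contradiction of Case~1a never arrives. So for repeated targets ``take any minimizer and apply \cref{prop:focal-a}'' is invalid; \cref{prop:focal-a} itself fails there, and the paper's one-line proof does not address this either.

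The corollary survives if you choose the minimizer deliberately. Consider a triple $(z',z,z)$.
\begin{itemize}
\item Reversing a shortest path from $v$ to $z$ shows $\delta(z,v)\le 2\delta(v,z)$. Hence $D(z)=\delta(z,z')\le\delta(z,v)+\delta(v,z')\le D(v)$ for every $v$, so $z$ is a minimizer.
\item $z$ is a focal point. The set $F(z,z)=F(z)$ has at least three elements, while $|F(z,z')|\le 2$ when $z'\neq z$. So an SDR exists and $F(z,z')\neq F(z,z)$.
\item $z$ is not parallel to itself at $z$: clause (3) requires $z\neq v$, and clauses (1), (2) require $|F(z,z)|\le 2$. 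The same remarks cover $z'=z$.
\end{itemize}
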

	
	\begin{proof}
		Since $Q$ is finite, a distance minimizer exists. By 
		\cref{thm:focal}, any distance minimizer is a focal point.
	\end{proof}

	\section{Tropical Pl\"ucker relations}\label{sec:mainproof}
	In this section we prove \cref{thm:main}(1),(2).  We assume that $(Q,\z)$ is a CCW-labeled CAT(0) planar graph, and show that $\tropP_\bullet = \tropP_\bullet(Q)$ satisfies the positive tropical Pl\"ucker relations.  This will prove \cref{thm:main}(2); the proof of \cref{thm:main}(1) is strictly easier.
	
	Let $a,b,c,d,e \in \z$ be five vertices, possibly not distinct, in CCW order around the boundary $\partial Q$.   We show that the positive tropical Pl\"ucker relation holds:
	\begin{equation}\label{eq:trop}
		\tropP_{abd} + \tropP_{ace} = \min( \tropP_{abc} + \tropP_{ade}, \tropP_{abe}+\tropP_{acd}).
	\end{equation}
	Note that $a$ plays a special role in \eqref{eq:trop}, but by cyclic symmetry there is no loss of generality.
	
	Let $v$ be a vertex and $F = F(v,z)$ be a neighbor set, which we assume to be of size one or two.  By \emph{following $F$} we mean moving from $v$ to the neighboring vertex $f_z(v) = v'$, or alternatively, taking one step along the canonical geodesic towards $z$.  Note that $v'$ depends only on $F = F(v,z)$ and not on $z$ itself.

		Our strategy is to either find a single focal point $p$ for 
		\begin{equation}\label{eq:single}
			\mbox{all four triples $abd,ace,abc,ade$ or all four triples $abd,ace,abe,acd$}
		\end{equation}
		or to find two focal points, one focal point $p = p_x$ for three triples
		$$
		\mbox{$abc,abd,abe$ or $abc,acd,ace$ or $abd,acd,ade$ or $abe,ace,ade$},
		$$
		where we set $x = b,c,d,e$ respectively, and the other focal point $q$ for at least two of the remaining three sets.  In the case of two focal points $p$ and $q$, we will also establish that 
		\begin{equation}\label{eq:dprop}
			\begin{split}
				\dist(q,a) &= \dist(q,p) + \dist(p,a) \\
				\dist(p,y) &= \dist(p,q) + \dist(q,y) \text{ for $y \in \{b,c,d,e\}\setminus\{x\}$}.
			\end{split}
		\end{equation}
		
		In the case of a single focal point $p$, applying \cref{thm:focal}, the equation \eqref{eq:trop} follows immediately.
		
		In the case of two focal points, and supposing that $x = b$ for concreteness, we get
		\begin{align*}
			\tropP_{abc} + \tropP_{ade} &= -\frac{1}{3}(\dist(p,a)+\dist(p,b)+\dist(p,c) + \dist(q,a)+\dist(q,d)+\dist(q,e) )\\
			\tropP_{abd} + \tropP_{ace} &= -\frac{1}{3}(\dist(p,a)+\dist(p,b)+\dist(p,d) + \dist(q,a)+\dist(q,c)+\dist(q,e) )\\
			\tropP_{abe} + \tropP_{acd} &= -\frac{1}{3}(\dist(p,a)+\dist(p,b)+\dist(p,e) + \dist(q,a)+\dist(q,c)+\dist(q,d) )
		\end{align*}
		and all three quantities are equal.
		
		We start by considering the vertex $a$.  We have that $a$ is a focal point for $axy$ with $x,y \in \{b,c,d,e\}$ if and only if $x$ and $y$ are not parallel at $a$.  Applying \cref{lem:cyclic} and possibly a cyclic rotation of $b,c,d,e$, the parallel equivalence classes at $a$ among $\{b,c,d,e\}$ can be assumed to be one of the following:
		\begin{enumerate}
			\item[(i)]
			four equivalence classes,
			\item[(ii)]
			three equivalence classes, $\{b,c\}$, $\{d\}$, and $\{e\}$,
			\item[(iii)]
			two equivalence classes, $\{b,c\}$ and $\{d,e\}$,
			\item[(iv)]
			two equivalence classes $\{b\}$ and $\{c,d,e\}$,
			\item[(v)]
			one equivalence class $\{b,c,d,e\}$.
		\end{enumerate}
		
		In the following we repeatedly use the following statement that follows from \cref{prop:short}: if $\gamma$ is a combinatorial geodesic from $u$ to $v$ passing through $w$, then $\dist(u,v) = \dist(u,w) + \dist(w,v)$.
		
		\noindent Case (i): In this case, $p = a$ is a focal point for all six of the triples $abc,abd,abe,acd,ace,ade$. 
		
		\noindent Case (ii): In this case, $p =a$ is a focal point for $abd,ace,abe,acd$, and we are in situation \eqref{eq:single}.  The other cases obtained by cyclically relabeling $b,c,d,e$ work similarly.
		
		\noindent Case (iii):
		In this case, $p =a$ is a focal point for $abd,ace,abe,acd$, and we are in situation \eqref{eq:single}.  The other cases obtained by cyclically relabeling $b,c,d,e$ work similarly.
		
		\noindent Case (iv): Two equivalence classes $\{b\}$ and $\{c,d,e\}$.
		In this case, $p = p_b = a$ is a focal point for $abc,abd,abe$.  To find the vertex $q$, set $F = F(a,c) = F(a,d) = F(a,e)$ and follow $F$ repeatedly until we reach a vertex $q$ where $c,d,e$ are not all parallel.  Then $q$ is a focal point for at least two of $acd,ace,ade$, and it satisfies \eqref{eq:dprop}.
		
		\noindent Case (v): One equivalence class $\{b,c,d,e\}$.  Set $F = F(a,b)=F(a,c)=F(a,d) =F(a,e)$.  Follow $F$ repeatedly until we reach a vertex $p$ where not all four of $b,c,d,e$ are parallel.  If we have three or more equivalence classes, or we have two equivalence classes of size two, then $p$ is a single focal point satisfying \eqref{eq:single} and we are done.  If we have two equivalence classes, one of size three and one of size one, we follow the recipe in Case (iv) to find a second focal point $q$.  We check that $p$ and $q$ satisfy \eqref{eq:dprop}, noting that the path we have followed is a combinatorial geodesic between $a$ and $q$ passing through $p$.
		
		This completes the proof of \cref{thm:main}(2).
		
		\section{Simple and normal CAT(0) planar graphs}\label{sec:simplenormal}		
		\subsection{Simplicity}

		\begin{proposition}\label{prop:sum}
			Suppose that $(Q',\z')$ and $(Q'',\z'')$ are two labeled CAT(0) planar graphs with boundary vertices $\z' = (z'_1,\ldots,z'_n)$ and $\z'' = (z''_1,\ldots,z''_n)$.  Suppose that vertex $v' \in V(Q')$ is labeled by $z'_p,z'_{p+1},\ldots,z'_q$ and vertex $v'' \in V(Q'')$ is labeled by $z''_r, z''_{r+1}, \ldots, z''_s$, so that the cyclic intervals $[p,q]$ and $[r,s]$ are proper and satisfy $[p,q] \cup [r,s] = [n]$.  Let $Q$ be obtained from $Q'$ and $Q''$ by identifying the vertices $v'$ and $v''$, and using the boundary vertices $z_i = z'_i$ if $i \in [r,s]$ and $z_i = z''_i$ if $i \in [p,q]$.  Then
			$$
			\tropP_\bullet(Q) = \tropP_\bullet(Q') + \tropP_\bullet(Q'').
			$$
		\end{proposition}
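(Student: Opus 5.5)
The plan is to reduce the identity to a comparison of distance minimizers on the glued graph $Q = Q' \cup_{v} Q''$, where $v$ denotes the identified vertex. Since $Q$ is obtained by gluing along a single cut vertex, every path between a vertex of $Q'$ and a vertex of $Q''$ passes through $v$. Hence for $x \in V(Q')$ and $y \in V(Q'')$ we have $\dist_Q(x,y) = \dist_{Q'}(x,v) + \dist_{Q''}(v,y)$. Within each piece, distances are unchanged: any path leaving $Q'$ must return through $v$, and such a detour can be deleted without increasing length. I would record this first as a lemma.

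Now fix a triple $I = \{i,j,k\}$. Write $A = [r,s]\setminus[p,q]$ for the labels realized only in $Q'$, and $B = [p,q]\setminus[r,s]$ for those realized only in $Q''$. Labels in $[p,q]\cap[r,s]$ sit at $v$ in both pieces: in $Q'$ a label $i \in [p,q]$ sits at $v'$, and in $Q''$ a label $i\in[r,s]$ sits at $v''$. So in $Q$ each $z_i$ is either a genuine vertex of $Q'$ (when $i\in[r,s]$), or a vertex of $Q''$ (when $i \in [p,q]$), or both at once, namely $v$.

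The key claim is the additive formula
$$\Le_Q(I) = \Le_{Q'}(I) + \Le_{Q''}(I).$$
From it the result follows by multiplying by $-\tfrac13$.

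To prove the claim, consider a minimizer $x$ for $I$ in $Q$, and say $x\in V(Q')$ (the other case is symmetric). For each $a \in I$, decompose $\dist_Q(x,z_a)$. If $z_a$ lies in $Q'$, this is $\dist_{Q'}(x,z'_a)$. Otherwise it is $\dist_{Q'}(x,v)+\dist_{Q''}(v,z''_a)$; in this case $z'_a=v'$, so the first summand equals $\dist_{Q'}(x,z'_a)$. Meanwhile, for $a\in[r,s]$ we have $z''_a=v''$, so $\dist_{Q''}(v,z''_a)=0$.

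Summing over $a \in I$ therefore gives
$$D_Q(x) = D_{Q'}(x) + D_{Q''}(v).$$
This yields $\Le_Q(I) \geq \Le_{Q'}(I) + D_{Q''}(v)$, and symmetrically for minimizers lying in $Q''$.

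The remaining step, which I expect to be the main obstacle, is showing that $v$ itself minimizes the part seen by the other piece. Concretely, one needs $D_{Q''}(v)=\Le_{Q''}(I)$ whenever a minimizer for $I$ in $Q$ lies in $Q'$. The approach is a case split on $|I\cap B|$:

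\begin{itemize}
\item If $|I\cap B| \le 1$, then at least two of the three targets sit at $v''$ in $Q''$. For any $y \in V(Q'')$, the triangle inequality gives $2\dist(y,v)+\dist(y,z_b) \ge \dist(v,z_b)$, so $v''$ is a minimizer in $Q''$. The same argument shows that $v'$ is a minimizer in $Q'$ when $|I\cap A|\le 1$.
\item Since $|I|=3$ and $A\cap B=\emptyset$, at least one of $|I\cap A|\le 1$ or $|I \cap B|\le 1$ holds. In the generic case both hold.
\item If $|I\cap B|\ge2$, then $|I\cap A|\le 1$. In that case I would argue directly that $\Le_Q(I)$ is attained in $Q''$, using the symmetric decomposition $D_Q(y)=D_{Q'}(v)+D_{Q''}(y)$ for $y\in V(Q'')$ together with $D_{Q'}(v)=\Le_{Q'}(I)$.
\end{itemize}

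Combining the two decompositions in all cases gives both inequalities, and hence equality. The pieces used are: the cut-vertex distance decomposition, the case analysis on how many labels of $I$ fall strictly in each side, and the triangle inequality for $\dist$.
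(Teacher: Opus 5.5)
Your proof is correct and shares the paper's skeleton, but it justifies the key step differently. In both, you split by how the labels of $I$ fall on the two sides of the cut vertex, and use that the glued vertex minimizes on the side where it carries at least two of the three labels. The paper locates minimizers through \cref{thm:focal}: it asserts that a focal point for $(i,j,k)$ exists in $V(Q'')$ and that $v'$ is always a focal point for $(v',v',z'_k)$, and it leaves the cut-vertex decomposition of distances implicit. You instead make the decomposition $D_Q(x)=D_{Q'}(x)+D_{Q''}(v)$ explicit and find minimizers by an elementary inequality. As a result, your argument uses no CAT(0) geometry and works for any two labeled graphs with forward-$1$/backward-$2$ edge costs glued at a vertex. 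This is in line with the paper's remark that the proposition holds even without an oriented boundary.

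Two small points. First, $2\delta(y,v)+\delta(y,z_b)\ge\delta(v,z_b)$ does not follow from the triangle inequality alone, because $\delta$ is not symmetric. You also need $\delta(v,y)\le 2\delta(y,v)$, which holds because reversing a path exchanges cost-$1$ and cost-$2$ steps; this is exactly the content of the paper's remark about $(v',v',z'_k)$. Second, your final step is cleaner without any reference to where a minimizer of $D_Q$ lies. The two decompositions give $D_Q\ge \Sigma_{Q'}(I)+\Sigma_{Q''}(I)$ at every vertex. If $|I\cap B|\le 1$ (resp.\ $|I\cap A|\le 1$), then $v$ minimizes $D_{Q''}$ (resp.\ $D_{Q'}$), so any minimizer of $D_{Q'}$ (resp.\ $D_{Q''}$) attains that bound.
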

		\begin{proof}
			Suppose $i,j,k \in [p,q]$.  Then $z_i,z_j,z_k$ are all vertices of $Q''$.  So $\tropP_{i,j,k}(Q') = 0$ and we have $\tropP_{i,j,k}(Q) = \tropP_{i,j,k}(Q'')$, as expected.  Similarly for $(i,j,k) \in [r,s]$.
			
			Now suppose that $i,j \in [p,q]$ and $k \in [r,s] \setminus [p,q]$.  (The case $i,j \in [r,s]$ and $k \in [p,q] \setminus [r,s]$ is identical.)  Then there exists a focal point for $i,j,k$ among the vertices $V(Q'') \subset V(Q)$.  It follows from \cref{thm:focal} that
			$$
			\tropP_{i,j,k}(Q) = \Sigma_{Q''}(z''_i,z''_j,z''_k) + \dist_{Q'}(v',z'_k) =\tropP_{i,j,k}(Q'')  + \tropP_{i,j,k}(Q'), 
			$$
			where we note that $\Sigma_{Q'}(v',v',b'_k) = \dist_{Q'}(v',z'_k)$ since $v'$ is always a focal point for $(v',v',z'_k)$.
		\end{proof}
		
		\cref{prop:sum} holds even if the boundary curve is not an oriented cycle.
		
		Recall that a vertex $v \in \partial Q$ is called \defn{acute} if it has degree $2$, and is a vertex of exactly one triangular face of $Q$.  Acute vertices are necessarily on the boundary of $Q$.  Recall the definition of simple CAT(0) planar graph from \cref{def:simple}.
		\begin{lemma}\label{lem:acute}
			A simple CAT(0) planar graph $Q$ has at least three acute vertices.
		\end{lemma}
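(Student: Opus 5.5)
We will derive the statement from the discrete Gauss--Bonnet inequality, \cref{lem:graph}, applied to the undirected graph underlying $Q$. If $Q$ is a single vertex, we regard the statement as vacuous or degenerate and set it aside; a single edge is excluded by \cref{def:simple}. So we assume $Q$ has at least one triangle. Since the boundary of $Q$ is a simple closed curve, the hypotheses of \cref{lem:graph} hold: the perimeter is a simple polygon, every interior face is a triangle, and every interior vertex has degree at least $6$. Hence
$$\sum_{x \in \partial Q} \max(4-\deg(x),0) \ge 6.$$

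The key step is to identify which boundary vertices contribute to this sum. A boundary vertex of a triangulated disk with simple boundary has degree at least $2$. Since $Q$ is simple, every edge lies on a triangle, so there are no cut edges. We argue that the boundary of $Q$ is oriented counterclockwise; more generally, the orientation of triangles forces every vertex of $Q$ to have even degree. Indeed, around any vertex the edges alternate between incoming and outgoing within each oriented triangle, and along a simple oriented boundary the two boundary edges at $x$ are one incoming and one outgoing. Consequently $\deg(x) \in \{2,4,6,\dots\}$ for boundary $x$, and the only possible positive contributions are $\max(4-2,0)=2$, coming from vertices of degree $2$.

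A boundary vertex of degree $2$ in a disk with simple boundary lies on exactly one triangle, namely the one spanned by its two boundary edges, so it is acute. Writing $A$ for the number of acute vertices, the inequality becomes $2A \ge 6$, hence $A \ge 3$.

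The main obstacle is the parity claim, namely excluding boundary vertices of degree $3$, each of which would contribute $1$ and could spoil the count. We handle it via the coloring of \cref{sec:main}. Each edge either raises or lowers the color by $1 \pmod 3$. Going around $x$, consecutive neighbors that lie in a common triangle with $x$ have colors $c(x)+1$ and $c(x)-1$ in alternation. Therefore the neighbors of a boundary vertex $x$, read along the fan of triangles from one boundary edge to the other, alternate between out- and in-neighbors.

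Simplicity of the oriented boundary, meaning the boundary is a consistently oriented cycle, makes the first and last neighbors of opposite type. An alternating sequence whose first and last terms have opposite types has even length, so $\deg(x)$ is even. If this argument turns out to require more care, a fallback is to compute directly: a degree-$3$ boundary vertex contributes $1$, and one would then need to show that the total is still at least $6$ from degree-$2$ vertices alone, which is the route we would try to avoid.
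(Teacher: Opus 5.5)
Your proposal is correct and follows the same route as the paper's proof. The paper applies \cref{lem:graph} after noting that boundary vertices of a simple CAT(0) planar graph are either acute of degree $2$ or have degree $\geq 4$. Your fan-alternation argument, combined with the consistently oriented boundary, is a valid justification of the even-degree fact, which the paper only asserts in passing.
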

		\begin{proof}
			Boundary vertices of a simple CAT(0) planar graph $Q$ are either acute (and have degree $2$) or have degree $\geq 4$.  We now apply \cref{lem:graph}.
		\end{proof}

		\begin{lemma}\label{lem:ubound}
			Let $Q$ be a simple CAT(0) planar graph and $v_0 \to v_1 \to \cdots \to v_r$ be the boundary path between two acute vertices $v_0$ and $v_r$, with $v_1, v_2,\ldots,v_{r-1}$ non-acute.  Let $u$ be another vertex of $Q$, not equal to any of the $v_i$.  Consider the neighbors $u'_i := f_{v_i}(u)$ of $u$.  Then exactly one of the following holds:
			\begin{enumerate}
				\item All the $u'_i$ are the same.
				\item For some $a \in [0,r-1]$ we have $u'_0 = u'_1 = \cdots = u'_a \neq u'_{a+1} = u'_{a+2} = u'_r$.  The two vertices $u'_a$ and $u'_{a+1}$ are cyclically adjacent around $u$.
				\item For some $a \in [1,r-2]$ we have $u'_0 = \cdots = u'_{a-1} \neq u'_a \neq u'_{a+1} = \cdots = u'_r$.  The three vertices $u'_{a-1}, u'_a, u'_{a+1}$ are cyclically adjacent around $u$, and furthermore we have $u'_{a\pm 1} \to u$ and $u \to u'_a$.
			\end{enumerate}
		\end{lemma}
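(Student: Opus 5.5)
The plan is to combine the non-crossing of canonical geodesics (the argument in the proof of \cref{lem:cyclic}) with the local structure of $\dist(\cdot,v_i)$ around $u$ (\cref{cor:localdistance}) and the geometry of the boundary arc $v_0\to v_1\to\cdots\to v_r$. Since $v_0,\ldots,v_r$ occur in order along $\partial Q$, the canonical geodesics $\gamma_{v_i}(u)$ cannot cross. Arguing as in \cref{lem:cyclic}, the first steps $u'_i=f_{v_i}(u)$ therefore move monotonically (weakly) in the cyclic order of neighbors of $u$ as $i$ increases. So the sequence $u'_0,\ldots,u'_r$ consists of consecutive constant blocks along an arc of neighbors of $u$. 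It remains to bound how many distinct values occur and how they sit around $u$.

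\emph{Step 1: adjacent boundary targets force adjacent first steps.} Consider consecutive boundary vertices $v_i\to v_{i+1}$. By \cref{lem:edgedist}, $\dist(u,v_{i+1})-\dist(u,v_i)\in\{1,-2\}$ (reading the reversed-orientation version of the lemma, with $u$ as source). If $u'_i\ne u'_{i+1}$, the two geodesics together with the edge $v_iv_{i+1}$ bound a region $R$. I will apply \cref{lem:graph} to $R$, as in the proof of \cref{lem:paths}. The interior vertices of the two geodesics have $\deg_R\ge 4$ or else admit area-reducing flips. The non-acute vertices $v_i,v_{i+1}$ on the outer boundary contribute little curvature. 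Hence $u$ must carry the deficit, which forces $u'_i$ and $u'_{i+1}$ to be cyclically adjacent around $u$, or to differ by exactly one intermediate neighbor in the configuration of \cref{lem:paths}(2).

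\emph{Step 2: at most three distinct values overall.} Here I apply \cref{lem:graph} to the region bounded by $\gamma_{v_0}(u)$, $\gamma_{v_r}(u)$ and the boundary path $v_0\to\cdots\to v_r$. The intermediate $v_i$ are non-acute, so they have degree $\ge4$ and contribute nothing to the left side of \cref{lem:graph}. Each geodesic interior vertex also contributes nothing after normalizing, as in \cref{lem:paths}. The total of $\ge 6$ must therefore come from $u$, $v_0$ and $v_r$. The acute vertices $v_0,v_r$ contribute at most $2$ each, so $u$ has $\deg_R(u)\le 3$, i.e.\ $\deg_R(u)\in\{2,3\}$ counting the fan of triangles at $u$ inside $R$. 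This limits the first steps to at most three consecutive neighbors: $u'_0,u'_r$ equal, adjacent, or separated by exactly one neighbor. Combined with the monotonicity, this yields cases (1), (2) and (3). For case (3), the middle value $u'_a$ must be an outgoing neighbor flanked by incoming ones, by parity of orientations around $u$ together with \cref{cor:localdistance}. It is attained by exactly one index $a$, since two consecutive targets both using $u'_a$ would, by Step 1 applied on each side, force a fourth neighbor. The restriction $a\in[1,r-2]$ holds because $a=0$ or $a=r-1$ would place an acute vertex at the jump, contradicting the degree count.

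\emph{Main obstacle.} I expect the main difficulty to be the curvature bookkeeping in Step 2. When geodesics merge before reaching the boundary, or when $u$ itself lies on $\partial Q$, the region degenerates. First I would handle merging as in \cref{lem:paths}, by replacing targets with the branch point, and treat boundary $u$ using the fake neighbors in $F(u)$. Uniqueness of the middle index in case (3), and the claim that the three cases are mutually exclusive, should then be straightforward.
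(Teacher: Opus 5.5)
Your first step, that the first steps $u'_i$ move weakly monotonically around $u$ by the non-crossing argument of \cref{lem:cyclic}, is fine. The curvature count that is supposed to bound the jumps does not work, however.

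You assert that interior vertices of the canonical geodesics contribute nothing to the sum in \cref{lem:graph} ``after normalizing as in \cref{lem:paths}''. That normalization is not available here, for two reasons. First, canonical geodesics can traverse edges backwards. At a path vertex whose two path edges are both incoming or both outgoing, the number of edges on each side is odd, so $\deg_R=3$ occurs and contributes $1$. Second, the flips of \cref{lem:paths} replace the path by another shortest path and can change its first step. But $f_{v_i}(u)$ is the unique neighbor minimizing $\delta(\cdot,v_i)$, which is exactly the quantity you are tracking.

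Concretely, take the rightmost graph of \cref{fig:examples36}, with vertices $z_1,\dots,z_6$ labelled as there. Use the boundary path $z_3\to z_4\to z_5$ and $u=z_6$. Then $f_{z_3}(z_6)=z_2$, $f_{z_4}(z_6)=z_4$ and $f_{z_5}(z_6)=z_5$. In the region bounded by $z_6,z_2,z_3,z_4,z_5$ the $R$-degrees are $3,3,2,4,2$, so the geodesic vertex $z_2$ contributes $1$.

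Your own premises (only $u,v_0,v_r$ contribute, and $v_0,v_r$ contribute at most $2$ each) in fact give $4-\deg_R(u)\ge 2$, i.e.\ $\deg_R(u)\le 2$. That would rule out case (3) entirely, so the ``$\le 3$'' you wrote does not follow. A Gauss--Bonnet proof would need real control of how the canonical geodesics turn, e.g.\ via the signed inequality $\sum_{x\in\partial R}(4-\deg_R(x))\ge 6$ that the proof of \cref{lem:graph} actually gives. That control is the missing content, and Step 1 has the same defect.

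The case (3) refinements are also unsupported.
\begin{enumerate}
\item Uniqueness of the middle index. Step 1 applied on both sides of the middle block yields only three consecutive neighbors. No fourth neighbor is forced, so $u'_a=u'_{a+1}$ is not excluded. In a flat patch one checks that a middle block of length two forces an acute intermediate $v_i$. So the non-acuteness hypothesis has to enter here, and your argument never uses it at this point.
\item In--out--in orientation. This does not follow from parity and \cref{cor:localdistance}. The out--in--out pattern is consistent with all the local distance patterns together with $\delta(x,v_{i+1})-\delta(x,v_i)\in\{1,-2\}$ at $u$ and all its neighbors.
\item Range of $a$. The range $a\in[1,r-2]$ you try to justify is wrong as stated. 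The example above has $r=2$ and case (3) with $a=1=r-1$. So the intended range is $[1,r-1]$, and no degree count can exclude $a=r-1$.
\end{enumerate}

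The paper replaces the global count by a local criterion. By \cref{lem:strandstripF}, $u'_i\neq u'_{i+1}$ only if $u$ lies on the strand strip $S(v_i\to v_{i+1})$. On that strip the two $F$-sets are nested, of sizes $1$ and $2$, which forces the two first steps to be an adjacent incoming/outgoing pair. Strand combinatorics (\cref{prop:strandintersect}) is then used to limit $u$ to at most two such strips.
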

		\begin{proof}
			By \cref{lem:strandstripF}, we have $u'_i = u'_{i+1}$ unless $u$ lies on the strand strip $S(i) := S(v_i \to v_{i+1})$. By \cref{prop:strandintersect}, the strand strips $S(i)$ and $S(j)$ have no triangles in common, and it follows that $u$ lies on zero, one, or two of the strand strips.  These three cases correspond to (1), (2), (3) respectively.
		\end{proof}
		
		
		\begin{lemma}\label{lem:glue}
			Let $Q$ be a CAT(0) planar graph with boundary oriented counterclockwise.  Then $Q$ is a union of simple CAT(0) planar graphs glued along cut vertices of $\partial Q$.  
		\end{lemma}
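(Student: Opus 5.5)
The plan is to decompose the boundary curve $\partial Q$ at its self-intersection points and to induct on the number of cut vertices. Since the boundary is oriented counterclockwise, $Q$ has no cut edges. Indeed, a cut edge is traversed twice by the boundary curve, once in each direction, so it would appear on the boundary with both orientations; this is incompatible with a consistently counterclockwise-oriented boundary in which every boundary edge borders exactly one triangle on its left. So every edge lies on the perimeter of some triangular face, and $\partial Q$ is a closed curve whose only self-intersections are at cut vertices.

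If $Q$ has no cut vertex, then $\partial Q$ is a simple closed curve oriented counterclockwise, and $Q$ is simple by \cref{def:simple}; this is the base case.

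Otherwise, pick a cut vertex $v$ and split the boundary curve at $v$ into two closed subcurves $\gamma_1,\gamma_2$, each beginning and ending at $v$. Each $\gamma_i$ bounds a region $R_i$ of the plane. Let $Q_i$ be the subgraph consisting of the vertices, edges and triangles inside or on $\gamma_i$. The key steps, in order:

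\begin{enumerate}
\item Show that $Q = Q_1 \cup Q_2$ and $Q_1 \cap Q_2 = \{v\}$. Any triangle of $Q$ lies in the interior of the disk-like region bounded by $\partial Q$, and near $v$ the union of faces is split into two sectors by the two boundary passes through $v$. Connectedness of $Q$ through $v$ alone follows because the two subcurves meet only at $v$ (using planarity).
\item Check that each $Q_i$ is again a CAT(0) planar graph. Connectedness follows from step 1. All interior faces are oriented triangles. Interior vertices of $Q_i$ are interior vertices of $Q$ with the same degree, since $v$ is a boundary vertex of both pieces; hence they have degree $\geq 6$. Every edge and vertex of $Q_i$ lies on a face of $Q_i$, because faces of $Q$ incident to an edge of $Q_i$ lie in $R_i$. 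The boundary of $Q_i$ is $\gamma_i$, which inherits the counterclockwise orientation.
\item Each $Q_i$ has strictly fewer cut vertices than $Q$ (the vertex $v$ is no longer a cut vertex of either piece). By induction, each $Q_i$ is a union of simple CAT(0) planar graphs glued along boundary cut vertices, and gluing $Q_1,Q_2$ at $v$ gives the claim for $Q$.
\end{enumerate}

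The main obstacle is step 1: making rigorous that a self-intersection of the boundary curve separates $Q$ into two pieces meeting only at $v$, and that the pieces inherit counterclockwise boundaries. I would argue via the cyclic order of edges around $v$. The boundary passes through $v$ at least twice, so the faces around $v$ form at least two maximal fans, separated by boundary "gaps". A face in one fan cannot be connected to a face in another fan by a sequence of faces sharing edges without passing through $v$; otherwise the boundary curve would be forced to cross itself instead of merely touching. Then the Jordan curve theorem applied to $\gamma_1$ separates the plane, putting the two fans on opposite sides.

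If $v$ is visited more than twice by the boundary, I would split off one loop at a time, so the induction is on the total number of boundary self-intersection visits rather than on the number of distinct cut vertices.
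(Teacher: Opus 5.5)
The paper states this lemma without proof, so there is no argument of the authors' to compare against. Your plan is sound: rule out cut edges, split off a boundary loop at a cut vertex, and induct on the number of repeated boundary visits. Steps 2 and 3 also go through once step 1 holds. But step 1 is the entire content of the lemma, and as written it is not proved.

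\begin{enumerate}
\item If the loop $\gamma_1$ you split off at $v$ passes through another cut vertex, it is not a Jordan curve. So the Jordan curve theorem does not apply to it, and ``the region bounded by $\gamma_2$'' is undefined when $\gamma_2$ is not simple.
\item The claim that $\gamma_1$ and $\gamma_2$ meet only at $v$ is exactly the non-interleaving property that needs proof; it does not follow from ``planarity'' alone.
\item ``The boundary curve would be forced to cross itself'' is not the real obstruction to two fans at $v$ being joined by a chain of edge-adjacent triangles. The obstruction is that such a chain, closed up at $v$, would enclose a corner of the unbounded, connected outer face.
\end{enumerate}

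The missing idea is to use the outer face $F_\infty$ directly, together with an innermost loop.

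\begin{itemize}
\item Take $\gamma_1$ to be a shortest subwalk of the boundary walk between two visits to the same vertex $v$. It has no repeated vertices, so it is a simple cycle, of length at least $3$ since there are no cut edges.
\item $F_\infty$ is connected, unbounded and disjoint from $\gamma_1$, so it lies in the exterior of $\gamma_1$. Hence each edge of $\gamma_1$ lies in exactly one triangle, that triangle is interior, and the closed disk bounded by $\gamma_1$ is a union of triangles of $Q$.
\item For $w\neq v$ on $\gamma_1$, the two edges of $\gamma_1$ at $w$ are consecutive steps in the face-tracing of $F_\infty$. So the exterior sector at $w$ is a single edge-free corner of $F_\infty$.
\item Hence nothing of $Q$ outside $\gamma_1$ touches the closed disk except at $v$.
\end{itemize}

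It follows that $Q_1$, the closed disk, is already a simple CAT(0) planar graph. The remainder $Q_2$, together with $v$, has boundary walk $\gamma_2$ with one fewer repeated visit and inherits the counterclockwise orientation. Interior vertices of either piece are interior vertices of $Q$ with all their edges, so your induction finishes.

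A shorter alternative is to use the graph-theoretic blocks of $Q$.
\begin{itemize}
\item There are no bridges, since a bridge borders $F_\infty$ on both sides and would be a cut edge.
\item Faces of a $2$-connected plane graph are bounded by cycles.
\item Each bounded face of a block is a triangle of $Q$, because a triangle's three edges lie in a common block.
\item Each block is therefore a simple CAT(0) planar graph, and blocks meet only at cut vertices of $\partial Q$.
\end{itemize}
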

		
		\subsection{Normality}
		
		\begin{lemma}\label{lem:fboundary}
			Suppose that $Q$ is a CAT(0) planar graph with boundary oriented counterclockwise.  Let $v = v_0 \to v_1 \to \cdots \to v_{r-1} \to v_r = w$ be a portion of the boundary such that none of the vertices $v_1,\ldots,v_{r-1}$ are acute.  Then this path is a path of minimum distance from $v$ to $w$ and (traversed backwards) a path of minimal distance from $w$ to $v$.  Furthermore, we have $f_w(v) = v_1$ and $f_v(w) = v_{r-1}$.
		\end{lemma}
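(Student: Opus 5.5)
The plan is to show first that the boundary path $\gamma = (v_0 \to v_1 \to \cdots \to v_r)$ is a combinatorial geodesic, and then invoke \cref{prop:short} to upgrade this to minimal directed distance in both directions. For the forward direction this is immediate once geodesicity is known. Every edge of $\gamma$ is traversed forward, so its directed length is $r$. By \cref{cor:edge-count}, every combinatorial geodesic from $v$ to $w$ has the same forward/backward edge counts, hence the same directed length $r$. For the reverse direction, $\gamma$ traversed backwards is again a combinatorial geodesic (of length $r$, all edges backwards, directed length $2r$), so \cref{prop:short} shows it is also a path of minimal directed distance from $w$ to $v$.

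The main step is to show $\ell(v,w) = r$. Suppose instead there is a path $\beta$ from $v$ to $w$ with fewer than $r$ edges.
\begin{enumerate}
\item Choose $\beta$ so that, together with the appropriate subpath of $\gamma$, it bounds a simple polygonal region $R$ with $\beta$ meeting $\gamma$ only at the endpoints of that subpath. This is possible by splitting at intersection points and passing to an innermost bigon, whose $\beta$-side is still strictly shorter than its $\gamma$-side.
\item Apply \cref{lem:graph} to $Q(R)$. Since $\partial Q$ is counterclockwise and $v_1,\ldots,v_{r-1}$ are not acute, each boundary vertex of $Q$ has degree $2$ (acute) or $\geq 4$. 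So every intermediate vertex of $\gamma$ on $\partial R$ has $\deg_R \geq 4$, as $R$ contains all of its triangles.
\item Hence all the curvature $\sum \max(4-\deg,0) \geq 6$ must come from the two corners and the interior vertices of the $\beta$-side.
\end{enumerate}

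Step 3 is where I expect the real difficulty. Two corners contribute at most $4$, so at least one vertex of $\beta$ has $\deg_R \leq 3$. My approach is the area-reduction trick from \cref{lem:paths} and \cref{lem:combdist}. A vertex of $\beta$ with $\deg_R = 2$ can be shortcut across its triangle, contradicting minimality of $\beta$, or else $\beta$ is not simple. A vertex with $\deg_R = 3$ can be rerouted across two triangles without lengthening $\beta$, strictly reducing the area of $R$.

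After finitely many such moves, the remaining configuration has too little curvature on the $\beta$-side, forcing $\beta$ to run along $\gamma$. That contradicts $|\beta| < r$. A cleaner alternative would be an induction on $r$ using \cref{lem:combdist}(3) at $v_{r-1}$, but I would try the curvature argument first.

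Finally, $f_w(v) = v_1$ follows because $v_1$ is the first step of a combinatorial geodesic that is also directed-shortest, so $\dist(v_1,w) = \dist(v,w) - 1$. It remains to exclude a neighbor $x \to v$ with $\dist(x,w) = \dist(v,w) - 2$. By \cref{lem:paths}(2), such an $x$ would sit between two shortest first steps. The only other outgoing edge adjacent to the boundary edge $v \to v_1$ in the relevant side lies outside $Q$, and the boundary-neighbor configuration rules this out. By \cref{lem:drop}, $v_1$ is then the unique minimizer, so $f_w(v) = v_1$.

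Symmetrically, reverse the orientations and apply the same argument in $Q^{\rm op}$ to the backward path from $w$. There $v_{r-1}$ is the first vertex, with an edge $v_{r-1} \to w$ in $Q$. Using \cref{lem:edgedist}, $\dist(v_{r-1},v) = \dist(w,v) - 2$, and \cref{lem:drop} gives uniqueness, so $f_v(w) = v_{r-1}$.
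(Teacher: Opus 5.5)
\textbf{The failing step.} Your exclusion of a neighbor $x\to v$ with $\dist(x,w)=\dist(v,w)-2$ does not work as argued. The configuration of \cref{lem:paths}(2) with $u_1=v_1$ does not lie outside $Q$. Since $\partial Q$ is counterclockwise, the triangle on the boundary edge $v\to v_1$ is the counterclockwise triangle $v\to v_1\to x\to v$ on the interior side. Whenever $\deg v\ge 4$, the triangle $v\to y'\to x\to v$ across $x\to v$ is also in $Q$.

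Concretely, take the rightmost graph of \cref{fig:examples36}, with $z_1,\dots,z_6$ labeled as there, and let $v=z_4$ with boundary edge $z_4\to z_5$. Then $\dist(z_4,z_1)=3$, $\dist(z_5,z_1)=\dist(z_2,z_1)=2$, and $\dist(z_6,z_1)=1$ with $z_6\to z_4$. Hence $f_{z_1}(z_4)=z_6\neq z_5$, exactly the configuration you claim is impossible. The lemma excludes it only because the intermediate vertex $z_5$ is acute, and your local argument never uses that hypothesis.

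The repair is one line from your main step. For every neighbor $x$ of $v$, we have $\dist(x,w)\ge \ell(x,w)\ge \ell(v,w)-1=r-1=\dist(v,w)-1$, since every edge costs at least $1$. So no neighbor drops by $2$, and \cref{lem:drop} forces $f_w(v)=v_1$. Your argument for $f_v(w)=v_{r-1}$ (a drop of $2$ via \cref{lem:edgedist}, uniqueness via \cref{lem:drop}) is fine. The detour through \cref{cor:edge-count} is unnecessary, since \cref{prop:short} applies to $\gamma$ directly.

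\textbf{Comparison with the paper, and the curvature step.} Your main step takes a different route from the paper. The paper does not prove geodesicity by Gauss--Bonnet; it propagates distances along the boundary using \cref{cor:localdistance}, showing $\dist(v_i,v)=2i$ and $\dist(v_{r-i},w)=i$ by induction on $r$. Your curvature argument can be made rigorous if you run it as a minimal-area counterexample:
\begin{enumerate}
\item Take the $\beta$-side to be a geodesic from $v_a$ to $v_b$, so $\deg_R=2$ on $\beta$ contradicts geodesicity.
\item The two corners contribute at most $4<6$, so some vertex of $\beta$ has $\deg_R=3$.
\item Reroute across that vertex through its third neighbor $y$. This either repeats a vertex, contradicting geodesicity, or (after splitting at $y$ if $y\in\gamma$) produces a bad bigon of smaller area.
\end{enumerate}

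\textbf{Cut vertices.} You also ignore cut vertices, which the statement allows. First, you need \cref{lem:acute} to know the $v_i$ are distinct; otherwise $\ell(v,w)<r$ trivially. Second, your justification ``$R$ contains all of its triangles'' is false at a cut vertex. The paper handles this by noting that every path from $v$ to $w$ passes through a cut vertex $v_i$ of $\gamma$, and splitting there. In your setup the same observation shows that a cut vertex cannot be an intermediate $\gamma$-vertex of a bigon whose other side avoids $\gamma$.
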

		
		\begin{proof}
			First suppose none of the vertices $v_i$ are cut vertices.  Then using \cref{cor:localdistance} we may prove by induction on $r$ that $\delta(v_i, v) = 2i$ and $\delta(v_{r-i},w) = i$, and the other claims also follow.
			
			By \cref{lem:acute}, the vertices $v_i$ are distinct.  If one of the vertices, say $v_i$, is a cut vertex, then \emph{any} path from $v$ to $w$ will pass through $v_i$.  The claim then reduces to the same claims for the paths $v_0 \to \cdots \to v_i$ and $v_i \to \cdots \to v_r$.  We are done by induction on the number of cut vertices.
		\end{proof}
		
		\begin{proposition}\label{prop:normal}
			Let $(Q,\z)$ be a normal CAT(0) planar graph.  Then every vertex $v \in V(Q)$ is normal.
		\end{proposition}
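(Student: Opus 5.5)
We need to show: for normal (Q,z), every vertex v is a distance minimizer (focal point, by \cref{thm:focal}) for some triple of labeled vertices. The plan is to exhibit such a triple explicitly.

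First reduce to the simple case. By \cref{lem:glue}, $Q$ is a union of simple CAT(0) planar graphs glued at cut vertices. Let $v$ lie in a simple piece $Q_0$ (if $v$ is a cut vertex, pick any piece containing it). For each boundary vertex $x$ of $Q_0$ that is a cut vertex, the labels in the hanging components attached at $x$ behave, from $Q_0$'s viewpoint, like labels placed at $x$: any path from $v$ to such a label passes through $x$, so $\dist(\cdot,z)=\dist(\cdot,x)+\dist(x,z)$ on $Q_0$. Every hanging component has its own acute vertices by \cref{lem:acute}, and these are labeled. Hence we may assume $Q$ is simple and every acute vertex is labeled (possibly by an "inherited" label). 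Note also that the focal-point conditions only depend on the sets $F(v,z)$ and on parallelism, which for an inherited label coincide with those for $x$.

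Now take $Q$ simple with $v$ arbitrary. By \cref{lem:acute} there are at least three acute vertices, all labeled. Partition $\partial Q$ into the boundary arcs between consecutive acute vertices $v_0\to\cdots\to v_r$. By \cref{lem:ubound}, along each arc the minimizing neighbors $f_{v_i}(v)$ change at most twice, and only in the controlled patterns (2) or (3). Combined with \cref{lem:cyclic}, the sets $F(v,z)$, as $z$ runs over the acute vertices in boundary order, wrap once around $v$ in weak cyclic order. Since every boundary vertex is reached by a canonical geodesic, the first steps of canonical geodesics from $v$ to all boundary vertices must cover all "directions" at $v$: every incoming neighbor, or every cyclically adjacent pair of incoming neighbors, occurs. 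The main step is to show that the acute vertices already realize at least three pairwise non-parallel directions with an SDR.

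For an interior $v$ of degree $\ge 6$ there are at least three incoming neighbors, so the set $F(v)$ has $\ge3$ elements. The idea is to use \cref{lem:ubound}: within a single arc between acute vertices, the $F$-sets take at most three consecutive values. If all acute-vertex $F$-sets fell into at most two "slots", then some arc would have to sweep across $\ge$ two further incoming neighbors, contradicting the at-most-three-values bound in \cref{lem:ubound}.

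Parallelism is the main obstacle. Two acute vertices with the same $F$-set of size two could be parallel, and then the witness path has to be followed. My plan is to induct on $\dist$: move to $f(v)$ along the common direction and argue that the set of acute targets in that direction spans a sub-arc of $\partial Q$ whose acute vertices eventually separate. Because the endpoints of that sub-arc are acute and distinct, they cannot remain parallel all the way to the boundary; at the boundary they are distinct vertices, so $F$-sets differ. This gives non-parallel representatives. For boundary $v$, use the fake edges in $F(v)$ together with \cref{lem:fboundary} to handle $v$ itself or its neighbors along $\partial Q$ in the same way. Finally, \cref{thm:focal} converts the focal-point conclusion into the statement that $v$ is a distance minimizer, so $v$ is normal.
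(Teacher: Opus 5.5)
Your skeleton is the paper's: reduce via \cref{lem:glue} to a simple piece with the cut vertices treated as labeled, use \cref{lem:acute}, \cref{lem:cyclic} and \cref{lem:ubound} to show that the $F$-sets of the acute vertices sweep out $F(v)$, handle boundary vertices with \cref{lem:fboundary}, and finish with \cref{thm:focal}. The step that fails is your treatment of parallelism, which you identify as the main obstacle. Parallelism at $v$ is decided at the witness, not at the boundary. If two distinct acute vertices $z\neq z'$ have $F(v,z)=F(v,z')=\{x\}$, or reach a witness with equal singleton $F$-sets, they are parallel at $v$ by clause (1) of the definition, however their geodesics diverge afterwards. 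This really happens once there are more acute vertices than incoming directions at $v$. For example, in a flat region every target in the open $120^\circ$ sector between two outgoing directions has the same singleton $F$-set $\{x\}$, so two boundary spikes in that sector are parallel at $v$. Hence ``distinct acute endpoints cannot remain parallel all the way to the boundary'' is false, and the witness-path induction does not produce a non-parallel triple.

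You do not need witnesses at all. Since $z\parallel_v z'$ requires $F(v,z)=F(v,z')$, it suffices to find three acute vertices with \emph{pairwise distinct} $F$-sets admitting an SDR. This comes out of the sweep itself. Between consecutive acute vertices, \cref{lem:ubound} moves the $F$-set by at most one incoming slot: half a slot in case (2), $\{x\}\leftrightarrow\{x,x'\}$, and a full slot in case (3), $\{x\}\to\{x'\}$. Assign to each set its first element in the counterclockwise order: $x$ for $\{x\}$, and $x$ for $\{x,x'\}$ when $x'$ follows $x$. Because the sets wind once around $v$, every incoming neighbor of $v$ occurs as a first element. An interior $v$ has at least three incoming neighbors, so three acute vertices with distinct first elements give distinct sets together with an SDR. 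The winding-once fact is what your sentence ``every boundary vertex is reached by a canonical geodesic'' is standing in for. That sentence does not justify it; the justification is the non-crossing of canonical geodesics from the proof of \cref{lem:cyclic}, as $z$ traverses $\partial Q$. The paper uses the same fact implicitly at the same point. This displacement count also replaces your ``sweep across $\ge$ two further incoming neighbors,'' which is off by one when $|F(v)|=3$.
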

		\begin{proof}
			First suppose that $Q$ is simple.  Let $v \in V(Q)$.  First consider the case that $v$ is an interior vertex.  Let $z_{i_1},z_{i_2},\ldots$ be the acute vertices in cyclic order.  Consider the sets $F(v,z_{i_j})$, which are weakly cyclically ordered around $v$ by \cref{lem:cyclic}.  By \cref{lem:ubound}, the consecutive sets $F(v,z_{i_j})$ and $F(v,z_{i_{j+1}})$ must intersect.  It follows that $\bigcup_j F(v, z_{i_j}) = F(v)$, and by \cref{lem:acute} it follows that $v$ is a focal point for some triple among the acute vertices.  By \cref{thm:focal}, $v$ is normal.
			
			Next consider the case that $v$ is a boundary vertex.  Let $u$ (resp. $w$) be the next (resp. previous) labeled vertex along the boundary from $v$.  Note that $(Q,\z)$ has at least three acute vertices by \cref{lem:graph}, and thus by normality the three vertices $w,v,u$ are distinct.  By \cref{lem:fboundary}, we have $f_u(v)$ (resp. $f_w(v)$) is the next (resp. previous) vertices along the boundary from $v$.  If $v$ is acute it follows immediately that $v$ is normal.  By \cref{lem:ubound}, if $v$ is not acute then as $z$ varies over $\z$, we have that $\bigcup_{z \in \z} F(v,z) = F(v)$, and again $v$ is normal.  
			
			This proves the result for $Q$ simple.  For a general normal $(Q,\z)$, by \cref{lem:glue}, $Q = \bigcup_j Q_j$ where the $Q_j$ are simple CAT(0) planar graphs.   Let $v \in V(Q)$.  If $v$ belongs to component $Q_j$ and is not a cut vertex, the normality of $v$ follows from the result for the simple normal $(Q_j, \z^{(j)})$ where we keep all the original labels of $Q_j$, and additionally label all the cut vertices of $Q_j$.  (\cref{lem:acute} guarantees that there are labeled vertices on both sides of every cut vertex.)
			
			Finally, if $v$ is a cut vertex and $Q = \bigcup Q_k$ where the $Q_k$ are glued at $v$, then the above argument applied to one of the $Q_k$ suffices to show that $v$ is normal.
		\end{proof}		
		
		\part{Strands and planar basis}\label{part:webs}
		
		\section{Non-elliptic strand combinatorics}\label{sec:strands}
		
		A non-elliptic web $W$ is a \emph{reduced} plabic graph in the sense of Postnikov \cite{Pos}.  It follows from general results on reduced plabic graphs that all of the strands of $W$ go from boundary to boundary and have no self-intersections.  Furthermore, two strands cannot have a \emph{bad double-crossing}: that is $\eta$ and $\eta'$ cannot both use edges $e$ and $e'$ so that these edges are in the same order on both strands.  
		
		\subsection{Proof of \cref{prop:strandintersect}}
		Let $W$ be a non-elliptic web and $F$ be an interior face of $W$, so that $F$ is a $m$-gon for even $m \geq 6$.  Let $\str$ and $\str'$ be two strands with edges $e \in \str$ (resp. $e' \in \str'$) on the perimeter of $F$.  (The intersection of $\str$ with the perimeter of $F$ consists of two edges, and it is possible or not for $e = e'$.)
		
		Suppose that $\str$ and $\str'$ intersect at some other edge $f$, not on the perimeter of $F$.  
		
		Consider the curve $\gamma$ that follows $\str$ from $e$ to $f$, then follows $\str'$ from $f$ to $e$, and then goes along the perimeter of $F$ from $e$ to $e'$ in such a way that $F$ is contained in the region $R$ bounded by $\gamma$; see \cref{fig:FF'}.
		\begin{figure}
		\begin{center}
		$$
		\begin{tikzpicture}[
			bvert/.style={circle, fill=black, inner sep=0, minimum size=4.5pt},
			wvert/.style={circle, draw=black, fill=white, inner sep=0, minimum size=4.5pt},
			qvert/.style={circle, fill=blue!60!black, inner sep=0, minimum size=6pt},
			midarrow/.style={decoration={markings, mark=at position 0.5 with {\arrow{stealth}}}, postaction={decorate}},
			]
			
		\coordinate (a) at (0,0);
		\coordinate (b) at ($(a) + (30:1)$);
		\coordinate (c) at ($(b) + (-30:1)$);
		\coordinate (d) at ($(c) + (30:1)$);
		\coordinate (e) at ($(d) + (-30:1)$);
		\coordinate (f) at ($(e) + (2,0)$);
		
		\coordinate (a1) at (0,-1);
		\coordinate (b1) at ($(a1) + (-30:1)$);
		\coordinate (c1) at ($(b1) + (30:1)$);
		\coordinate (d1) at ($(c1) + (-30:1)$);
		\coordinate (e1) at ($(d1) + (30:1)$);
		\coordinate (f1) at ($(e1) + (2,0)$);
		
		\draw[midarrow,red,thick] (a) to (b);
		\draw[midarrow,red,thick] (b) to (c);
		\draw[red,thick,midarrow] ($(a)+(-1,0)$) to (a);
		\draw[midarrow,red,thick] (c) to (d);
		\draw[midarrow,red,thick] (d) to (e);
		\draw[midarrow,red,dashed] (e) edge[bend left] (f);
		\draw[blue,thick,midarrow] (f) to ($(f)+(30:1)$);
		
		\draw[midarrow,blue,thick] (a1) to (b1);
		\draw[midarrow,blue,thick] (b1) to (c1);
		\draw[blue,thick,midarrow] ($(a1)+(-1,0)$) to (a1);
		\draw[midarrow,blue,thick] (c1) to (d1);
		\draw[midarrow,blue,thick] (d1) to (e1);
		\draw[midarrow,blue,dashed] (e1) edge[bend right] (f1);
		\draw[red,thick,midarrow] (f1) to ($(f1)+(-30:1)$);
		
		\node[text=red] at ($(a) +(-1.3,0)$) {$\eta$};
		\node[text=blue] at ($(a1) +(-1.3,0)$) {$\eta'$};
		
		\draw (a)--(a1);

		\draw[red,thick,midarrow] (f) -- (f1);
		\draw[blue,thick,midarrow] (f1) -- (f);
		\draw[purple,thick] (f) --(f1);
		\draw[dashed] (c) --(c1);

		\node at (0.9,-0.5) {$F$};
		\node at (4.8, -0.5) {$F'$};
		\node at (0.3,0.4) {$e$};
		\node at (0.3,-1.4) {$e'$};
		\node at (5.7,-0.5) {$f$};
			
		\node[wvert] at (a) {};
		\node[bvert] at (b) {};
		\node[wvert] at (c) {};
		\node[bvert] at (d) {};
		\node[wvert] at (e) {};
		\node[bvert] at (f) {};
		
			\node[bvert] at (a1) {};
		\node[wvert] at (b1) {};
		\node[bvert] at (c1) {};
		\node[wvert] at (d1) {};
		\node[bvert] at (e1) {};
		\node[wvert] at (f1) {};
		\end{tikzpicture}
		$$
		\end{center}
		\caption{A picture of two strands $\eta$ and $\eta'$ adjacent to the same face $F$, and later intersecting.}
		\label{fig:FF'}
		\end{figure}
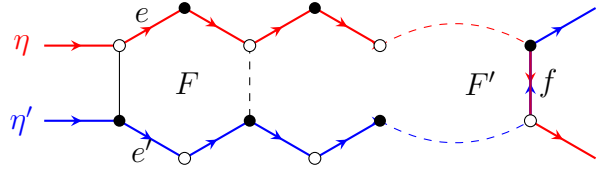
		
		We assume that $\gamma$ is a simple closed curve.  Let $F'$ be the face in $W(R)$ with $f$ on its perimeter and suppose that $F'$ is a $m'$-gon.  We apply the planar dual of \cref{lem:graph} to $W(R)$.  The conclusion is as follows.  Let $\alpha$ (resp. $\alpha'$) be the number of edges of $F$ (resp. $F'$) that are on the boundary of $R$.  Then $\alpha+\alpha' \geq m+m' - 2$ and we have $\alpha \geq 3$ (resp. $\alpha' \geq 3$) with equality if and only if $\str$ and $\str'$ intersect along the perimeter of $F$ (resp. $F'$).

		Since we have chosen $\gamma$ to be a simple closed curve, we also have $m - \alpha \geq 1$ (resp. $m' -\alpha' \geq 1$).  Substituting, we get the inequality $2 \geq (m-\alpha) + (m'-\alpha') \geq 2$, so the only solutions are when $m - \alpha = m'-\alpha' = 1$.  Since $m,m' \geq 6$, this forces $\alpha,\alpha' \geq 5$, and in particular $\str$ and $\str'$ cannot intersect along $\gamma$.  Thus the picture in \cref{fig:FF'} can never occur.

		The cases $m - \alpha = m'-\alpha' = 1$ give the only solutions, pictured in \cref{fig:parallel}, where $\str,\str'$ never intersect.
		
	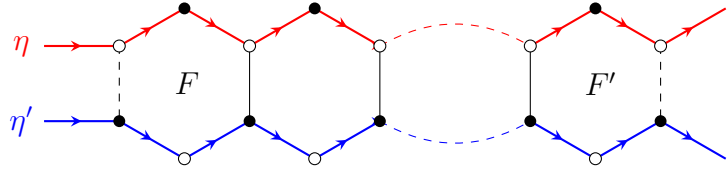
\begin{figure}
		\begin{center}
		$$
		\begin{tikzpicture}[
			bvert/.style={circle, fill=black, inner sep=0, minimum size=4.5pt},
			wvert/.style={circle, draw=black, fill=white, inner sep=0, minimum size=4.5pt},
			qvert/.style={circle, fill=blue!60!black, inner sep=0, minimum size=6pt},
			midarrow/.style={decoration={markings, mark=at position 0.5 with {\arrow{stealth}}}, postaction={decorate}},
			]
			
		\coordinate (a) at (0,0);
		\coordinate (b) at ($(a) + (30:1)$);
		\coordinate (c) at ($(b) + (-30:1)$);
		\coordinate (d) at ($(c) + (30:1)$);
		\coordinate (e) at ($(d) + (-30:1)$);
		\coordinate (f) at ($(e) + (2,0)$);
		\coordinate (g) at ($(f) + (30:1)$);
		\coordinate (h) at ($(g) + (-30:1)$);
		
		\coordinate (a1) at (0,-1);
		\coordinate (b1) at ($(a1) + (-30:1)$);
		\coordinate (c1) at ($(b1) + (30:1)$);
		\coordinate (d1) at ($(c1) + (-30:1)$);
		\coordinate (e1) at ($(d1) + (30:1)$);
		\coordinate (f1) at ($(e1) + (2,0)$);
		\coordinate (g1) at ($(f1) + (-30:1)$);
		\coordinate (h1) at ($(g1) + (30:1)$);
		
		\draw[midarrow,red,thick] (a) to (b);
		\draw[midarrow,red,thick] (b) to (c);
		\draw[red,thick,midarrow] ($(a)+(-1,0)$) to (a);
		\draw[midarrow,red,thick] (c) to (d);
		\draw[midarrow,red,thick] (d) to (e);
		\draw[midarrow,red,dashed] (e) edge[bend left] (f);
		\draw[midarrow,red,thick] (f) to (g);
		\draw[midarrow,red,thick] (g) to (h);
		\draw[red,thick,midarrow] (h) to ($(h)+(30:1)$);
		
		\draw[midarrow,blue,thick] (a1) to (b1);
		\draw[midarrow,blue,thick] (b1) to (c1);
		\draw[blue,thick,midarrow] ($(a1)+(-1,0)$) to (a1);
		\draw[midarrow,blue,thick] (c1) to (d1);
		\draw[midarrow,blue,thick] (d1) to (e1);
		\draw[midarrow,blue,dashed] (e1) edge[bend right] (f1);
		\draw[midarrow,blue,thick] (f1) to (g1);
		\draw[midarrow,blue,thick] (g1) to (h1);
		\draw[thick,blue,midarrow] (h1) to ($(h1)+(-30:1)$);
		
		\node[text=red] at ($(a) +(-1.3,0)$) {$\eta$};
		\node[text=blue] at ($(a1) +(-1.3,0)$) {$\eta'$};
		
		\draw[dashed] (a)--(a1);
		\draw (c) --(c1);
		\draw (e)--(e1);
		\draw(f)--(f1);
		\draw[dashed] (h)--(h1);

		\node at (0.9,-0.5) {$F$};
		\node at (6.4, -0.5) {$F'$};

		\node[wvert] at (a) {};
		\node[bvert] at (b) {};
		\node[wvert] at (c) {};
		\node[bvert] at (d) {};
		\node[wvert] at (e) {};
		\node[wvert] at (f) {};
		\node[bvert] at (g) {};
		\node[wvert] at (h) {};
		
			\node[bvert] at (a1) {};
		\node[wvert] at (b1) {};
		\node[bvert] at (c1) {};
		\node[wvert] at (d1) {};
		\node[bvert] at (e1) {};
		\node[bvert] at (f1) {};
		\node[wvert] at (g1) {};
		\node[bvert] at (h1) {};
		\end{tikzpicture}
		$$
		\end{center}
		\caption{Two strands $\eta$ and $\eta'$ can both be adjacent to many faces, as long as the two strands never intersect.  The dashed lines can be multiple edges: the starting and ending faces $F$ and $F'$ can have arbitrarily long perimeter.  The middle faces are all hexagons.}
		\label{fig:parallel}
		\end{figure}

		\subsection{Strand strips}
		We assume in this section that $W$ has black boundary vertices and thus the dual $(Q,\z)$ is a CCW-labeled CAT(0) planar graph.
		
		Let $\str=\str_i$ be a strand ending at boundary vertex $i$ of $W$.  The \defn{strand strip} $S_i$ is the union of all triangles (and the vertices and edges of those triangles) in $Q(W)$ corresponding to vertices visited by $\str$.  Let $z_i \to z'_i$ be the boundary edge of $Q$ out of $z_i$.  The vertex $z'_i$ may or may not be among the $\z$.
		
		\begin{lemma}\label{lem:strandcomb}
			The two long parallel sides of the strand strip are combinatorial geodesics, and are the canonical geodesics towards $z_i$ or $z'_i$ for all vertices on the same side.  See \cref{fig:strandstripparallel}.
		\end{lemma}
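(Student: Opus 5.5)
The plan is to first describe the strand strip $S_i$ concretely, then prove the geodesic claims by induction along the strip from the boundary end at $z_i$.

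The strand $\str_i$ visits interior vertices $a_1,\ldots,a_m$ of $W$, alternating white and black. Since it turns left at white and right at black vertices, consecutive triangles $T_{a_t}, T_{a_{t+1}}$ of $Q$ share the dual edge of $\str_i$'s $t$-th edge. Thus $S_i$ is a strip of triangles glued along edges, zig-zagging. By \cref{prop:strandintersect}, $\str_i$ does not self-intersect, and two strands meet at most once. From this I will deduce that no triangle repeats and that the two long sides of $S_i$ are simple paths $L=(x_0,x_1,\ldots)$ and $R=(y_0,y_1,\ldots)$. The strand ends at $b_i$, whose adjacent faces are those labeled $z_i$ and $z'_i$ (with the convention for labels of faces containing $b_{i-1},b_i$). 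So the strip terminates at the boundary edge $z_i\to z'_i$, and each side terminates at one of $z_i, z'_i$.

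Next, orientations. Every edge crossed by $\str_i$ has, by the black-on-the-right convention, all its black endpoints on a fixed side of the strand's direction. Hence every rung edge from $L$ to $R$ points in a fixed direction, say $L\to R$. Since each triangle is oriented, the edges along $L$ are then all traversed one way relative to the strand, and those along $R$ the other way. This gives the pattern of \cref{fig:strandstripparallel}: one side is a directed path toward its endpoint ($z'_i$ say) and the other is directed away, so it is traversed backwards toward $z_i$.

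For the geodesic claim I would induct backwards from the end. The base case is the vertex of the side adjacent to the endpoint. For the inductive step, take a side vertex $x$ whose successor $x^+$ on the same side is already known to satisfy $f_z(x^+)$ equal to the next vertex. I will show $f_z(x)=x^+$ using \cref{cor:localdistance} applied at $x$, using the strip triangles to determine the distance values of the neighbors of $x$ that lie in the strip. The key tool is \cref{lem:edgedist}: comparing combinatorial distances $\ell(\cdot,z)$ along rungs shows that the rung partner of $x$ has distance at least that of $x^+$. Uniqueness in \cref{lem:drop} then forces the minimizer to be $x^+$, so the side is the canonical geodesic. By \cref{prop:short} (or directly, since each step is a canonical step) it is a combinatorial geodesic.

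The main obstacle is ruling out a shortcut through triangles outside the strip, i.e.\ showing that a vertex on $L$ is not combinatorially closer to $z'_i$ via the exterior. For this I would use an area/curvature argument as in \cref{lem:combdist} and \cref{lem:paths}. Suppose a shorter path existed; together with the side it bounds a disk. Apply \cref{lem:graph} to that disk: the side vertices have degree $\ge 4$ inside it, since each has the two side edges and rung edges into the strip. The shortcut's turning vertices must supply all the curvature, which yields a contradiction exactly as in Claim (a) of \cref{lem:combdist}. Equivalently, one can invoke the nonexistence of double crossings in \cref{prop:strandintersect}, which shows the strip embeds without folding back.
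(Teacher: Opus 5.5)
There is a genuine gap. You identify the right obstacle, but your tool for it does not work, and the orientation picture underneath the induction is wrong.

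\textbf{The orientation step is false.} Every triangle of $Q$ is an oriented $3$-cycle, and the two rungs of a strip triangle share its apex, so they cannot both point from $L$ to $R$. The slip is that $\eta_i$ runs \emph{along} web edges, traversing them alternately white-to-black and black-to-white. So the black endpoint is alternately ahead of and behind the strand, not on a fixed side of it.

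Consequently the rungs alternate in direction. Both long sides are then directed paths pointing the same way as $\eta_i$, one ending at $z_i$ and the other at $z'_i$. This is what the paper later records in \cref{lem:strandstripF}(2),(3): $|F(v,z_i)|=2$ on the $z_i$-side. No side is ``traversed backwards'', and the distance values you planned to read off the strip are wrong.

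\textbf{The inductive step never controls the neighbour that actually competes with $x^+$.} Let $u$ be the third vertex of the triangle on the non-strip side of a side edge $x\to x^+$ heading toward $z$. That triangle is $x\to x^+\to u\to x$, so $u$ is an in-neighbour of $x$. By \cref{lem:edgedist}, $\delta(u,z)=\delta(x,z)-2$ exactly when $\ell(u,z)=\ell(x,z)-1$, and then $f_z(x)=u$ by case (2) of \cref{lem:drop}, not $x^+$.

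Strip data cannot exclude this. The second pattern of \cref{cor:localdistance} is compatible with everything you know inside the strip. Nor is this a shorter path, so your shortcut argument never addresses it.

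That argument is also flawed as written:
\begin{itemize}
\item A disk cut off by an exterior shortcut lies on the non-strip side, so the rungs contribute nothing to $\deg_R$.
\item With a long geodesic as one side of the bigon, \cref{lem:graph} gives no contradiction by itself. Each geodesic vertex with $\deg_R=3$ contributes $1$, so this is not the situation of Claim (a).
\item \cref{prop:strandintersect} only keeps the strip from folding onto itself; it says nothing about exterior shortcuts.
\end{itemize}

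\textbf{The fix.} Let \cref{lem:combdist}(3) carry the curvature, inducting backwards from the end of the side; this is presumably the induction the paper intends. Start with the last edge into $z$, whose two third vertices are at distance $1$.

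For the inductive step, suppose $\ell(x^{++},z)=\ell-1$ and both third vertices of the triangles on $x^+x^{++}$ are at distance $\ell$. These two vertices flank $x^{++}$ around $x^+$. By \cref{lem:combdist}(3) they are then the only neighbours of $x^+$ at distance $\ell$, and $x^{++}$ is the only one at $\ell-1$.

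Now $x^+$ has exactly three strip triangles on one side, and, since $\deg\ge 6$, at least three on the other (boundary vertices need a small separate check). Hence $x$ and both third vertices on $xx^+$, including $u$, lie at distance $\ell+1$. This propagates the hypothesis and shows the side is a combinatorial geodesic. Applying the same reasoning at $x$, together with \cref{lem:edgedist}, shows $x^+$ is the unique minimizing neighbour of $x$, so the side is the canonical geodesic.
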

		\begin{proof}
			This follows from \cref{lem:combdist} by induction.
		\end{proof}
		\begin{figure}[h!]
			\centering
			\includegraphics[width=0.49\linewidth]{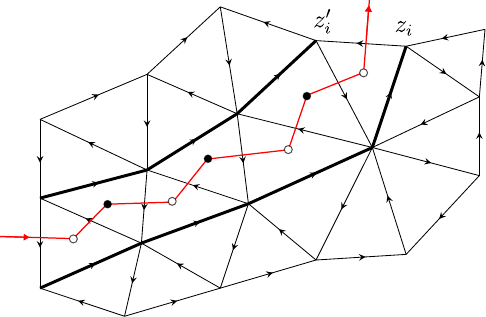} \includegraphics[width=0.49\linewidth]{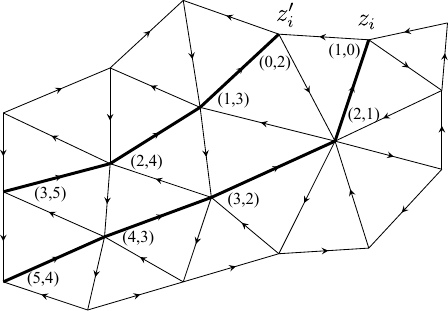}
			\caption{Left: strand strips are bounded by combinatorial geodesics.  See \cref{lem:strandcomb}.  Right: Distances $(\delta(\cdot ,z'_i),\delta(\cdot ,z_i))$ in \cref{lem:stranddist}.}
			\label{fig:strandstripparallel}
		\end{figure}
		
		\begin{lemma}\label{lem:stranddist}
			The distance functions $\delta(\cdot, z'_i)$ and $\delta(\cdot, z_{i})$ restricted to the vertices on $S_i$ are as illustrated in \cref{fig:strandstripparallel}.
		\end{lemma}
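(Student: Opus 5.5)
We have to describe the distance functions $\dist(\cdot,z'_i)$ and $\dist(\cdot,z_i)$ on $S_i$. The figure is not available, so the target is the natural pattern. By \cref{lem:strandcomb}, $S_i$ is bounded by two combinatorial geodesics: a left side $x_0,x_1,\ldots$ and a right side $y_0,y_1,\ldots$. Each is the canonical geodesic towards $z_i$ or $z'_i$ for its own vertices, and they end at the boundary edge $z_i\to z'_i$. The triangles of $S_i$ zigzag between the two sides and alternate white and black, since $\str_i$ alternates turning left and right. So going along the strip backwards from the boundary edge, the vertices can be indexed by levels. Each triangle has two vertices on one side and one on the other.

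\textbf{Step 1: coordinates.} Starting from $z_i,z'_i$ (where the distances are $(1,0)$ at $z_i$ and $(0,2)$ at $z'_i$, writing pairs as $(\dist(\cdot,z'_i),\dist(\cdot,z_i))$), I will compute the distances inductively one triangle at a time, moving away from the boundary edge. Suppose the distances on an edge shared by two consecutive triangles of the strip are known. The third vertex of the next triangle has combinatorial distance one larger to both $z_i$ and $z'_i$; this uses \cref{lem:combdist}(1),(2) together with the fact, from \cref{lem:strandcomb}, that the sides are geodesics towards these targets. \cref{lem:edgedist} then fixes the increments exactly: stepping against an edge adds $2$, stepping along it adds $1$, according to the orientation. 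The orientation is determined by the triangle's color: white triangles are CCW, black ones are CW.

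\textbf{Step 2: the closed formula.} Carrying out the induction gives explicit formulas. The side consisting of canonical geodesics toward $z_i$ is traversed with a fixed pattern of forward and backward edges, by \cref{cor:edge-count}. So along one side $\dist(\cdot,z_i)$ increases by $1$ and $2$ alternately, or uniformly by $3$ per two steps. The same holds for $\dist(\cdot,z'_i)$ on the other side. The cross distance (to $z_i$ from the side geodesic towards $z'_i$, and vice versa) differs from the own-side distance by the constant $\pm1$ or $\pm2$ seen at the first triangle. I will verify that this offset is preserved by each inductive step, using \cref{lem:triangledist}: the three values on any triangle are $d,d+1,d+2$.

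\textbf{Main obstacle.} The hard part is justifying in Step 1 that the new vertex's canonical geodesic passes back through the previous edge of the strip rather than leaving it sideways. That is the content of \cref{lem:strandcomb}, so I will lean on it together with the uniqueness in \cref{lem:drop}. Near the boundary edge and at cut vertices I will treat the cases by hand, using \cref{lem:fboundary}.
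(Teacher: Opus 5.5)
Your route is the paper's own. Its proof is just that the sides of $S_i$ are combinatorial geodesics (\cref{lem:strandcomb}) and that combinatorial geodesics realize directed distance (\cref{prop:short}); your triangle-by-triangle use of \cref{lem:edgedist} unpacks exactly this. However, the closed formula you announce in Step 2 is wrong, and the verification you plan there would fail.

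The colors do alternate along the strip, but the edges on a given side come only from every other triangle. Write $x_0=z_i,x_1,\ldots$ for the $z_i$-side and $y_0=z'_i,y_1,\ldots$ for the $z'_i$-side. The web vertex next to $b_i$ is white and $\eta_i$ turns left there. Hence the strip triangles are $(x_m,y_m,x_{m+1})$, all white, and $(y_m,x_{m+1},y_{m+1})$, all black. Propagating the orientation from $z_i\to z'_i$ gives the edges $x_m\to y_m$, $y_m\to x_{m+1}$, $x_{m+1}\to x_m$ and $y_{m+1}\to y_m$. So both sides are directed paths running toward the boundary edge, in the direction of travel of $\eta_i$, and
\[ \delta(x_m,z_i)=m,\quad \delta(x_m,z'_i)=m+1,\quad \delta(y_m,z'_i)=m,\quad \delta(y_m,z_i)=m+2. \]
Own-side distances therefore grow by exactly $1$ per step, never by $2$. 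Your offsets $+1$ and $+2$ are correct. Combined with your alternating increments, though, they break: the edge $x_1\to x_0$ forces $\delta(x_1,z_i)=1$, so your pattern gives $\delta(\cdot,z_i)=1,3,3$ on the triangle $(x_1,y_1,x_2)$. That contradicts \cref{lem:triangledist}.

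To make Step 1 rigorous, you need these combinatorial distances:
\[ \ell(x_m,z_i)=m,\quad \ell(y_m,z'_i)=m,\quad \ell(x_m,z'_i)=m \ (m\ge 1),\quad \ell(y_m,z_i)=m+1. \]
The first two come from \cref{lem:strandcomb}. Get the cross ones by induction from \cref{lem:combdist}. If $\ell(y_m,z_i)$ were $m$, the edge $(x_m,y_m)$ would be level, and \cref{lem:combdist}(2) would force $\ell(y_{m-1},z_i)=m-1$. If $\ell(x_m,z'_i)$ were $m-1$, all three values on $(x_{m-1},y_{m-1},x_m)$ would be equal, contradicting \cref{lem:combdist}(1). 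Note also that your phrase ``one larger to both'' fails at the first step, since $\ell(x_1,z'_i)=\ell(x_0,z'_i)=1$.

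With these values, apply \cref{prop:short} to two geodesics. The first is $y_m\to\cdots\to y_0$ followed by $z_i\to z'_i$ traversed backwards. The second is $y_{m-1}\to x_m$ traversed backwards followed by $y_{m-1}\to\cdots\to y_0$. This gives the displayed directed distances.
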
		
		\begin{proof}
			Follows from \cref{lem:strandcomb} and \cref{prop:short}.
		\end{proof}
				
		\begin{lemma}\label{lem:strandstrip} \
			\begin{enumerate}
				\item
				Let $v$ be a vertex on the side of the strand $\str$ containing $z'_i$.  Then $\delta(v,z_{i})= \delta(v,z'_i) + 2$.  Furthermore, paths with shortest directed distance from $v$ to $z'_i$ (resp. $z_{i}$) do not cross the strand $\str$ (resp. cross the strand $\str$ once).
				\item
				Let $v$ be a vertex on the side of the strand containing $z_{i}$.  Then $\delta(v,z'_{i})= \delta(v,z_{i}) + 1$.  Furthermore, paths with shortest directed distance from $v$ to $z_{i}$ (resp. $z'_{i}$) do not cross the strand $\str$ (resp. cross the strand $\str$ once).
			\end{enumerate}
		\end{lemma}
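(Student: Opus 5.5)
The plan is to reduce everything to the boundary edge $z_i \to z'_i$ and the fact that the strand $\str$ separates $Q$ into two sides, using \cref{lem:strandcomb}, \cref{lem:stranddist} and \cref{prop:short}. The strand $\str=\str_i$ ends at the boundary vertex $b_i$, whose dual edge is exactly $z_i \to z'_i$. So $\str$ exits $Q$ through this boundary edge, and $z_i$ and $z'_i$ lie on opposite sides of the strand strip $S_i$. Since every vertex of $Q$ lies on one side of $\str$ (the strip's two long sides being combinatorial geodesics ending at $z_i$ and $z'_i$), the two cases (1) and (2) cover all vertices. By \cref{lem:edgedist1} applied to the edge $z_i\to z'_i$, we have $\dist(v,z_i)-\dist(v,z'_i)\in\{2,-1\}$ for every $v$ (apply the lemma to $\dist(v,\cdot)$ in $Q^{\rm op}$). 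The content of the lemma is therefore to decide which value occurs, according to the side of $\str$.

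The first step is to prove the crossing statements. A path from $v$ to a target on the same side need not cross $\str$. For a shortest path, I will show it does not cross, and that a shortest path to a target on the opposite side crosses exactly once. Take a shortest directed path $\gamma$ from $v$ to the target. Any crossing of $\str$ means $\gamma$ enters the strip $S_i$ and leaves it on the other side. If $\gamma$ crossed twice (or, for a same-side target, crossed at all), consider the portion between the entry and exit points on the same long side of $S_i$. That side is a combinatorial geodesic by \cref{lem:strandcomb}, hence by \cref{prop:short} a path of shortest directed distance. So I can replace this portion of $\gamma$ by the segment along the side without increasing length. I must check that it strictly decreases, or at least doesn't change the crossing count. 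Iterating this reduces any shortest path to one with the minimal number of crossings. Strictness then comes from the local distance pattern on $S_i$ given by \cref{lem:stranddist}, which shows that traversing across the strip and back costs strictly more than staying on one side.

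The second step computes the distance difference. Let $v$ be on the $z'_i$ side, and take the canonical geodesic $\gamma_{z'_i}(v)$. It reaches the long side of $S_i$ adjacent to $z'_i$ and then follows it, by \cref{lem:strandcomb}. To reach $z_i$, one follows the same path to the point where it meets the strip and then crosses. By \cref{lem:stranddist} the pairs $(\dist(\cdot,z'_i),\dist(\cdot,z_i))$ along that side differ by exactly $2$, i.e. $\dist(x,z_i)=\dist(x,z'_i)+2$ for $x$ on the $z'_i$-side of the strip. A shortest path from $v$ to $z_i$ crosses $\str$ once, at some vertex $x$ on that side, so
$$\dist(v,z_i)=\dist(v,x)+\dist(x,z_i)=\dist(v,x)+\dist(x,z'_i)+2\geq \dist(v,z'_i)+2.$$
Combined with the upper bound $\dist(v,z_i)\le\dist(v,z'_i)+2$ (walk to $z'_i$, then back along $z_i\to z'_i$ at cost $2$), this gives equality. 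Case (2) is symmetric, using $\dist(x,z'_i)=\dist(x,z_i)+1$ on the $z_i$-side of the strip and the forward edge $z_i\to z'_i$ of cost $1$.

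The main obstacle is the first step: showing rigorously that a shortest path cannot cross $\str$ and come back at no extra cost. This is a non-uniqueness issue, since shortest paths are not unique. I would handle it with an area-reduction argument as in \cref{lem:paths}, using \cref{prop:strandintersect} (strands are geodesic-like and cross at most once) together with \cref{lem:graph} on the region bounded by $\gamma$ and the side of $S_i$.
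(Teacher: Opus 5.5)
Your Step 2 is sound, and it does not actually depend on Step 1. Any path from a vertex $v$ on the $z'_i$ side to $z_i$ must traverse a rung of $S_i$, so it passes through some vertex $x$ on the $z'_i$-line. Then \cref{lem:stranddist} and the triangle inequality give $\dist(v,z_i)\ge \dist(v,x)+\dist(x,z'_i)+2\ge\dist(v,z'_i)+2$. Together with the reverse bound from the edge $z_i\to z'_i$, this proves both distance identities, and it faithfully unpacks the paper's one-line proof (``immediate from \cref{lem:stranddist}'').

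The gap is in Step 1. The strictness you invoke, that crossing the strip and coming back always costs strictly more than staying on one side, is false. Every triangle of $S_i$ is an oriented cycle $u\to u'\to w\to u$ in which $u\to u'$ lies on a long side and $u'\to w$, $w\to u$ are rungs. The path $u'\to w\to u$ crosses $\str$ twice and costs $2$, exactly the cost of going from $u'$ to $u$ against the side edge. So zigzagging through the strip is a shortest way to move backwards along a side. Your replacement argument therefore cannot give a strict decrease unless it uses the direction of travel toward $z_i$ or $z'_i$, which your sketch never does; the proposed area-reduction argument does not address this either.

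The missing idea is to derive the crossing statements from the distance identities at the two ends of a single rung, rather than by path surgery. Suppose a shortest path from $v$ on the $z'_i$ side to $z'_i$ crosses $\str$. Let its first rung step go from $x$ on the $z'_i$-line to $y$ on the $z_i$-line, at cost $c\in\{1,2\}$.
\begin{itemize}
\item Shortestness and the identity at $y$ give $\dist(x,z'_i)=c+\dist(y,z'_i)=c+\dist(y,z_i)+1$.
\item The identity at $x$ and the triangle inequality give $\dist(x,z'_i)+2=\dist(x,z_i)\le c+\dist(y,z_i)$.
\end{itemize}
Together these say $3\le 0$, a contradiction. The same computation with the two sides exchanged shows that shortest paths from the $z_i$ side to $z_i$ never cross. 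Applying these two facts to the tail of a shortest path after its first crossing shows that shortest paths to the opposite endpoint cross exactly once. So prove the identities first (your Step 2, using only that some crossing occurs), and then deduce the crossing claims this way.
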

		\begin{proof}
			Immediate from \cref{lem:stranddist}.
		\end{proof}
		
		\begin{lemma}\label{lem:strandstripF}\
			\begin{enumerate}
				\item
				Let $v$ be a vertex not on the strand strip $S_i$.  Then $F(v,z'_i) = F(v,z_{i})$.
				\item
				Let $v$ be on the strand strip, and on the side with $z'_i$.  Then $|F(v,z'_i)|=2$, $|F(v,z_{i})|=1$, and $F(v,z_{i}) \subset F(v,z'_i)$.
				\item
				Let $v$ be on the strand strip, and on the side with $z_{i}$.  Then $|F(v,z'_i)|=1$, $|F(v,z_{i})|=2$, and $F(v,z'_{i}) \subset F(v,z_{i})$.
			\end{enumerate}
		\end{lemma}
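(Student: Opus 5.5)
The statement to prove is \cref{lem:strandstripF}. My plan is to read off $F(v,z)$ from the first step of the canonical geodesic $\gamma_z(v)$, using \cref{def:F-set}. For each vertex, $F(v,z)$ is determined by two pieces of data: the minimizing neighbor $f_z(v)$, and whether the edge joining it to $v$ is outgoing or incoming. By \cref{lem:drop} and \cref{cor:localdistance}, an outgoing edge means the distance drops by $1$ and $|F|=2$; an incoming edge means it drops by $2$ and $|F|=1$. So in each case it suffices to compare the first steps of $\gamma_{z_i}(v)$ and $\gamma_{z'_i}(v)$.

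For (1), take $v$ off $S_i$. I would first show that the function $\delta(\cdot,z_i)-\delta(\cdot,z'_i)$ is constant on each side of the strand $\eta_i$ away from the strip. The value is $2$ on the $z'_i$ side and $-1$ on the $z_i$ side, by \cref{lem:strandstrip}. Consequently, the neighbors of $v$ minimizing $\delta(\cdot,z_i)$ and $\delta(\cdot,z'_i)$ coincide, and the drop values coincide, provided every neighbor of $v$ lies on the same side as $v$. The point to check is that if $v\notin S_i$, then no neighbor of $v$ lies on the other side of $\eta$. This holds because crossing $\eta$ means crossing an edge of $W$ on $\eta$, so the dual edge of $Q$ has both endpoints in triangles visited by $\eta$, i.e.\ in $S_i$. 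Hence all neighbors of $v$ are on $v$'s side, the two distance functions differ by a constant on $\{v\}\cup N(v)$, $f_{z_i}(v)=f_{z'_i}(v)$ with the same edge direction, and therefore $F(v,z_i)=F(v,z'_i)$.

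For (2) and (3), I would use the explicit distance pattern on the strip given by \cref{lem:stranddist} and \cref{fig:strandstripparallel}, together with \cref{lem:strandcomb}, which says the long sides are canonical geodesics. Take $v$ on the $z'_i$ side. By \cref{lem:strandstrip}(1), a shortest path to $z'_i$ does not cross $\eta$. The canonical geodesic toward $z'_i$ runs along the side of the strip, so its first step is an outgoing boundary-side edge, and $|F(v,z'_i)|=2$. A shortest path to $z_i$ must cross the strip once. Comparing with \cref{cor:localdistance}, the minimizing neighbor for $z_i$ is the vertex across the strip that realises the drop by $2$; it is joined by an incoming edge, so $|F(v,z_i)|=1$. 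The inclusion $F(v,z_i)\subset F(v,z'_i)$ says this incoming neighbor is cyclically adjacent to $f_{z'_i}(v)$. I would verify this from the local triangle $v\to f_{z'_i}(v)\to x\to v$ in the strip: the pattern of \cref{fig:strandstripparallel} gives $\delta(x,z_i)=\delta(v,z_i)-2$. Case (3) follows by the symmetric argument with the roles of the sides exchanged, using the values $(+1)$ from \cref{lem:strandstrip}(2).

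The main obstacle is the local bookkeeping in (2) and (3): pinning down exactly which neighbor realises each drop, including at endpoints of the strip and boundary vertices with fake $F$-elements. I would handle this by induction along the strip starting from $z_i\to z'_i$, which is also how \cref{lem:strandcomb} is proved.
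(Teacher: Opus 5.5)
Your proposal is correct and follows essentially the paper's argument. Parts (2) and (3) are read off from the strip distance pattern of \cref{lem:stranddist} together with \cref{cor:localdistance}. Part (1) comes from the difference $\delta(\cdot,z_i)-\delta(\cdot,z'_i)$ being constant on each side of $\eta_i$, plus the locality of $F$-sets. Your observation that every edge of $Q$ crossing $\eta_i$ has both endpoints in $S_i$ makes the paper's ``determined locally'' step precise.

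One caveat: \cref{lem:strandstrip} is deduced only from the strip values of \cref{lem:stranddist}, so for off-strip $v$ the constancy is exactly what the paper proves inside this proof (by propagation). You can supply it directly. For $v\notin S_i$ on the $z'_i$ side, a shortest path from $v$ to $z_i$ first meets $S_i$ at some $u$ on that same side. This gives $\delta(v,z_i)=\delta(v,u)+\delta(u,z'_i)+2\ge\delta(v,z'_i)+2$, while the boundary edge $z_i\to z'_i$ gives $\delta(v,z_i)\le\delta(v,z'_i)+2$. The $z_i$ side is symmetric, with $+1$ in place of $+2$.
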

		\begin{proof}
			(2) and (3) follow immediately from \cref{lem:stranddist} and \cref{cor:localdistance}.  We claim that on the side of $\str$ containing $z'_i$, we have $\dist(\cdot,z_i) = \dist(\cdot, z'_i) + 2$ (and similarly on the side containing $z_i$).  This follows as \cref{cor:localdistance} states that the directed distance functions can be propagated using only local geometry.  Since the $F$-sets are determined by the distance function locally, (1) follows.
		\end{proof}
		
		\subsection{Normal and cyclic-less webs}\label{sec:normal}
		In this section we prove \cref{thm:normalnormal}.  Recall from \cref{def:normalweb} that a web is normal if and only if strands that intersect have different labels.
		
		\begin{proposition}\label{prop:acuteweb}
			Let $W$ be a non-elliptic web with black boundary that is not normal.  Then there are two adjacent boundary vertices with the same label connected to the same white interior vertex.
		\end{proposition}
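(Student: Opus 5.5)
Since $W$ is not normal, by \cref{def:normalweb} there are two strands $\str,\str'$ carrying the same label $i$ which intersect in the interior of $W$. The label of a strand is the boundary vertex where it ends, so both strands end at the same boundary vertex $b_i$. This means $b_i$ has degree at least two, and $\str,\str'$ arrive at $b_i$ along two different edges $e,e'$ incident to $b_i$. The strategy is to pick such a pair whose intersection point is as close to $b_i$ as possible. Then an area/curvature argument shows the configuration must degenerate to a single white trivalent vertex attached to two consecutive edges at $b_i$. In the standardized picture these become two adjacent boundary vertices with the same label joined to one white vertex.

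\textbf{Setting up the disk.} Among all pairs of same-label strands that intersect, I choose a pair and edges $e,e'$ so that $e,e'$ are cyclically consecutive around $b_i$. Consecutive pairs exist: if $\str,\str'$ cross, then a strand entering $b_i$ between them must, by planarity, cross one of them, and it still carries label $i$. Hence we can pass to an innermost pair. Next I follow $\str$ and $\str'$ backwards from $b_i$ to their first intersection $f$. By \cref{prop:strandintersect} they meet at most once, so the two segments together bound a simple closed region $R$, whose corners are $b_i$ and the intersection edge $f$. Choosing $e,e'$ consecutive guarantees that no other edge at $b_i$ enters $R$.

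\textbf{Curvature count.} I apply the planar dual of \cref{lem:graph} to the subweb $W(R)$, exactly as in the proof of \cref{prop:strandintersect}.
\begin{itemize}
\item Interior faces of $W(R)$ have at least $6$ edges, since $W$ is non-elliptic.
\item A boundary face of $R$ met by a strand segment that turns consistently (left at white vertices, right at black ones) contributes no positive curvature.
\end{itemize}
Hence the total curvature of at least $6$ must be concentrated at the two corners $b_i$ and $f$. This should force $R$ to contain no vertices other than those on the two segments. It should also force both segments to have length one: $\str$ goes $w\to b_i$ along $e$, $\str'$ goes $w\to b_i$ along $e'$, and they meet at a single white trivalent vertex $w$. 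For this, I also need that turning left at a white vertex is exactly what lets two strands leave the same vertex $w$ and land on consecutive edges at $b_i$.

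\textbf{Conclusion and main obstacle.} Once $e=(w,b_i)$ and $e'=(w,b_i)$ are consecutive edges from the same white vertex, splitting the valence at $b_i$ as in the standardization operations yields two adjacent boundary vertices, both labeled $i$ (the strands end there), connected to the same white interior vertex $w$. This is the claim.

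The main obstacle is the curvature bookkeeping in the middle step. One must check carefully that the corner at $b_i$, where two strands end on consecutive edges, contributes curvature comparable to an acute vertex. One must also rule out $R$ being a longer ``hexagon strip'' as in \cref{fig:parallel}, where the strands run parallel without meeting. Here I would use that the strands actually intersect at $f$, so the $F'$-type corner supplies at most the bounded curvature computed in the proof of \cref{prop:strandintersect}, leaving the equality case $\alpha=\alpha'$ with no room for intermediate faces.
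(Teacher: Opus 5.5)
Your route is the same as the paper's in spirit: reduce to consecutive edges $e,e'$ at $b_i$, then run a nonpositive-curvature count on the disk $R$ cut out by the two strands. The paper instead assumes $e,e'$ do not share an endpoint, closes up the face between them into an interior face with at least six sides, and cites \cref{prop:strandintersect}, whose proof is exactly such a count. The gap is that your decisive step is only asserted (``this should force\ldots''), and the bookkeeping you sketch looks in the wrong place. The corner at $f$ can supply only a bounded amount, and no analysis of segment lengths or hexagon strips is needed. The point you are missing is that the only face of $R$ allowed to have fewer than six sides is the face $F_0$ of $W$ at $b_i$ between $e$ and $e'$. It is not an interior face, since $b_i$ is one of its vertices.

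To close the gap, for a trivalent vertex $u\in\partial R$ put $\epsilon(u)=+1$ if its third edge leaves $R$ and $\epsilon(u)=-1$ if it enters $R$. Since $e,e'$ are consecutive, $b_i$ has degree $2$ in $R$, and Euler's formula for the disk $R$ gives
\[
\sum_{u\in\partial R\setminus\{b_i\}}\epsilon(u)\;+\;1\;+\;\sum_{F\subset R}\bigl(6-m_F\bigr)\;=\;6,
\]
where $m_F$ is the number of sides of $F$. Every face other than $F_0$ contributes $\le 0$. The cycle $\partial R$ is a union of at most two strand segments. Its turn at every trivalent vertex, including the endpoints of $f$, is the turn of one of the strands, and $R$ stays on a fixed side of each segment. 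Since strands alternate left and right turns, $\epsilon$ alternates along each segment, so $\sum\epsilon\le 2$. Hence $6-m_{F_0}\ge 3$. Since $W$ is bipartite, $m_{F_0}$ is even, so $m_{F_0}=2$: $e$ and $e'$ end at a common vertex $w$, which is white because $b_i$ is black. That is the statement, and done this way your argument is self-contained and avoids the paper's unexplained surgery at $b_i$.

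Note that in this final configuration both strands pass through $w$. Their unique common edge $f$ is therefore $e$ or $e'$, and one of the two strands is $b_i\to w\to b_i$; $f$ is not a separate corner as in your picture.

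One small repair in your reduction to consecutive edges. A label-$i$ strand ending at $b_i$ strictly between $e$ and $e'$ need not cross $\eta$ or $\eta'$, because it may have started at $b_i$ inside the same wedge. In that case it shares its first edge with the label-$i$ strand ending along that edge. That gives a closer intersecting pair, so induction on the gap still works.
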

		\begin{proof}
			Let $W$ be a non-elliptic web with black boundary and let $\str$ and $\str'$ be two strands that intersect and end at the same boundary vertex.  Since strands are continuous and go from boundary to boundary we may assume that the last edges $e$ and $e'$ used by $\str$ and $\str'$ are distinct and cyclically adjacent.  Suppose that $e$ and $e'$ are not incident to the same interior vertex.  Let $F$ be the (boundary) face with $e$ and $e'$ on its perimeter.  We would like to apply \cref{prop:strandintersect} to our situation.   To do this we split the boundary vertex and add edges so that $F$ becomes an interior face with at least six sides.  Applying \cref{prop:strandintersect} we see that the situation is impossible as $\str$ and $\str'$ are assumed to intersect somewhere else.  We conclude that $e$ and $e'$ are incident to the same interior vertex, as required.
		\end{proof}
		
		\begin{proof}[Proof of \cref{thm:normalnormal}]
			In $W$, we have two adjacent boundary vertices with the same label connected to the same white interior vertex if and only if we have an unlabeled acute vertex in $(Q(W),\z(W))$.  The equivalence of the two notions of normality follows from \cref{prop:acuteweb}.
			
			We now prove the equivalence of the two notions of cyclic-less.  Suppose that $(Q,\z)$ is cyclic, with a boundary path connecting $z_i$ and $z_{i+1}$ where all intermediate vertices have degree 4.  Then the white vertex in $W$ corresponding to the unique triangle with $z_i$ as a vertex has cyclic strand triple $(i,i+1,i+2)$.  Thus $(Q,\z)$ cyclic implies that $W$ is cyclic.
						
			Now suppose that $W$ has a strand triple $(i,i+1,i+2)$ at a vertex $a$.  Consider the strands $\eta_i, \eta_{i+1}, \eta_{i+2}$ passing through $v$.  Let $a = a_0, a_1,\ldots$ be the vertices of $W$ that $\eta_{i+1}$ passes through, starting at $a$.  We claim that $a_1$ also has strand triple $(i,i+1,i+2)$.  For two of the strands $\eta_i, \eta_{i+1}, \eta_{i+2}$ it is clear that they pass through $a_1$.  Let $\eta$ (resp. $\eta'$) be the third strand passing through $a_0$ (resp. $a_1$).  By \cref{prop:strandintersect}, $\eta$ and $\eta'$ cannot intersect.  It follows that the end point of $\eta'$ must be labeled either $i+1$, or the same as the endpoint of $\eta$.  Since $W$ is normal, we see that $a_1$ has strand triple $(i,i+1,i+2)$.  Repeating this argument, we may now assume that $a$ is connected to the boundary vertex of $W$ labeled $i+1$.				
			
			The union of all triangles on or to the left of the strand $\eta_i$ cuts off a CAT(0) planar graph $Q' \subset Q$.  Since $\eta_i$ goes from the boundary vertex $b_{i+1}$ to the boundary vertex $b_i$ of $W$, the part of $\partial Q$ that belongs to $Q'$ contains exactly one labeled vertex $z_i$.  By \cref{lem:acute}, $Q'$ contains at least three acute vertices -- the only possibilities are $z_i$ and the corners at the start and end of $\eta_i$.  Thus $Q'$ has exactly three acute vertices and the proof of \cref{lem:acute} implies that all boundary vertices of $Q'$ that are not acute have degree four.  Repeating the argument for $\eta_{i+2}$, we deduce that all intermediate vertices on the boundary path from $z_i$ to $z_{i+1}$ have degree four.  We conclude that $W$ cyclic implies that $(Q,\z)$ is cyclic.			
\end{proof}
		
		\subsection{Uniqueness of Strand Triples}
		
		\begin{lemma}\label{lem:strand-bijection}
			Let $Q$ be a standard CAT(0) planar graph. Then the interior triangles of $Q$ are 
			uniquely determined by their strand triples. That is, the map
			\[
			T \mapsto (i_T, j_T, k_T)
			\]
			from interior triangles to strand triples is injective.
		\end{lemma}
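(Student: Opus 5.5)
The approach is to show that an interior triangle $T$, viewed as an interior vertex $a$ of the dual standard web $W$, is recovered from its strand triple by intersecting strands. Write the strand triple of $a$ as $(i,j,k)$. The three strands through $a$ are $\str_i,\str_j,\str_k$; since $W$ is standard, each boundary vertex $b_m$ has valence one, so each label $m$ determines a \emph{unique} strand $\str_m$ ending at $b_m$. Thus $T$ determines the unordered set $\{\str_i,\str_j,\str_k\}$, and conversely any triangle $T'$ with the same strand triple corresponds to a vertex $a'$ through which these same three strands pass.

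It therefore suffices to show that three fixed strands pass through at most one common interior vertex. This follows from \cref{prop:strandintersect}. Each pair among $\str_i,\str_j,\str_k$ intersects at most once. At a trivalent vertex $a$, any two of the three strands through $a$ share exactly one edge incident to $a$: a strand entering along one edge turns onto a second edge, and the three strands use the three pairs of edges. So at $a$ the strands $\str_i$ and $\str_j$ intersect along an edge $e$ incident to $a$.

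Now suppose $a'\neq a$ also carries all three strands. Then $\str_i$ and $\str_j$ also intersect along an edge $e'$ incident to $a'$. Since they intersect at most once, $e=e'$, so $a'$ is the other endpoint of $e$, which has the opposite color. Repeat this with the pairs $(\str_i,\str_k)$ and $(\str_j,\str_k)$. Each pair's common edge at $a$ must equal its common edge at $a'$, and these are three distinct edges at $a$. So all three edges at $a$ would join $a$ to $a'$, which is impossible in a planar simple web with $a$ trivalent and having other neighbors. (Two vertices joined by a multi-edge would create a bigon face, violating non-ellipticity.) Hence $a=a'$ and $T=T'$.

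The main obstacle is justifying the local claim that the ``single intersection'' of two strands in \cref{prop:strandintersect} is a single shared edge, so that meeting at both $a$ and $a'$ forces the shared edge to be the same. If intersections could be longer shared segments, I would instead use the second statement of \cref{prop:strandintersect}: the faces adjacent to both strands are incident to the vertices of $e$, which pins the intersection locally. One must also check the degenerate case where some $b_m$ attaches directly to $a$, so that a strand starts or ends at $a$. In that case $T$ touches the boundary, and the same edge-sharing argument applies with the boundary edge included.
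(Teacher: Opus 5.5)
Your proof is correct and is essentially the paper's: both apply \cref{prop:strandintersect} to the pair $\str_i,\str_j$ sharing the edge $e=(a,b)$ at $a$, which forces any other vertex with the same strand triple to be $b$. The only difference is the final contradiction: the paper notes that $\str_k$ would then pass through both $a$ and $b$ off $e$ and, by its turning directions, self-intersect, whereas you apply the single-intersection property to the other two pairs to force multiple edges between $a$ and $b$ (cleanest to rule out by observing that the strands through $a$ could then never reach the boundary); either route works.
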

		
		\begin{proof}  
			Let $a$ be a vertex of a non-elliptic web $W$ with strand triple $(i,j,k)$.   Two strands $\str_i$ and $\str_j$ that belong to the strand triple of $a$ intersect along one of the edges, say $e = (a,b)$, incident to $a$.  By \cref{prop:strandintersect}, the two strands $\str_i$ and $\str_j$ do not intersect elsewhere in $W$.  So the only way that $(i,j,k)$ can be a strand triple of another vertex is if it is also the strand triple of the vertex $b$.  
			
			Suppose that $\str_k$ passes through both $a$ and $b$ (but not along the edge $e$).  Noting the direction that $\str_k$ turns at $a$ and at $b$ we see that this is impossible because $\str_k$ has no self-intersections.
		\end{proof}

		\section{Planar basis and planar cross-ratios}\label{sec: planar basis}
		
		\begin{definition}[Planar basis]\label{def:planar-basis}
			Define the \emph{directed distance} $d(e_I, e_J)$ on the vertices $e^I$ of the hypersimplex $\Delta(k,n)$ to be the minimal number of steps of the form $e_i - e_{i+1}$ (indices mod $n$) required to travel from $e_I$ to $e_J$ along edges of $\Delta(k,n)$.  The \emph{planar basis element} $\h_J = (\h_J)_\bullet \in \R^{\binom{[n]}{k}}$ is
			\[
			\mathfrak{h}_J := \frac{1}{n} \sum_{I \in \binom{[n]}{k}} d(e_J, e_I) \, e^I.
			\]
		\end{definition}
		
		For example, with $n = 6$ and $k = 3$, we have 
		\begin{align*}
		6\h_{124} &= e^{123} + e^{246} + 2e^{236} + 2e^{146}+2e^{245}+3e^{136}+3e^{235}+3e^{145}+4e^{126}+4e^{135}+4e^{234}+4e^{456}\\
		&+5e^{125}+5e^{134}+5e^{356}+6e^{256}+6e^{346}+7e^{156}+7e^{345}.
		\end{align*}

		\begin{definition}[(Tropical) Planar cross-ratio]\label{def:tropical-cross-ratio}
			For $J = \{j_1 < \cdots < j_k\} \in \binom{[n]}{k}^{\ncyc}$, denote by $I'_J = \{j \in J : j+1 \notin J\}$ the set of \emph{cyclic endpoints} of $J$.  The \emph{cubical array} is
			\[
			D_J := \left\{ e_J + \sum_{m \in M} (e_{m+1} - e_m) : M \subseteq I'_J \right\}.
			\]
			For $\tropP_\bullet \in \R^{\binom{[n]}{k}}$, the \emph{tropical cross-ratio} (or \emph{tropical $u$-variable}) is
			\[
			u^t_J(\tropP_\bullet) := \sum_{e_M \in D_J} (-1)^{|J \cap M| - k - 1} \tropP_M,
			\]
			viewed as a linear function on $\R^{\binom{[n]}{k}}$.  
		\end{definition}
		
		For example, with $k = 3$ and $n = 7$, we have
		$$
			u^t_{236}(\tropP_\bullet) = -\tropP_{236} + \tropP_{246} + \tropP_{237} - \tropP_{247}.
		$$

		The set $\{\mathfrak{h}_J : J \in \binom{[n]}{k}\}$ is a basis for $\R^{\binom{[n]}{k}}$; the elements $\{\mathfrak{h}_J : J \text{ cyclic}\}$ span the lineality space $L_{k,n}$ (defined in \eqref{eq:lin}); and for non-cyclic $J$, the ray $\R_{\ge 0} \cdot \mathfrak{h}_J$ is a ray of $\Trop_{> 0} X(k,n)$ \cite{E22}.  See also \cite[Theorem 3.1]{ELnc}.
		
		\begin{theorem}[\cite{E20}]\label{thm:udual}
		For $J \in  \binom{[n]}{k}^{\ncyc}$, we have that $u^t_J$ vanishes on $L_{k,n}$.  For $I, J \in \binom{[n]}{k}^{\ncyc}$, we have the duality $u^t_J(\h_I) = \delta_{I,J}$.
		\end{theorem}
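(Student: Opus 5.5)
The plan is to treat the two assertions of \cref{thm:udual} separately. Both reduce to evaluating the alternating sum $u^t_J$ on explicit vectors: generators of $L_{k,n}$ in the first case, and $\h_I$ in the second.

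\textbf{Vanishing on lineality.} Since $L_{k,n}$ is spanned by the vectors $\sum_{I\ni i} e^I$, it suffices to evaluate $u^t_J$ on $\tropP_M=\sum_{m\in M}x_m$ for arbitrary $\x\in\R^n$. The cubical array $D_J$ is an affine cube with $|I'_J|\ge 2$ independent directions $e_{m+1}-e_m$, $m\in I'_J$. Here $|I'_J|\ge 2$ because $J$ is non-cyclic, so $J$ has at least two cyclic runs. The sign $(-1)^{|J\cap M|-k-1}$ is, up to a global sign, $(-1)^{|S|}$, where $S\subseteq I'_J$ records which directions were used. Evaluated on the point $e_J+\sum_{m\in S}(e_{m+1}-e_m)$, a linear function gives an affine function of the indicator of $S$. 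The alternating sum of an affine function over a cube of dimension at least $2$ vanishes, since the second finite differences of an affine function are zero. This settles the first claim.

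\textbf{Duality.} Fix $I$ and write $f(M)=n(\h_I)_M=d(e_I,e_M)$. We must show that the alternating cube sum of $f$ over $D_J$ is $n\delta_{I,J}$. The first step is a closed formula for the directed distance on $\Delta(k,n)$. Moving $e_I$ to $e_M$ by steps $e_i\mapsto e_{i+1}$ is a cyclic transport problem, so I expect
$$d(e_I,e_M)=\min_{t}\sum_{c=1}^{n}\bigl(h_c(M)-h_c(I)+t\bigr),$$
where $h_c(X)=|X\cap[1,c]|$ and the integer $t$ is chosen so that all summands are nonnegative. I would verify this by checking the triangle inequality for single steps and exhibiting a greedy path.

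The second step substitutes $M=e_J+\sum_{m\in S}(e_{m+1}-e_m)$. Each move $m\in S$ changes $h_c(M)$ only at the single coordinate $c=m$, lowering it by one. The differences $h_c(M)-h_c(I)$ are then additive in $S$ except through the minimum over $t$. Taking the iterated finite difference over the cube, all terms linear in $S$ cancel, and only the nonlinearity of the min contributes. The alternating sum should reduce to a count of how many directions $m\in I'_J$ make coordinate $m$ the unique place where the minimizing $t$ is tight. My expectation is that this forces $I=J$ and yields the value $n$, as in Early's computation.

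\textbf{Main obstacle.} The hard step is controlling the $\min_t$ term. Because the moves act on disjoint coordinates, I would try a lattice-path or "ballot" argument. For $I\neq J$, pick a cyclic endpoint $m\in I'_J$ at which $h_m(J)-h_m(I)$ is not the unique minimum. The difference of $f$ in direction $m$ is then constant across the remaining directions, which makes the cube sum zero. For $I=J$, a direct computation should give the value $n$: every direction lowers the minimum exactly at a tight coordinate, since the tight set of $t$ is all of $[n]$ when $I=J$. Should this fail, the fallback is to cite \cite{E20} directly, as the statement does.
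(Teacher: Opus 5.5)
The paper does not prove this statement; it quotes it from \cite{E20}, so your argument has to stand on its own. Your first part, vanishing on $L_{k,n}$, is correct and complete.

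The reduction in the second part is also sound. Your transport formula is the right one for the paper's convention: it gives $d(e_{124},e_{345})=7$, as in $6\h_{124}$. Note, though, that your phrase ``steps $e_i\mapsto e_{i+1}$'' describes the opposite direction. The formula yields $d(e_I,e_M)=\sum_c\bigl(h_c(M)-h_c(I)\bigr)+n\,t^*(I,M)$ with $t^*(I,M)=\max_c\bigl(h_c(I)-h_c(M)\bigr)$. The first term is linear in $e_M$ and therefore vanishes under $u^t_J$. A move at $m=n$ changes every $h_c$ with $c<n$, not only $h_n$. This only adds an affine function of $S$, which also vanishes since $|I'_J|\ge 2$.

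\textbf{The gap.} It is in the final step, which carries all of the content. Put $a_c=h_c(I)-h_c(J)$, $A=\max_c a_c$ and $T=\{c : a_c=A\}$. Then $t^*(I,M_S)=A+[\,S\cap T\neq\emptyset\,]$ up to that affine correction, and inclusion--exclusion gives exactly
\[
u^t_J(\h_I)=\bigl[\,I'_J\subseteq T\,\bigr].
\]
This is not a count of directions whose tight coordinate is unique, so your vanishing criterion is wrong. If $m\in I'_J$ is tight but not uniquely, the difference in direction $m$ is $[\,S\cap T=\emptyset\,]$. That varies with $S$ whenever another cyclic endpoint is also tight.

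For example, take $n=7$, $J=135$, $I=136$. Then $a=(0,0,0,0,-1,0,0)$. The endpoint $m=1$ satisfies your criterion, but its difference is not constant; only $m=5$ works.

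What you actually need is that for $I\neq J$ some cyclic endpoint of $J$ is not tight at all. The proposal gives no argument for this. It follows from a run/gap argument:
\begin{enumerate}
\item Write $J$ as a union of maximal cyclic runs $[s_i,m_i]$ separated by nonempty gaps.
\item Since $a_c-a_{c-1}=[c\in I]-[c\in J]$ (cyclically, as $a_n=a_0=0$), $a$ is weakly increasing across each gap and weakly decreasing across each run.
\item Suppose every $m_i$ lies in $T$. Starting from $a_{m_i}=A$, the next gap contains no element of $I$, since otherwise $a$ would exceed $A$.
\item The next run then starts and ends at value $A$ while weakly decreasing, so $a$ is constant on it and the run lies entirely in $I$.
\item Hence $I=J$.
\end{enumerate}
Conversely, for $I=J$ one has $T=[n]$, giving the value $1$. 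This closes the proof. It in fact gives $u^t_J(\h_I)=\delta_{I,J}$ for every $I$, cyclic or not.
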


		Thus, for any $\tropP_\bullet \in \R^{\binom{[n]}{k}}$, we have the planar basis expansion 
		$$
		\tropP_\bullet \equiv \sum_{J \in \binom{[n]}{k}^{\ncyc}} u^t_J(\tropP_\bullet) \, \mathfrak{h}_J \pmod{L_{k,n}}.
		$$
		We may view this as giving sections of the quotient maps $\R^{\binom{[n]}{k}} \to \R^{\binom{[n]}{k}}/L_{k,n}$ and $\Trop \, \Gr(k,n) \to \Trop\, X(k,n)$.

				Let $\T^{k,n}:= \R^{(k-1)\times(n-k)}$ with natural basis $e_{i,j}$ for $(i,j) \in [k-1] \times [n-k]$.  For $J = \{j_1 < j_2 < \cdots < j_k\} \in \binom{[n]}{k}$, define the integer points 
	$$
	\vn_J := \sum_{i=1}^{k-1} \sum_{s = j_i - (i-1)}^{j_{i+1} - (i+1)} e_{i,s} \in \T^{k,n}.
	$$
Define the projection $\Psi: \R^{\binom{[n]}{k}} \to \T^{k,n}$ by $\mathfrak{h}_J \mapsto \mathfrak{t}_J$.  It is shown in \cite{ELnc} that the $\t_J$ for $J$ noncyclic can be interpreted as generators of the rays of the \emph{noncrossing fan}.  The map $\Psi$ inverts the positive parameterization of $\Trop_{>0}X(k,n)$.  In the following result we use the notion of noncrossing tableau.  We will review this definition in the case of $k = 3$ in \cref{sec:noncrossing}.

\begin{thm}[\cite{ELnc}]\label{thm:ELnc}
	The projection $\Psi$ descends to $\R^{\binom{[n]}{k}}/L_{k,n}$ and the restriction to $\Trop_{>0}X(k,n)$ is a bijection.  This induces a bijection between integer points of $\Trop_{>0}X(k,n)$ and the set of cyclic-less noncrossing tableaux: for an integer point $\tropP_\bullet \in \Trop_{>0}X(k,n)$ we have
	 $$
	 \Psi(\tropP_\bullet) = \sum_{J \in \J} \t_J
	 $$
	 for a unique cyclic-less noncrossing tableau $\J = (J_1,J_2,\ldots,J_r)$.
\end{thm}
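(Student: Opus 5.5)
The plan is to build $\Psi$ intrinsically from the tropical planar cross-ratios, and then compare it with the positive parametrization $\rho:\T^{k,n}\to\Trop_{>0}\Gr(k,n)$. That comparison is what gives both the bijection and the description of integer points.

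\emph{Descent.} I would first reinterpret $\Psi$ on all of $\R^{\binom{[n]}{k}}$ as
$$\Psi(\tropP_\bullet)=\sum_{J\in\binom{[n]}{k}^{\ncyc}} u^t_J(\tropP_\bullet)\,\t_J .$$
\cref{thm:udual} gives $u^t_J(\h_I)=\delta_{I,J}$ for noncyclic $I,J$, so this formula agrees with $\h_J\mapsto\t_J$ on noncyclic $J$.
\begin{itemize}
\item \cref{thm:udual} also says each $u^t_J$ vanishes on $L_{k,n}$, and the cyclic $\h_J$ span $L_{k,n}$. Hence the cyclic $\h_J$ are sent to $0$, and $\Psi$ descends to $\R^{\binom{[n]}{k}}/L_{k,n}$.
\item For cyclic intervals that do not wrap around, a direct check shows the defining sum for $\t_J$ is empty. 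For the wrap-around intervals the raw formula for $\t_J$ need not vanish, so the cross-ratio form above is what I would take as the definition of $\Psi$.
\end{itemize}

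\emph{Bijectivity on $\Trop_{>0}X(k,n)$.} I would use the positive parametrization $\rho$, i.e.\ the tropicalization of a subtraction-free web-matrix parametrization in the style of Speyer--Williams. It is known to be a piecewise-linear homeomorphism onto $\Trop_{>0}\Gr(k,n)$ modulo lineality, so it suffices to prove $\Psi\circ\rho=\mathrm{id}$. The steps are:
\begin{enumerate}
\item Show that the domains of linearity of $\rho$ are exactly the cones of the noncrossing fan, i.e.\ the cones spanned by $\{\t_J: J\in\J\}$ for pairwise noncrossing families $\J$.
\item Show that $\rho(\t_J)\equiv\h_J$ modulo lineality for each noncyclic $J$.
\item Show that the $\h_J$ for $J$ in a noncrossing family lie in a common cone of $\Trop_{>0}X(k,n)$.
\end{enumerate}
Given these, on each cone $\rho(\sum_{J\in\J} c_J\t_J)=\sum_{J\in\J} c_J\h_J$, and applying $\Psi$ returns $\sum_{J\in\J} c_J\t_J$. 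So $\Psi$ is the inverse of $\rho$ and in particular a bijection.

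\emph{Integer points.} Next I would show the noncrossing fan is unimodular, i.e.\ each maximal cone's generators $\t_J$ form a $\Z$-basis of $\Z^{(k-1)\times(n-k)}$. I expect this by a triangular-order argument on the supports of the $\t_J$.
\begin{itemize}
\item Given unimodularity, the integer points of $\T^{k,n}$ are exactly the nonnegative integer combinations $\sum_{J\in\J}\t_J$ of pairwise noncrossing $J$'s, i.e.\ multisets forming a noncrossing tableau.
\item Cyclic columns contribute nothing modulo lineality, so uniqueness forces cyclic-less tableaux.
\item Transporting along $\rho$ and using $\rho(\t_J)=\h_J\in\Z^{\binom{[n]}{k}}+L_{k,n}$ matches integer points of $\Trop_{>0}X(k,n)$ with integer points of $\T^{k,n}$.
\end{itemize}

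\emph{Main obstacle.} I expect the hardest part to be step 1, identifying the linearity domains of $\rho$ with the noncrossing fan, together with step 2. I would first check both on two-column tableaux, by computing $\min$-plus flows in the web network for $J,J'$ noncrossing versus crossing. I would then induct on the number of columns, using the tropical Pl\"ucker relations \eqref{eq:postropPlucker} to show that additivity fails exactly when two columns cross.
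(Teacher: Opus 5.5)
The paper gives no proof of this statement. It is imported from the companion paper \cite{ELnc}, and the only hint given here is that $\Psi$ inverts the positive parametrization $\rho$, which is the architecture you chose. Your descent argument via \cref{thm:udual} is fine. You are also right that the raw $\t_J$ does not vanish for wrap-around cyclic $J$: for $J=\{1,\dots,a\}\cup\{n-k+a+1,\dots,n\}$ it is the all-ones vector in row $a$.

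\emph{These $k-1$ row vectors are exactly where your argument breaks.} We have $\dim\T^{k,n}=(k-1)(n-k)$, but $\dim\Trop_{>0}X(k,n)=(k-1)(n-k-1)$. A maximal noncrossing family of noncyclic $J$ has only $(k-1)(n-k-1)$ members. For $k=3$: facets of the noncrossing complex have size $3n-8$, and all $n$ cyclic triples are compatible with everything, leaving $2n-8$, while $\dim\T^{3,n}=2n-6$. Consequently:
\begin{itemize}
\item $\rho$ defined on $\T^{k,n}$ cannot be a PL homeomorphism onto $\Trop_{>0}X(k,n)$, so $\Psi\circ\rho=\mathrm{id}_{\T^{k,n}}$ cannot hold.
\item The noncrossing cones cannot be the domains of linearity of $\rho$, since they do not cover $\T^{k,n}$.
\item The generators of a maximal cone cannot be a $\Z$-basis of $\Z^{(k-1)\times(n-k)}$, and most integer points of $\T^{k,n}$ are not of the form $\sum_{J\in\J}\t_J$.
\end{itemize}
There are two repairs. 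One is to work in $\T^{k,n}$ modulo the span of the $k-1$ row vectors. There $\h_J\mapsto\t_J$ descends as literally defined, the dimensions match, and unimodularity should be stated for the quotient lattice: maximal-cone generators together with the rows form a $\Z$-basis. The other is to keep your cross-ratio $\Psi$ and prove bijectivity onto the support of the noncyclic noncrossing fan, which is a PL section over that quotient.

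Even after this repair, the substance of the theorem is not supplied. You need two things:
\begin{itemize}
\item[(a)] the noncrossing cones form a complete simplicial fan in the quotient, i.e.\ they cover it and meet along common faces;
\item[(b)] $\rho$ is linear on each cone, with $\rho(\t_J)\equiv\h_J$.
\end{itemize}
Your plan does not give (a) at all. For instance, you would need to show that every codimension-one noncrossing cone lies in exactly two maximal ones, or import the fan structure from \cite{ELnc}. Knowing where $\rho$ is or is not additive does not yield completeness.

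For (b), a pairwise criterion (``additivity fails exactly when two columns cross'') cannot work. The function $\min(x,y,z)$ is linear on every two-dimensional face of the positive orthant but not on the orthant itself. The three-term positive Pl\"ucker relations hold at every point of the image of $\rho$, so by themselves they cannot detect where $\rho$ is linear.

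What does work for (b) is this. For a network parametrization, each coordinate of $\rho$ is a minimum of linear forms, one per flow, hence superadditive and positively homogeneous. So $\rho$ is linear on $\mathrm{cone}(\t_J : J\in\J)$ if and only if $\rho\bigl(\sum_{J\in\J}\t_J\bigr)=\sum_{J\in\J}\rho(\t_J)$ for the whole family at once. Equivalently, for each Pl\"ucker coordinate, a single flow is simultaneously optimal at every $\t_J$, $J\in\J$. That identity is essentially the integer-point statement itself and must be proved directly. Once (a) and (b) hold, your step (3) is redundant.
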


In the rest of the paper we will use these results in the case $k = 3$.
		
		\section{Planar basis expansion}
		\def\tn{{\tilde n}}
		
		The aim of this section is to prove \cref{thm:pbexpansion}.
		
		\begin{example}\label{example: standardization 310 to 321}
			This example illustrates that the expansion of \cref{thm:pbexpansion} can have multiplicities when $Q$ is not standard.  Consider the non-standard noncrossing tableau
			$$\mathcal{J} = \{(1,6,7),(1,6,8),(2,3,10),(2,3,10),(2,5,9),(3,4,9),(7,9,10)\}$$
			with $\t$-vector
			$$\t_\mathcal{J} = \t_{1,6,7}+\t_{1,6,8}+2 \t_{2,3,10}+\t_{2,5,9}+\t_{3,4,9}+\t_{7,9,10}$$
			The planar basis expansion of the Pl\"ucker function is 
			\begin{eqnarray}\label{eq: 310 ray}
				\tropP_\bullet(W(\J)) & = & -2 \h_{1,3,9}+2 \h_{1,3,10}-\h_{1,4,8}+\h_{1,4,9}-\h_{1,5,7}+\h_{1,5,9}+\h_{1,6,7}+\h_{1,6,8}\\
				& & +2 \h_{2,3,9}-\h_{2,4,7}+\h_{2,4,8}+\h_{2,5,7}+\h_{3,4,7}-\h_{6,8,10}+\h_{6,9,10}+\h_{7,8,10}. \nonumber
			\end{eqnarray}
			Looking ahead to \cref{sec:noncrossing} and applying \cref{prop: standardization}, the standardization of $\mathcal{J}$ is given by 
			$$\{(1,12,13),(2,11,15),(3,10,16),(4,7,20),(5,6,21),(8,9,17),(14,18,19)\}.$$	
		\end{example}
		
		\subsection{Proof of \cref{thm:pbexpansion}}
		Our proof of \cref{thm:pbexpansion} is based on the following computation of tropical cross-ratios.
		\begin{proposition}\label{prop:cross-ratio-formula}
			Let $(Q,\z)$ be a normal CAT(0) planar graph. For any triple $(i, j, k)$ of boundary indices, we have
			\begin{equation}\label{eq:diff}
			u^t_{ijk}(\tropP_\bullet(Q,\z)) = \#  \text{white strand triples equal to $i,j,k$} -   \#  \text{black strand triples equal to $i,j,k$}.		\end{equation}
		\end{proposition}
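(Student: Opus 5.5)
We plan to compute $u^t_{ijk}(\tropP_\bullet)$ directly from its definition as an alternating sum of at most eight values $\tropP_M = -\frac13\Le_Q(M)$, with $M$ ranging over the cubical array $D_{ijk}$. First we reduce to the standard case. The standardization operations of \cref{fig:cat0qops} change $\tropP_\bullet$ in a controlled way. Splitting labels on a boundary vertex, adding labels, and attaching a triangle across a clockwise boundary edge each either restrict to a coordinate projection of $\tropP_\bullet$, or add a single white triangle whose strand triple is cyclic. Cyclic triples contribute only to lineality and have vanishing $u^t$ by \cref{thm:udual}. So it suffices to treat standard $(Q,\z)$, where strand triples are distinct by \cref{lem:strand-bijection}. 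In that case the claim reads: $u^t_{ijk}=\pm1$ if $ijk$ is the strand triple of a white/black triangle, and $0$ otherwise.

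Next we organize $u^t_{ijk}$ as an iterated finite difference. For $J=\{i,j,k\}$ with cyclic endpoints $I'_J$, $u^t_J$ is a mixed difference in the directions $e_{m+1}-e_m$ for $m\in I'_J$. Each step replaces the label $z_m$ by $z_{m+1}$. In the standard dual graph, $z_m$ and $z_{m+1}$ are joined by the boundary edge $z_m\to z'_m=z_{m+1}$, and the strand $\eta_m$ separates them. By \cref{lem:strandstrip} and \cref{lem:strandstripF}, the difference $\dist(v,z_{m+1})-\dist(v,z_m)$ equals $-2$ on the $z'_m$-side of $\eta_m$ and $+1$ on the $z_m$-side. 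Combined with \cref{thm:focal}, this shows the focal point for $J$ can be tracked. If the distance minimizer for $J$ can be chosen off the strand strip $S_m$, then $F(v,z_m)=F(v,z_{m+1})$, so the same vertex $v$ is a focal point for both $J$ and $J\setminus m\cup(m+1)$. The difference in $\Le_Q$ then equals the difference of distances from $v$, which depends only on the side of $\eta_m$ containing $v$. The upshot is that a mixed difference can be nonzero only if the focal points for all sets in $D_J$ are forced to sit where the strips $S_m$, $m\in I'_J$, meet.

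We then treat the cases by $|I'_J|$. When $|I'_J|=3$ (no two of $i,j,k$ cyclically adjacent), $u^t_J$ is a triple difference. By the previous step, it localizes to a vertex lying on all three strips $S_i,S_j,S_k$. By \cref{prop:strandintersect}, three pairwise intersecting strands meet in a single interior vertex of $W$, that is, a triangle $T$ of $Q$ with strand triple $ijk$. Evaluating $\Le_Q$ at the three corners of $T$, using the distance patterns of \cref{fig:strandstripparallel}, should give $+1$ or $-1$ according to whether $T$ is white or black. The main obstacle is justifying this localization rigorously, i.e. showing the eight minimizers can be chosen coherently among the vertices of $T$ and its neighbors. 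The cases $|I'_J|=1,2$ are handled the same way with fewer strands; there the relevant configuration is a triangle adjacent to the boundary whose strand triple contains consecutive labels.

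A fallback for the hard step avoids local analysis. One checks \eqref{eq:diff} when $Q$ is a single triangle, and proves that both sides change identically under adding one triangle at the boundary, a move that is dual to adding a trivalent vertex to $W$. Then one inducts on the number of triangles, using \cref{prop:sum} to handle non-simple gluings.
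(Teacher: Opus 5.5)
The core of your argument has a genuine gap. The localization step is not only unproved, as you acknowledge, but false as formulated.

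You infer that a nonzero triple difference forces a vertex on $S_i\cap S_j\cap S_k$. You then claim, via \cref{prop:strandintersect}, that three pairwise crossing strands pass through a common web vertex. That proposition only says two strands cross at most once. Take the side-three triangle of the triangular lattice with all nine boundary vertices labeled. The three strands running along the three sides pairwise cross near the corners, and all three strips contain the central vertex, yet no triangle has that strand triple. Such configurations are exactly what the case analysis in the proof of \cref{prop:non-strand-vanishing} handles: a focal point lying on several strips, with shared or distinct spines. There the answer is $u^t_{ijk}=0$. So even a rigorous localization would leave the vanishing statement open.

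Your first-order observation also does not control mixed differences. Suppose common focal points for $\{(i,j,k),(i+1,j,k)\}$ and for $\{(i,j+1,k),(i+1,j+1,k)\}$ lie off $S_i$ but on opposite sides of $\eta_i$. Then the mixed difference in the $i,j$ directions is $\pm 3$, and nothing in your argument excludes this.

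The missing idea is to exhibit one vertex that is a focal point for all eight triples of the cubical array, so that \cref{lem:linearity-vanishing} applies. When two strips share a spine, you instead need two focal points whose differences $\dist(\cdot,z_k)-\dist(\cdot,z_{k+1})$ agree. The paper does this as follows: take a focal point $x^*$ for $ijk$, move it along a witness path until $F(x^*,z_i),F(x^*,z_j),F(x^*,z_k)$ are pairwise distinct, and split into cases according to which strips contain $x^*$ and how their spines sit. For a genuine strand triple, \cref{prop:focalT} shows that each of the eight triples has a minimizer among the vertices of $T$; this yields the values in \cref{lem:main}.

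Your fallback induction does not avoid this work. Attaching one triangle across a counterclockwise boundary edge creates two clockwise boundary edges and a new boundary label. You give no mechanism for how the Fermat--Le sums change. Moreover, the paper's incremental construction (\cref{lem:telescope}) uses \cref{thm:pbexpansion}, i.e.\ this proposition, as input.

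Your reduction to the standard case is also incomplete. Labeling the unlabeled boundary vertices does give a coordinate projection $\tropP_\bullet(Q)=\iota(\tropP_\bullet(Q'))$. But to turn the $\pm1$'s for $Q'$ into the multiplicity count of \eqref{eq:diff}, you need the compatibility $\iota(\h_{i,j,k})=\h_{\iota^{-1}(i'),\iota^{-1}(j'),\iota^{-1}(k')}$, where $i',j',k'$ are the cyclic predecessors in $\iota([n])$ and normality guarantees they are distinct. Equivalently, $u^t_{ijk}(\tropP_\bullet(Q))$ telescopes into the sum of $u^t_{J'}(\tropP_\bullet(Q'))$ over the $J'$ lying over $ijk$. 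You also need the matching fact that strand labels of $W$ are the cyclic-predecessor images of those of $W'$.

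Duplicated labels $z_m=z_{m+1}$ are not a coordinate projection of the standardized vector: entries such as $\tropP_{m,m+1,c}$ are not coordinates of it. They have to be handled by running the standard-case argument directly on $Q'$, noting that $u^t_J$ vanishes whenever such an $m$ lies in $I'_J$. The clockwise-edge operation plays no role here, since a normal graph already has counterclockwise boundary.
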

		
					By \cref{thm:udual}, the planar basis elements $\{\mathfrak{h}_J : J \in \binom{[n]}{3}^{\text{ncyc}}\}$ 
			satisfy $u^t_I(\mathfrak{h}_J) = \delta_{I,J}$. Therefore, the coefficient of $\mathfrak{h}_{ijk}$ 
			in the planar basis expansion of $\tropP_\bullet(Q,\z)$ equals $u^t_{ijk}(\tropP_\bullet(Q,\z))$. By 
			\cref{prop:cross-ratio-formula}, this coefficient is given by \eqref{eq:diff}, and  \cref{thm:pbexpansion} follows.

		\subsection{Reduction to standard case}
		
		We will reduce \cref{prop:cross-ratio-formula} to the statement for a standard CAT(0) planar graph.
		
		\begin{proposition}\label{prop:cross-ratio-formula-standard}
			Let $Q$ be a standard CAT(0) planar embedded graph with boundary vertices 
			$z_1, \ldots, z_n$. For any triple $(i, j, k)$ of boundary indices:
			\[
			u^t_{ijk}(\tropP_\bullet(Q)) = 
			\begin{cases}
				+1 & \text{if } (i,j,k) \text{ is the strand triple of a white triangle}, \\
				-1 & \text{if } (i,j,k) \text{ is the strand triple of a black triangle}, \\
				0 & \text{if } (i,j,k) \text{ is not a strand triple of any triangle}.
			\end{cases}
			\]
		\end{proposition}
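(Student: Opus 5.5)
The plan is to compute $u^t_{ijk}(\tropP_\bullet(Q))$ directly from the definition, as an alternating sum of Fermat--Le sums over the cubical array $D_{ijk}$. Each term is evaluated at a suitably chosen focal point, using \cref{thm:focal}. Since $Q$ is standard, every boundary vertex carries exactly one label and the boundary is oriented counterclockwise. Hence the boundary edge out of $z_m$ is $z_m\to z_{m+1}$, that is, $z'_m=z_{m+1}$.

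A shift $m\mapsto m+1$ at a cyclic endpoint $m\in I'_{ijk}$ therefore replaces the target $z_m$ by $z'_m$. By \cref{lem:strandstrip} and \cref{lem:strandstripF}, its effect on $\dist(v,\cdot)$ depends only on the position of $v$ relative to the strand $\str_m$:
\begin{itemize}
\item if $v$ lies on the $z'_m$-side, then $\dist(v,z_{m+1})-\dist(v,z_m)=-2$;
\item if $v$ lies on the $z_m$-side, then $\dist(v,z_{m+1})-\dist(v,z_m)=+1$.
\end{itemize}
Moreover, off the strip $S_m$ the $F$-sets for $z_m$ and $z_{m+1}$ coincide. So $D_{ijk}$ is a discrete mixed-difference operator in the $|I'_{ijk}|\in\{2,3\}$ directions. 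Each single difference is a step function across the corresponding strand, up to the effect of moving the minimizer.

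\textbf{Single shifts.} First I would prove a single-shift lemma. Let $v$ be a focal point for a triple $I$ containing $m$, and let $I^+$ be obtained from $I$ by replacing $m$ by $m+1$. If $v$ is not on $S_m$, then $v$ is still a focal point for $I^+$. This is because the $F$-sets are unchanged by \cref{lem:strandstripF}(1). Parallelism is also unchanged, since it is determined by $F$-sets along witness paths, and these can be checked to stay off $S_m$ or to behave compatibly there. Consequently $\Le(I^+)-\Le(I)\in\{-2,+1\}$ according to the side of $\str_m$ containing $v$.

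If every focal point for $I$ lies on $S_m$, I would instead move along the strip using \cref{lem:Dsame}. The goal is to find a common focal point, or a pair of adjacent focal points whose $D$-values differ in a controlled way, exactly as in the two-focal-point bookkeeping of \cref{sec:mainproof}.

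\textbf{Localising the alternating sum.} Next I would reduce the whole alternating sum to the configuration of the strands $\str_i,\str_j,\str_k$. By \cref{prop:strandintersect}, any two of these strands intersect at most once. The three strands cut $Q$ into regions, and the sign pattern of the single differences is constant on each region. Choose a common focal point $p$ for as many terms of $D_{ijk}$ as possible. If $p$ can be chosen off all of the strips being shifted, then every term is computed at the same vertex $p$. The mixed difference of a sum of functions of one variable each then vanishes, giving $u^t_{ijk}=0$.

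A nonzero contribution arises only when the minimizers for the various shifted triples are forced into different regions. This can happen only when all three strands meet at a single vertex of $W$, that is, at a triangle $T$ of $Q$ with strand triple $(i,j,k)$. Here I would use \cref{lem:strand-bijection}: there is at most one such $T$, so the answer lies in $\{-1,0,1\}$. The actual value is then computed locally on the three vertices of $T$ and its neighbours. I would use \cref{lem:stranddist} and \cref{cor:localdistance} for the distance profiles along the three strips, showing that a white (CCW) triangle yields $+1$ and a black one yields $-1$, including the sign conventions $(-1)^{|J\cap M|-k-1}$. The case $|I'_{ijk}|=2$, where two of $i,j,k$ are cyclically consecutive, is handled the same way with a $2$-dimensional difference. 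The triangle adjacent to the boundary plays the role of $T$, via the boundary behaviour in \cref{lem:fboundary}.

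\textbf{Main obstacle.} The main obstacle is justifying the "common focal point off the strips" step and the exact local computation when minimizers straddle strands. Here I would first try a careful induction on the position of focal points, following canonical geodesics toward the triangle where the three strands meet. If this becomes unwieldy, a fallback is to verify the formula on the three-strand local models (flat hexagonal pieces together with the configuration of \cref{fig:parallel}), which by \cref{prop:strandintersect} are the only possible interaction patterns, and then conclude by the locality of \cref{cor:localdistance}.
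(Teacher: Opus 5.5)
Your architecture matches the paper's: uniqueness of $T$ from \cref{lem:strand-bijection}; the values $\pm1$ from evaluating the eight Fermat--Le sums at vertices of $T$ (\cref{prop:focalT}, \cref{lem:main}); and vanishing otherwise via tropical linearity at a common focal point (\cref{lem:linearity-vanishing}).

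\textbf{The gap.} It is in the vanishing step. You claim that minimizers for the shifted triples can be forced apart only when $\eta_i,\eta_j,\eta_k$ meet at one web vertex. This is false, so the chain ``not a strand triple $\Rightarrow$ common focal point $\Rightarrow u^t_{ijk}=0$'' does not go through.

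\textbf{A configuration where it fails.} Take a focal point $x^*$ for $(i,j,k)$ with the following properties:
\begin{itemize}
\item $x^*$ lies on $S_i$ on the $z_i$-side, and on $S_j$ on the $z_{j+1}$-side;
\item both strips have the same spine $v\to x^*$, so $\eta_i$ and $\eta_j$ cross on the web edge dual to $v\to x^*$;
\item $\eta_k$ passes elsewhere, so $(i,j,k)$ is not a strand triple.
\end{itemize}
By \cref{lem:strandstripF}, $F(x^*,z_{i+1})=F(x^*,z_j)=\{v\}$. Hence $z_{i+1}\parallel_{x^*} z_j$, and passing to $v$ lowers the sums for $(i+1,j,k)$ and $(i+1,j,k+1)$ by at least $3$.

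Conversely, $v$ is not a minimizer for $(i,j+1,k)$. There $D(v)=D(x^*)+3$, because $v$ lies in the two-element sets $F(x^*,z_i)$ and $F(x^*,z_{j+1})$, while $F(x^*,z_k)\neq\{v\}$.

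In fact, with $x^*$ computing $(i,j,k)$, $(i,j+1,k)$, $(i+1,j+1,k)$ and $v$ computing $(i+1,j,k)$, one gets
\[
\Sigma(i+1,j,k)-\Sigma(i,j,k)=-2, \qquad \Sigma(i+1,j+1,k)-\Sigma(i,j+1,k)=+1.
\]
So $\Sigma$ is not tropically linear on the cube, and no vertex computes all eight terms. The paper handles exactly this in Case (2) of \cref{prop:non-strand-vanishing}. It computes six terms at $x^*$ and the two terms $(i+1,j,k),(i+1,j,k+1)$ at $v$. It then uses
\[
\delta(x^*,z_k)-\delta(x^*,z_{k+1})=\delta(v,z_k)-\delta(v,z_{k+1})
\]
to see that the nonlinearity cancels in the alternating sum.

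\textbf{What is needed instead.} Your heuristic must be replaced by an exhaustive case analysis around a focal point $x^*$ for $(i,j,k)$.
\begin{itemize}
\item First move $x^*$ along focal points toward a witness, so that $F(x^*,z_i)$, $F(x^*,z_j)$, $F(x^*,z_k)$ are pairwise distinct. This is also what makes your single-shift lemma valid off the strips; your claim that parallelism is preserved along witness paths is otherwise unsupported.
\item Then split according to which of $S_i,S_j,S_k$ contain $x^*$, on which side, and how their spines at $x^*$ sit: equal, cyclically adjacent, or neither.
\item In every configuration except the shared-spine one, a single vertex is focal for all eight triples. It is $x^*$, the tail of a spine, the out-neighbour of $x^*$ between two adjacent spines, or, in degenerate adjacent cases, a boundary vertex.
\item In the shared-spine case, the non-strand-triple hypothesis is used to certify the six/two split of minimizers.
\end{itemize}

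Your fallback of checking ``local models'' does not substitute for this. $\Sigma$ is a global minimum. The only available localisation is \cref{thm:focal} at a focal point of $(i,j,k)$, combined with \cref{lem:strandstripF}, and that is exactly the case analysis above.
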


		\begin{proof}
			By \cref{lem:strand-bijection}, for any triple $(i, j, k)$, there is 
			at most one interior triangle $T$ with strand triple $(i, j, k)$.
			If no such triangle exists, then $(i, j, k)$ is not a strand triple, and by 
			\cref{prop:non-strand-vanishing}, $u^t_{ijk}(\tropP_\bullet(Q)) = 0$.
			If such a triangle $T$ exists, it is unique, and by Corollary~\ref{cor:cross-ratio}:
			\[
			u^t_{ijk}(\tropP_\bullet(Q)) = 
			\begin{cases}
				+1 & \text{if } T \text{ is white (counterclockwise-oriented)}, \\
				-1 & \text{if } T \text{ is black (clockwise-oriented)}.
			\end{cases} \qedhere
			\]
		\end{proof}
		\cref{cor:cross-ratio} and \cref{prop:non-strand-vanishing} are proved later in this section.
		
	\subsection{Proof of \cref{prop:cross-ratio-formula}}

		Let $W$ be a normal web.  Let $W'$ be obtained from $W$ by splitting high degree boundary vertices into degree one vertices, and let $W'' = \std(W)$ be obtained from $W'$ by removing boundary vertices with degree $0$.  Thus $W''$ is the standardization of $W$.  Let $(Q,\z) = (Q(W),\z(W))$, $(Q',\z') = (Q(W'),\z(W'))$, and $(Q'',\z'') = (Q(W''),\z(W''))$.   
		
		The proof of \cref{prop:cross-ratio-formula-standard} generalizes to webs where all boundary vertices have degree $\leq 1$.  We need only note that by \cref{def:tropical-cross-ratio}, $u_{i,j,k}^t(\tropP_\bullet(Q))$ vanishes unless $z_i \neq z_{i+1}$, and $z_j \neq z_{j+1}$, and $z_k \neq z_{k+1}$.  Thus \cref{prop:cross-ratio-formula-standard} holds as stated for $Q'$.
		
		Let $\iotaFInc: [n] \hookrightarrow [n']$ denote the inclusion such that $z_i = z'_{\iotaFInc(i)}$.
		Then the Pl\"ucker variables of $Q$ and $Q'$ are related by the formula
		$$
		\tropP_{I}(Q) = \tropP_{\iotaFInc(I)}(Q'), \qquad
		\text{or alternatively}, \qquad
		\tropP_\bullet(Q) = \iotaFInc(\tropP_\bullet(Q'))
		$$
		where $\iotaFInc : \R^{\binom{[n']}{3}} \to \R^{\binom{[n]}{3}}$ is given by
		$$
		\iotaFInc(e^I) = \begin{cases} 0 &\mbox{if $I$ is not contained in $\iotaFInc([n])$,} \\
			e^{\iotaFInc^{-1}(I)} & \mbox{otherwise.}
		\end{cases}
		$$ 
		Finally, by \cite[Section 3.4]{ELnc}, we have
		$$
		\iotaFInc(\h_{i,j,k}) = \h_{\iotaFInc^{-1}(i'),\iotaFInc^{-1}(j'),\iotaFInc^{-1}(k')}
		$$
		where $i' = i$ if $i \in \iotaFInc([n])$, otherwise $i'$ is the cyclic predecessor of $i$ inside $\iotaFInc([n])$, and similarly for $j'$ and $k'$.  Note that by normality, $\iotaFInc^{-1}(i'),\iotaFInc^{-1}(j'),\iotaFInc^{-1}(k')$ are distinct.  Applying this to \cref{prop:cross-ratio-formula-standard}, we obtain \cref{prop:cross-ratio-formula}.

		\subsection{Strand triple computation}
		
		\begin{proposition}\label{prop:focalT}
			Suppose that $T$ is a triangle of $Q(W)$ with strand triple $i,j,k$.  Then for all triples $(a, b, c) \in C_{ijk}$ in the cubical array, at least one vertex of $T$ is a minimizer for $(z_a, z_b, z_c)$.
		\end{proposition}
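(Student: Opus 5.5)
The plan is to use \cref{thm:focal}. It reduces the claim to a local check: for each $(a,b,c)\in C_{ijk}$ we exhibit a vertex of $T$ that is a \emph{focal point} for $(z_a,z_b,z_c)$. The cubical array $C_{ijk}$ consists of the (at most eight) triples obtained from $\{i,j,k\}$ by replacing each cyclic endpoint $m$ by $m+1$. For $k=3$ with a non-cyclic strand triple, these are the triples $(a,b,c)$ with $a\in\{i,i+1\}$, $b\in\{j,j+1\}$, $c\in\{k,k+1\}$, with the obvious modification when two of $i,j,k$ are cyclically adjacent.

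First I would describe the $F$-sets at the three vertices $u_1,u_2,u_3$ of $T$ with respect to $z_m$ and $z_{m+1}$ for $m\in\{i,j,k\}$. The strand $\eta_m$ passes through the web vertex dual to $T$, so $T$ lies on the strand strip $S_m$. By \cref{lem:strandstripF}, on each side of $\eta_m$ the sets $F(u,z_m)$ and $F(u,z_{m+1})$ are nested: one is a singleton, and it is contained in the other, which is a two-element set. Here I use that $z'_m$ and $z_{m+1}$ have the same $F$-sets unless we lie on a further strand strip between them, which for a standard graph does not happen, since $z'_m=z_{m+1}$. Moreover, \cref{lem:stranddist} and \cref{lem:strandcomb} identify these sets explicitly: the canonical geodesics from $u$ towards $z_m$ and $z_{m+1}$ run along the long sides of the strip $S_m$. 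Thus for each vertex of $T$ and each $m\in\{i,j,k\}$, the $F$-sets towards $z_m$ and $z_{m+1}$ point out of $T$ along the strip $S_m$ on the appropriate side.

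Next comes the combinatorial heart. The three strands through $T$ leave $T$ through its three edges, and they exit in three different directions. By \cref{prop:strandintersect}, the strips $S_i,S_j,S_k$ share only the triangles incident to the web edges at the crossing. Consequently, at a suitable vertex $u$ of $T$, the $F$-sets $F(u,z_a),F(u,z_b),F(u,z_c)$ are supported in three different cyclic sectors around $u$. Which vertex is suitable depends on the choice of $a\in\{i,i+1\}$ and so on. I would check, by case analysis on whether $T$ is white or black and on which side of each strand each vertex lies, that for every $(a,b,c)$ some $u\in T$ has $F$-sets admitting an SDR. The sectors are then either disjoint, or meet in a way that leaves distinct representatives, using the cyclic order of \cref{lem:cyclic}.

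The remaining condition is non-parallelism, and I expect this to be the main obstacle. Two targets, say $z_a$ and $z_b$, could have equal two-element $F$-sets at $u$, and we must show they are not parallel there. My approach is to follow the witness path: it runs along a strip boundary, which is a canonical geodesic by \cref{lem:strandcomb}. The two targets separate when that path meets the strand strip of the other index, because the strips $S_i$ and $S_j$ cross exactly at $T$ and never again by \cref{prop:strandintersect}. This should force $F(w,z_a)\neq F(w,z_b)$ at the witness $w$. If the witness argument becomes unwieldy, the fallback is to verify directly, via \cref{lem:Dsame}, that moving off $T$ never decreases $D$. Applying \cref{thm:focal} then finishes the proof.
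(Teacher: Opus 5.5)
Your proposal follows the paper's proof. You reduce via \cref{thm:focal} to exhibiting a focal vertex of $T$, read off the $F$-sets at the three vertices of $T$ from \cref{lem:strandstripF} (each strip meets each vertex of $T$ in three consecutive triangles, with the spine as the singleton $F$-set), and finish with the white/black case check that the paper records in \cref{fig:uijk+1} and \cref{fig:uijk-1}.

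One simplification is available. In that local configuration the three two-element $F$-sets at a vertex of $T$ are distinct pairs of consecutive in-neighbors, and two equal singletons (a shared spine) already destroy the SDR. So non-parallelism is automatic whenever an SDR exists, and the witness-path argument you flag as the main obstacle is never actually needed.
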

		
		\begin{proof}
			If $T$ is a corner triangle, so two of $\{i,j,k\}$ are cyclically adjacent, then the statement is easy.  Suppose that $T$ is not a corner triangle, so $\{i,i+1,j,j+1,k,k+1\}$ are all distinct.  By \cref{thm:focal}, it suffices to show that at least one vertex of $T$ is a focal point.  The focal points can be determined using \cref{lem:strandstripF}.  For $T$ a white triangle, the choices of focal points are shown in Figure \ref{fig:uijk+1}.  For $T$ a black triangle, the choices of focal points are shown in Figure \ref{fig:uijk-1}.  
		\end{proof}
		\begin{figure}[!h]
			\begin{tikzpicture}[scale=1.0]
				\tikzset{->-/.style={decoration={markings, mark=at position #1 with
							{\arrow[scale=1.0]{stealth}}},postaction={decorate}}, ->-/.default=0.5}
				
				\newcommand{\V}[2]{({(#1)+(#2)*0.5},{(#2)*sqrt(3)/2})}
				
				\newcommand{\drawYshape}[2]{
					\begin{scope}[shift={(#1,#2)}]
						\foreach \b in {0,...,8} {
							\pgfmathtruncatemacro{\amax}{9-1-\b}
							\foreach \a in {0,...,\amax} {
								\pgfmathtruncatemacro{\sab}{\a+\b}
								\pgfmathtruncatemacro{\Vab}{%
									((\a==2 || \a==3) && \b>=1 && \b<=5)
									|| ((\b==4 || \b==5) && \a>=2 && \a<=9-\b)
									|| ((\sab==6 || \sab==7) && \a>=0 && \a<=3)
									? 1 : 0}
								\pgfmathtruncatemacro{\spb}{\a+1+\b}
								\pgfmathtruncatemacro{\Vpb}{%
									((\a+1==2 || \a+1==3) && \b>=1 && \b<=5)
									|| ((\b==4 || \b==5) && \a+1>=2 && \a+1<=9-\b)
									|| ((\spb==6 || \spb==7) && \a+1>=0 && \a+1<=3)
									? 1 : 0}
								\pgfmathtruncatemacro{\saq}{\a+\b+1}
								\pgfmathtruncatemacro{\Vaq}{%
									((\a==2 || \a==3) && \b+1>=1 && \b+1<=5)
									|| ((\b+1==4 || \b+1==5) && \a>=2 && \a<=9-\b-1)
									|| ((\saq==6 || \saq==7) && \a>=0 && \a<=3)
									? 1 : 0}
								\pgfmathtruncatemacro{\drawH}{\Vab*\Vpb}
								\ifnum\drawH=1
								\draw[->-] \V{\a}{\b} -- \V{\a+1}{\b};
								\fi
								\pgfmathtruncatemacro{\drawL}{\Vab*\Vaq}
								\ifnum\drawL=1
								\draw[->-] \V{\a}{\b+1} -- \V{\a}{\b};
								\fi
								\pgfmathtruncatemacro{\drawD}{\Vpb*\Vaq}
								\ifnum\drawD=1
								\draw[->-] \V{\a+1}{\b} -- \V{\a}{\b+1};
								\fi
							}
						}
						\node[left]  at \V{0}{7} {$j$};
						\node[left]  at \V{0}{6} {$j{+}1$};
						\node[below left] at \V{2}{1} {$k$};
						\node[below right] at \V{3}{1} {$k{+}1$};
						\node[right] at \V{5}{4} {$i$};
						\node[right] at \V{4}{5} {$i{+}1$};
					\end{scope}
				}
				
				\def\dotrad{4pt}
				
				\drawYshape{0}{0}
				\begin{scope}[shift={(0,0)}]
					\fill[red]  \V{0}{7} circle (\dotrad);  
					\fill[red]  \V{2}{1} circle (\dotrad);  
					\fill[red]  \V{5}{4} circle (\dotrad);  
					\fill[blue] \V{2}{5} circle (\dotrad);  
					\fill[blue] \V{2}{4} circle (\dotrad);  
					\fill[blue] \V{3}{4} circle (\dotrad);  
				\end{scope}
				
				\drawYshape{7}{0}
				\begin{scope}[shift={(7,0)}]
					\fill[red]  \V{0}{7} circle (\dotrad);  
					\fill[red]  \V{2}{1} circle (\dotrad);  
					\fill[red]  \V{4}{5} circle (\dotrad);  
					\fill[blue] \V{2}{5} circle (\dotrad);  
				\end{scope}
				
				\drawYshape{0}{-7}
				\begin{scope}[shift={(0,-7)}]
					\fill[red]  \V{5}{4} circle (\dotrad);  
					\fill[red]  \V{0}{6} circle (\dotrad);  
					\fill[red]  \V{3}{1} circle (\dotrad);  
					\fill[blue] \V{2}{4} circle (\dotrad);  
					\fill[blue] \V{3}{4} circle (\dotrad);  
				\end{scope}
				
				\drawYshape{7}{-7}
				\begin{scope}[shift={(7,-7)}]
					\fill[red]  \V{0}{6} circle (\dotrad);  
					\fill[red]  \V{3}{1} circle (\dotrad);  
					\fill[red]  \V{4}{5} circle (\dotrad);  
					\fill[blue] \V{2}{5} circle (\dotrad);  
					\fill[blue] \V{2}{4} circle (\dotrad);  
					\fill[blue] \V{3}{4} circle (\dotrad);  
				\end{scope}
				
			\end{tikzpicture}
			\caption{Representative minimizers used in the calculation of $u^t_{i,j,k}=1$.  The triple of boundary red dots $(a,b,c) \in \{i,i+1\} \times \{j,j+1\} \times \{k,k+1\}$ indicates that we are computing $\Sigma_{a,b,c}$, while blue dots on the interior triangle are distance minimizers for $\Sigma_{a,b,c}$.  Note that minimizers can always be found on the triangle, but upper right and lower left minimizer sets do not overlap.  The distances are $\Sigma_{i,j,k} = D,\ \Sigma_{i+1,j,k} = D-2,\ \Sigma_{i,j+1,k+1} = D-1,\ \Sigma_{i+1,j+1,k+1} = D.$}	
			\label{fig:uijk+1}
		\end{figure}
		\begin{figure}[!h]
			\begin{tikzpicture}[scale=1]
				\tikzset{->-/.style={decoration={markings, mark=at position #1 with
							{\arrow[scale=1]{stealth}}},postaction={decorate}}, ->-/.default=0.5}
				
				\newcommand{\V}[2]{({(#1)+(#2)*0.5},{(#2)*sqrt(3)/2})}
				
				\newcommand{\drawYshapeCW}[2]{
					\begin{scope}[shift={(#1,#2)}]
						\foreach \b in {0,...,8} {
							\pgfmathtruncatemacro{\amax}{9-1-\b}
							\foreach \a in {0,...,\amax} {
								\pgfmathtruncatemacro{\sab}{\a+\b}
								\pgfmathtruncatemacro{\Vab}{%
									((\a==2 || \a==3) && \b>=2 && \b<=5)
									|| ((\b==4 || \b==5) && \a>=3 && \a<=9-\b)
									|| ((\sab==7 || \sab==8) && \a>=0 && \a<=2)
									? 1 : 0}
								\pgfmathtruncatemacro{\spb}{\a+1+\b}
								\pgfmathtruncatemacro{\Vpb}{%
									((\a+1==2 || \a+1==3) && \b>=2 && \b<=5)
									|| ((\b==4 || \b==5) && \a+1>=3 && \a+1<=9-\b)
									|| ((\spb==7 || \spb==8) && \a+1>=0 && \a+1<=2)
									? 1 : 0}
								\pgfmathtruncatemacro{\saq}{\a+\b+1}
								\pgfmathtruncatemacro{\Vaq}{%
									((\a==2 || \a==3) && \b+1>=2 && \b+1<=5)
									|| ((\b+1==4 || \b+1==5) && \a>=3 && \a<=9-\b-1)
									|| ((\saq==7 || \saq==8) && \a>=0 && \a<=2)
									? 1 : 0}
								\pgfmathtruncatemacro{\drawH}{\Vab*\Vpb}
								\ifnum\drawH=1
								\draw[->-] \V{\a}{\b} -- \V{\a+1}{\b};
								\fi
								\pgfmathtruncatemacro{\drawL}{\Vab*\Vaq}
								\ifnum\drawL=1
								\draw[->-] \V{\a}{\b+1} -- \V{\a}{\b};
								\fi
								\pgfmathtruncatemacro{\drawD}{\Vpb*\Vaq}
								\ifnum\drawD=1
								\draw[->-] \V{\a+1}{\b} -- \V{\a}{\b+1};
								\fi
							}
						}
						\node[left]  at \V{0}{8} {$j$};
						\node[left]  at \V{0}{7} {$j{+}1$};
						\node[below left]  at \V{2}{2} {$k$};
						\node[below right] at \V{3}{2} {$k{+}1$};
						\node[right] at \V{5}{4} {$i$};
						\node[right] at \V{4}{5} {$i{+}1$};
					\end{scope}
				}
				
				\def\dotrad{4pt}
				
				\drawYshapeCW{0}{0}
				\begin{scope}[shift={(0,0)}]
					\fill[red]  \V{0}{8} circle (\dotrad);  
					\fill[red]  \V{2}{2} circle (\dotrad);  
					\fill[red]  \V{5}{4} circle (\dotrad);  
					\fill[blue] \V{2}{5} circle (\dotrad);  
					\fill[blue] \V{3}{5} circle (\dotrad);  
					\fill[blue] \V{3}{4} circle (\dotrad);  
				\end{scope}
				
				\drawYshapeCW{7}{0}
				\begin{scope}[shift={(7,0)}]
					\fill[red]  \V{0}{8} circle (\dotrad);  
					\fill[red]  \V{2}{2} circle (\dotrad);  
					\fill[red]  \V{4}{5} circle (\dotrad);  
					\fill[blue] \V{2}{5} circle (\dotrad);  
					\fill[blue] \V{3}{5} circle (\dotrad);  
				\end{scope}
				
				\drawYshapeCW{0}{-7}
				\begin{scope}[shift={(0,-7)}]
					\fill[red]  \V{0}{7} circle (\dotrad);  
					\fill[red]  \V{3}{2} circle (\dotrad);  
					\fill[red]  \V{5}{4} circle (\dotrad);  
					\fill[blue] \V{3}{4} circle (\dotrad);  
				\end{scope}
				
				\drawYshapeCW{7}{-7}
				\begin{scope}[shift={(7,-7)}]
					\fill[red]  \V{0}{7} circle (\dotrad);  
					\fill[red]  \V{3}{2} circle (\dotrad);  
					\fill[red]  \V{4}{5} circle (\dotrad);  
					\fill[blue] \V{2}{5} circle (\dotrad);  
					\fill[blue] \V{3}{5} circle (\dotrad);  
					\fill[blue] \V{3}{4} circle (\dotrad);  
				\end{scope}
				
			\end{tikzpicture}
			\caption{Representative minimizers used in the calculation of $u^t_{i,j,k}=-1$ as in Figure \ref{fig:uijk+1}.  The distances are $\Sigma_{i,j,k} = D,\ \Sigma_{i+1,j,k} = D-2,\ \Sigma_{i,j+1,k+1} = D-4,\ \Sigma_{i+1,j+1,k+1} = D-3$.}
			\label{fig:uijk-1}
		\end{figure}
		
		\begin{lemma}
			\label{lem:main}
			Let $Q$ be a standard CCW-labeled CAT(0) planar graph with boundary vertices 
			$z_1, \ldots, z_n$. Let $T$ be a non-corner interior 
			triangular face of $Q$ with strand triple $(i, j, k)$, where $i, j, k$ are 
			pairwise non-adjacent in cyclic order. Let $D = \Sigma(i, j, k)$.
			
			\noindent
			{(1) If $T$ is black:}
			\begin{align*}
				\Sigma(i+1, j, k) &= \Sigma(i, j+1, k) = \Sigma(i, j, k+1) = D - 2, \\
				\Sigma(i+1, j+1, k) &= \Sigma(i+1, j, k+1) = \Sigma(i, j+1, k+1) = D - 4, \\
				\Sigma(i+1, j+1, k+1) &= D - 3.
			\end{align*}
			
			\noindent
			{(2) If $T$ is white:}
			\begin{align*}
				\Sigma(i+1, j, k) &= \Sigma(i, j+1, k) = \Sigma(i, j, k+1) = D - 2, \\
				\Sigma(i+1, j+1, k) &= \Sigma(i+1, j, k+1) = \Sigma(i, j+1, k+1) = D - 1, \\
				\Sigma(i+1, j+1, k+1) &= D.
			\end{align*}
		\end{lemma}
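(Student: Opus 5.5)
We compute each of the eight values $\Sigma(a,b,c)$ for $(a,b,c) \in \{i,i+1\}\times\{j,j+1\}\times\{k,k+1\}$ by exhibiting a minimizer on $T$ itself. By \cref{prop:focalT}, some vertex of $T$ is a minimizer (equivalently, by \cref{thm:focal}, a focal point) for each such triple. The explicit choices are recorded in \cref{fig:uijk+1} and \cref{fig:uijk-1}. So the whole problem reduces to knowing the nine numbers $\delta(x,z_a)$ for $x$ a vertex of $T$ and $a \in \{i,i+1,j,j+1,k,k+1\}$. Once these are known, each $\Sigma(a,b,c)$ is the sum of three of them at the chosen vertex.

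To get these distances we use the strand strips. The triangle $T$ lies on the three strand strips $S_i,S_j,S_k$, since the three strands through the web vertex dual to $T$ end at $b_i,b_j,b_k$. In a standard graph $z'_a = z_{a+1}$, because the boundary edge out of $z_a$ goes to the next labeled vertex. Hence \cref{lem:stranddist} and \cref{lem:strandstrip} apply with targets $z_a$ and $z_{a+1}$.

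For each $a\in\{i,j,k\}$, the strand $\eta_a$ crosses $T$ and separates one vertex of $T$ from the other two. Which vertex is isolated is determined by the turning rule: left at white, right at black. Normalize $\delta(x_0,z_a)=d_a$ at a reference vertex $x_0$ of $T$. Then:
\begin{enumerate}
\item \cref{lem:triangledist} together with \cref{lem:edgedist} gives $\delta(\cdot,z_a)$ on the other two vertices of $T$. The values are $d_a, d_a\pm1, d_a\pm 2$ in an order fixed by the orientation of $T$ and by which side of $\eta_a$ each vertex lies on.
\item \cref{lem:strandstrip} converts these to $\delta(\cdot,z_{a+1})$. We add $2$ on the $z'_a$ side and subtract $1$ on the $z_a$ side, or the reverse, as the lemma dictates.
\end{enumerate}

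This produces a $3\times 6$ table. Taking $D=\Sigma(i,j,k)=\min_{x\in T}\sum_{a\in\{i,j,k\}}\delta(x,z_a)$, all the claimed differences $D-2$, $D-4$, $D-3$ (black) and $D-2$, $D-1$, $D$ (white) then follow by reading off the row sums at the prescribed vertex. As a consistency check, the cross-ratio $-\Sigma_{ijk}+\Sigma_{i+1,j,k}+\dots$ divided by $-3$ should come out to $\pm1$.

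The main obstacle is keeping the orientation bookkeeping straight. For each of the three strands we must decide consistently which vertex of $T$ is cut off. We must also confirm that the vertex chosen in the figures is really a minimizer, not merely a vertex of $T$. For the latter we would verify the focal-point conditions of \cref{def:focal} directly, using \cref{lem:strandstripF}: $F(x,z_a)$ and $F(x,z_{a+1})$ differ exactly for $x$ on $S_a$, with the containment pattern given there. Since $i,j,k$ are pairwise non-adjacent and the strips are distinct, \cref{prop:strandintersect} guarantees that the three pairs of $F$-sets sit at distinct positions around $x$, which yields the SDR and non-parallelism conditions. We would carry out the white case in full and obtain the black case by the same argument with orientations reversed, passing to $Q^{\rm op}$ where convenient.
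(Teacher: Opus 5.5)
Your proposal follows the paper's proof, which is a direct computation of the eight sums on the vertices of $T$ using \cref{prop:focalT} (minimizers as in \cref{fig:uijk+1} and \cref{fig:uijk-1}); your strand-strip bookkeeping supplies the eighteen (not nine) distances. Note the sign: by \cref{lem:strandstrip}, passing from $\delta(\cdot,z_a)$ to $\delta(\cdot,z_{a+1})$ you add $1$ on the $z_a$-side and subtract $2$ on the $z_{a+1}$-side.

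You should drop the planned independent check of the focal-point conditions. It is unnecessary, since by \cref{prop:focalT} you can simply take $\min_{x\in T}$ for each triple. As stated it is also false: any two of the three strands through $T$ share a web edge, so at the head of the dual edge of $T$ their strips have the same spine, and that vertex is not focal for every triple. Indeed, if some vertex of $T$ were focal for all eight triples, $\Sigma$ would be additive on the cube and $u^t_{ijk}$ would vanish rather than equal $\pm1$.
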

		\begin{proof}
			Direct computation using \cref{prop:focalT}.
		\end{proof}
		
		\begin{corollary}[Tropical Cross-Ratio for Strand Triples]
			\label{cor:cross-ratio}
			For a triangle $T$ with strand triple $(i, j, k)$:
			\[
			u^t_{ijk}(\tropP_\bullet(Q)) = 
			\begin{cases}
				-1 & \text{if } T \text{ is black (clockwise)}, \\
				+1 & \text{if } T \text{ is white (counterclockwise)}.
			\end{cases}
			\]
		\end{corollary}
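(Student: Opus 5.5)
The plan is to evaluate the tropical cross-ratio of \cref{def:tropical-cross-ratio} directly, using the values of the Fermat-Le distance sums recorded in \cref{lem:main}. Take $J=\{i,j,k\}$ to be the strand triple of $T$. The first case is when $i,j,k$ are pairwise non-adjacent. Then every element of $J$ is a cyclic endpoint, so $I'_J=J$ and the cubical array $D_J$ consists of all eight sets $\{a,b,c\}$ with $(a,b,c)\in\{i,i+1\}\times\{j,j+1\}\times\{k,k+1\}$.

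Next I fix the signs. The exponent $|J\cap M|-k-1$ depends only on how many of $i,j,k$ have been shifted. Checking against the worked example $u^t_{236}$ shows that the unshifted term and the doubly shifted terms carry sign $-1$, while the singly and triply shifted terms carry $+1$. Hence
$$u^t_{ijk}(\tropP_\bullet)=-\tropP_{ijk}+\sum_{\text{one shift}}\tropP-\sum_{\text{two shifts}}\tropP+\tropP_{i+1,j+1,k+1}.$$

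Now substitute $\tropP_I=-\frac13\Le(I)$ and the values from \cref{lem:main}. For a white triangle the bracket is
$$-D+3(D-2)-3(D-1)+D=-3,$$
so $u^t_{ijk}=+1$. For a black triangle the bracket is
$$-D+3(D-2)-3(D-4)+(D-3)=3,$$
so $u^t_{ijk}=-1$. All the real geometric content is already in \cref{lem:main} and \cref{prop:focalT}. Those results exhibit, for each of the eight triples, a minimizer on $T$, using \cref{thm:focal} together with the distance profiles of \cref{lem:stranddist}. Once those are available, this step is pure arithmetic.

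The remaining case, which I expect to be the only real obstacle, is a corner triangle, where two of the labels are cyclically adjacent, say $j=i+1$. Then $i\notin I'_J$, and $D_J$ has only four elements, obtained by shifting $j$ and $k$. \cref{lem:main} does not cover this case, so I would redo the minimizer computation of \cref{prop:focalT}. Because $Q$ is standard, $T$ is the unique triangle at the acute vertex $z_{i+1}$. I would take that vertex, or the adjacent vertex of $T$, as a focal point for each of the four triples, and read off the distances from \cref{lem:fboundary} and \cref{lem:stranddist}.

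The expected outcome for the corner case is a relation of the form $\Le$-values $D,\,D-2,\,D-2,\,D-1$, or the analogous pattern, giving $u^t=\pm1$ with sign $+1$ for white and $-1$ for black. I would check this against the $\Trop_{>0}X(3,6)$ examples in \cref{fig:examples36}. In that figure, $\h_{1,3,5}$ etc.\ appear with the predicted signs, and corner triangles contribute terms such as $\h_{2,3,5}$.
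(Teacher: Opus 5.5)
Your proposal is correct and follows the paper's proof. You expand $u^t_{ijk}$ with signs fixed by the number of shifted indices, substitute $\tropP_I=-\frac13\Le(I)$ and the values from \cref{lem:main}, and get $-\frac13(-3)=+1$ for white and $-\frac13(3)=-1$ for black, with corner triangles handled separately (the paper only calls that case straightforward). For your corner-case plan, note that by the argument of \cref{prop:acuteweb} the strands $\eta_i,\eta_{i+1}$ of a standard web can meet only at the white vertex adjacent to $b_i,b_{i+1}$, so every corner triangle is the white triangle at an acute vertex and no black corner pattern is needed; this matters, because the naive analogue $D,D-2,D-2,D-4$ would give $0$.
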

		\begin{proof}
			For a corner triangle $T$, the result is straightforward.  We assume that $T$ is non-corner.
			Expanding the definition of $u^t_{ijk}$ and using \cref{lem:main}, we have
			\begin{align*}
				u^t_{ijk} 
				&= -\frac{1}{3}\bigl[-\Sigma_{ijk} + \Sigma_{i+1,j,k} + \Sigma_{i,j+1,k} + \Sigma_{i,j,k+1} \\
				&\quad\qquad - \Sigma_{i+1,j+1,k} - \Sigma_{i+1,j,k+1} - \Sigma_{i,j+1,k+1} + \Sigma_{i+1,j+1,k+1}\bigr].
			\end{align*}			
			For a black triangle, this gives			
			\begin{align*}
				u^t_{ijk} &= -\frac{1}{3}\bigl[-D + 3(D-2) - 3(D-4) + (D-3)\bigr] = - 1
			\end{align*}
			For a white triangle, this gives				
			\begin{align*}
				u^t_{ijk} &= -\frac{1}{3}\bigl[-D + 3(D-2) - 3(D-1) + D\bigr] = +1. \qedhere
			\end{align*}
		\end{proof}

		\subsection{Non-Strand Triples: Vanishing of the Tropical Cross-Ratio}

		\begin{lemma}
			\label{lem:linearity-vanishing}
			If $\Sigma$ is tropically linear over $\{i, i+1\} \times \{j, j+1\} \times \{k, k+1\}$, that is, $\Sigma(a,b,c) = D + (a-i)\delta_1 + (b-j)\delta_2 + (c-k)\delta_3$ for $(a,b,c)$ in the cube, then $u^t_{ijk}(\tropP_\bullet(Q)) = 0$.
		\end{lemma}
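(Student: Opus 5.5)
The plan is to expand $u^t_{ijk}$ exactly as in the proof of \cref{cor:cross-ratio}, and observe that it is an alternating sum over the vertices of a cube. That alternation kills any affine function. Concretely, for $J = \{i,j,k\}$ non-cyclic with cyclic endpoints $I'_J = J$ (the generic case relevant here, where $i+1,j+1,k+1 \notin J$), the cubical array $D_J$ consists of the eight sets obtained by replacing each element $m \in M \subseteq \{i,j,k\}$ by $m+1$. So $u^t_{ijk}(\tropP_\bullet)$ is the signed sum
\[
u^t_{ijk}(\tropP_\bullet) = -\sum_{\epsilon \in \{0,1\}^3} (-1)^{\epsilon_1+\epsilon_2+\epsilon_3}\, \tropP_{i+\epsilon_1, j+\epsilon_2, k+\epsilon_3},
\]
with the overall sign as fixed in \cref{def:tropical-cross-ratio} (the sign convention is irrelevant for vanishing). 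I would also use $\tropP_I = -\frac{1}{3}\Sigma(I)$ from \cref{def:model}, which rewrites this as $\frac{1}{3}\sum_\epsilon (-1)^{|\epsilon|}\Sigma(i+\epsilon_1,j+\epsilon_2,k+\epsilon_3)$, matching the displayed formula in the proof of \cref{cor:cross-ratio}.

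Next I would substitute the hypothesis $\Sigma(a,b,c) = D + \epsilon_1\delta_1 + \epsilon_2\delta_2 + \epsilon_3\delta_3$, where $\epsilon = (a-i,b-j,c-k) \in \{0,1\}^3$. The signed sum splits into four pieces. The constant term contributes $D\sum_\epsilon (-1)^{|\epsilon|} = D(1-1)^3 = 0$. The term in $\delta_1$ contributes $\delta_1 \sum_\epsilon \epsilon_1 (-1)^{|\epsilon|}$, which factors as $\delta_1 \cdot(-1)\cdot(1-1)^2 = 0$, and the $\delta_2$ and $\delta_3$ terms vanish for the same reason. Hence $u^t_{ijk}(\tropP_\bullet(Q)) = 0$.

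The only point needing care, and the likely obstacle, is the degenerate situation where some of $i+1,j+1,k+1$ lie in $J$, so that $I'_J$ is a proper subset of $J$ and $D_J$ is a lower-dimensional face of the cube. In that case I would note that $u^t_{ijk}$ is the alternating sum over the sub-cube indexed by $M \subseteq I'_J$. The restriction of an affine function to this sub-cube is still affine. Since $|I'_J| \ge 1$ for non-cyclic $J$, the same factorization $(1-1)^{|I'_J|}$, with one factor removed for each linear term, still gives zero. No geometric input about $Q$ is needed; the lemma is purely the statement that an alternating cube sum annihilates affine functions.
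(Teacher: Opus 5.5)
Your argument is correct and is exactly the paper's ``clear from definition'' proof made explicit: up to the factor $1/3$, $u^t_{ijk}$ is an alternating sum of $\Sigma$ over the vertices of the cube (or, in degenerate cases, over the face indexed by $M\subseteq I'_J$), and such a sum annihilates affine functions. There is one small slip in the degenerate case: you need $|I'_J|\ge 2$, not $|I'_J|\ge 1$, because when $|I'_J|=1$ a linear term survives ($(1-1)^0=1$). The stronger bound does hold, since a non-cyclic $J$ has at least two maximal cyclic blocks and each block contributes one cyclic endpoint.
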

		
		\begin{proof} 
			Clear from definition.
		\end{proof}
		
\begin{proposition}
	\label{prop:non-strand-vanishing}
	Let $Q$ be a standard CAT(0) planar graph with boundary vertices $z_1, \ldots, z_n$. Let $(i,j,k)$ be any triple of boundary indices that is \emph{not} a strand triple of any interior triangle of $Q$. Then $u^t_{ijk}(\tropP_\bullet(Q)) = 0$.
\end{proposition}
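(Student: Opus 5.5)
The plan is to show that, when $(i,j,k)$ is not a strand triple, the function $\Sigma$ restricted to the cube $\{i,i+1\}\times\{j,j+1\}\times\{k,k+1\}$ is tropically linear in the sense of \cref{lem:linearity-vanishing}. That lemma then gives $u^t_{ijk}(\tropP_\bullet(Q))=0$. Corner cases are handled first. If two of $i,j,k$ are cyclically adjacent, say $j=i+1$, the cubical array $D_{ijk}$ collapses, because $I'_J$ has at most two elements. The remaining alternating sum can be checked directly from \cref{lem:fboundary}, which says that boundary paths between consecutive labels are geodesic. So we may assume $\{i,i+1,j,j+1,k,k+1\}$ are six distinct indices.

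The key tool is \cref{lem:strandstripF} applied to the three strands $\str_i,\str_j,\str_k$. Since $Q$ is standard, $z'_i=z_{i+1}$, so moving $i\to i+1$ changes the $F$-set at a vertex $v$ only when $v$ lies on the strip $S_i$. Off the strip, \cref{lem:strandstrip} gives an explicit constant shift:
\begin{itemize}
\item $\delta(v,z_{i+1})-\delta(v,z_i)=-2$ on the $z_{i+1}$ side of $\str_i$;
\item $\delta(v,z_{i+1})-\delta(v,z_i)=+1$ on the $z_i$ side.
\end{itemize}

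I would take a distance minimizer $p$ for $(z_i,z_j,z_k)$, which is a focal point by \cref{thm:focal}, and split into two cases.

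\emph{Case A.} Some vertex $p$ that is a focal point for $(i,j,k)$ lies on at most two of the strips $S_i,S_j,S_k$, and there is a focal point avoiding each pair of strips appropriately. Then I would aim to show that one vertex $p$ is simultaneously a focal point for all eight triples $(a,b,c)$ in the cube. If $p$ avoids $S_i$, then $F(p,z_i)=F(p,z_{i+1})$, and the focal conditions (SDR and non-parallelism) for $(a,b,c)$ are unchanged when $i$ is replaced by $i+1$. The parallel relation has to be checked along witness paths, which may themselves cross strips; I would handle this with the non-crossing of canonical geodesics (\cref{lem:cyclic}). Once $p$ is a common minimizer, $\Sigma(a,b,c)=\sum\delta(p,\cdot)$ is a sum of separate functions of $a$, $b$ and $c$, which is exactly tropical linearity.

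\emph{Case B.} Every minimizer lies on the intersection of two strips. By \cref{prop:strandintersect} two strands meet at most once, so the minimizers lie on the single triangle where two of the strands cross, say $\str_i$ and $\str_j$. That triangle has strand triple $(i,j,m)$ with $m\neq k$, since $(i,j,k)$ is not a strand triple. Here I would use the focal-point pictures from \cref{prop:focalT} and the geometry of $S_k$ relative to this triangle. Because $\str_k$ does not pass through the triangle, $\Sigma$ should still be linear in the $k$-direction, and the $(i,j)$-face of the cube should be linear after reducing to the configuration of \cref{fig:parallel}. This would be combined with \cref{lem:Dsame} to move minimizers across equal-$D$ edges, giving a common minimizer for each pair of opposite faces.

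I expect the main obstacle to be Case B, and more generally proving that a single common focal point exists for all eight triples. Minimizer sets can drift as the labels shift; \cref{fig:uijk+1} shows that they do not always overlap. My first attempt would be to prove a weaker statement: a common minimizer exists for each pair $(a,a+1)$ with the other two labels fixed. This gives constant first differences along every edge of the cube, which suffices for linearity. Non-crossing of the three strips, and the fact that no triangle has strand triple $(i,j,k)$, should rule out the nonlinear configurations of \cref{lem:main}.
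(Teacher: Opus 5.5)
You have a genuine gap: the conclusion you aim for is false in a configuration the hypothesis does not exclude, and your fallback contains a non sequitur.

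\textbf{The target is too strong.} Since $u^t_{ijk}=-\frac{1}{3}\Delta_i\Delta_j\Delta_k\Sigma$ is a mixed third difference over the cube, it vanishes as soon as the first differences in \emph{one} direction, say $\Sigma(a,b,k+1)-\Sigma(a,b,k)$, do not depend on $(a,b)$. A single vertex minimizing all eight triples (tropical linearity, \cref{lem:linearity-vanishing}) is much stronger, and it can fail.

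\textbf{A configuration you must handle.} Let $x^*$ be a focal point for $(i,j,k)$, and let $v\to x^*$ be an incoming edge whose dual web edge is traversed by both $\eta_i$ and $\eta_j$. Take $x^*$ on the $z_i$-side of $\eta_i$ and the $z_{j+1}$-side of $\eta_j$. The two triangles on this edge have strand triples $(i,j,m)$ and $(i,j,m')$, and nothing forces $k\in\{m,m'\}$. Then $F(x^*,z_{i+1})=F(x^*,z_j)=\{v\}$, so $x^*$ never minimizes $(i+1,j,c)$. Suppose also that $F(x^*,z_k)$ is contained in neither pair of consecutive incoming neighbors of $x^*$ that contains $v$. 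Then $x^*$ is still a focal point for $(i,j+1,k)$ and $(i+1,j+1,k)$. Using \cref{lem:strandstrip} and the drop by $2$ along $v\to x^*$, with $S=\Sigma(i,j,k)$, you get $\Sigma(i,j+1,k)=S-2$, $\Sigma(i+1,j+1,k)=S-1$, and $\Sigma(i+1,j,k)=S-2$ (attained at $v$). This face is not affine. So no common minimizer for the cube exists, and your Case B goal of a linear $(i,j)$-face cannot be reached.

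\textbf{The fallback does not repair this.} A common minimizer $p$ of $(a,b,c)$ and $(a+1,b,c)$ only gives $\Sigma(a+1,b,c)-\Sigma(a,b,c)=\delta(p,z_{a+1})-\delta(p,z_a)\in\{-2,+1\}$. That value is determined by which side of $\eta_a$ contains $p$. Parallel edges of the cube can have their common minimizers on different sides. Above, $v$ serves the $i$-edge at $(j,k)$ (difference $-2$), while $x^*$ serves the $i$-edge at $(j+1,k)$ (difference $+1$).

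\textbf{What works instead} is your remark about the $k$-direction, made precise. The web edge dual to $v\to x^*$ carries only $\eta_i$ and $\eta_j$. So $v$ and $x^*$ lie on the same side of $\eta_k$, and $\delta(\cdot,z_{k+1})-\delta(\cdot,z_k)$ takes the same value at both. With $x^*$ and $v$ serving as common minimizers of the pairs $(a,b,k),(a,b,k+1)$, this makes $\Sigma(a,b,k+1)-\Sigma(a,b,k)$ independent of $(a,b)$. Hence $u^t_{ijk}=0$ even though $\Sigma$ is not linear on the cube.

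\textbf{The remaining cases are only announced.}
\begin{itemize}
\item Your Case A argument covers a focal point lying on none of $S_i,S_j,S_k$. Even there it needs a normalization you do not make: first move, along witness paths and staying among focal points, to one where $F(x^*,z_i),F(x^*,z_j),F(x^*,z_k)$ are pairwise distinct. Otherwise parallelism can arise after shifting labels, and \cref{lem:cyclic} does not control witness paths.
\item When $x^*$ lies on one or two strips crossing at different incoming edges, the common focal point for the eight triples is in general not $x^*$ but a neighbor. It is the tail $v$ of such an edge $v\to x^*$ when $F(x^*,z_k)=\{v\}$. It is the head of the outgoing edge between two adjacent such edges when $F(x^*,z_k)$ consists of their tails. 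Which one applies depends on the side of each strip containing $x^*$, and none of this appears in your plan.
\item Your Case B premise is inaccurate. A vertex lying on $S_i$ and $S_j$ need not be a vertex of a triangle common to both strips, and crossing strands share two triangles, not one.
\item The adjacent-index case does not follow from \cref{lem:fboundary}, which only says boundary arcs are geodesics. It needs the same focal-point analysis with four terms.
\end{itemize}
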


\begin{proof}
Let $C:= D_{ijk}$ as defined in \cref{def:tropical-cross-ratio}.  In the following, we will generally make the assumption that $i,j,k$ are pairwise non-adjacent cyclically and $C = \{i,i+1\}\times\{j,j+1\}\times\{k,k+1\}$ has size eight.   (The case $j = i+1$ is simpler as there are only four terms in $u_{ijk}^t$ and for the purposes of the below case analysis it behaves as if $x^*$ never lies on the strand $\eta_i$.)

Let $x^*$ be a focal point for $(i,j,k)$, and let $F_a:= F(x^*,z_a)$.  By following a path towards a witness while staying within the set of focal points for $x^*$, we may assume that 
\begin{equation}\label{eq:Fsame}
\mbox{no two of $F_i,F_j,F_k$ coincide.}
\end{equation}

Case (1).
Suppose that $x^*$ does not lie on any of the strand strips $S_i, S_j, S_k$.  Then by \cref{lem:strandstripF}(1) it follows that $F(x^*,z_i) = F(x^*,z_{i+1})$ and similarly for $j$ and $k$.  Using \eqref{eq:Fsame}, it follows that $x^*$ is a focal point for all eight triples $(z_a,z_b,z_c)$ with $(a,b,c)\in\{i,i+1\}\times\{j,j+1\}\times\{k,k+1\}$.  The claim then follows from \cref{lem:linearity-vanishing}. 

\medskip

Henceforth, we assume that $x^*$ lies on at least one of the strand strips.

For $a \in \{i,j,k\}$, the \emph{$a$-line} (resp. \emph{$(a+1)$-line}) is the path on the boundary of the strand strip $S_a$ that ends at $z_a$ (resp. $z_{a+1}$).
For each $a \in \{i,j,k\}$, we have that $x^*$ either lies on the $a$-side or $a+1$-side of the strand strip $S_a$.  If $x^*$ lies on a strand strip $S_a$ then one of the two sets $\{F(x^*,z_a),F(x^*,z_{a+1})\}$ has size one, denoted $F_1(a)$, and the other has size two, denoted $F_2(a)$.  We have $F_1(a) = \{v\} \subsetneq F_2(a)$ where $e = (v \to x^*)$ is an incoming edge called the \emph{spine}.  Note that the spine always goes from one side of the strand strip to the other.  

Let $\T$ denote the set of triangles with $x^*$ as a vertex.  If $x^*$ lies on $S_a$, then $\T \cap S_a$ consists of exactly three triangles, unless $x^*$ lies on the boundary $\partial Q$.  In the following, for brevity we will assume that $x^*$ does not lie on $\partial Q$; the case analysis we do still works even if $x^* \in \partial Q$.  If $x^*$ lies on both $S_a$ and $S_b$, we say that the strips are opposed if $x^*$ lies (on the $a$-line and $(b+1)$-line) or (on the $(a+1)$-line and the $b$-line).

If $x^*$ lies on two strand strips $S_a$ and $S_b$, and the two strips are opposed, then $\T \cap S_a \cap S_b$ consists of either $0$ or $2$ triangles.  In the case of $2$ triangles, we have that the spines of $S_a$ and $S_b$ (at $x^*$) coincide.  If $x^*$ lies on two strand strips $S_a$ and $S_b$ that are not opposed, then $\T \cap S_a \cap S_b$ consists of either $0$ or $1$ triangles.  In the case of $1$ triangle, the spines of $S_a$ and $S_b$ are cyclically adjacent among the incoming edges to $x^*$.

Case (2).  Suppose that $x^*$ belongs to two strips, say $S_i$ and $S_j$, with the same spine $e = (v \to x^*)$.  After possibly swapping $i$ and $j$, we may assume that $x^*$ lies on the $i$-line and on the $(j+1)$-line.  Using the assumption that $(i,j,k)$ is not a strand triple, we may check directly that $x^*$ is a focal point for the six triples in $C \setminus \{(i+1,j,k),(i+1,j,k+1)\}$ and that $v$ is a focal point for the two triples $\{(i+1,j,k),(i+1,j,k+1)\}$.  Furthermore, $\dist(x^*,z_k)-\dist(x^*,z_{k+1}) = \dist(v,z_k) - \dist(v, z_{k+1})$.  A direct calculation similar to \cref{cor:cross-ratio} shows that $u^t_{ijk} = 0$.  

\medskip

Henceforth, we assume that spines of strand strips that $x^*$ lies on are distinct.  

Case (3). If $x^*$ lies on all three strand strips with distinct spines, we deduce that $x^*$ is a focal point for all triples in $C$.  The claim then follows from \cref{lem:linearity-vanishing}. 
Note that it could be the case that two (out of six) of the $F$-sets coincide; this can only happen if, for example, $j = i+1$.  In this case, we find that we can choose $x^* = j$ and it is a focal point for all triples in $C$.

\medskip
Henceforth, we assume that $x^*$ lies on one or two strand strips.

Case (4a).  
Suppose that $x^*$ lies on $S_i$ and $S_j$ which may or may not be opposed, and does not lie on $S_k$.  Suppose first that $|\T \cap S_i \cap S_j| = 0$.  Then $x^*$ is a focal point for all triples in $C$ unless $F(x^*,z_k)$ coincides with one of the spine edges $e = (v \to x^*)$, say that of $S_i$.  In the latter case, $v$ is a focal point for all triples in $C$ (even if $v$ lies on $S_k$).  The claim then follows from \cref{lem:linearity-vanishing}. 

Case (4b).  
Now suppose that $|\T \cap S_i \cap S_j| = 1$ (which implies that $S_i$ and $S_j$ are not opposed).  Then in addition to the above case, we can also have that $F(x^*,z_k)$ consists of \emph{both} spine edges of $S_i$ and $S_j$.  Let $(v^* \to w)$ be the outgoing edge between the two spine edges.  Then $w$ is a focal point for all triples in $C$.  The claim then follows from \cref{lem:linearity-vanishing}. 

\medskip
Henceforth, we assume that $x^*$ lies on exactly one of the three strand strips.  Suppose that $x^*$ lies on $S_i$ but not $S_j$ or $S_k$.

Case (5a).  Suppose that $x^*$ lies on the $(i+1)$-line.  It is straightforward to see that we have a SDR at $x^*$ for any of the triples in $C$.  We need to consider the possibility that $F(x^*,z_{i+1}) = F(x^*,z)$ has size two and $z_{i+1},z$ are not parallel at $x^*$, where $z \in \{z_j,z_{j+1},z_k,z_{k+1}\}$.  But if we follow a path to the witness we conclude that $z = z_{i+1}$ (so $i$ and one of $\{j,k\}$ is cyclically adjacent).  In this case, $z_{i+1}$ is a focal point for all triples in $C$.  The claim then follows from \cref{lem:linearity-vanishing}. 

Case (5b).  Suppose that $x^*$ lies on the $i$-line.  If $F(x^*,z_i) = F(x^*,z)$ has size two and $z_i, z$ are not parallel at $x^*$, the situation is similar to Case 5(a).  Otherwise, either $x^*$ is a focal point for all triples in $C$, or one of $\{F(x^*, z_j), F(x^*,z_k)\}$ is equal to $v$ where $e = (v \to x^*)$ is the spine of $S_i$.  In this last case, $v$ is a focal point for all triples in $C$.  The claim then follows from \cref{lem:linearity-vanishing}. 

This completes the proof.
\end{proof}
\begin{figure}[h!]
	\centering
	\includegraphics[width=1\linewidth]{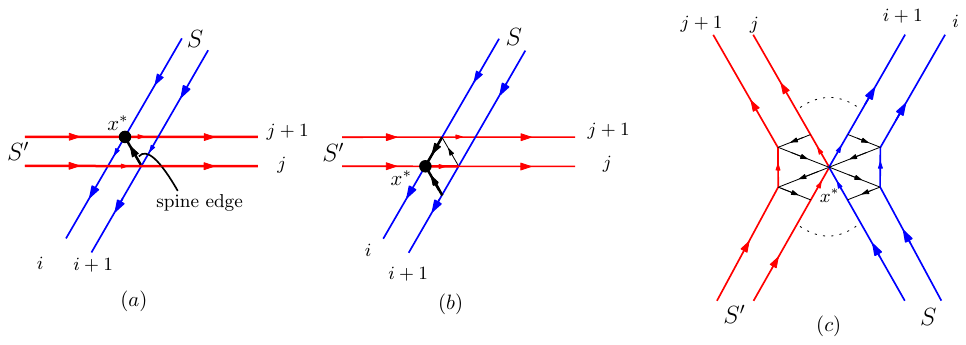}
	\caption{Three of the cases appearing in the proof of \cref{prop:non-strand-vanishing}.  In (a), $S$ and $S'$ are opposed and share the same spine edge; we have $\vert \mathbb{T}\cap S\cap S'\vert=2$.  In (b), $S$ and $S'$ are not opposed; the spine edges are cyclically adjacent; we have $\vert \mathbb{T}\cap S\cap S'\vert = 1$.  In (c), one of several possibilities where $\vert \mathbb{T}\cap S\cap S'\vert =0$.}
	\label{fig:prop16}
\end{figure}

		\part{Noncrossing tableaux}\label{part:noncrossing}
		\section{Noncrossing tableau}\label{sec:noncrossing}
		
		Two pairs $(a < b), (a'< b') \in \binom{[n]}{2}$ are \emph{noncrossing} if the interiors of the arcs $a \to b$ and $a' \to b'$ are non-intersecting when the numbers $1,2,\ldots,n$ are arranged in order on the boundary of a circle.
		Two triples $I = \{a < b <c\}, I' = \{a'<b'<c'\} \in \binom{[n]}{3}$ are \emph{noncrossing} if 
		\begin{enumerate}
			\item $(a,b)$ and $(a',b')$ are noncrossing, and
			\item $(b,c)$ and $(b',c')$ are noncrossing, and
			\item if $b = b'$ then $(a,c)$ and $(a',c')$ are noncrossing.
		\end{enumerate}
		
		\noindent A \emph{noncrossing tableau} $\J = (J_1,J_2,\ldots,J_r)$ is a pairwise noncrossing collection of triples $J_i \in \binom{[n]}{3}$.  The tableau $\J$ is standard if $[n] = \bigsqcup_i J_i$.  In particular, if $\J$ is standard then $3r = n$.  The \emph{content} $c(\J) = (c_1,c_2,\ldots)$ of a noncrossing tableau is the composition of $3r$ where $c_i$ counts the number of occurrences of $i$ in $\J$.  A noncrossing tableau $\J = (J_1,J_2,\ldots,J_r)$ is \defn{cyclic-less} if none of the $J_i$ is a cyclic triple.
		
		\subsection{Standardization}
		We define the \emph{standardization} of a noncrossing tableau $\J = (J_1,J_2,\ldots,J_r)$.  Let $J = \{a< b<c\}$.  We call $a$ the \emph{left endpoint}, $b$ the \emph{middle}, and $c$ the \emph{right endpoint}.  If $a \in [n]$ is a letter used in multiple $J_i$, we first move all left endpoints at $a$ slightly to the right, and all right endpoints at $a$ slightly to the left, in such a way that locally does not create crossings.  For example, if we have $(a,b,c)$ and $(a,b',c')$ where $b < b'$ then we would change the triples to $(a+s\epsilon, b, c)$ and $(a+t\epsilon, b',c')$ for $\epsilon > 0$ where $t < s$.
		
		After doing this, only middle points can coincide.  Among all the triples with a fixed middle $b$, we take the left endpoints and order them $a_1 < a_2 < \cdots < a_r$ and the right endpoints $c_1 < c_2 < \cdots < c_r$.  The standardization consists of the triples 
		$$
		(a_1,b_1,c_1), (a_2,b_2,c_2), \ldots, (a_r,b_r,c_r),
		$$
		where $b_i$ are all close to $b$ and satisfy $b_1 > b_2 > \cdots > b_r$.  Finally, to complete the standardization, we move the points to the integer locations $1,2,\ldots,3r$, keeping their relative order.  The following result is a direct check.
		
		\begin{proposition}\label{prop: standardization}
			The standardization $\std(\J)$ is a noncrossing tableau.
		\end{proposition}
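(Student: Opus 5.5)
The plan is to verify the three noncrossing conditions directly for every pair of triples in $\std(\J)$. I will treat standardization as a composite of two perturbation steps: first separate coinciding endpoints, then separate coinciding middles. The final relabeling to $1,\ldots,3r$ is order-preserving on the circle, so it preserves noncrossing and can be ignored. The guiding observation is that two arcs whose four endpoints are distinct are noncrossing exactly when their endpoints do not interleave. Two arcs sharing an endpoint never cross in their interiors. So the whole question is whether a perturbation that pulls a shared endpoint apart creates an interleaving.

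First step: consider a pair of triples $J=(a,b,c)$, $J'=(a',b',c')$ in $\J$. If all relevant endpoints are distinct, a small enough perturbation does not change their cyclic order, so conditions (1)--(3) persist. When endpoints coincide, I will handle the cases separately.
\begin{enumerate}
\item $a=a'$. Both triples use $a$ as left endpoint.
\begin{itemize}
\item If $b<b'$, the prescribed rule puts the endpoint of $J$ farther right than that of $J'$. The arcs $(a,b)$ and $(a,b')$ then become nested, since $b<b'$.
\item If $b=b'$, condition (3) says $(a,c)$ and $(a,c')$ do not cross. I order the perturbation by $c$ so that the arcs $(a,c)$ stay nested, again with no interleaving.
\end{itemize}
\item The analogous cases for right endpoints $c=c'$, nesting by the left endpoints.
\item $a$ equal to some $c'$. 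These endpoints move in opposite directions, so $(a,b)$ and $(b',c')$, and the associated pairs, simply become disjoint.
\end{enumerate}
In every case I need to check that the chosen order is consistent, i.e.\ acyclic, across all triples sharing a letter. I expect this to follow because nesting is a partial order on pairwise noncrossing arcs with a common endpoint.

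Second step: fix a middle $b$ with triples $(a_1,b,\cdot),\ldots$. After the first step the left endpoints $a_1<\cdots<a_r$ are distinct, and likewise the right endpoints $c_1<\cdots<c_r$. The new triples are $(a_m,b_m,c_m)$ with $b_1>\cdots>b_r$ near $b$. Matching the $m$-th smallest left endpoint with the $m$-th largest middle makes the arcs $(a_m,b_m)$ nested: smaller $a$ goes with larger $b$. Likewise, $c_1<c_2$ paired with $b_1>b_2$ gives nested arcs $(b_m,c_m)$. The middles are now distinct, so condition (3) becomes vacuous among these triples.

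It remains to check that the re-pairing does not create crossings with triples of a different middle $b''$, or with the arcs $(a_m,c_m)$. I will argue this from the original noncrossing conditions. Any arc $(x,y)$ from another triple that avoids crossing every original arc $(a_i,b)$ has its endpoints not separating any $a_i$ from $b$. Hence it also does not separate $a_m$ from $b_m$, because the $b_m$ lie in an arbitrarily small neighborhood of $b$ that contains no other letters.

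The main obstacle is the first step's consistency when a letter serves as a left endpoint for several triples with different middles and right endpoints simultaneously. I would first try to define a single linear order on all triples sharing a letter $a$, by lexicographic comparison of $(b,c)$ in decreasing order. I would then verify pairwise that it matches the nesting forced by conditions (1) and (3).
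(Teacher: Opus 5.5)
Your proposal is correct and is essentially the direct check the paper has in mind; the paper gives no further detail. The argument is: separate coinciding endpoints so that left (resp.\ right) arcs through a common endpoint become nested, then re-pair each group of coinciding middles so that its left and right arcs are nested, and observe that arbitrarily small perturbations preserve non-interleaving between different groups. The consistency issue you flag is not a real obstacle, since at a shared endpoint conditions (1)--(2) only force an ordering by the middle $b$, and ties in $b$ are irrelevant because the re-pairing overrides them and condition (3) is vacuous once all $3r$ points are distinct; do add the trivial coincidences $a=b'$ and $c=b'$ to your first step, and note that your case $a=c'$ and the check on the arcs $(a_m,c_m)$ are unnecessary, since left arcs are never compared with right arcs and condition (3) no longer applies.
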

		
		\begin{example}
		Let $\J = (134,135,234,234,356)$.  Then $\std(\J) = (18a,27b,36c,45d,9ef)$, where we use the letters $\{a,b,c,d,e,f\}$ instead of $\{10,11,12,13,14,15\}$.		
		\end{example}
		
		\begin{proposition}
			A noncrossing tableau $\J$ can be recovered from the pair $(\std(\J),c(\J))$ consisting of its standardization and content.
		\end{proposition}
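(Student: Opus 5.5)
The plan is to show that $\std$ loses exactly the information recorded by $c(\J)$, by inverting each step of the standardization. Write $N = 3r = \sum_i c_i$. The content $c(\J)$ determines the weakly increasing surjection $\phi : [N] \to [n]$ that sends the first $c_1$ letters to $1$, the next $c_2$ letters to $2$, and so on. The standardization never moves a perturbed point past a point with a different original value. It only reorders points within the cluster above a fixed letter $a$, and the final relabeling to $1,\dots,3r$ preserves relative order. So the letters of $\std(\J)$ that came from occurrences of $a$ are exactly those in the block $\phi^{-1}(a)$. I would state this as a first lemma, proved by checking that every $\epsilon$-perturbation is smaller than the gap between distinct integers.

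Given that lemma, we recover $\J$ as the multiset $\{\phi(J') : J' \in \std(\J)\}$. Each triple $(a_\ell,b_\ell,c_\ell)$ of $\std(\J)$ is the image of a triple $(a,b,c)$ of $\J$ with the perturbed points lying over $a,b,c$. The only subtle point is the middle step, where the left and right endpoints are re-paired among triples with a common middle $b$. We sort the left endpoints $a_1<\dots<a_r$ and the right endpoints $c_1<\dots<c_r$ and then pair them in order, so a standardized triple need not be the image of a single original triple. To handle this, I would argue that the original family with middle $b$ is already ordered this way. Noncrossing condition (3) requires that $(a,c)$ and $(a',c')$ be noncrossing when $b=b'$. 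With $a,a' < b < c,c'$, this forces the nesting $a<a' \Rightarrow c'<c$, possibly with weak inequalities. Hence the multiset of pairs $\{(a,c)\}$ for middle $b$ is determined by the sorted lists of left and right endpoints, paired in the nested order. Since the standardization assigns $b_1>b_2>\dots$ consistently with this nesting, applying $\phi$ to the standardized triples with middles in $\phi^{-1}(b)$ returns exactly the original multiset of triples with middle $b$.

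The main obstacle is checking that the first perturbation step, which moves left and right endpoints off a shared letter, is compatible with this argument. The perturbation must not move points across letters, and the induced order must be determined purely by noncrossingness, so that it gives no extra information beyond what $\phi$ collapses. Since $\phi$ forgets the order within a block, the only thing to verify is the lemma that blocks are preserved. I would prove that lemma by induction over the three stages of the construction, taking $\epsilon < 1/(3r)$.
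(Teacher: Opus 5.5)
There is a genuine gap, and it sits at the one step that matters. Your block-preservation lemma is fine, and you correctly derive from condition (3) that the triples of $\J$ with a common middle $b$ are nested. The trouble is the next sentence, ``the original family with middle $b$ is already ordered this way,'' which contradicts that derivation.

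Nesting means the $i$-th smallest left endpoint is paired with the $i$-th \emph{largest} right endpoint. Standardization instead pairs the $i$-th smallest left endpoint with the $i$-th \emph{smallest} right endpoint; this is what makes the triples $(a_i,b_i,c_i)$ with $b_1>b_2>\cdots$ noncrossing once the middles are split. So applying $\phi$ triple-by-triple gives the wrong pairing, and your recovery formula $\J=\{\phi(J') : J'\in\std(\J)\}$ is false.

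Here is a concrete counterexample. Take $\J=\{(1,3,5),(2,3,4)\}$, which has $\std(\J)=\{(1,4,5),(2,3,6)\}$ and content $(1,1,2,1,1)$. Then $\phi(\std(\J))=\{(1,3,4),(2,3,5)\}\neq\J$; this is not even noncrossing, since the arcs $(1,4)$ and $(2,5)$ cross. The paper says as much in the proof of \cref{thm:nc-to-web}: $\J$ is in general not $(\iotaStd(K_1),\ldots,\iotaStd(K_r))$.

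The fix uses exactly your ingredients, and it is the paper's proof. For each $b$, take the standardized triples whose middles lie in $\phi^{-1}(b)$, ordered so that $b_1>\cdots>b_r$. By noncrossingness this forces $a_1<\cdots<a_r$ and $c_1<\cdots<c_r$. Then output the triples $(\phi(a_i),b,\phi(c_{r+1-i}))$.

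This output is correct for two reasons.
\begin{enumerate}
\item Standardization only permutes which left endpoint is attached to which right endpoint within a middle-$b$ family. Hence $\{\phi(a_i)\}$ and $\{\phi(c_i)\}$ are the original multisets of left and right endpoints for middle $b$.
\item Sort the original pairs by left endpoint ascending, breaking ties by right endpoint descending. Nestedness then forces the right endpoints to be weakly decreasing in this order. So the original multiset of pairs is $\{(\phi(a_i),\phi(c_{r+1-i}))\}$.
\end{enumerate}
Note that the step you flagged as the main obstacle, block preservation, is routine. The real content is this reversal of the pairing.
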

		\begin{proof}
		First note that standardization does not change whether a letter is a left endpoint, middle, or right endpoint.  
		Suppose that $\J$ uses letters in $[n]$ and $\std(\J)$ uses the numbers in $[n']$.  We have a map $\iotaStd:[n'] \to [n]$, given by $\iotaStd([c_1+\cdots+c_{i-1} + 1, c_1 + \cdots + c_i]) = i$.  For each $b \in [n]$, take the triples $(a_1,b_1,c_1),\ldots,(a_r,b_r,c_r) \in \std(\J)$ satisfying $\iotaStd(b_j) = b$ and $b_1 > b_2 > \cdots > b_r$ and therefore $a_1 < a_2 < \cdots$ and $c_1 < c_2 < \cdots$.  Then $\J$ contains the triples 
		$$
		(\iotaStd(a_1),b,\iotaStd(c_r)), \ldots, (\iotaStd(a_r),b, \iotaStd(c_1)).
		$$
Taking the union over $b \in [n]$ we recover $\J$.
		\end{proof}
		
		\subsection{Bijection with semistandard Young tableaux}
		We draw semistandard Young tableaux (SSYT) in English notation, so that columns are strictly increasing as we go down, and rows are weakly increasing as we go from left to right.
		
		We describe a bijection $\Phi$ between SSYT $T$ of shape $3 \times r$ with letters in $[n]$ and noncrossing tableaux $\J = (J_1,\ldots,J_r)$ with $J_i \in \binom{[n]}{3}$.  For each row $R$ that is not the first row, starting from the left we connect each letter $\ell$ to the rightmost letter in the row above it that is $< \ell$ and that has not been connected to a letter in row $R$.  By semistandardness, at every step such a letter always exists.  The connected triples $J_1,J_2,\ldots,J_r$ form a noncrossing tableau $\J = \Phi(T)$.
		
		\begin{proposition}
			The map $\Phi$ is a bijection between $3 \times r$ SSYT with letters in $[n]$ and noncrossing tableaux $(J_1,\ldots,J_r)$ with $J_i \in \binom{[n]}{3}$.
		\end{proposition}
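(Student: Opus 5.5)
We will show that $\Phi$ is well defined, that its image consists of noncrossing tableaux, and then construct an explicit inverse $\Psi$ from noncrossing tableaux $(J_1,\ldots,J_r)$ with $J_i\in\binom{[n]}{3}$ to $3\times r$ SSYT.

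\emph{Well-definedness.} Fix a row $R$ (the second or third) and the row $R'$ above it. Let $\ell_1\le \ell_2\le\cdots\le \ell_r$ be the entries of $R$ and $m_1\le\cdots\le m_r$ those of $R'$. Column strictness gives $m_t<\ell_t$ for every $t$. Processing $\ell_1,\ell_2,\ldots$ from the left, we show by induction that when $\ell_t$ is reached, at least one letter among $m_1,\ldots,m_t$ is still unused. Indeed, at most $t-1$ letters have been used, and all of $m_1,\ldots,m_t$ are $<\ell_t$. So a rightmost available letter exists.

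Every element of $\Phi(T)$ is a genuine triple $a<b<c$, since each connection goes strictly upward in value. The connections between rows 1 and 2 form a greedy matching, and so do those between rows 2 and 3. The standard parenthesis-matching argument shows that a greedy "rightmost available smaller letter" matching is noncrossing. Suppose two arcs $(m,\ell)$ and $(m',\ell')$ crossed, with $m<m'<\ell<\ell'$ in value. If $\ell$ is processed first, it would have chosen $m'$ over $m$ if $m'$ were still available; and $m'$ is still free at that moment, since $\ell'$ comes later. If $\ell'$ is processed first, a similar comparison applies once ties are resolved by position. This yields conditions (1) and (2) of the noncrossing definition.

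Condition (3) needs care when $b=b'$, that is, when equal letters occur in the middle row. Equal letters $b$ in row 2 take left partners in decreasing order and right partners in increasing order, by greediness. This produces nested pairs $(a,c)$ and $(a',c')$, which are noncrossing. This tie analysis is where I expect the most bookkeeping.

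\emph{Inverse.} Given a noncrossing tableau, let row $s$ of $\Psi(\J)$ be the multiset of $s$-th smallest entries of the $J_i$, sorted weakly increasing. Rows are weakly increasing by construction. For column strictness, the second row sorted against the first satisfies $m_t<\ell_t$: the $t$ smallest middles each exceed their own left endpoints, so at least $t$ left endpoints are smaller than $\ell_t$. The same argument works for rows 2 and 3.

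\emph{Mutual inverses.} $\Psi\circ\Phi=\mathrm{id}$ is immediate, because $\Phi$ only records a matching of the rows, and $\Psi$ recovers the row multisets. For $\Phi\circ\Psi=\mathrm{id}$ we need that for fixed row multisets the noncrossing matching is unique and coincides with the greedy one. I would prove this by induction on the leftmost letter $\ell$ of $R$. The noncrossing condition forces $\ell$ to be matched to the nearest available smaller letter: otherwise some unmatched-yet smaller letter lying between the two would have to be matched to a larger letter, and that arc would cross. Ties among equal letters are handled by the nesting rule in condition (3) together with the fact that the triples are regarded as an unordered collection.

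The main obstacle is this uniqueness step, with ties, especially equal middles, where condition (3) is essential.
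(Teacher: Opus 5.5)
Your overall plan is the paper's: define $\Psi$ by sorting left, middle and right endpoints, observe $\Psi\circ\Phi=\mathrm{id}$, and obtain $\Phi\circ\Psi=\mathrm{id}$ from uniqueness. Your arguments for well-definedness, for conditions (1)--(2), and for column-strictness of $\Psi(\J)$ are fine.

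The gap is exactly the tie step you flagged, and the claim made there is false for $\Phi$ as literally defined. Suppose a row-3 letter $c$ is connected to a value $b$ that occupies several cells of row 2. Then it takes the \emph{rightmost} available copy. Since row 3 is processed in weakly increasing order, the copies of $b$ are claimed from right to left by weakly increasing $c$'s. As you note, the row-1 partners of the copies of $b$ weakly decrease from left to right. So the rightmost copy receives both the smallest $a$ and the smallest $c$, and the chords $(a,c)$ through a common middle interleave instead of nesting. For example, the tableau with rows $(1,2)$, $(3,3)$, $(4,5)$ is sent to $\{(1,3,4),(2,3,5)\}$. Its chords $1$--$4$ and $2$--$5$ cross, violating (3). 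The unique noncrossing tableau with these rows is $\{(1,3,5),(2,3,4)\}$. The same example refutes your final claim that the forced noncrossing structure ``coincides with the greedy one''.

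What is missing is that ties cannot be settled by greediness; they need a convention built into $\Phi$. One option: when connecting row 3 to row 2, among the available cells carrying the largest available value $<\ell$, take the \emph{leftmost}. Equivalently, work with values and pair the $a$'s and $c$'s attached to a common middle anti-monotonically. This does not change the value-level matchings between consecutive rows, so (1)--(2) are unaffected, and it makes (3) hold.

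With this convention your uniqueness step becomes clean. At the level of values, the smallest middle $b$ must be matched to the largest left letter $a^*<b$. Otherwise $a^*$ goes to some $b'>b$ while $b$ goes to some $a<a^*$, and $a<a^*<b<b'$ is a crossing. The same holds for rows 2--3, and (3) forces the anti-monotone pairing through equal middles.

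The paper's proof relies tacitly on the same convention. It identifies the triple through the leftmost letter $c$ of $R_3$ with $J_1=(a,b,c)$, which by (3) has the largest left endpoint among the triples with middle $b$. Under the literal rule, however, $c$ is attached to the rightmost copy of $b$ in $R_2$, which carries the smallest such left endpoint.

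A minor point: your case ``$\ell'$ processed first'' is vacuous, since $\ell<\ell'$ forces $\ell$ to be processed first.
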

		\begin{proof}
			Given a noncrossing tableau $\J$, define rows $R_1,R_2,R_3$ obtained by taking the left, middle, right endpoints, respectively, of the $J_i$ and sorting.  Since left endpoints are smaller than middle endpoints are smaller than right endpoints, it is clear that $\Psi(\J):= (R_1,R_2,R_3)$ is a semistandard Young tableau, and that $\Psi\circ \Phi(T) = T$.  We need to show that $\Phi \circ \Psi(\J) = \J$.  Let $J_1 \in \J$ be the triple with the smallest right endpoint, and if there is a tie, the largest left endpoint, and if there is another tie, the smallest middle endpoint.  By induction on $r$, it is enough to show that $J_1 = (a < b < c)$ can be unambiguously recovered from $\Psi(\J) = (R_1,R_2,R_3)$.  Indeed, we claim that $J_1$ is the triple connected to the leftmost letter in $R_3$ in the construction of $\Phi \circ \Psi(\J)$.
			
			Let $J' = (a' < b' < c')$ be another triple in $\J$.  Since $c' \geq c$, by the noncrossing condition and our assumptions, we have $b' \leq b$ or $b' \geq c$.  We conclude that $b$ is largest among the letters in $R_2$ that are less than $c$.  Again by the noncrossing condition, we deduce that
			\begin{itemize}
				\item
				if $b' \leq b$ then either $b' < a$ or $a \leq a' < b'$,   
				\item
				if $c \leq b' < c'$ then either $a' < a$ or $a' > b$.
			\end{itemize}
			We compare this to the construction of $\Phi \circ \Psi(\J)$ and we see that $J_1$ must be the triple connected to the leftmost letter, that is, $c$, in $R_3$.  This completes the proof.
		\end{proof}
		
		\section{Noncrossing tableaux and webs}
		\subsection{Tymoczko's construction}
		Tymoczko \cite{Tym} constructs a non-elliptic web $W(\J)$ from a standard noncrossing tableau $\J$.  Let $\J$ be a standard noncrossing tableau.  We replace each triple $J = (a<b<c)$ by a $\m$ drawn as a pair of semicircle arcs connecting $a$ and $b$, and connecting $b$ and $c$ on a horizontal line.  Any pair of $\m$-s are either non-intersecting, or in one of the following two positions, see \cite[Section 2.2]{Tym}.  This follows easily from the noncrossing condition.  
		\begin{equation}\label{eq:intersect}
		\begin{tikzpicture}[baseline=20pt]
		
		\draw (-2,0) arc (180:0:0.5) circle;
		\draw (-1,0) arc (180:0:1.5) circle;
		\draw[dashed] (-3,0) arc (180:0:1.5) circle;
		\draw[dashed] (0,0) arc (180:0:0.5) circle;
		\draw (-4,0)--(10,0);
		
		\draw[dashed] (4,0) arc (180:0:0.5) circle;
		\draw[dashed] (5,0) arc (180:0:1) circle;
		\draw (6,0) arc (180:0:1) circle;
		\draw (8,0) arc (180:0:0.5) circle;
		
		\end{tikzpicture}
		\end{equation}
		
		Note that any intersection between two arcs is an intersection between a left arc of a $\m$ with a right arc of another $\m$.  Thus there do not exist three arcs that all pairwise intersect, and consequently the $\m$-diagram is well-defined up to isotopy of arcs, as long as we insist that arcs intersect minimally.
		
		Next we replace each $\m$ by a $Y$, where the white vertex is placed very close to the horizon, 
		$$
		\begin{tikzpicture}
		
		\draw (0,0) arc (180:0:1) circle;
		\draw (2,0) arc (180:0:1) circle;
		\draw (-1,0)--(5,0);
		
		\draw (6,0)--(13,0);
		\draw[thick] (10,0) -- (10,0.3);
		
		\draw[thick] (8,0) arc (180:20:1) circle;
		\draw[thick] (12,0) arc (0:160:1) circle;
		\filldraw[white] (10,0.3) circle (2pt);
		\draw (10,0.3) circle (2pt);
		\end{tikzpicture}
		$$
		and then we resolve all crossings in the following way:
		$$
		\begin{tikzpicture}
		\draw (0,0)--(2,2);
		\draw (2,0)--(0,2);
		
		\draw[->] (3,1)--(5,1);
		
		\begin{scope}[shift={(6,0)}]
		\draw[thick] (0,0)-- (1,0.5)--(2,0);
		\draw[thick] (0,2) --(1,1.5)--(2,2);
		\draw[thick] (1,0.5)--(1,1.5);
				\filldraw[white] (1,1.5) circle (2pt);
		\draw (1,1.5) circle (2pt);
				\filldraw[black] (1,0.5) circle (2pt);
		\end{scope}
		\node at (9,1) {or};
		\begin{scope}[shift={(10,0)}]
		\draw[thick] (0,0)-- (1,0.5)--(2,0);
		\draw[thick] (0,2) --(1,1.5)--(2,2);
		\draw[thick] (1,0.5)--(1,1.5);
				\filldraw[white] (1,0.5) circle (2pt);
		\draw (1,0.5) circle (2pt);
				\filldraw[black] (1,1.5) circle (2pt);
		\end{scope}
		\end{tikzpicture}
		$$
		depending on whether the corresponding arcs are as in the left or the right configuration in \eqref{eq:intersect}.

		The following result is obtained from \cite{Tym} by composing with a bijection between standard tableaux and standard noncrossing tableaux; see \cite{PPS,SSW17}. 
		\begin{theorem}\label{thm:WJ}
			This procedure produces a non-elliptic web $W(\J)$, and is a bijection between standard noncrossing tableaux and standard webs.
		\end{theorem}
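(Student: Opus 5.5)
We deduce \cref{thm:WJ} from Tymoczko's theorem \cite{Tym} by composing with a known bijection. We do not build a new argument from scratch. Tymoczko's result is phrased in terms of standard Young tableaux of shape $3 \times r$, with rows recording the left endpoints, middles and right endpoints of $\m$-diagrams. So we first fix the bijection between standard $3\times r$ Young tableaux and standard noncrossing tableaux. For this we restrict the map $\Phi$ (and its inverse, which sorts left, middle and right endpoints into rows $R_1,R_2,R_3$) to standard tableaux. Since the letters are then distinct, semistandardness becomes standardness, and the content-preserving bijection $\Phi$ of the previous subsection restricts to a bijection between standard $3\times r$ Young tableaux and standard noncrossing tableaux on $[3r]$.

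Next we check that our $\m$-diagram agrees with Tymoczko's. In \cite{Tym}, the $\m$ attached to the $j$-th column is built by connecting each letter of a lower row to the nearest available smaller letter in the row above. This is exactly the greedy rule defining $\Phi$, so the triples $J_i$ are precisely Tymoczko's $\m$'s. With that identification in place:
\begin{enumerate}
\item the pairwise intersection patterns of two $\m$'s are the two configurations in \eqref{eq:intersect}, as already observed from the noncrossing condition;
\item the replacement of each $\m$ by a $Y$ with a white vertex near the horizon matches \cite{Tym};
\item the local crossing resolution, which depends on which configuration of \eqref{eq:intersect} occurs, matches \cite{Tym}.
\end{enumerate}
Tymoczko proves that the resulting graph is a non-elliptic web with all $3r$ boundary vertices black of valence one, that is, a standard web. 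He also proves that the assignment from standard tableaux to such webs is a bijection; its inverse is read off from the boundary via the minimal cut path or depth function. Composing the two bijections gives the theorem.

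The main obstacle is verifying that the conventions match precisely, not any new mathematics. The points to check are:
\begin{enumerate}
\item the choice of which arc crossing becomes a black-over-white versus a white-over-black vertex pair, so that the output matches Tymoczko's orientation (black boundary, our dual-graph convention);
\item that the well-definedness up to isotopy, which relies on minimal intersections and the absence of three pairwise-crossing arcs, agrees with his construction.
\end{enumerate}
We expect to check both on small cases, such as the two configurations of \eqref{eq:intersect} with $n=6$, and to appeal to \cite{PPS,SSW17}, where the equivalence of standard tableaux and standard noncrossing tableaux under this $\m$-diagram correspondence is made explicit.
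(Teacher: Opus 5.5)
Your proposal is correct and is essentially the paper's own argument: the paper derives the theorem from Tymoczko's bijection \cite{Tym}, composed with the bijection between standard $3\times r$ tableaux and standard noncrossing tableaux (citing \cite{PPS,SSW17}), and your restriction of $\Phi$ to content $(1,\ldots,1)$ is exactly that bijection, since its greedy row-to-row matching is Tymoczko's $\m$-diagram rule. One phrase is imprecise: Tymoczko's $\m$'s are not attached to columns of the tableau but come from the global greedy matching between consecutive rows, though this does not affect the argument.
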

		
		\subsection{Semistandardization}\label{sec: semistandarization}
		We now semistandardize \cref{thm:WJ}.  (We note that our construction is distinct from Russell's construction \cite{Rus} for SSYT of shape $n \times 3$.)  Let $\K= \std(\J)$ where $\J =  (J_1,J_2,\ldots,J_r)$ is a noncrossing tableau.  Let $\iotaStd: [3r] \to [n]$ be the order-preserving map such that $c_i(\J) = |\iotaStd^{-1}(i)|$.  We define $W(\J)$ by applying $\iotaStd$ to the labels of the boundary vertices of $W(\K)$.
		
		\begin{theorem}\label{thm:WJnormal}
			When applied to a (possibly not standard) noncrossing tableau, the resulting web is normal.  Furthermore, we obtain a bijection between noncrossing tableaux and normal webs.
		\end{theorem}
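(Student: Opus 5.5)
The plan is to prove both assertions through the strand combinatorics of $W(\J)$ and $W(\K)$, where $\K=\std(\J)$.

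\textbf{Normality.} Non-ellipticity of $W(\J)$ is inherited from $W(\K)$ by \cref{thm:WJ}, since relabeling boundary vertices does not change the graph. So it remains to check the normality condition of \cref{def:normalweb}: two strands that intersect must end at boundary vertices with different labels under $\iotaStd$.

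First I will describe the strands of Tymoczko's web $W(\K)$ in terms of the $\m$-diagram. Each interior vertex of $W(\K)$ comes from either a $Y$ (one per triple) or a resolved crossing. I expect to show that the strand triple of the white vertex of the $Y$ for $K=(a<b<c)$ is $\{a,b,c\}$ up to cyclic shift. Crossings of an arc of one $\m$ with an arc of another should produce vertices whose strand triples are made of endpoints of those two triples. This is routine bookkeeping with the left/right turning rules.

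Second, suppose two strands ending at $b_p$ and $b_q$ intersect with $\iotaStd(p)=\iotaStd(q)$. Then $p$ and $q$ lie in a single fiber $\iotaStd^{-1}(i)$, which is a block of consecutive letters of $[3r]$. By the construction of standardization, the letters in one fiber are arranged so that left endpoints are pushed right, right endpoints are pushed left, and coinciding middles are ordered $b_1>b_2>\cdots$ compatibly with nesting. I would argue that strands ending in the same fiber are ``parallel''. Concretely, the $\m$-arcs emanating from a block of equal letters are pairwise non-crossing by the local choice made in standardization. Hence, by \cref{prop:strandintersect} and the reducedness of $W(\K)$, the corresponding strands never meet.

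An alternative route I would keep in reserve goes through \cref{prop:acuteweb}. By that result it suffices to rule out two adjacent boundary vertices $b_p,b_{p+1}$ in the same fiber attached to a common white vertex. Such a white vertex would be a $Y$ of a triple containing both $p$ and $p+1$ with the same $\iotaStd$-value, which is impossible because the letters of a triple of $\J$ are distinct. The only other possibility is a white vertex arising from a crossing resolution adjacent to the boundary. The standardization perturbation was designed precisely to avoid creating such crossings between arcs at a repeated letter. This second route seems cleaner, and I would try it first.

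\textbf{Bijection.} Injectivity follows from the proposition recovering $\J$ from $(\std(\J),c(\J))$ together with \cref{thm:WJ}. For surjectivity, take a normal web $W$ with labels in $[n]$ and standardize it to $W^\std$ by the operations of \cref{fig:webops1}. The labels then induce an order-preserving surjection $\iotaStd$ and a content vector. By \cref{thm:WJ}, $W^\std=W(\K)$ for a standard noncrossing tableau $\K$. I then need to show that $\K=\std(\J)$ for the tableau $\J$ obtained by collapsing labels through $\iotaStd$. This requires two things. First, each triple of $\K$ maps to three distinct labels; this follows from normality, since the strand triple at the $Y$ vertex is exactly that triple. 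Second, the relative order within each fiber agrees with the standardization convention.

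\textbf{Main obstacle.} I expect the hardest step to be this last compatibility: that the order of letters in a fiber of $\K$ matches the left-shift/right-shift/middle-reversal rule of standardization. My plan is to use the fact that normality forbids intersecting strands in a fiber, so the arcs in each fiber are nested or disjoint. Then I would check that the noncrossing standardization is the unique non-crossing arrangement, by a case analysis on endpoint type (left, middle, right) modeled on the two configurations in \eqref{eq:intersect}.
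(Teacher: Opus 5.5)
Your injectivity argument is fine: from $W(\J)$ you recover $W(\K)$ and the content, then $\K$ by \cref{thm:WJ}, then $\J$ from $(\std(\J),c(\J))$. Your normality argument via \cref{prop:acuteweb} follows the paper's route. But the crossing case is the whole content of that proof, and the reason you give for it is false as stated.

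Standardization \emph{does} create crossings between arcs issuing from one fiber. For $\J=\{(1,3,5),(2,3,4)\}$ one gets $\K=\std(\J)=\{(1,4,5),(2,3,6)\}$ with $\iotaStd(3)=\iotaStd(4)=3$. Here the left arc $(1,4)$ crosses the right arc $(3,6)$.

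What you need is the identification the paper makes. Apart from $Y$-vertices, a white vertex joined to two consecutive boundary vertices $p<p+1$ comes only from the right-hand configuration in \eqref{eq:intersect}, with triples satisfying $a<b<a'<c<b'<c'$. The two boundary vertices are then $a'<c$, a left endpoint immediately followed by a right endpoint. At a repeated letter, standardization puts right endpoints before left endpoints, so $\iotaStd(a')\neq\iotaStd(c)$. Crossings at a shared middle are of the left-hand type, and their boundary-side vertex is black.

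Your first route fails for the same reason, and it also conflates $\m$-arcs with strands. Normality constrains strands, and in the example above the strands ending at $3$ and $4$ are disjoint even though the arcs from $3$ and $4$ cross.

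The surjectivity argument has a genuine gap, because its supporting claims are false.
\begin{enumerate}
\item The strand triple of the $Y$-vertex of $(a,b,c)$ is not $\{a,b,c\}$ in general. In $W(\{(1,4,5),(2,3,6)\})$ the strand triples are $135$ (black) and $235,145,136$ (white), matching $-\h_{135}+\h_{235}+\h_{145}+\h_{136}$ in \cref{fig:examples36}. So $236$ is not a strand triple at all, and normality gives no direct control over the labels of a triple of $\K$.
\item Collapsing $\K$ through $\iotaStd$ does not produce $\J$. Here $\iotaStd(\K)=\{(1,3,4),(2,3,5)\}$ is not even noncrossing, because standardization re-pairs left and right endpoints at a repeated middle; the recovery proposition uses the reversed pairing.
\item Your plan for the fiber-order step assumes that arcs within a fiber are nested or disjoint, which the same example refutes.
\end{enumerate}

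So nothing in the proposal shows that, for an arbitrary normal web, the tableau $\K$ of its standardization lies in the image of $\std$ for the given content. To salvage your route you would need the converse of the normality computation: a forbidden pattern in $\K$ must force a white vertex joined to two consecutive boundary vertices of one fiber. That is a real combinatorial lemma you have not supplied.

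The paper sidesteps this by building the inverse directly. The Khovanov--Kuperberg / Petersen--Pylyavskyy--Rhoades depth map turns the standardized web into a word in $\{1,0,-1\}$. Placing the labels of $W$ into the corresponding rows gives a semistandard tableau $T$, and $\Phi(T)$ is the required noncrossing tableau.
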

		\begin{proof}
			First, we can trivially assume that we work with noncrossing tableaux that use all the indices of $[n]$, and normal webs with boundary labels $[n]$ such that boundary vertices have degree $\geq 1$.  Henceforth, we make this assumption.

			Let $W$ be a non-elliptic web with black boundary.  Then by \cref{prop:acuteweb}, $W$ is normal if and only if for any white vertex $w$ connected to two boundary vertices with labels $\ell, \ell'$, we have $\ell \neq \ell'$.  
			
			Let $W(\K)$ be the web constructed from a standard noncrossing tableau $\K$.  Suppose that $W(\K)$ has a white vertex $w$ connected to two adjacent boundary vertices.  Then there are two possibilities for $w$: (i) it came from the trivalent vertex of a $Y$ of a triple $(a,b,c)$, or (ii) it came from resolving the intersection between the strand $b-c$ and $a'-b'$ between triples $(a,b,c)$ and $(a',b',c')$ where $a < b < a' < c < b' < c'$; the two adjacent boundary vertices are $a'$ and $c$.
			
			Now, suppose that $\K$ arose from standardization of a (nonstandard) noncrossing tableau $\J$.  In case (i), we have that  $\iotaStd(a,b,c)$ is one of the triples of $\J$, and so all three labels $\iotaStd(a), \iotaStd(b), \iotaStd(c)$ are distinct.  In case (ii), we note that we cannot have $\iotaStd(a') = \iotaStd(c)$.  This is because $a'$ is a left endpoint while $c$ is a right endpoint and this would contradict the definition of standardization.  We conclude that the construction $\J \mapsto W(\J)$ has image in normal webs.
			
			To show that this map is bijective, we construct an inverse following Khovanov--Kuperberg \cite{KK} and Petersen--Pylyavskyy--Rhoades \cite{PPR}.  Using the \emph{depth map} (\cite[Section 2]{PPR}), we produce from the web $W(\K)$ a word in $\{1,0,-1\}$ using each letter $r$ times.  We obtain a SSYT $T$ by assigning the corresponding labels of $W(\J)$ to the first, second, or third rows of $T$ respectively.  Composed with the bijection $T \mapsto \Phi(T) =: \J$ we have the inverse map.
		\end{proof}
		
\subsection{Noncrossing ray identities}\label{sec:tJ-def}

Recall some definitions from \cref{sec: planar basis}.  Let $\T^{3,n}$ denote the space of $2 \times (n-3)$ real matrices, with standard basis vectors $e_{i,s}$ for $i \in \{1,2\}$ and $s \in \{1, \ldots, n-3\}$.  For a triple $J = \{j_1, j_2, j_3\}$ with $j_1 < j_2 < j_3$, we have the vectors $\t_J \in \T^{3,n}$ given by
	\[
	\t_J = \sum_{s=j_1}^{j_2-2} e_{1,s} + \sum_{s=j_2-1}^{j_3-3} e_{2,s}.
	\]
	Row~1 of $\t_J$ is the indicator of the interval $[j_1,\, j_2 - 2]$ and Row~2 is the indicator of $[j_2 - 1,\, j_3 - 3]$.  Note that $\t_J = 0$ when $J$ is a cyclic triple.  Recall that the linear projection $\Psi: \R^{\binom{n}{3}} \to \T^{3,n}$ is defined by $\Psi(\h_J) = \t_J$ for each planar basis element $\h_J$.  

\begin{lemma}\label{lem:4-term}
	The $\t_J$ vectors satisfy the following exchange identities.
	\begin{enumerate}
		\item If $a < a' \leq b-1$ and $b+1 \leq c < c'$, then
		\[
		\t_{a,b,c} + \t_{a',b,c'} = \t_{a',b,c} + \t_{a,b,c'}.
		\]
		\item If $a + 1 \leq b < b'$ and $b' + 1 \leq c < c'$, then
		\[
		\t_{a,b,c} + \t_{a,b',c'} = \t_{a,b,c'} + \t_{a,b',c}.
		\]
		\item If $a < a' \leq b-1$ and $ b < b' \leq c-1$, then
		\[
		\t_{a,b,c} + \t_{a',b',c} = \t_{a,b',c} + \t_{a',b,c}.
		\]
		\item If $a<a'<b<b'<c<c'$, then 
		\[
		\t_{a,b,c} + \t_{a',b',c'} = \t_{a,b,c'} + \t_{a',b',c} = \t_{a',b,c} + \t_{a,b',c'}= \t_{a,b',c} + \t_{a',b,c'},
		\]
		where only the rightmost term is noncrossing. 
	\end{enumerate}
\end{lemma}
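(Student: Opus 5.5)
The identities are linear in the $\t$-vectors, so I will verify each one directly from the explicit formula
\[
\t_J=\one_{[j_1,j_2-2]}\oplus\one_{[j_2-1,j_3-3]},
\]
where $\one_{[p,q]}$ is the indicator of the interval $[p,q]$ (read as $0$ when $q<p$), the first summand is row~1 and the second is row~2. I will treat the two rows separately. Row~1 depends only on $(j_1,j_2)$ and row~2 only on $(j_2,j_3)$. So it suffices to check that the multiset of pairs $(j_1,j_2)$ agrees on both sides, and likewise for the pairs $(j_2,j_3)$, or, where the pairs differ, that the interval indicators still add up correctly.

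For (1), the left side has row-1 pairs $\{(a,b),(a',b)\}$ and the right side has $\{(a',b),(a,b)\}$; the row-2 pairs are $\{(b,c),(b,c')\}$ on both sides. Both rows therefore agree as multisets, and the identity is immediate. Case (2) works the same way: the row-1 pairs are $(a,b),(a,b')$ on both sides, and the row-2 pairs are $(b,c),(b',c')$ versus $(b,c'),(b',c)$. These differ, so the genuine check is
\[
\one_{[b-1,c-3]}+\one_{[b'-1,c'-3]}=\one_{[b-1,c'-3]}+\one_{[b'-1,c-3]}.
\]
This is the standard identity $\one_{[p,q]}+\one_{[p',q']}=\one_{[p,q']}+\one_{[p',q]}$ for $p\le p'$, $q\le q'$, which holds provided $p'\le q+1$ so that no empty-interval convention breaks it. Here $p'=b'-1\le c-2=q+1$ because $b'+1\le c$. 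Case (3) is symmetric: the row-2 pairs $(b,c),(b',c)$ match trivially, and row 1 reduces to the same interval identity for the left endpoints $a,a'$ and the right endpoints $b-2,b'-2$, with $a'\le b-1=(b-2)+1$ as the required hypothesis.

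For (4), all three sums have row-1 pairs drawn from $\{(a,b),(a',b'),(a,b'),(a',b)\}$ and row-2 pairs drawn from $\{(b,c),(b',c'),(b,c'),(b',c)\}$. I will apply the interval identity to each row. In row 1 it gives $\one_{[a,b-2]}+\one_{[a',b'-2]}=\one_{[a,b'-2]}+\one_{[a',b-2]}$, valid because $a'\le b-1$. In row 2 it gives the analogous identity, valid because $b'-1\le c-2$. Every one of the four expressions has row 1 equal to either $\{(a,b),(a',b')\}$ or $\{(a,b'),(a',b)\}$, and the two choices give equal sums; row 2 behaves the same way. Hence all four expressions are equal.

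The final remark, that only $\t_{a,b',c}+\t_{a',b,c'}$ is noncrossing, follows from the definition of noncrossing triples. In $\{a,b,c\},\{a',b',c'\}$ the arcs $(a,b)$ and $(a',b')$ cross. In the second and third pairs some arc also crosses: for $(a',b,c),(a,b',c')$, the arcs $(b,c)$ and $(b',c')$ cross. In the last pair the arcs $(a,b')$ and $(a',b)$ are nested, as are $(b',c)$ and $(b,c')$.

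The only delicate point in the whole argument is the boundary behaviour of empty intervals, since the interval identity can fail when an interval is empty. I expect this to be the main obstacle, and it is exactly what the inequality hypotheses in (1)–(4) control. I will check those inequalities carefully in each case, including the degenerate situations $a'=b-1$ and $c=b+1$.
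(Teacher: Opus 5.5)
Your proof is correct. The paper states this lemma without proof, leaving it as a direct check from the formula for $\t_J$, and your row-by-row verification is exactly that check: (1) holds identically, and (2)--(4) reduce to the interval identity, whose hypothesis $p'\le q+1$ you correctly derive from the stated inequalities. The step you call ``standard'' is immediate. For $p\le p'\le q+1$ and $q\le q'$, the intervals $[p,q]$ and $[p',q']$ have union $[p,q']$ and (possibly empty) intersection $[p',q]$, so inclusion--exclusion gives the identity, including the degenerate empty-interval cases you flagged.
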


\begin{corollary}\label{cor:difference-invariant}
	Let $a < b < c$ and $a' < b' < c'$ be triples in $\binom{[n]}{3}$.
	\begin{enumerate}
		\item If $b = b'$, then $\t_{a,b,c} - \t_{a,b,c'}$ depends only on $b$, $c$, and $c'$; that is, it is independent of $a$, within
		the range $a, a' \le b - 1$.
		\item If $a = a'$, then whenever $a + 1 \le b \le b'$ and $b' + 1 \le c, c'$, we have
		\[
		\t_{a,b,c} - \t_{a,b,c'} = \t_{a,b',c} - \t_{a,b',c'};
		\]
		that is, $\t_{a,b,c} - \t_{a,b,c'}$ is independent of the middle index.
		\item If $c = c'$, then whenever $a, a' \le b - 1$ and $b \le b' \le c - 1$, we have
		\[
		\t_{a,b,c} - \t_{a',b,c} = \t_{a,b',c} - \t_{a',b',c};
		\]
		that is, $\t_{a,b,c} - \t_{a',b,c}$ is independent of the middle index.
	\end{enumerate}
\end{corollary}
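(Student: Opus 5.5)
The statement just above is \cref{cor:difference-invariant}. I would deduce it directly from the exchange identities of \cref{lem:4-term}, since each part of the corollary is a rearrangement of one of the four-term identities there. Equivalently, I could argue straight from the explicit formula $\t_J = \sum_{s=j_1}^{j_2-2} e_{1,s} + \sum_{s=j_2-1}^{j_3-3} e_{2,s}$. That formula makes the claims transparent: row~1 of $\t_J$ depends only on $(j_1,j_2)$, and row~2 depends only on $(j_2,j_3)$. I would present both routes briefly, using the explicit formula as the main argument, because it avoids worrying about the strict-inequality hypotheses in \cref{lem:4-term}.

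\textbf{Part (1).} Take $\t_{a,b,c} - \t_{a,b,c'}$. Row~1 of both vectors is the indicator of $[a,b-2]$, so it cancels. What remains is the difference of the row~2 indicators of $[b-1,c-3]$ and $[b-1,c'-3]$, which visibly depends only on $b,c,c'$. Equivalently, \cref{lem:4-term}(1) rearranges to $\t_{a,b,c}-\t_{a,b,c'} = \t_{a',b,c}-\t_{a',b,c'}$. That identity requires $b+1\le c<c'$; the cases $c=c'$ and $c>c'$ are trivial or follow by swapping roles.

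\textbf{Part (2).} In $\t_{a,b,c}-\t_{a,b,c'}$ the row~1 parts again cancel, leaving $\mathbf{1}_{[b-1,c-3]}-\mathbf{1}_{[b-1,c'-3]}$ in row~2. This equals $\pm\mathbf{1}_{[\min(c,c')-2,\,\max(c,c')-3]}$, which is independent of $b$ provided $b-1\le \min(c,c')-2$. That condition is guaranteed by the hypothesis $b'+1\le c,c'$, and it applies equally with $b'$ in place of $b$. This is also \cref{lem:4-term}(2) rearranged.

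\textbf{Part (3).} Here the row~2 parts of $\t_{a,b,c}$ and $\t_{a',b,c}$ cancel, leaving $\mathbf{1}_{[a,b-2]}-\mathbf{1}_{[a',b-2]}$ in row~1. This equals $\pm\mathbf{1}_{[\min(a,a'),\,\max(a,a')-1]}$ and is independent of $b$, as long as $\max(a,a')\le b-1$. That holds by hypothesis, and also for $b'\ge b$. This is \cref{lem:4-term}(3) rearranged.

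The only real care needed is checking that the hypotheses keep every interval well-formed, meaning that no truncation occurs when an endpoint crosses another. For example, in part~(2) this requires $c,c'\ge b'+1$, so that the row~2 intervals start at or before both right endpoints. I expect this to be the sole, mild, obstacle; everything else is bookkeeping with indicator vectors.
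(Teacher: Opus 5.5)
Your proof is correct. It is essentially the paper's route: the paper presents this corollary as an immediate consequence of \cref{lem:4-term}, and parts (1)--(3) are exactly the rearrangements of the exchange identities \cref{lem:4-term}(1)--(3) that you note as your secondary route.

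Your primary argument works one level lower, computing directly with the row indicators $[j_1,j_2-2]$ and $[j_2-1,j_3-3]$ of $\t_J$. This has two advantages. It is self-contained, since the paper states \cref{lem:4-term} without proof. It also covers directly the equal or reversed cases ($a=a'$, $b=b'$, $c=c'$, $c>c'$) that the lemma's strict hypotheses exclude.
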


\subsection{Strand triples of resolution vertices}\label{sec:strand-triples-resolution}
Let $\J = (J_1,\ldots,J_r)$ be a noncrossing tableau without repeated indices and let $J =(a<b<c) \in \J$ be the triple with the largest right endpoint $c$.  Let $W' = W(\J\setminus J)$.  We consider the construction of $W(\J)$ by adding the $\m$-diagram $\m_J$ of $J$ to $W'$.

Two types of crossings between $\m_J$ and $W'$ arise:
\begin{itemize}
	\item \emph{Left-arc crossings:} the left arc of $\m_J$ crosses edges of $W'$ originally coming from right arcs of existing $\m$-diagrams.  These right arcs come from triples $(a',b',c')$ satisfying $a' < b' < a < c' < b < c$. 
	\item \emph{Right-arc crossings:} the right arc of $\m_J$ crosses edges of $W'$ originally coming from left arcs of existing $\m$-diagrams.  These left arcs come from triples $(a',b',c')$ satisfying $a'<a<b<b'<c'<c$.
\end{itemize}

In the following computation, the assumption that $c$ is the largest letter is crucial.

\begin{lemma}\label{lem:strand-triples}
	Suppose the left arc of $\m_J$ crosses $r$ edges of $W'$, where the $i$-th crossed edge is adjacent to a white vertex with strand triple $(a_i, b_i, c_i)$, with $a_1 < a_2 < \cdots < a_r$.  Suppose the right arc of $\m_J$ crosses $\ell$ edges of $W'$, where the $i$-th crossed edge is adjacent to a white vertex with strand triple $(a'_i, b'_i, c'_i)$, with $c'_1 < c'_2 < \cdots < c'_\ell$.  Then the resolution of all $r + \ell$ crossings creates the following vertices.
	\begin{enumerate}
		\item From left-arc crossings ($r+1$ white vertices, $r$ black vertices), all sharing right endpoint index $\gamma$:
		\begin{itemize}
			\item White strand triples:
			\[
			(a, c_r, \gamma), (b_r,c_{r-1}, \gamma), (b_{r-1},c_{r-2},\gamma), \ldots, (b_1,b,\gamma) 
			\]
			where $\gamma = c$ if $\ell = 0$, and $\gamma = c'_1$ if $\ell \geq 1$.  If $r = 0$ we instead get a single triple $(a,b,\gamma)$.
			\item Black strand triples: 
			\[
			(b_r, c_r, \gamma),\; (b_{r-1},c_{r-1}, \gamma), \ldots, (b_1, c_1, \gamma).
			\]
		\end{itemize}
		\item From right-arc crossings ($\ell$ white vertices, $\ell$ black vertices), all sharing middle $b$:
		\begin{itemize}
			\item White strand triples:
			\[
			(a'_1, b, c'_2), (a'_2, b, c'_3), \ldots, (a'_\ell, b, c).
			\]
			\item Black strand triples: \[(a'_1, b, c'_1), (a'_2, b, c'_2),\ldots,(a'_\ell, b, c'_\ell).\]
		\end{itemize}
	The strand triples of existing vertices are unchanged.
	\end{enumerate}
\end{lemma}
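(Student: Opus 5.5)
We plan to compute the strand triples directly by tracing strands through the resolved $\m$-diagram. Strands in a web built from Tymoczko's construction can be read off from the $\m$-diagram: away from resolved crossings, a strand entering a white vertex of a $Y$ turns left and one entering a black vertex turns right. At a resolved crossing, the two new vertices (one black, one white, joined by a short edge) act as a local ``swap'' of strands. Concretely, the strands passing through the black--white pair are the two strands of the crossing arcs together with the strand running along the short connecting edge. The first step is therefore a local lemma: at a resolved crossing between a left arc and a right arc, the strand triple of the white (resp.\ black) vertex is determined by the endpoints reached by continuing along the arcs in the directions dictated by the left/right turning rules. We would prove this by inspecting the two pictures in the resolution rule.

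Next we use the hypothesis that $c$ is the largest letter. It implies that the right leg of $Y_J$, ending at $c$, is the outermost arc at its right end. Consequently, any strand that reaches the boundary by running down the right arc of $\m_J$ past all right-arc crossings ends at $c$. Strands that leave the right arc earlier exit at the endpoint $c'_1$, the smallest of the crossed right endpoints. This produces $\gamma=c$ when $\ell=0$ and $\gamma=c'_1$ when $\ell\geq 1$. We then handle the left arc by induction on the crossings ordered by $a_1<\cdots<a_r$. Walking along the left arc from $a$ toward the trivalent vertex, the $i$-th crossing replaces the strand of the crossed right arc. Its white vertex then sees the previously exiting index ($c_r$ at the first step, $c_{i-1}$ at later steps) together with the middle $b_i$ of the crossed triple. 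Its black vertex sees $(b_i,c_i,\gamma)$, because the crossed right arc of $(a_i,b_i,c_i)$ carries strands ending at $b_i$ and $c_i$. The last white vertex, the trivalent vertex of $Y_J$, collects $b_1$ and $b$.

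Symmetrically, along the right arc the shared strand is the one ending at $b$, which travels down the left leg region. The $i$-th crossing with the left arc of $(a'_i,b'_i,c'_i)$ gives a black vertex $(a'_i,b,c'_i)$ and a white vertex $(a'_i,b,c'_{i+1})$, with the convention $c'_{\ell+1}=c$. This shift by one index appears because the strand leaving through the next crossing exits at the next larger right endpoint. It can again be checked by an induction along the arc.

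Finally, the statement that the strand triples of existing vertices are unchanged follows from two observations. First, adding $\m_J$ only reroutes strands locally at the new crossings. Second, since $c$ is maximal, every rerouted strand still exits at the same boundary vertex as before, because the new arc lies outermost on the right. The main obstacle is making the local turning analysis at resolved crossings rigorous and consistent with the two orientation cases of \eqref{eq:intersect}. A secondary difficulty is tracking correctly which strand is ``carried'' along each arc between consecutive crossings. We would first verify the claimed formulas on small cases ($r,\ell\le 1$) to fix conventions, and then run the two inductions.
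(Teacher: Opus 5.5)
Your route is the paper's. Its proof reads the triples off the resolved $r=2$ figure, in which each crossed edge runs straight to the boundary vertex $c_i$, and declares the right arc analogous. Your strand tracing does recover the \emph{new} triples once the bookkeeping below is fixed. The genuine gap is your last paragraph: maximality of $c$ does not control rerouting.

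The local $H$ at the $i$-th left-arc crossing does the following.
\begin{itemize}
\item The strand leaving the old white vertex $W_i$ along the crossed edge goes straight through, so it still ends at $c_i$.
\item The strand now entering $W_i$ from the new black vertex still ends at $b_i$.
\item The strand that ran along the crossed edge \emph{towards} $W_i$ is diverted, to $b_{i+1}$, or to $a$ when $i=r$.
\end{itemize}
Old triples are therefore preserved only if each diverted strand starts at the boundary, i.e.\ if every crossed edge is incident to a boundary vertex. Nothing forces this, and without it the clause is false.

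Write $Y_{xyz}$ for the white vertex of the $Y$ of $(x,y,z)$, and take $\J=((1,2,5),(4,6,7),(3,8,9))$ with $J=(3,8,9)$. The region under the arc $(4,6)$ is nested in the region under $(3,8)$, so the arc $(2,5)$ meets $(4,6)$ nearer to $5$. Hence in $W'$ the edge crossed by the left arc of $\m_J$ joins $Y_{125}$ (triple $(1,2,5)$) to the black resolution vertex $B$ (triple $(2,5,7)$); it does not reach the boundary. The new vertices do get $(3,5,9)$, $(2,5,9)$, $(2,8,9)$. But the strand $6\to Y_{467}\to B\to Y_{125}\to 2$ of $W'$ becomes $6\to Y_{467}\to B\to X\to 3$, with $X$ the new white vertex. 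So $B$ and $Y_{467}$ change from $(2,5,7),(2,6,7)$ to $(3,5,7),(3,6,7)$.

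The same happens on the right arc. For $\J=((1,2,4),(3,7,8),(5,6,9))$, the strand $3\to\cdots\to 8$ through the two old resolution vertices is rerouted to end at $c=9$, which directly contradicts your ``outermost'' argument.

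To repair this you have two options:
\begin{itemize}
\item add the hypothesis that every crossed edge is a boundary edge (this is not automatic); or
\item replace ``unchanged'' by the correct substitution rule for old vertices upstream of a diverted strand, and check that these changes vanish after applying $\Psi$, which is all \cref{lem:telescope} uses. In the first example, $\t_{367}-\t_{357}=\t_{267}-\t_{257}$ by \cref{cor:difference-invariant}.
\end{itemize}

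Smaller bookkeeping points:
\begin{itemize}
\item Walking from $a$ you meet the crossed edges in the order $c_r<\cdots<c_1$. Index by the $c_i$; the $a_i$ play no role and need not even be distinct.
\item The white vertex on the edge to $c_i$ has triple $(b_{i+1},c_i,\gamma)$, with $b_{r+1}:=a$. Your sentence assigns $(b_i,c_{i-1},\gamma)$ to the $i$-th crossing, and at the first step it would give $(b_r,c_r,\gamma)$, which is the black triple.
\item On the right arc, the common strand ends at $b$ and leaves $Y_J$ down the middle stub, not the left leg.
\item The reason $\gamma=c'_1$ is this: the strand from $a$ passes straight through every left-arc $H$ and through $Y_J$. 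It then turns at the first right-arc $H$ onto the crossed edge and follows the old strand ending at $c'_1$.
\end{itemize}
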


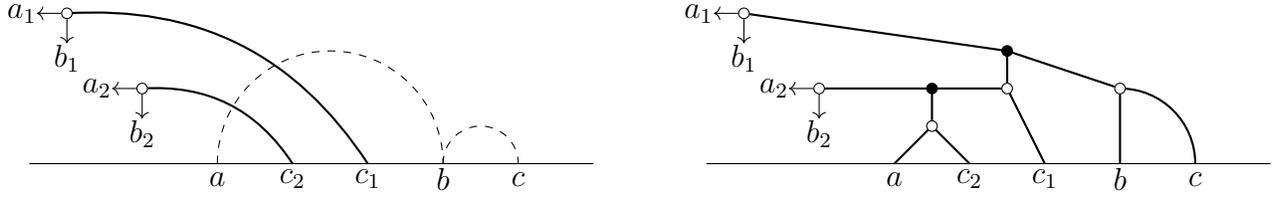
\begin{figure}
\begin{center}
\begin{tikzpicture}
\draw (3.5,0) -- (11,0);
\draw[dashed] (6,0) arc (180:0:1.5) circle;
\draw[dashed] (9,0) arc (180:0:0.5) circle;

\draw[thick] (5,1) to [bend left = 30](7,0);
\draw[thick] (4,2) to [bend left = 30](8,0);
\draw[->] (5,1) -- (5,0.6);
\draw[->] (5,1) -- (4.6,1);
\draw[->] (4,2) -- (4,1.6);
\draw[->] (4,2) -- (3.6,2);

\filldraw[white] (5,1) circle (2pt);
\draw (5,1) circle (2pt);
\filldraw[white] (4,2) circle (2pt);
\draw (4,2) circle (2pt);

\node at (5,0.4) {$b_2$};
\node at (4.4,1) {$a_2$};
\node at (4,1.4) {$b_1$};
\node at (3.4,2) {$a_1$};

\node at (6,-0.2) {$a$};
\node at (7,-0.2) {$c_2$};
\node at (8,-0.2) {$c_1$};
\node at (9,-0.2) {$b$};
\node at (10,-0.2) {$c$};

\begin{scope}[shift={(9,0)}]
\draw (3.5,0) -- (11,0);

\draw[->] (5,1) -- (5,0.6);
\draw[->] (5,1) -- (4.6,1);
\draw[->] (4,2) -- (4,1.6);
\draw[->] (4,2) -- (3.6,2);

\draw[thick] (5,1) -- (6.5,1) -- (6.5,0.5) -- (6,0);
\draw[thick] (6.5,0.5)--(7,0);
\draw[thick] (6.5,1)--(7.5,1) -- (8,0);
\draw[thick] (7.5,1)--(7.5,1.5)--(4,2);
\draw[thick] (7.5,1.5)--(9,1)--(9,0);
\draw[thick] (9,1) to [bend left = 45] (10,0);

\filldraw[white] (5,1) circle (2pt);
\draw (5,1) circle (2pt);
\filldraw[white] (4,2) circle (2pt);
\draw (4,2) circle (2pt);
\filldraw[white] (6.5,0.5) circle (2pt);
\draw (6.5,0.5) circle (2pt);
\filldraw[white] (7.5,1) circle (2pt);
\draw (7.5,1) circle (2pt);
\filldraw[white] (9,1) circle (2pt);
\draw (9,1) circle (2pt);

\filldraw[black] (6.5,1) circle (2pt);
\filldraw[black] (7.5,1.5) circle (2pt);

\node at (5,0.4) {$b_2$};
\node at (4.4,1) {$a_2$};
\node at (4,1.4) {$b_1$};
\node at (3.4,2) {$a_1$};

\node at (6,-0.2) {$a$};
\node at (7,-0.2) {$c_2$};
\node at (8,-0.2) {$c_1$};
\node at (9,-0.2) {$b$};
\node at (10,-0.2) {$c$};
\end{scope}
\end{tikzpicture}
\end{center}
\caption{Left: The left arc of $\m_J$ (drawn dashed) intersects a number of edges of the web $W'$ (drawn solid).  The strand triples of the white interior vertices on these edges are denoted $(a_i,b_i,c_i)$.  Right: After resolving we create $r+1$ new white vertices and $r$ new black vertices with strand triples described by \cref{lem:strand-triples}.  Here $r = 2$.}
\label{fig:resolve}
\end{figure}

\begin{proof}
Figure 1 shows the left arc crossings (in the representative case of $r=2$), from which the strand triples in (1) can be computed.  The right arc crossings can be recovered in a similar way.
\end{proof}

Let $\tropP_\bullet(W):= \tropP_\bullet(Q(W),\z(W))$ denote the tropical Pl\"ucker vector.

\begin{lemma}\label{lem:telescope}
In the above situation, we have 
	\[
	\Psi(\tropP_\bullet(W)) - \Psi(\tropP_\bullet(W')) = \t_{a,b,c}.
	\]
\end{lemma}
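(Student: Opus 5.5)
Apply \cref{thm:pbexpansion} to both $W$ and $W'$, then take $\Psi$ term by term. Since $\Psi(\h_J)=\t_J$ and $\Psi$ kills the lineality space (\cref{thm:ELnc}), we get
\[
\Psi(\tropP_\bullet(W)) = \sum_{T\ \mathrm{white}} \t_{i_T,j_T,k_T} - \sum_{T\ \mathrm{black}} \t_{i_T,j_T,k_T},
\]
and similarly for $W'$. By \cref{lem:strand-triples}, every interior vertex of $W'$ keeps its strand triple in $W$. So the difference is just the signed sum of $\t$-vectors over the newly created vertices. Two families of new vertices appear: those from the left-arc crossings and those from the right-arc crossings. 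This also requires $W$ and $W'$ to be normal with black boundary, which holds by \cref{thm:WJnormal}. We also use that $\J$ has no repeated indices, so the webs are standard.

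The right-arc family telescopes along the middle index $b$:
\[
\sum_{m=1}^{\ell}\bigl(\t_{a'_m,b,c'_{m+1}}-\t_{a'_m,b,c'_m}\bigr), \qquad c'_{\ell+1}:=c .
\]
By \cref{cor:difference-invariant}(1), each difference $\t_{a'_m,b,c'_{m+1}}-\t_{a'_m,b,c'_m}$ is independent of the left index. Replacing $a'_m$ by a common left index, say $a$, is allowed because $a<b$ and $a'_m<b$. The sum then collapses to $\t_{a,b,c}-\t_{a,b,c'_1}=\t_{a,b,c}-\t_{a,b,\gamma}$. When $\ell=0$ this family is empty and $\gamma=c$, so the formula still holds.

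For the left-arc family, every new triple has right endpoint $\gamma$, so the signed sum is
\[
\t_{a,c_r,\gamma}+\sum_{m=1}^{r}\bigl(\t_{b_m,c_{m-1},\gamma}-\t_{b_m,c_m,\gamma}\bigr), \qquad c_0:=b .
\]
We need to show this equals $\t_{a,b,\gamma}$. First compute $\t_{b_m,c_{m-1},\gamma}-\t_{b_m,c_m,\gamma}$ directly from the row description of $\t_J$. The row-1 contribution is $e_1$ on $[c_m-1,c_{m-1}-2]$. The row-2 contribution is $-e_2$ on $[c_m-1,c_{m-1}-2]$. This expression does not involve $b_m$, which is the content of \cref{cor:difference-invariant}(3) and \cref{lem:4-term}(3), the middle-index exchange with fixed right endpoint. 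Summing over $m$ therefore telescopes to $e_1-e_2$ on $[c_r-1,b-2]$. Adding $\t_{a,c_r,\gamma}$ gives exactly $\t_{a,b,\gamma}$, since moving the middle index from $c_r$ up to $b$ shifts that same block from row 2 to row 1. The case $r=0$ is immediate.

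Adding the two families gives $\t_{a,b,\gamma}+(\t_{a,b,c}-\t_{a,b,\gamma})=\t_{a,b,c}$, as claimed.

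The main obstacle is checking that the ordering inequalities needed by \cref{cor:difference-invariant} hold. We need $a<c_r<\cdots<c_1<b$, that each $b_m$ lies below $c_m$, and that $c'_m<c$. These should follow from the crossing conditions $a'<b'<a<c'<b<c$ and $a'<a<b<b'<c'<c$ together with noncrossing. A secondary point is the degenerate cases, for example $b_m=c_{m}-1$, where some $\t$ vanish. These should be checked directly from the interval formula, which stays valid when an interval is empty.
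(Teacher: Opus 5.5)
Your proposal is correct and follows the paper's proof essentially step for step: you split the difference into the left-arc and right-arc contributions of \cref{lem:strand-triples}, collapse the right-arc sum to $\t_{a,b,c}-\t_{a,b,\gamma}$ using \cref{cor:difference-invariant}(1), and collapse the left-arc sum to $\t_{a,b,\gamma}$. The only cosmetic difference is in the left-arc sum: the paper uses \cref{lem:4-term}(3) to replace each $b_m$ by $a$ and then telescopes in $\t_{a,c_m,\gamma}$, whereas you verify the same $b_m$-independence by an explicit row computation and telescope the resulting blocks directly.
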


\begin{proof}
We apply \cref{thm:pbexpansion} and use the projection $\Psi(\h_J) = \t_J$.  We have 
$$
\Psi(\tropP_\bullet(W)) - \Psi(\tropP_\bullet(W')) = L + R
$$
where with $c_0 = b$ the contribution from \cref{lem:strand-triples}(1) is
$$
L = \t_{a,c_r,\gamma} + \sum_{i=1}^{r} \t_{ b_i,c_{i-1}, \gamma} - \sum_{i=1}^r \t_{b_i,c_i,\gamma}
$$
and with $c'_{\ell+1} = c$ the contribution from \cref{lem:strand-triples}(2) is
$$
R = \sum_{i=1}^\ell \t_{a'_i,b,c'_{i+1}} - \sum_{i=1}^\ell \t_{a'_i,b,c'_i}.
$$
For simplicity, we have assumed that $r > 0$ and $\ell > 0$.  When either $r=0$ or $\ell =0$ the computation is simpler.
By \cref{cor:difference-invariant}, 
$$
R = \sum_{i=1}^\ell \left( \t_{a,b,c'_{i+1}}- \t_{a,b,c'_{i}}\right) = \t_{a,b,c}-\t_{a,b,\gamma}.
$$
By \cref{lem:4-term}(3), and setting $b_0 = a$,
$$
L =  \t_{a,c_r,\gamma} + \sum_{i=1}^{r} \t_{ b_i,c_{i-1}, \gamma} - \sum_{i=1}^r \t_{b_i,c_i,\gamma} = \t_{a,c_r,\gamma} + \sum_{i=1}^r\left(-\t_{a,c_i,\gamma} +\t_{a,c_{i-1},\gamma}\right) = \t_{a,b,\gamma}.
$$
The claim follows.
\end{proof}

\subsection{The planar basis expansion of the web of a noncrossing tableau}\label{sec:nc-planar-basis}

\begin{theorem}\label{thm:nc-to-web}
	Let $\J = (J_1,J_2,\ldots,J_r)$ be a (possibly not standard) noncrossing tableau and $W(\J)$ be the normal web from \cref{thm:WJnormal}.  Then
	\[
	\Psi(\tropP_\bullet(W(\J))) = \sum_i \t_{J_i}.
	\]
\end{theorem}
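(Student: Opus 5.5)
The plan is to reduce to the standard case and then induct on the number $r$ of triples, using \cref{lem:telescope} as the inductive step.

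\emph{Reduction.} First I will show it suffices to treat standard noncrossing tableaux (equivalently, tableaux without repeated indices). Let $\K = \std(\J)$ with order-preserving map $\iotaStd:[3r]\to[n]$. By construction, $W(\J)$ is $W(\K)$ with boundary labels relabeled by $\iotaStd$, and $W(\J)$ is normal by \cref{thm:WJnormal}. I will then apply \cref{thm:pbexpansion} to both webs. Strands of $W(\J)$ and $W(\K)$ coincide as paths; only their end labels change by $\iotaStd$. So every white or black triangle with strand triple $(i,j,k)$ in $Q(W(\K))$ gives the triple $\iotaStd(i,j,k)$ in $Q(W(\J))$, and normality guarantees these entries are distinct. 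Applying $\Psi$ gives
\[
\Psi(\tropP_\bullet(W(\J))) = \sum_{T \text{ white}} \t_{\iotaStd(T)} - \sum_{T\text{ black}} \t_{\iotaStd(T)} .
\]
It therefore suffices to prove the statement for $\K$, provided the relabeling argument is carried through the induction itself. Concretely, I will run the induction below directly on $\J$, after perturbing coincident letters as in the standardization procedure. \cref{lem:strand-triples} and \cref{cor:difference-invariant} only use the relative order of the labels, and the identities of \cref{lem:4-term} remain valid when adjacent labels coincide, since $\t_J=0$ for cyclic $J$ and the degenerate $\t$'s behave consistently. This compatibility is the step I expect to be the main obstacle. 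I will attack it first by checking \cref{cor:difference-invariant} when $c'_i = c'_{i+1}$ or $b_i = c_{i-1}$ after applying $\iotaStd$, and I will fall back to \cite[Section 3.4]{ELnc} (as in the proof of \cref{prop:cross-ratio-formula}) if needed.

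\emph{Induction.} For standard $\J$ I will induct on $r$. The base case $r=0$ is the empty web, for which $\tropP_\bullet \equiv 0$ and $\Psi = 0$. More usefully, for $r=1$ the web is a single $Y$ whose white vertex has strand triple $J_1$, so \cref{thm:pbexpansion} gives $\t_{J_1}$ directly.

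For the inductive step, let $J=(a<b<c)$ be the triple with the largest right endpoint and set $W'=W(\J\setminus J)$. Tymoczko's construction is local and the $\m$-diagram is well defined up to isotopy, so $W(\J)$ is obtained from $W'$ by adding $\m_J$ and resolving its crossings; this is exactly the setup of \cref{sec:strand-triples-resolution}. One point to address is that $W'$ lives on the boundary labels $[n]\setminus J$. I will handle this by viewing the letters $a,b,c$ as degree-zero boundary vertices of $W'$, which changes neither the Fermat--Le function on the remaining labels nor the strand triples. \cref{lem:telescope} then gives $\Psi(\tropP_\bullet(W))-\Psi(\tropP_\bullet(W'))=\t_{a,b,c}$. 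Combined with the inductive hypothesis $\Psi(\tropP_\bullet(W'))=\sum_{J_i\neq J}\t_{J_i}$, this yields the claim.
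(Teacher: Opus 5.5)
Your argument for tableaux without repeated indices is the paper's: repeated use of \cref{lem:telescope}. Treating $a,b,c$ as degree-zero boundary vertices of $W'$ is a sensible way to make the inductive hypothesis apply; note that it means the induction really runs over tableaux without repeated indices, not over standard ones.

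The gap is in the passage to a general $\J$. Relabeling the computation for $\K=\std(\J)$ by $\iotaStd$ can at best give $\Psi(\tropP_\bullet(W(\J)))=\sum_i\t_{\iotaStd(K_i)}$. This is not the claim, because $\iotaStd(\K)\neq\J$ in general. Triples of $\J$ sharing a middle letter have nested $(a,c)$-pairs, while standardization pairs the $m$-th smallest left endpoint with the $m$-th smallest right endpoint. For $\J=\{(1,3,6),(2,3,5)\}$ on $[6]$ one has $\K=\{(1,4,5),(2,3,6)\}$ and $\iotaStd(\K)=\{(1,3,5),(2,3,6)\}$. This shares no triple with $\J$ and is not even noncrossing. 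So your plan is missing the step $\sum_i\t_{\iotaStd(K_i)}=\sum_i\t_{J_i}$, which the paper obtains from \cref{lem:4-term}(1); here it reads $\t_{135}+\t_{236}=\t_{136}+\t_{235}$. For the same reason, an induction run literally on $\J$, peeling off its triple with largest right endpoint, does not follow how $W(\J)$ is actually built.

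You are right that carrying the standard identity through $\iotaStd$ needs justification, but the reason you give is not the operative one. Vanishing of $\t_J$ for cyclic $J$ has nothing to do with coincident labels, and no triple with a repeated letter ever has to be interpreted. The clean fact is that $\t_{j_1j_2j_3}$ is additive in its letters. Let $H(m)\in\R^{n-3}$ be the indicator vector of $\{s\ge m\}$. Then row~1 of $\t_{j_1j_2j_3}$ is $H(j_1)-H(j_2-1)$ and row~2 is $H(j_2-1)-H(j_3-2)$, so $\t_{j_1j_2j_3}$ is a sum of three terms depending only on $j_1$, $j_2$, $j_3$ respectively.

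With this, the argument goes through:
\begin{itemize}
\item Every identity used in \cref{lem:4-term}, \cref{cor:difference-invariant} and \cref{lem:telescope} trades triples with equal signed multisets of first, middle and last letters.
\item The triangles created by adding $(a,b,c)$ have signed letter multisets $\{a\}$, $\{b\}$, $\{c\}$.
\item Since $W(\J)$ is normal, each relabeled strand triple $\iotaStd(T)$ is strictly increasing. So this bookkeeping survives $\iotaStd$ and expresses $\sum_T\pm\t_{\iotaStd(T)}$ through the left, middle and right letters of $\iotaStd(\K)$.
\item These multisets coincide with those of $\J$, because standardization never changes whether a letter is a left endpoint, middle or right endpoint.
\end{itemize}
That yields $\sum_i\t_{J_i}$ and closes both the transfer step and the re-pairing step at once.
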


\begin{proof}
Repeatedly applying \cref{lem:telescope}, we see that the statement holds for $\J$ a standard noncrossing tableau.

Now let $\J = (J_1,\ldots,J_r)$ be arbitrary and let $\K = (K_1,\ldots,K_r)$ be the standardization of $\J$, and let $\iotaStd: [n'] \to [n]$ be the corresponding map on labels.  Since $W(\J)$ is obtained from $W(\K)$ by applying $\iotaStd$ to the boundary labels, we have 
$$
\tropP_\bullet(W(\J)) = \iotaStd(\tropP_\bullet(W(\K)))
$$
where $\iotaStd(\h_{a,b,c}) = \h_{\iotaStd(a),\iotaStd(b),\iotaStd(c)}$.  Thus
$$
\Psi(\tropP_\bullet(W(\J)))  = \sum_i \t_{\iotaStd(K_i)}.
$$
It is not always the case that $\J = (\iotaStd(K_1),\iotaStd(K_2),\ldots,\iotaStd(K_r))$, but we do have $\sum_i \t_{\iotaStd(K_i)} = \sum_i \t_{J_i}$, using \cref{lem:4-term}.
\end{proof}

\begin{example}\label{example: 39}
	We illustrate \cref{thm:nc-to-web} in the case of a positive tropical Pl\"ucker vector $\tropP_\bullet$ arising from a standard CAT(0) planar graph.  Let
$$\tropP_\bullet = -\h_{136}+\h_{137}+\h_{146}+\h_{236}-\h_{379}+\h_{389}-\h_{469}+\h_{479}+\h_{569} \in \Trop_{>0}\Gr(3,9),$$
with normal model given in \cref{fig: 39 normal}.  We have
\begin{eqnarray*}
	\Psi(\tropP_\bullet) & = & \t_{146}+\t_{237}+\t_{589}.
\end{eqnarray*}
Compare to \cref{fig:resolve} with $(a_1,b_1,c_1) = (1,4,6)$ and $(a_2,b_2,c_2) = (2,3,7)$ and $(a,b,c) = (5,8,9)$.
	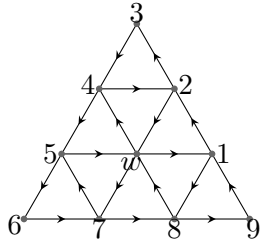
\begin{figure}[h!]
	\begin{tikzpicture}[scale=0.5]
		\tikzset{->-/.style={decoration={markings, mark=at position 0.55 with {\arrow{stealth}}},postaction={decorate}}}
		
		\pgfmathsetmacro{\h}{1.732}
		
		\coordinate (1) at (3, 3*\h);
		
		\coordinate (y) at (4, 2*\h);
		\coordinate (2) at (2, 2*\h);
		
		\coordinate (6) at (5, \h);
		\coordinate (w) at (3, \h);
		\coordinate (z) at (1, \h);
		
		\coordinate (5) at (6, 0);
		\coordinate (x) at (4, 0);
		\coordinate (4) at (2, 0);
		\coordinate (3) at (0, 0);
		
		\draw[->-] (5) -- (6);
		\draw[->-] (x) -- (5);
		\draw[->-] (4) -- (x);
		\draw[->-] (3) -- (4);
		\draw[->-] (6) -- (y);
		\draw[->-] (y) -- (1);
		\draw[->-] (1) -- (2);
		\draw[->-] (2) -- (z);
		\draw[->-] (z) -- (3);
		
		\draw[->-] (6) -- (x);
		\draw[->-] (x) -- (w);
		\draw[->-] (w) -- (4);
		\draw[->-] (4) -- (z);
		\draw[->-] (z) -- (w);
		\draw[->-] (w) -- (6);
		\draw[->-] (w) -- (2);
		\draw[->-] (y) -- (w);
		\draw[->-] (2) -- (y);
		
		\foreach \v in {1,2,3,4,5,6,x,y,w,z} {
			\fill[black!60] (\v) circle (2.5pt);
		}
		
		\node at ($(1)+(0,0.3)$) {\small $3$};
		\node at ($(2)+(-0.3,0.1)$) {\small $4$};
		\node at ($(3)+(-0.25,-0.15)$) {\small $6$};
		\node at ($(4)+(0,-0.25)$) {\small $7$};
		\node at ($(5)+(0.1,-0.25)$) {\small $9$};
		\node at ($(6)+(0.3,0)$) {\small $1$};
		\node at ($(x)+(0,-0.25)$) {\small $8$};
		\node at ($(y)+(0.3,0.1)$) {\small $2$};
		\node at ($(w)+(-0.15,-0.25)$) {\small $w$};
		\node at ($(z)+(-0.3,0)$) {\small $5$};
		
	\end{tikzpicture}
	\caption{The normal CAT(0) planar graph used in \cref{example: 39}.}
	\label{fig: 39 normal}
\end{figure}
\end{example}

The two examples which follow illustrate various perspectives on planar basis expansions and normal models.  In the first example, we present the normal CAT(0) planar graph and web for the ray of $\Trop_{>0}\Gr(3,9)$ whose planar basis expansion was computed in \cite[Figure 1]{E21}.  The normal model is given in \cref{fig:bigray39} below.  In \cref{example: 310 to 321 continued}, we continue with the ray of $\Trop_{>0}G(3,10)$ from \cref{example: standardization 310 to 321}; this is the simplest ray for which its planar basis expansion has a coefficient which is not in $\{-1,0,1\}$, whose discovery provided a clue that some finer structure should be sought for; in \cref{fig:g310ppbexpansion} we present schematically its planar basis expansion via its normal model, where the multiplicities are resolved into $\pm1$.

\begin{figure}[h!]
\centering
\includegraphics[width=.9\linewidth]{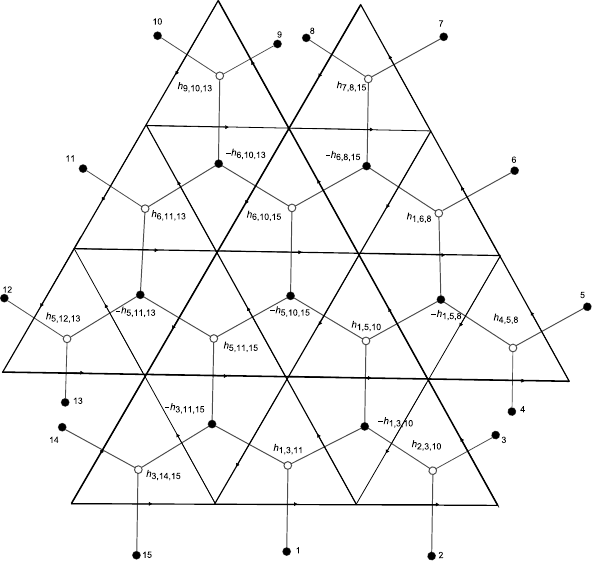}
\caption{Normal web, scaffold, and the planar basis expansion of the positive tropical Pl\"ucker vector $\tropP_\bullet(W(\K))$ corresponding to the (standard) noncrossing tableau $\K= \{(1,6,8),(2,3,11),(4,5,10),(7,14,15),(9,12,13)\}.$}
\label{fig:bigray39}
\end{figure}

\begin{example}\label{example: big315}
	Let us illustrate the whole section (and \cref{thm:WJ} and \cref{thm:WJnormal} in particular) with a ray for $\Trop_{>0}\Gr(3,9)$, standardized to a ray of $\Trop_{>0}\Gr(3,15)$.  See the bottom right entry in \cite[Figure 1]{E20} for the planar basis expansion modulo relabeling.
	
	The noncrossing tableau
	$$\J = \{(1,2,7),(1,4,5),(2,3,6),(4,8,9),(5,7,8)\},$$
	has standardization 
	$$\K= \{(1,6,8),(2,3,11),(4,5,10),(7,14,15),(9,12,13)\}.$$
	The planar basis expansion of $\tropP_\bullet(W(\K))$, the standard web, and its dual CAT(0) planar graph is given in \cref{fig:bigray39}.  Applying $\iotaStd$ as defined in the beginning of \cref{sec: semistandarization} and making the replacements
	$$1\to 1,\ 2\to 1,\ 3\to 2,\ 4\to 2,\ 5\to 3,\ 6\to 4,\ 7\to 4,8\to 5,$$
	$$9\to 5,\ 10\to 6,\ 11\to 7,\ 12\to 7,\ 13\to 8,\ 14\to 8,\ 15\to 9$$	
	we can recover the original positive tropical Pl\"ucker vector.  After many cancellations, we have
	\begin{eqnarray*}
		\tropP_\bullet(W(\J)) & = & -(\h_{135}+\h_{279}+\h_{369}+\h_{468})\\
		& &+  \h_{127}+\h_{136}+\h_{145}+\h_{235}+\h_{289}+\h_{379}+\h_{469}+\h_{478}+\h_{568},
	\end{eqnarray*}
	and we compute its projection to derive the noncrossing tableau:
	$$\Psi(\tropP_\bullet(W(\J))) = \mathfrak{t}_{127} + \mathfrak{t}_{145} + \mathfrak{t}_{236} + \mathfrak{t}_{489} + \mathfrak{t}_{578}.$$
\end{example}

\begin{example}\label{example: 310 to 321 continued}
We continue \cref{example: standardization 310 to 321}, specifically with the Pl\"ucker vector given in \eqref{eq: 310 ray}, whose planar basis expansion is depicted schematically in \cref{fig:g310ppbexpansion}.  Note that the map $\iotaStd: [21] \rightarrow[10]$ is rather nontrivial, resulting in significant redundancy in the planar basis expansion.  Note, for example, that $\pm \h_{1,3,9}$ appears four times, resolving the seemingly mysterious coefficient $-2$ \eqref{eq: 310 ray}, whose origin is now explained by the unique normal model representation. 

\end{example}

\begin{figure}[h!]
	\centering
	\includegraphics[width=0.8\linewidth]{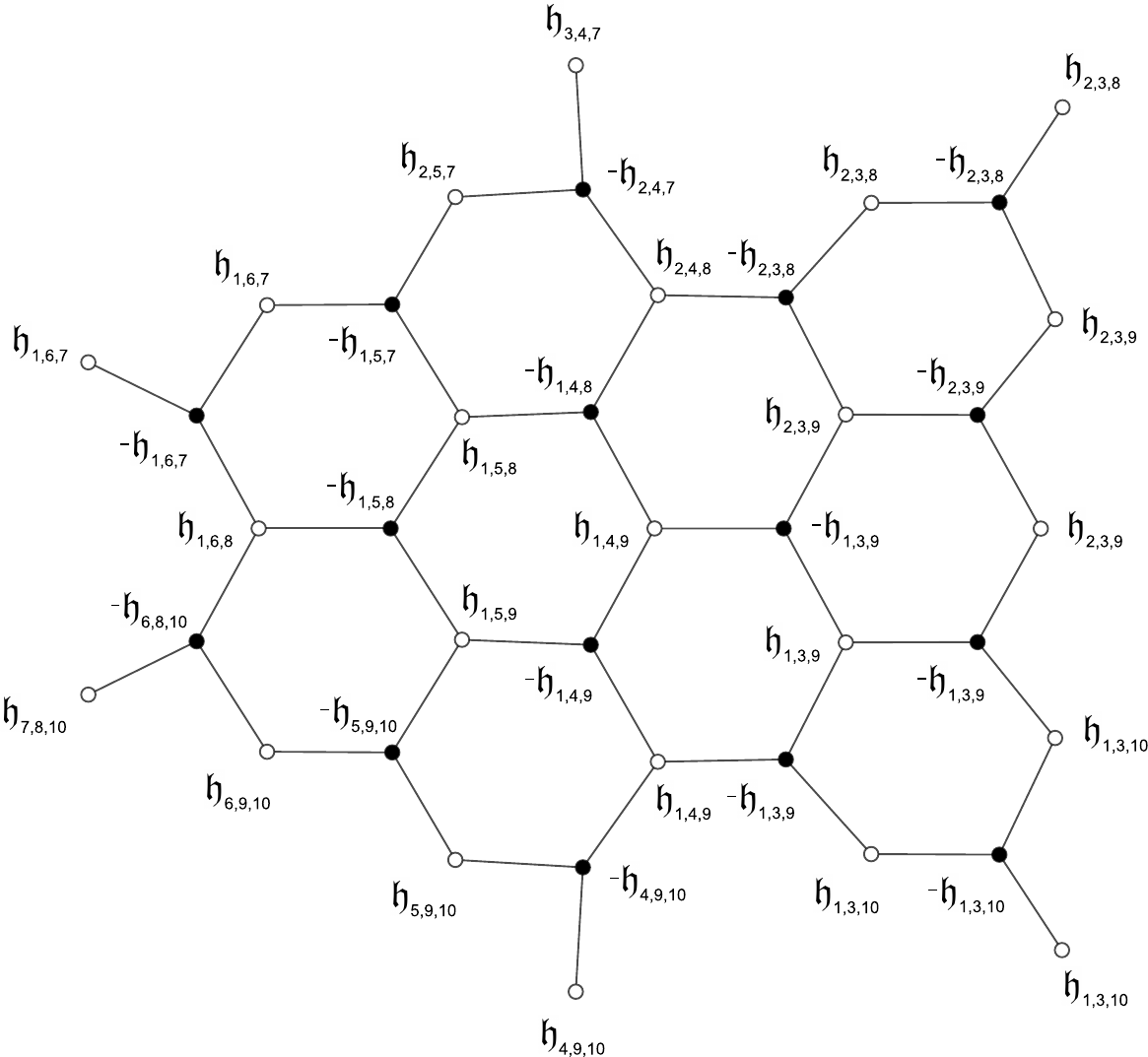}
	\caption{The planar basis expansion for the ray of $\Trop_{>0}\Gr(3,10)$ from \cref{example: 310 to 321 continued} corresponding to a (non-standard) noncrossing tableau, constructed by standardizing, computing the standard web for $\Trop_{>0}\Gr(3,21)$, taking the planar basis expansion and then reindexing.  Summing the nodes gives \cref{eq: 310 ray}, i.e. $\tropP_\bullet(W(\J)) = -2 \h_{1,3,9}+2 \h_{1,3,10}-\h_{1,4,8}+\h_{1,4,9}-\h_{1,5,7}+\h_{1,5,9}+\h_{1,6,7}+\h_{1,6,8}
	 +2 \h_{2,3,9}-\h_{2,4,7}+\h_{2,4,8}+\h_{2,5,7}+\h_{3,4,7}-\h_{6,8,10}+\h_{6,9,10}+\h_{7,8,10}.$}
	\label{fig:g310ppbexpansion}
\end{figure}

\begin{proposition}\label{prop:cyclic-less}
Suppose that $\J$ is a noncrossing tableau and $W(\J)$ is the normal web from \cref{thm:WJnormal}.  Then $\J$ is cyclic-less if and only if $W(\J)$ is cyclic-less.  Thus the bijection of \cref{thm:WJnormal} restricts to a bijection between cyclic-less noncrossing tableaux and cyclic-less normal webs.
\end{proposition}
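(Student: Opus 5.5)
The plan is to compare cyclic triples in $\J$ with cyclic strand triples in $W(\J)$ directly. Recall that $W$ is cyclic exactly when some interior vertex has a cyclic strand triple $(i,i+1,i+2)$. By \cref{thm:normalnormal}, this is the same as $Q(W)$ being cyclic. The second sentence of the statement then follows from the first together with \cref{thm:WJnormal}, so everything reduces to the equivalence.

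For the direction \emph{$\J$ cyclic $\Rightarrow$ $W(\J)$ cyclic}, suppose $J=(i,i+1,i+2)\in\J$. I would pass to the standardization $\K=\std(\J)$. The three letters of $J$ become $a<b<c$ in $\K$ with $\iotaStd(a)=i$, $\iotaStd(b)=i+1$, $\iotaStd(c)=i+2$.

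In the $\m$-diagram of $\K$, consider the white vertex of the $Y$ coming from $(a,b,c)$. I claim its strand triple maps under $\iotaStd$ to $(i,i+1,i+2)$. The approach is to run the strand computation of \cref{lem:strand-triples}, adding triples in order of right endpoint. Any arc crossing the $\m$ of $(a,b,c)$ must have an endpoint strictly between $a$ and $c$. Such an endpoint has $\iotaStd$-value in $\{i,i+1,i+2\}$. Hence every strand triple produced at the resolution vertices along this $\m$ still has image in $\{i,i+1,i+2\}$. These images are distinct by normality (\cref{thm:WJnormal}), so at least one vertex has cyclic strand triple $(i,i+1,i+2)$.

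For the converse, \emph{$W(\J)$ cyclic $\Rightarrow$ $\J$ cyclic}, I would argue through the planar basis. Suppose some vertex has strand triple $(i,i+1,i+2)$. The proof of \cref{thm:normalnormal} shows that we may take this vertex adjacent to the boundary vertex $b_{i+1}$. It also shows that the boundary of $Q$ between $z_i$ and $z_{i+1}$ consists of degree-four vertices, so $Q$ is locally a corner region.

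From this I would extract the local structure of $W$ near $b_i,b_{i+1},b_{i+2}$: the strands $\eta_i,\eta_{i+1},\eta_{i+2}$ cut off a region containing only labels $i,i+1,i+2$. Inverting Tymoczko's construction via the depth map (as in the proof of \cref{thm:WJnormal}), the boundary vertices labelled $i+1$ there contribute letters in the second row of the SSYT. Each of these is matched in $\Phi$ to an $i$ above and an $i+2$ below, which yields a triple $(i,i+1,i+2)\in\J$.

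An alternative for this converse is to use \cref{thm:nc-to-web} and \cref{thm:ELnc}. Cyclic triples satisfy $\t_J=0$, so they do not appear in $\Psi$. I would therefore rather argue combinatorially: the cut-off corner region of $Q$ has exactly three acute vertices, so it comes from a single $\m$ with letters in $\{i,i+1,i+2\}$.

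I expect the main obstacle to be the converse, specifically tracking through standardization and the depth map that the matched triple is literally $(i,i+1,i+2)$ and not a triple merely crossing that region. I would first try an induction on $|\J|$, removing the triple with the largest right endpoint as in \cref{lem:strand-triples}. The check is that adding a non-cyclic triple never creates a new cyclic strand triple: by \cref{lem:strand-triples} the new triples all contain $\gamma$ or $b$ together with letters from crossing triples, and I would verify that these letters cannot form a cyclic interval.
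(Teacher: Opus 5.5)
Your forward direction rests on a false premise, and the inductive step you propose for the converse is false as stated. The missing ingredient in both is how standardization re-pairs endpoints.

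\textbf{Forward direction.} Standardization does not carry the triples of $\J$ to triples of $\K=\std(\J)$ with the same $\iota$-image. Among triples sharing a middle letter, $\J$ pairs left and right endpoints in nested fashion. By contrast, $\std$ pairs the $s$-th smallest left endpoint with the $s$-th smallest right endpoint. For $\J=\{(2,3,4),(1,3,5)\}$ with $n=5$ one gets $\K=\{(1,4,5),(2,3,6)\}$ and $\iota(\K)=\{(1,3,4),(2,3,5)\}$, so the cyclic triple has no preimage at all.

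Your locality claim also fails when a preimage does exist. By \cref{lem:strand-triples}, once the left arc of $(a,b,c)$ is crossed, its Y-vertex receives $(b_1,b,\gamma)$ and the new black vertices receive $(b_s,c_s,\gamma)$. Here $b_s<a$, so $\iota(b_s)$ may be smaller than $i$. For $\J=\{(1,2,4),(3,4,5)\}$ one has $\K=\{(1,2,4),(3,5,6)\}$. The Y-vertex of $(3,5,6)$ has strand triple $(2,5,6)\mapsto(2,4,5)$, and the only cyclic vertex is the white one with triple $(a,c_r,\gamma)=(3,4,6)\mapsto(3,4,5)$.

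What is needed is the paper's choice. Take the \emph{first} triple $(a,b,c)$ of $\K$, in order of right endpoint, with $\iota(a)=i$ and $\iota(b)=i+1$, and look at the white vertex $(a,c_r,\gamma)$ (with $c_0=b$ when $r=0$).
\begin{itemize}
\item At each letter, standardization places right endpoints before middles before left endpoints. Hence $\iota(c_r)=i+1$ and $\iota(\gamma)\ge i+2$.
\item If $\iota(c)=i+2$, then $\gamma\le c$ gives $\iota(\gamma)=i+2$ and you are done.
\item Otherwise the definition of standardization supplies an earlier triple $(a',b',c')$ with $\iota(b')=i+1$, $\iota(c')=i+2$ and $a'<a<b<b'<c'<c$. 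The right arc of $(a,b,c)$ then crosses its left arc, so $\gamma\le c'$ and again $\iota(\gamma)=i+2$.
\end{itemize}

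\textbf{Converse.} The depth-map argument and the claim that the cut-off corner ``comes from a single $\m$'' are assertions rather than proofs; and, as you note, $\Psi$ cannot detect cyclic triples. Your final fallback, induction along \cref{lem:strand-triples}, is the paper's route. However, the local statement you propose to verify --- that adding a non-cyclic triple never creates a cyclic strand triple --- is false. In the first example, adding $(2,3,6)$, whose image $(2,3,5)$ is not cyclic, creates the white vertex $(a,b,\gamma)=(2,3,5)\mapsto(2,3,4)$. The inductive step must therefore use that $\J$ itself is cyclic-less, not merely that the added triple is.

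The delicate case is the white triple $(a,c_r,\gamma)$ with $\gamma=c'_1<c$.
\begin{itemize}
\item If it is cyclic, then $\iota(c_r)=\iota(b)=\iota(b'_1)=:b^*$, $\iota(a)=b^*-1$ and $\iota(c'_1)=b^*+1$.
\item The triple $(\bar a,\bar b,\bar c)$ of $\K$ whose addition created the strand triple $(a'_1,b'_1,c'_1)$ has $\iota(\bar b)=b^*$ and $\iota(\bar c)=b^*+1$.
\item Among its triples with middle $b^*$, $\J$ pairs the largest left endpoint with the smallest right endpoint. This forces $(b^*-1,b^*,b^*+1)\in\J$.
\end{itemize}
This tracing back through standardization is what your sketch lacks.
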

\begin{proof}
We have to analyze the recursive construction of $W(\J)$, and we let $\K$ be the standardization of $\J$.  We will consider the situation of \cref{lem:strand-triples}, and we will refer to a strand triple as \emph{cyclic} if it is cyclic after the destandardization projection $\iotaStd$.  Suppose that $\J$ contains a cyclic triple $(a^*,b^*,c^*)$.  Let us order the triples of $\K$ in increasing order of the right endpoint, corresponding to the order in which the triples are added in \cref{lem:strand-triples}.  Find the first triple of the form $(a,b,c)$ where $\iotaStd(a) = a^*$ and $\iotaStd(b) = b^*$.  We claim that we will create a cyclic strand triple when we add $(a,b,c)$.  If $\iotaStd(c)= c^*$, this is easy to see.  If $\iotaStd(c) > c^*$, the definition of standardization implies that a triple $(a',b',c')$ satisfying $\iotaStd(b') = b^*$ and $\iotaStd(c') = c^*$ had already been added earlier.  The white strand triple $(a, c_r,\gamma)$ satisfies $c_r \leq b$ and $\gamma \leq c'$, and is therefore cyclic.

For the converse, suppose that $\J$ contains no cyclic triples, and consider one instance of \cref{lem:strand-triples} successively applied to triples in $\K$.  Suppose that $W'$ has no cyclic strand triples and we are adding $(a,b,c)$, a triple in $\K$.  Let us set $(a^*,b^*,c^*):= (\iotaStd(a),\iotaStd(b),\iotaStd(c))$.  Consider the new strand triples of the form $(b_i,c_{i-1}, \gamma)$.  If $(b_i,c_{i-1},\gamma)$ is cyclic then $\iotaStd(b_i),\iotaStd(c_{i-1}), \iotaStd(\gamma)$ are adjacent, which can only happen if $\iotaStd(b_i),\iotaStd(c_i),\iotaStd(\gamma)$ are adjacent.  But we have $b_i < a < c_i <b < \gamma$, so this would imply that $(a,b,c)$ is cyclic which is not the case.  A similar argument holds for triples of the form $(b_i,c_i,\gamma)$ or $(a'_i,b,c'_{i+1})$ or $(a'_i,b,c'_i)$.  

Let us now consider the new strand triple $(a,c_r,\gamma)$.  We have $a < c_r < b < \gamma \leq c$.  If $\gamma = c$, then $(a,c_r,\gamma)$ cyclic would force $(a,b,c)$ to be cyclic, which is not the case.  We may thus suppose that $\gamma =c'_1 < c$.  We have $a < c_r < b < b'_1 < c'_1 < c$ and $\iotaStd(b'_1) \neq \iotaStd(c'_1)$.  So $(a,c_r,c'_1)$ cyclic would force $\iotaStd(c_r) = \iotaStd(b) = \iotaStd(b'_1) = b^*$.  If $(a,c_r,\gamma)$ is cyclic then we would have that $a^*$ and $b^*$ are adjacent, and also $\iotaStd(b'_1)$ and $\iotaStd(c'_1)$ is adjacent.  When the strand triple $(a'_1,b'_1,c'_1)$ in $W'$ was first created (in the process of \cref{lem:strand-triples}), we must have added a triple of the form $(\bar a, \bar b, \bar c)$ where $\iotaStd(\bar b) = b^*= \iotaStd(b'_1)$ and $\iotaStd(\bar c) = \iotaStd(c'_1)$ are adjacent to $b^*$.  It follows that $\J$ contains the cyclic triple $(a^*,b^*,\iotaStd(\bar c))$.  This completes the proof of the converse.
\end{proof}

\section{Proof of \cref{thm:main}(3)}\label{sec:proof-main-3}

We now prove \cref{thm:main}(3).

Suppose that $(Q,\z)$ is normal.  By \cref{prop:normal}, every vertex of $Q$ is normal.  By \cref{prop:Mv-positroid}, the set $M_v$ is a loopless positroid.  By \cref{prop:embedding}, the map $v \mapsto -\bb_v$ sends $V(Q)$ into the integer points of $L(\bp_\bullet)$, and adjacent vertices map to edges of the simplicial complex on $L_\Z(\bp_\bullet)$.  We claim that the map $\v \mapsto -\bb_v$ is injective.  Suppose first that $Q = Q(W)$ for a standard web $W$.  In this case, the difference $\bb_v - \bb_w = \b_v - \b_w$ is given by the strand labels of the edges of some path connecting $v$ and $w$; equivalently, $\b_v - \b_w$ is determined by the set of strands separating $v$ from $w$.  It is known \cite{Pos,LP} that the face labels of a reduced plabic graph, or equivalently, the vertices of the membrane, are distinct in this situation. 

In the general case, $W$ may not be standard but it will be normal, so strands with the same label do not intersect and are parallel.  So for any two distinct vertices $v$ and $w$, there exists some index $i$ such that $v$ and $w$ are separated by some nonzero number of strands labeled $i$, all of which are traveling in the same direction.  This implies that the map $\v \mapsto -\bb_v$ is injective.   It follows that $(Q,\z)$ is a strong scaffold for $\tropP_\bullet(Q,\z)$ in the sense of \cref{def:scaffold}. 

We now prove existence and uniqueness of the cyclic-less normal model.  Let $\tropP_\bullet \in \Trop_{>0} X(3,n)(\Z)$.  By \cref{thm:ELnc}, $\Psi(\tropP_\bullet)$ admits a unique noncrossing tableau decomposition $\Psi(\tropP_\bullet) = \sum_{J \in \J} \t_J$.  By \cref{thm:WJnormal} and \cref{prop:cyclic-less}, the resolution algorithm produces a normal, cyclic-less, non-elliptic web $W(\J)$, and by \cref{thm:nc-to-web}, its planar basis expansion satisfies $\Psi(\tropP_\bullet(W(\J))) = \sum_{J \in \J} \t_J = \Psi(\tropP_\bullet)$.  Since $\Psi$ is injective modulo lineality, $\tropP_\bullet(W(\J)) \equiv \tropP_\bullet$ modulo lineality.

Let $(Q,\z)$ be the dual CAT(0) planar graph of $W(\J)$.  Combining with \cref{thm:pbexpansion}, the Pl\"ucker vector $\tropP_\bullet(Q,\z)$ satisfies $\tropP_\bullet(Q,\z) \equiv \tropP_\bullet$ modulo lineality, so $(Q,\z)$ is a normal, cyclic-less model for $\tropP_\bullet$.  Since each step in the construction $(\tropP_\bullet \text { modulo lineality}) \mapsto \J \mapsto W(\J) \mapsto (Q,\z)$ is a bijection (using \cref{thm:ELnc}, the resolution bijection \cref{thm:WJnormal}, and the web-to-dual-graph correspondence), the normal model is unique.  \qed

		\part{Buildings and membranes}\label{part:buildings}

		\section{Buildings}\label{sec:buildings}
		
\subsection{Preliminaries}
Let $\K = \C((t))$ and $\O = \C[[t]]$ denote the rings of formal Laurent series and formal power series respectively.  The field $\K$ is equipped with a valuation $\val: \K^\times \to \Z$ sending a formal Laurent series to the exponent of the leading (lowest degree) term.  A \emph{lattice} $\Lambda$ in $\K^k$ is a $\O$-submodule of $\K^k$ generated by $k$ linearly independent vectors in $\K^k$.  Two lattices $\Lambda_1$ and $\Lambda_2$ are \emph{equivalent} if $\Lambda_1 = c \Lambda_2$ for $c \in \K^\times$.  We let $[\Lambda]$ denote the equivalence class of lattices containing $\Lambda$.  Two equivalence classes $[\Lambda_1]$ and $[\Lambda_2]$ are \emph{adjacent} if there are representatives satisfying $t\Lambda_1 \subsetneq \Lambda_2 \subsetneq \Lambda_1$.
		
		Let $\B$ denote the affine building of $\PGL_k(\K)$.  This is the flag simplicial complex with vertex set given by the set of equivalence classes $[\Lambda]$ of lattices, and edges given by adjacent equivalence classes.  The simplices of $\B$ are given by tuples of equivalence classes that are pairwise adjacent.		
		
The affine building $\B$ is covered by \emph{apartments}.  An apartment $A$ consists of all equivalence classes of lattices of the form
\begin{equation}\label{eq:lattice}
\Lambda= \sp_\O( t^{-a_1} e_1, t^{-a_2} e_2, \ldots, t^{-a_k} e_k),
\end{equation}
where $e_1,\ldots,e_k$ is a basis of $\K^k$.  We may identify the lattice \eqref{eq:lattice} with the integer point $(a_1,\ldots,a_k) \in \Z^k$.  Restricted to the apartment the equivalence relation is that two lattice points in $\Z^k$ are equivalent if their difference is a multiple of $\one=(1,1,\ldots,1)$.  Thus the vertices of an apartment can be identified with $\Z^k/\one$.

The \emph{fundamental coweights} $\omega_1,\ldots,\omega_{k-1}$ are the vectors $$\omega_i := (1,1,\ldots,1,0,0,\ldots,0) \in \Z^k/\one$$
that form a basis of $\Z^k/\one$.  We call $\Z^k/\one$ the \emph{coweight lattice}.  An element of the coweight lattice is \emph{dominant} if and only if it is a nonnegative sum of fundamental coweights.  The Weyl group $W = S_k$ acts on $\Z^k/\one$ and each element of the Weyl group orbit in the coweight lattice has a unique dominant coweight.  The coweight-valued distance between two points $\a,\a’ \in \Z^k/\one$ is $d(\a,\a’) = |\a’-\a|$, where $|\a|$ denotes the dominant coweight belonging to the Weyl group orbit $W \cdot \a$.  

Let $[\Lambda_1]$ and $[\Lambda_2]$ be any two vertices of $\B$.  Then there exists an apartment $A$ that contains both $[\Lambda_1]$ and $[\Lambda_2]$.  The distance $d([\Lambda_1],[\Lambda_2])$ is defined to be the dominant coweight computed using the distance function of $A$.  This does not depend on the choice of $A$.  Two equivalence classes $[\Lambda_1]$ and $[\Lambda_2]$ are adjacent if and only if the distance $d([\Lambda_1],[\Lambda_2])$ is equal to $\omega_i$ for some $i = 1,2,\ldots,k-1$.  Note that $d([\Lambda_1],[\Lambda_2])$ is not symmetric.  Instead, we have $d([\Lambda_2],[\Lambda_1]) = - w_0 \cdot d([\Lambda_1],[\Lambda_2])$ where the longest permutation $w_0 \in W$ acts by $w_0 \cdot \omega_i = - \omega_{k-i}$.

We shall use the $\omega_1$-distance function
$$
d_1(\a,\a') :=  \langle \omega'_1, d(\a,\a') \rangle, \qquad \omega'_1 = \left(\frac{k-1}{k},-\frac{1}{k},\ldots,-\frac{1}{k}\right).
$$
Here, $\omega'_1$ is the first fundamental weight.  Thus if $\a$ and $\a'$ are connected by an edge in the building, we have
$$
d_1(\a,\a') = \langle \omega'_1,\omega_i \rangle = \frac{k-i}{k}.
$$
Let us convert the one skeleton of the building $\B$ into a directed graph $G(\B)$ where we keep only the edges $\a \to \a'$ where $d_1(\a,\a') = 1/k$.  Equivalently, $d(\a,\a') = \omega_{k-1}$ or $\a-\a' \in W \omega_1$.

\begin{proposition}
The directed graph distance function on $G(\B)$ agrees with $k d_1$.
\end{proposition}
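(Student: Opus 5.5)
We have to show that for any two vertices $[\Lambda_1],[\Lambda_2]$ of $\B$, the length of a shortest directed path from $[\Lambda_1]$ to $[\Lambda_2]$ in $G(\B)$ equals $k\,d_1([\Lambda_1],[\Lambda_2])$. Both quantities can be computed inside a single apartment $A$ containing the two vertices. So the plan is to work in $\Z^k/\one$, prove a lower bound from a triangle inequality, and prove the matching upper bound by an explicit path in $A$.

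\emph{Step 1 (reduction to an apartment).} Choose an apartment $A \cong \Z^k/\one$ containing both vertices, and write $\x = \a'-\a$. Choose the representative of $\x$ with $\min_i x_i = 0$, and set $m := \max_i x_i$. Since $|\x|$ is the sorted vector, $d_1(\a,\a') = \langle \omega'_1, |\x|\rangle$ evaluated on this normalized representative equals $m - \frac{1}{k}\sum_i x_i$. Hence
\[
k\,d_1(\a,\a') = km - \sum_i x_i = \sum_i (m - x_i) \in \Z_{\ge 0}.
\]
An edge of $G(\B)$ has $\a' - \a \in W\omega_1$ up to sign convention, i.e.\ $\a'-\a \equiv -e_j \equiv \one - e_j$. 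For such an edge this formula gives $k d_1 = 1$, as required.

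\emph{Step 2 (upper bound).} Starting from the normalized $\x$, I will build a directed path inside $A$ of length $\sum_i (m - x_i)$. At each step, add $\one - e_j$ (equivalently subtract $e_j$) for some coordinate $j$ that is currently not maximal. This lowers $\sum_i (m-x_i)$ by exactly one while keeping $m$ fixed modulo $\one$. After $\sum_i (m-x_i)$ steps all coordinates are equal, so we reach $\a'$. I need to double-check the sign convention of $W\omega_1$ against the orientation $\a\to\a'$; if it runs the other way, the same argument works using $+e_j$ on the minimal coordinates.

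\emph{Step 3 (lower bound).} Show that $k d_1$ satisfies the triangle inequality $d_1(\a,\a'') \le d_1(\a,\a') + d_1(\a',\a'')$ on all of $\B$. Combined with $k d_1 = 1$ on edges, this shows that every directed path has length at least $k d_1$. In a single apartment the inequality follows from the formula $k d_1(\x) = \sum_i(\max \x - x_i)$, since $\max$ is subadditive and the sum is linear. The three points need not lie in a common apartment, however. Here I would invoke the standard fact that dominant-coweight distances satisfy the triangle inequality in the dominance order, $|\a''-\a| \le |\a'-\a| + |\a''-\a'|$ (for example via retraction onto an apartment centered at $\a$, which preserves distances from $\a$ and does not increase other distances), and then pair with the dominant weight $\omega'_1$. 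Alternatively, I would argue directly with lattices: $k d_1([\Lambda_1],[\Lambda_2])$ equals $k\cdot$ (minimal $s$ with $t^s\Lambda_1 \supseteq \Lambda_2$ after normalizing) minus the index. This expression is visibly subadditive along chains of lattices.

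The main obstacle is Step 3 across different apartments. I expect the lattice-index formulation to be the cleanest route, since it avoids the retraction machinery entirely.
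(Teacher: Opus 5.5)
Your proof is correct, but it is organized differently from the paper's. The paper states without proof that $d$, and hence $d_1$, is the path metric of $\B$ in which an edge of type $\omega_i$ has length $(k-i)/k$. From this the lower bound is immediate. The upper bound comes from replacing each edge $\mathbf{0} \to \omega_i$ of an optimal path in $\B$ by the directed chain $\mathbf{0} \to \omega_{k-1} \to \omega_{k-2} \to \cdots \to \omega_i$ of $k-i$ edges of $G(\B)$ inside a single simplex.

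You never use that path-metric fact. Your upper bound is an explicit walk of length $\sum_i (m-x_i)$ inside a common apartment, so only the existence of common apartments is needed. Your lower bound is the triangle inequality for $d_1$, which you prove instead of assuming.

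The lattice route you prefer works cleanly. Let $m$ be the least integer with $\Lambda_2 \subseteq t^{-m}\Lambda_1$. Checking in a common apartment, $k\,d_1([\Lambda_1],[\Lambda_2]) = \dim_{\C}(t^{-m}\Lambda_1/\Lambda_2) = km + \val\det\Lambda_2 - \val\det\Lambda_1$. The exponent $m$ is subadditive along chains of lattices, and the determinant terms telescope.

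So your version is self-contained, and it identifies an edge $[\Lambda]\to[\Lambda']$ of $G(\B)$ concretely: for suitable representatives, $\Lambda' \subset \Lambda$ with $\dim_{\C}(\Lambda/\Lambda')=1$. The paper's version is shorter and makes the simplex structure visible.

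There are two sign slips to clean up.
\begin{enumerate}
\item An edge $\mathbf{a} \to \mathbf{a}'$ of $G(\B)$ has $\mathbf{a}-\mathbf{a}' \in W\omega_1$, i.e.\ $\mathbf{a}' = \mathbf{a} - e_j$. So your walk subtracts $e_j$ from the current position exactly $m - x_j$ times for each $j$; equivalently, the remaining displacement gains $e_j$ on a non-maximal coordinate. The orientation is fixed, so there is no alternative convention to fall back on.
\item In your lattice formula, the exponent should be the least $s$ with $t^{-s}\Lambda_1 \supseteq \Lambda_2$, and the index term enters with the sign shown above.
\end{enumerate}
Finally, your opening claim that both quantities can be computed in one apartment holds only for $d_1$ and for the upper bound on the graph distance. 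That is exactly why your Step 3 is needed.
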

\begin{proof}
First, note that the distance $d([\Lambda_1],[\Lambda_2])$ is the distance of the shortest path from $[\Lambda_1]$ to $[\Lambda_2]$, where the distance of a path is the sum of the distances of the edges of the path.

Clearly the distance function in $G(\B)$ is greater than or equal to that in $\B$, multiplied by $k$.  Let $[\Lambda_1],[\Lambda_2]$ be two vertices connected by an edge in $\B$.  We may assume that $d([\Lambda_1],[\Lambda_2]) = \omega_i$.  Translating, we suppose that $[\Lambda]$ is the vertex $\0$ and $[\Lambda']$ is the vertex $\omega_i$ in some apartment.  These two vertices are part of a simplex with the vertices $\0, \omega_1,\omega_2,\ldots,\omega_{k-1}$.  We have
$$
d_1(\0,\omega_{k-1}) = d(\omega_{k-1},\omega_{k-2}) = \cdots = d(\omega_{i+1},\omega_i) = 1/k.
$$
It follows that the graph distance from $[\Lambda_1]$ to $[\Lambda_2]$ in $G(\B)$ is at most $k-i$.  Since this is $k$ times the $d_1$-distance from $[\Lambda_1]$ to $[\Lambda_2]$ in $G(\B)$, the distance functions agree.  
\end{proof}

Given $[\Lambda] \in \B$, define the color of $[\Lambda]$ to be
$$
c([\Lambda]) := \val \det(\Lambda) \in \Z/k\Z.
$$
Note that $\val \det(c \Lambda) = k \cdot \val(c) + \val \det(\Lambda)$, so the color is well-defined.  Viewed as a function $c(\cdot)$ on the vertices of $G(\B)$, the color function satisfies \cref{def:color}.

\subsection{Le's distance function}
\def\tDelta{\tilde \Delta}

We assume that a basis of $\K^k$ has been fixed.    Then given $v_1,\ldots,v_k \in \K^k$, we have an element $\det(v_1,\ldots,v_k) \in \K$.  We define $\val \det(\Lambda)$ to be the valuation of $\det(v_1,\ldots,v_k)$ for a $\O$-basis $v_1,\ldots,v_k$ of $\Lambda$.  While the determinant itself may not be well-defined, the valuation is well-defined.

Given lattices $\Lambda_1,\ldots,\Lambda_k$, define
\begin{equation}\label{eq:tDelta}
\tDelta(\Lambda_1,\ldots,\Lambda_k):= \min \{\val \det(v_1,\ldots,v_k) \mid v_i \in \Lambda_i\} \in \Z.
\end{equation}
The minimum is achieved by picking generic $v_i \in \Lambda_i$.  Given any vector $v \in \K^k$ and lattice $\Lambda$, we define
$$
c(v, \Lambda):= \min \{\lambda \in \Z \mid t^\lambda v \in \Lambda\}.
$$
We say that $v \in \K^k$ is a \emph{generator} for $\Lambda'$ if it is part of a basis for $\Lambda'$ as a $\O$-module.  Given two lattices $\Lambda, \Lambda'$, we define
$$
c(\Lambda',\Lambda) = \min \{ c(v,\Lambda) \mid v \text{ is a generator for } \Lambda'\}.
$$
We say that $v$ is a \emph{tight generator} for $\Lambda'$ with respect to $\Lambda$ if $c(v, \Lambda) = c(\Lambda',\Lambda)$.

\begin{theorem}[\cite{Le}]\label{thm:Le1}
Let $\Lambda_1,\ldots,\Lambda_k$ be lattices.  There exists a lattice $\Lambda$ such that
$$
\val(\det(\Lambda)) + \sum_{i=1}^k c(\Lambda,\Lambda_i) = \tDelta(\Lambda_1,\ldots,\Lambda_k).
$$
\end{theorem}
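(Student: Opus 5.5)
We need to prove Le's theorem: there exists $\Lambda$ with $\val\det(\Lambda) + \sum_i c(\Lambda,\Lambda_i) = \tDelta(\Lambda_1,\ldots,\Lambda_k)$.

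The plan is to prove the two inequalities separately: first that the left-hand side is at least $\tDelta$ for every lattice $\Lambda$, and then that equality is attained for a specific $\Lambda$ built from generic vectors.

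\emph{Lower bound for an arbitrary $\Lambda$.} Let $\Lambda$ be any lattice with $\O$-basis $w_1,\ldots,w_k$.
\begin{itemize}
\item For any generator $w$ of $\Lambda$, the integer $\lambda_i := c(w,\Lambda_i)$ satisfies $t^{\lambda_i} w \in \Lambda_i$.
\item The difficulty is that the definition of $c(\Lambda,\Lambda_i)$ minimizes over generators separately for each $i$. So we may choose a tight generator $u_i$ of $\Lambda$ with respect to $\Lambda_i$, a possibly different generator for each $i$.
\item Then $t^{c(\Lambda,\Lambda_i)} u_i \in \Lambda_i$, and hence
$$\tDelta \le \val\det\bigl(t^{c(\Lambda,\Lambda_1)}u_1,\ldots,t^{c(\Lambda,\Lambda_k)}u_k\bigr) = \sum_i c(\Lambda,\Lambda_i) + \val\det(u_1,\ldots,u_k).$$
\end{itemize}
Since each $u_i \in \Lambda$, we have $\val\det(u_1,\ldots,u_k) \ge \val\det(\Lambda)$. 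This gives the left-hand side $\ge \tDelta$ for every $\Lambda$.

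\emph{Construction attaining equality.} Choose generic $v_i \in \Lambda_i$ achieving $\val\det(v_1,\ldots,v_k) = \tDelta$. The $v_i$ are then linearly independent, and we set
$$\Lambda := \sp_\O(v_1,\ldots,v_k).$$
\begin{itemize}
\item By construction $\val\det(\Lambda) = \tDelta$.
\item It therefore suffices to show $c(\Lambda,\Lambda_i) = 0$ for each $i$. The generator $v_i$ lies in $\Lambda_i$, so $c(v_i,\Lambda_i) \le 0$ and hence $c(\Lambda,\Lambda_i) \le 0$.
\item For the reverse inequality, combine with the first step: the left-hand side is $\ge \tDelta = \val\det(\Lambda)$, so $\sum_i c(\Lambda,\Lambda_i) \ge 0$.
\item Since each term is $\le 0$ and the sum is $\ge 0$, every term equals $0$, and equality holds.
\end{itemize}

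\emph{Expected obstacle.} The main subtlety is the first step. We must confirm that replacing a common basis by the individually chosen tight generators $u_i$ is legitimate. The justification is that the bound $\val\det(u_1,\ldots,u_k) \ge \val\det(\Lambda)$ holds for \emph{any} vectors in $\Lambda$, basis or not. If $\det(u_1,\ldots,u_k)=0$ the inequality is vacuous ($+\infty$), but then the chain of inequalities breaks. In that case I would instead perturb: replace $u_i$ by $u_i + t^N w$ for suitable basis vectors $w$ and large $N$. This keeps each $u_i$ a generator, keeps $c(u_i,\Lambda_i)$ unchanged for large $N$ (since $\Lambda_i \supseteq t^M\Lambda$ for some $M$), and makes the $u_i$ independent.

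A secondary check is that a generic choice of the $v_i$ actually attains the minimum defining $\tDelta$. This holds because $\val\det$ is lower semicontinuous on the $\O$-modules involved, so the minimum is attained on a nonempty Zariski-open set after truncation mod $t^N$.
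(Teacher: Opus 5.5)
\textbf{There is a genuine gap: the key inequality is reversed.}

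Your first step claims the left side is at least $\tilde\Delta$ for every $\Lambda$. That claim is false, and your argument for it chains the inequalities the wrong way. From $\tilde\Delta \le \sum_i c(\Lambda,\Lambda_i) + \val\det(u_1,\ldots,u_k)$ and $\val\det(u_1,\ldots,u_k) \ge \val\det(\Lambda)$, nothing follows about $\val\det(\Lambda)+\sum_i c(\Lambda,\Lambda_i)$. You would need $\val\det(u_1,\ldots,u_k) \le \val\det(\Lambda)$, i.e.\ that the separately chosen tight generators form an $\O$-basis of $\Lambda$. Nothing forces this: they may even coincide, and your perturbation only makes them independent over $\K$, not modulo $t\Lambda$.

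The true inequality is the reverse, $\val\det(\Lambda)+\sum_i c(\Lambda,\Lambda_i)\le \tilde\Delta$ for \emph{every} $\Lambda$, and your own observation proves it. Take $v_i\in\Lambda_i$ with $\val\det(v_1,\ldots,v_k)=\tilde\Delta$. Let $\lambda_i$ be maximal with $u_i:=t^{\lambda_i}v_i\in\Lambda$. Then $u_i$ is a generator of $\Lambda$ with $c(\Lambda,\Lambda_i)\le c(u_i,\Lambda_i)\le -\lambda_i$, so $\tilde\Delta=\val\det(u_1,\ldots,u_k)-\sum_i\lambda_i\ge \val\det(\Lambda)+\sum_i c(\Lambda,\Lambda_i)$.

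The inequality can be strict. For $\Lambda_1=\cdots=\Lambda_k=\O^k$ and $\Lambda=\mathrm{span}_{\O}(t^{-1}e_1,e_2,\ldots,e_k)$, the left side is $-1+0<0=\tilde\Delta$. This is consistent with \cref{thm:Le2}. Via $d_1([\Lambda],[\Lambda_i])=-c(\Lambda,\Lambda_i)-\frac1k\val\det(\Lambda)+\frac1k\val\det(\Lambda_i)$ (the $k$-analogue of the identity quoted in \cref{sec:KT}), minimizing $\sum_i d_1$ is the same as \emph{maximizing} the left side. So the theorem asserts that this maximum equals $\tilde\Delta$.

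\textbf{Your construction therefore has no support, and it genuinely fails.} Take $\Lambda_1=\O^k$ and $\Lambda_2=\cdots=\Lambda_k=t\O^k$, so $\tilde\Delta=k-1$. The basis vector $v_2$ of $\Lambda=\mathrm{span}_{\O}(v_1,\ldots,v_k)$ lies in $t\Lambda_1$, so $c(\Lambda,\Lambda_1)\le -1$, and the left side is at most $k-2$. The correct lattice here is $\O^k$, whose left side is $0+0+(k-1)$.

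\textbf{The missing idea} is the real content of Le's theorem: you must produce a lattice with an $\O$-basis $w_1,\ldots,w_k$ in which each $w_i$ is a tight generator with respect to $\Lambda_i$. This is exactly the criterion the paper itself invokes in \cref{sec:diskoid}. Given such a basis, $t^{c(\Lambda,\Lambda_i)}w_i\in\Lambda_i$ gives $\tilde\Delta\le$ left side, hence equality.

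One way to obtain such a lattice:
\begin{enumerate}
\item The left side is integer valued and bounded above by $\tilde\Delta$, so a maximizer $\Lambda$ exists.
\item Let $V(\Lambda,\Lambda_i)\subseteq\Lambda/t\Lambda$ be the image of $\Lambda\cap t^{-c(\Lambda,\Lambda_i)}\Lambda_i$. Suppose these subspaces admit no transversal basis.
\item Rado's theorem then gives $S\subseteq[k]$ such that $U:=\sum_{i\in S}V(\Lambda,\Lambda_i)$ has dimension $<|S|$.
\item Let $\Lambda'$ be the preimage of $U$ in $\Lambda$. One checks that $c(\Lambda',\Lambda_i)\ge c(\Lambda,\Lambda_i)$ for $i\in S$, that $c(\Lambda',\Lambda_i)\ge c(\Lambda,\Lambda_i)-1$ for $i\notin S$, and that $\val\det(\Lambda')=\val\det(\Lambda)+k-\dim U$.
\item So the left side increases by at least $|S|-\dim U>0$, contradicting maximality. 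Hence a transversal basis exists, and lifting it gives the required basis of tight generators.
\end{enumerate}
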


For equivalence classes $[\Lambda_1],\ldots,[\Lambda_k] \in \B$, define
$$
\Delta([\Lambda_1],\ldots,[\Lambda_k]):= \frac{1}{k}\left(\sum_{i=1}^k \val \det(\Lambda_i)\right) -\tDelta(\Lambda_1,\ldots,\Lambda_k).
$$
It is straightforward to verify that this function does not depend on the representatives of equivalence classes chosen.

\begin{theorem}[\cite{Le}]\label{thm:Le2}
Let $[\Lambda_1],\ldots,[\Lambda_k]$ be vertices of $\B$.  Then
$$
\Delta([\Lambda_1],\ldots,[\Lambda_k]) = \min_{[\Lambda] \in \B} \left(d_1([\Lambda],[\Lambda_1]) + \cdots + d_1([\Lambda],[\Lambda_k]) \right).
$$
The minimum is attained at a vertex $[\Lambda]$ that satisfies \cref{thm:Le1}.
\end{theorem}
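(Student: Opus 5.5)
The plan is to reduce the statement to \cref{thm:Le1} by expressing $d_1$ in terms of the quantities $c(\cdot,\cdot)$ and $\val\det$, and then proving one inequality for every lattice $\Lambda$. Equality will then come from the lattice that \cref{thm:Le1} provides. Fix representatives $\Lambda_1,\ldots,\Lambda_k$. Every quantity below changes consistently under rescaling by $t^m$, so we may work with honest lattices throughout.

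\emph{Step 1: a formula for $d_1$.} Let $\Lambda,\Lambda'$ be lattices and choose an apartment containing both. Concretely, pick a basis $e_1,\ldots,e_k$ of $\Lambda$ adapted to $\Lambda'$ (elementary divisors), so that $\Lambda=\sp_\O(e_j)$ and $\Lambda'=\sp_\O(t^{-a_j}e_j)$. Then $d(\Lambda,\Lambda')$ is the dominant rearrangement of $\a$, and pairing with $\omega'_1$ gives
$$d_1(\Lambda,\Lambda')=\max_j a_j-\tfrac1k\sum_j a_j .$$
Next compute $c(\Lambda,\Lambda')$. For the generator $e_{j_0}$ with $a_{j_0}$ maximal we have $c(e_{j_0},\Lambda')=-\max_j a_j$. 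Any generator $v=\sum f_je_j$ of $\Lambda$ has some $f_j\in\O^\times$, so $c(v,\Lambda')\ge -a_j\ge-\max a$. Hence $c(\Lambda,\Lambda')=-\max_j a_j$. Since $\val\det\Lambda'-\val\det\Lambda=-\sum a_j$, we obtain
$$d_1([\Lambda],[\Lambda'])=-c(\Lambda,\Lambda')+\tfrac1k\bigl(\val\det\Lambda'-\val\det\Lambda\bigr).$$
The same coordinates show the containment $\Lambda'\subseteq t^{c(\Lambda,\Lambda')}\Lambda$.

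\emph{Step 2: summation.} Summing the formula over $\Lambda'=\Lambda_1,\ldots,\Lambda_k$ gives
$$\sum_i d_1([\Lambda],[\Lambda_i])=\tfrac1k\sum_i\val\det\Lambda_i-\Bigl(\val\det\Lambda+\sum_i c(\Lambda,\Lambda_i)\Bigr).$$
Comparing with the definition of $\Delta$, the theorem becomes the statement that $\val\det\Lambda+\sum_i c(\Lambda,\Lambda_i)\le\tDelta(\Lambda_1,\ldots,\Lambda_k)$ for every $\Lambda$, with equality for some $\Lambda$.

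\emph{Step 3: the inequality, and equality via \cref{thm:Le1}.} Take any $w_i\in\Lambda_i$. By the containment from Step 1, $w_i\in t^{c(\Lambda,\Lambda_i)}\Lambda$. Expanding in an $\O$-basis of $\Lambda$ shows
$$\val\det(w_1,\ldots,w_k)\ge \sum_i c(\Lambda,\Lambda_i)+\val\det\Lambda .$$
Minimizing over the $w_i$ gives the inequality, hence $\sum_i d_1([\Lambda],[\Lambda_i])\ge\Delta$ for every vertex $[\Lambda]$. By \cref{thm:Le1} there is a lattice achieving equality, so the minimum equals $\Delta$ and is attained at any lattice satisfying \cref{thm:Le1}.

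The only delicate point is Step 1, specifically the existence of a simultaneously adapted basis and the identification of the orientation and sign conventions for $d$ and $d_1$. I would verify these on a single edge, where $d_1=(k-i)/k$, before trusting the general formula.
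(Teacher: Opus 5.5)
Your argument is correct. The paper gives no internal proof to compare it with: \cref{thm:Le2} is quoted from Le without proof.

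The identity in your Step 1, $d_1([\Lambda],[\Lambda'])=-c(\Lambda,\Lambda')+\frac1k(\val\det\Lambda'-\val\det\Lambda)$, is exactly the formula the paper later imports from \cite[Remark 5.3]{Le} in the Keel--Tevelev section. Your route therefore makes \cref{thm:Le2} a formal consequence of three ingredients: \cref{thm:Le1}; the elementary divisor (stacked basis) theorem over the DVR $\O$, which is the same fact as the paper's assertion that any two vertices of $\B$ lie in a common apartment; and the trivial bound $\val\det(w_1,\ldots,w_k)\ge\sum_i c(\Lambda,\Lambda_i)+\val\det\Lambda$ for $w_i\in\Lambda_i$.

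The conventions you flagged as delicate check out. Put $\Lambda\leftrightarrow\mathbf{0}$ and $\Lambda'\leftrightarrow\mathbf{a}$ in the apartment. Dominance means weakly decreasing modulo $\one$, so $\langle\omega'_1,|\mathbf{a}|\rangle=\max_j a_j-\frac1k\sum_j a_j$. This gives $(k-i)/k$ on an edge with $d=\omega_i$, as the paper states. Your computation $c(\Lambda,\Lambda')=-\max_j a_j$ is also right, since a generator of $\Lambda$ must have a unit coordinate in any $\O$-basis of $\Lambda$.

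Your Step 2 gives slightly more than you claim, and it is worth stating. The identity $\sum_i d_1([\Lambda],[\Lambda_i])-\Delta=\tilde\Delta(\Lambda_1,\ldots,\Lambda_k)-\bigl(\val\det\Lambda+\sum_i c(\Lambda,\Lambda_i)\bigr)$ holds for every $\Lambda$. Hence the minimizers are \emph{exactly} the lattices satisfying the equality in \cref{thm:Le1}. The direction ``satisfies \cref{thm:Le1} implies minimizer'' is precisely how the paper uses the theorem in \cref{sec:diskoid}, so your proof delivers the reading the paper actually needs.
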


\begin{proof}[Proof of \cref{thm:modelexists}]
Let $X \in \Gr(k,n)(\K)$ be a point in the Grassmannian over $\K$ with tropical Pl\"ucker vector $\tropP_\bullet = \tropP_\bullet(X) \in \R^{\binom{[n]}{k}}$.  Let $x_1,x_2,\ldots,x_n \in \K^k$ be the columns of $X$.  Let $\Lambda^a_i$ be the lattice with basis $t^{-a} x_i, w_2^{(i)},\ldots, w_k^{(i)}$ where the $w$-s are fixed vectors not depending on $a$.  For fixed $J = \{j_1,\ldots,j_k\}$ and sufficiently large $a$, the function
$$
\val(\det(v_1,\ldots,v_k)), \qquad v_i \in \Lambda_{j_i}
$$
takes minimum value at the choice $v_i = t^{-a} x_{j_i} \in \Lambda_{j_i}$.  This is because a generic choice of $v_i$ would give
$$
\det(v_1,\ldots,v_k) = t^{-ka} c \det(x_{j_1},\ldots,x_{j_k}) + \text{ higher order terms},
$$
and by assumption we have $ \det(x_{j_1},\ldots,x_{j_k}) \neq 0$.  Here, $c \in \C$ is a scalar that does not affect the valuation.  Now, choose $\Lambda_i := \Lambda^a_i$ where $a$ is sufficiently large in the above sense for all choices of $J$.  Then our choice gives
$$
\tDelta(\Lambda_{j_1},\ldots,\Lambda_{j_k}) = \tropP_J(X) - ak
$$
and
$$
\Delta([\Lambda_{j_1}],\ldots,[\Lambda_{j_k}]) = -\tropP_J(X) + ak + \frac{1}{k}\sum_{i=1}^k \val \det(\Lambda_{j_i}).
$$
for all $J$.  

We apply \cref{thm:Le2} to $k$-element subsets of $(\Lambda_1,\ldots,\Lambda_n)$.  For each $J \in \binom{[n]}{k}$, there is a distance minimizing lattice $[\Lambda_J]\in \B$.  Consider the $n + \binom{[n]}{k}$ vertices $\{[\Lambda_1],\ldots,[\Lambda_n]\} \cup \{[\Lambda_J] \mid J \in \binom{[n]}{k}\}$.  Let $Q \subset G(\B)$ be a directed subgraph such that the directed distance functions $\dist_{G(\B)}(\cdot,\cdot)$ and $\dist_Q(\cdot,\cdot)$ agree on any pair from these $n + \binom{[n]}{k}$ vertices.  As each $[\Lambda_J]$ was a distance minimizer in $G(\B)$, it is still a distance minimizer in $Q$, with identical Fermat-Le distance functions.  Then $(Q, \z =([\Lambda_1],\ldots,[\Lambda_n]))$ is a model for $\tropP_\bullet(X)$, up to the lineality action by the vector $(\frac{1}{k}\val \det(\Lambda_1)+a,\frac{1}{k}\val \det(\Lambda_2)+a,\ldots, \frac{1}{k}\val \det(\Lambda_n)+a)$.  
\end{proof}

\subsection{Positive configurations}
We review the notion of \emph{positive configuration} from \cite{Le}.
An $n$-tuple of lattices $\Lambda_1,\ldots,\Lambda_n$ is called a \emph{positive configuration}
if we have a collection of ordered bases $v_{i1},\ldots,v_{ik}$ for $\Lambda_i$
such that for each triple of integers $1 \leq p < q<r\leq n$, and each triple of
nonnegative integers $i, j, \ell$ such that $i + j + \ell = k$, 
\begin{enumerate}
\item we have $f^t_{ij\ell}(\Lambda_p,\Lambda_q,\Lambda_r) = - \val \det(v_{p1},\ldots,v_{pi},v_{q1},\ldots,v_{qj},v_{r1},\ldots,v_{r\ell})$, and
\item the leading coefficient of $\det(v_{p1},\ldots,v_{pi},v_{q1},\ldots,v_{qj},v_{r1},\ldots,v_{r\ell})$ is positive.
\end{enumerate}
The tropical functions $f^t_{ij\ell}$ are defined similarly to \eqref{eq:tDelta} by taking $i$ (resp. $j$ and $\ell$) vectors from $\Lambda_p$ (resp. $\Lambda_q$ and $\Lambda_r$).

\section{Diskoids and convex sets}\label{sec:diskoid}
\def\blambda{\mathbf{\lambda}}
\def\maxconv{{\rm maxconv}}
\def\minconv{{\rm minconv}}
In this section, we specialize to $k = 3$.  For a sequence $(\lambda_1,\lambda_2,\ldots,\lambda_n)$ where $\lambda_i \in \{\omega_1,\omega_2\}$, the space of polygons is
$$
P(\blambda) = \{([\Lambda_1],[\Lambda_2],\ldots,[\Lambda_{n}],[\Lambda_{n+1}]=[\Lambda_1]) \mid d([\Lambda_i],[\Lambda_{i+1}])= \lambda_i\}.
$$
The space of polygons $P(\blambda)$ has the structure of an algebraic variety (see \cite{FKK,Akh}) and we say that $P \in P(\blambda)$ is \emph{generic} if it is a generic point in an irreducible component of the polygon space.

Given a generic polygon $P \in P(\blambda)$, we obtain a diskoid $D(P)$ with boundary $P$; the one-skeleton of $D(P)$ (after orienting the edges) can be identified with a CAT(0) planar graph $Q(P)$.  The vertices of $D(P)$ were characterized by Akhmejanov \cite{Akh} as a convex hull.  The max-convex hull of a set $S$ of vertices of $\B$ is the smallest set $\maxconv(S)$ such that $[\Lambda], [\Lambda'] \in \maxconv(S)$ implies $[\Lambda+\Lambda'] \in \maxconv(S)$.  The min-convex hull of a set $S$ of vertices of $\B$ is the smallest set $\minconv(S)$ such that $[\Lambda], [\Lambda'] \in \minconv(S)$ implies $[\Lambda \cap \Lambda'] \in \minconv(S)$.  The convex hull is the intersection $\conv(S) := \maxconv(S) \cap \minconv(S)$.

\begin{theorem}[\cite{Akh}]
For a generic polygon $P$, the set of vertices of $D(P)$ (or of $Q(P)$) is the convex hull $\conv(P)$.
\end{theorem}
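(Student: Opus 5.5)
The plan is to prove the two inclusions $\conv(P) \subseteq V(D(P))$ and $V(D(P)) \subseteq \conv(P)$ separately. Throughout, I would work with the oriented one-skeleton $Q(P)$, which is a CAT(0) planar graph, and use the directed-distance machinery of \cref{sec:directed-distance}. This is legitimate because the $\omega_1$-distance $d_1$ on $G(\B)$ agrees, after multiplying by $3$, with the directed graph distance. That identification lets us treat combinatorial geodesics in $Q(P)$ (\cref{prop:short}) as geodesics in $\B$.

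For $\conv(P) \subseteq V(D(P))$ it suffices to show two things. First, $V(D(P))$ contains the boundary $P$, which holds by construction. Second, $V(D(P))$ is closed under $[\Lambda],[\Lambda'] \mapsto [\Lambda+\Lambda']$ and $[\Lambda\cap\Lambda']$. I would first argue that $D(P)$ is geodesically convex in $\B$. A diskoid is a CAT(0) disk, and the local condition $\deg \ge 6$ together with the Gauss--Bonnet count of \cref{lem:graph} shows that a combinatorial geodesic leaving $D(P)$ and returning would bound a region violating that lemma. Given this, for two vertices $[\Lambda],[\Lambda']$ of $D(P)$, I would choose an apartment containing both, place them as points $\a,\a' \in \Z^3/\one$, and normalize the representatives. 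Then $\Lambda+\Lambda'$ and $\Lambda\cap\Lambda'$ correspond to the coordinatewise max and min of $\a$ and $\a'$. These points lie in the union of the $d_1$- and reverse $d_1$-geodesic hulls of $\a$ and $\a'$ (the ``Weyl parallelogram''). That parallelogram is a union of geodesics between the two points, so it lies in $D(P)$ by convexity.

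For the reverse inclusion $V(D(P)) \subseteq \conv(P)$ I would induct on the number of triangles of $Q(P)$. By \cref{lem:acute} a simple piece has at least three acute vertices. If $v$ is acute with boundary neighbours $u$ and $w$, deleting $v$ gives a smaller diskoid whose boundary polygon $P'$ replaces $u,v,w$ by $u,w$. However $P'$ need not lie in $\conv(P)$ obviously, so a better peeling step is to remove a whole strand strip $S_i$ adjacent to the boundary. By \cref{lem:strandcomb}, its inner side is a combinatorial geodesic whose vertices I would exhibit explicitly as iterated sums and intersections of consecutive boundary lattices. Along a degree-$4$ boundary run, each inner vertex $x$ sits in two triangles with consecutive boundary vertices $z,z'$ of opposite orientation, and I expect $[x] = [\Lambda_z+\Lambda_{z'}]$ or $[\Lambda_z\cap\Lambda_{z'}]$ according to colour. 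The new boundary therefore lies in $\conv(P)$, so $\conv(P') \subseteq \conv(P)$, and induction finishes. Cut vertices are handled by \cref{lem:glue}.

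The main obstacle is this last identification, $[x] = [\Lambda_z\pm\Lambda_{z'}]$, rather than merely $[x]$ being some lattice between them. It is exactly here that genericity of $P$ must enter: for a special polygon the sum could degenerate to a lattice not in the diskoid, or the diskoid could fail to be unique. My first attempt would be to compute in a single apartment containing the triangle pair (a hexagon of the standard tiling) and check directly that sum and intersection produce the shared vertex. I would then use genericity to guarantee that the relevant boundary lattices are in general position, so that the local apartment picture is valid.
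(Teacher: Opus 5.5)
Both inclusions have genuine gaps.

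\textbf{First inclusion.} It rests on the claim that $V(D(P))$ is closed under $[\Lambda],[\Lambda']\mapsto[\Lambda+\Lambda']$ and under $[\Lambda\cap\Lambda']$. This is false as soon as $D(P)$ is not flat. Closure under either operation alone would suffice, but both fail.

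Take an interior vertex $v$ of degree $8$, as in \cref{fig:312buildingex}, and two incoming neighbours $x,x'$ of $v$ that are four apart around $v$. Then $\dist(x,x')=\dist(x',x)=3$ in $Q$, so $d(x,x')=\omega_1+\omega_2$ by \cref{prop:dagree}. In an apartment containing both, you may write $x=(0,0,0)$ and $x'=(2,1,0)$. Here $[t^{-1}\Lambda_x\cap\Lambda_{x'}]=(1,1,0)$ is $v$, being the unique vertex $w$ with $x\to w\leftarrow x'$. However, $y:=[t^{-1}\Lambda_x+\Lambda_{x'}]=(1,0,0)$ has edges $y\to x$ and $y\to x'$ in $G(\B)$. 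If $y$ lay in $D(P)$, these would be edges of $Q(P)$ by \cref{prop:dagree}. That would produce a $4$-cycle $x,v,x',y$, which \cref{lem:graph} forbids: apply it to the disc the cycle bounds, in which $v$ has degree $5$. So $y\notin V(D(P))$. Using two opposite outgoing neighbours shows in the same way that $V(D(P))$ is not closed under intersections.

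The faulty step is the convexity argument. $D(P)$ is convex for the CAT(0) metric, whose geodesics are straight segments in apartments. But the Weyl parallelogram is the union of $d_1$-geodesics, and the path $x\leftarrow y\to x'$ realises $3d_1(x,x')$ while leaving $D(P)$. Filled inside $\B$, this path together with $x\to v\leftarrow x'$ bounds just the two chambers $\{x,v,y\}$ and $\{x',v,y\}$, so there is no Gauss--Bonnet contradiction. This is exactly why the theorem intersects $\maxconv$ with $\minconv$; granting the theorem, $y\in\maxconv(P)\setminus\minconv(P)$. To get $\conv(P)\subseteq V(D(P))$ you must show that every vertex off the diskoid fails at least one of the two memberships, and nothing in your argument does this.

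\textbf{Second inclusion.} The peeling step fails for a related reason. Consecutive boundary vertices $z,z'$ are adjacent. If $t\Lambda_z\subsetneq\Lambda_{z'}\subsetneq\Lambda_z$, then $t^a\Lambda_z+t^b\Lambda_{z'}$ equals $t^a\Lambda_z$ for $a\le b$ and $t^b\Lambda_{z'}$ for $a>b$, and similarly for intersections. So the hull of $\{z,z'\}$ is just $\{z,z'\}$, and $[x]=[\Lambda_z+\Lambda_{z'}]$ or $[\Lambda_z\cap\Lambda_{z'}]$ is impossible for an inner vertex $x$; genericity cannot rescue this.

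Using more of the run does not help either. The vertices $b_0,\dots,b_m$ of a boundary run with intermediate degrees $4$ satisfy $d(b_0,b_m)\in\{m\omega_1,m\omega_2\}$ (by the argument of \cref{lem:fboundary}). Hence they are collinear in any apartment containing $b_0,b_m$, and their sums and intersections stay on the run. Interior vertices come only from far-apart boundary vertices: in the flat hexagon with alternating boundary, the centre is $[\Lambda_z+\Lambda_{z''}]$ for opposite vertices $z,z''$.

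That hexagon also shows what really blocks the acute-vertex induction. Deleting an acute vertex is harmless, since then $P'\subseteq P$, contrary to your worry. The problem is that this hexagon has no acute vertex at all, because \cref{lem:acute} needs a counterclockwise boundary.

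\textbf{What is missing.} You need a global membership test. One that works: with $a_s$ minimal such that $t^{a_s}\Lambda_s\subseteq\Lambda$, one has $[\Lambda]\in\maxconv(P)$ iff $\Lambda=\sum_s t^{a_s}\Lambda_s$. By Nakayama, this holds iff the subspaces $V(\Lambda,\Lambda_s)\subseteq\Lambda/t\Lambda$ span, and dually for $\minconv$. The tight-generator argument at the end of \cref{sec:diskoid} produces such a spanning set at any focal point. So $V(D(P))\subseteq\conv(P)$ reduces to a version of \cref{prop:normal}, for both $Q$ and $Q^{\rm op}$, that allows mixed boundary orientation. The opposite inclusion requires showing that off the diskoid one of the two spanning conditions fails, and that is where the real work lies.
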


\begin{proposition}\label{prop:LeAkh}
A polygon $P \in P(\blambda)$ is a positive configuration of lattices in $\B$ in the sense of \cite{Le} if and only if it is a generic polygon in the sense of \cite{Akh}.  
\end{proposition}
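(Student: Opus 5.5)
I would prove the two implications separately, using the explicit algebraic description of each notion on the same space $P(\blambda)$ of polygons with sides $\lambda_i \in \{\omega_1,\omega_2\}$.

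Both notions are conditions on the tropical invariants $f^t_{ij\ell}(\Lambda_p,\Lambda_q,\Lambda_r)$, $i+j+\ell=3$, of triples of boundary lattices. Genericity in the sense of Akhmejanov means the polygon lies in the open dense stratum of an irreducible component of $P(\blambda)$. By \cite{FKK}, the irreducible components are indexed by non-elliptic webs (Satake/geometric Satake), and each component is the closure of a cell parametrized by the valuations of a basis of invariants. So on the generic stratum of a component, every such tropical invariant takes the value prescribed by the web, namely the minimum over choices of vectors, attained by generic choices. Positivity in the sense of \cite{Le} asks for a single choice of ordered bases $v_{i1},\ldots,v_{ik}$ for which all these minima are realized simultaneously (condition (1)) with positive leading coefficients (condition (2)).

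\emph{Positive $\Rightarrow$ generic.} Given a positive configuration, condition (1) says that each $f^t_{ij\ell}$ is attained by the specific flag vectors, not only by a generic choice. I would show that this forces the polygon into the open stratum: the boundary strata of a component are cut out by the vanishing of the leading coefficient of some minor of the type appearing in (1). Condition (2) says these leading coefficients are positive, hence nonzero, so the polygon avoids every degeneracy locus and is generic.

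\emph{Generic $\Rightarrow$ positive.} For a generic $P$ in the component indexed by a web $W$, I would build bases inductively around the boundary. Choose $v_{p1}$ spanning the relevant line of $\Lambda_p$ modulo $t\Lambda_p$, compatibly with the adjacent $\Lambda_{p\pm1}$; this is possible since adjacent lattices differ by $\omega_1$ or $\omega_2$. Then use the fact that the generic point's invariants equal the web-prescribed values, so flags chosen generically attain every minimum. It remains to arrange positive leading coefficients. The components are preserved by the positive torus, and the leading-coefficient map to $(\C^\times)^N$ is dominant on the generic stratum. So, after rescaling basis vectors by signs and invoking that all the relevant minors are positive on the totally positive part of the cell, one obtains a positive configuration.

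The main obstacle is this last sign step. One must show that the leading coefficients of all the minors in condition (2) can be made simultaneously positive, which is not automatic from genericity over $\C$. I would first try to match Le's positive configurations with the image under the positive parametrization of \cite{FKK}, via a cluster chart on $\Trop_{>0}$ given by the web. If that fails, I would reduce to triangles: for triples $p<q<r$, apply the same argument to the sub-diskoid spanned by $\Lambda_p,\Lambda_q,\Lambda_r$, and invoke \cref{thm:LPKT}-type compatibility to glue the local positivity statements.
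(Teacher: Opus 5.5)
Your argument for positive $\Rightarrow$ generic does not work, because nonvanishing of the leading coefficients in condition (1) says nothing about the polygon.

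\begin{itemize}
\item As the paper notes after \eqref{eq:tDelta}, each $f^t_{ij\ell}(\Lambda_p,\Lambda_q,\Lambda_r)$ is an extremal valuation attained by generic vectors.
\item Any minimizing tuple is automatically independent modulo $t\Lambda_p$, so it extends to an ordered basis. Hence each requirement in (1) is a nonempty Zariski-open condition on the (irreducible) space of choices of bases.
\item There are only finitely many such requirements, so a generic choice of ordered bases satisfies (1) for \emph{every} lattice configuration.
\end{itemize}

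So vanishing of these leading coefficients is a property of the chosen bases, not a locus in $P(\blambda)$. Your argument would therefore show that every polygon is generic, including polygons lying on the intersection of two irreducible components. What actually changes at special points is the values of the $f^t_{ij\ell}$ themselves. The whole content of positivity is the sign condition (2).

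Consequently the sign step you flag in the converse is the entire proposition, and neither fallback closes it.

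\begin{itemize}
\item The cluster-chart matching is not specified.
\item The reduction to triangles via \cref{thm:LPKT} is circular. The proof of \cref{prop:LPKT} in \cref{sec:KT} uses \cref{prop:LeAkh} to pass from the generic sub-polygon $\theta(\partial Q')$ to a positive configuration.
\item Positivity requires one global choice of ordered bases for all triples, so triangle-by-triangle positivity would not glue without further work.
\end{itemize}

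The missing idea is that you never need to produce signs by hand. By \cite[Theorem 5.25]{Le1}, a configuration of lattices is positive exactly when its tropical invariants $f^t_{ij\ell}$ satisfy the hive inequalities, so positivity depends only on valuation data. By \cite{GS}, hives index the top-dimensional irreducible components of polygon spaces, and the $f^t_{ij\ell}$ are constant on the generic points of each component. Combining these two facts identifies genericity in the sense of \cite{Akh} with positivity; this is the paper's proof. It needs neither the web-indexing of components from \cite{FKK}, which you also use in a form (``closure of a cell parametrized by valuations of a basis of invariants'') that is not an established result, nor any explicit construction of bases.
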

\begin{proof}
By \cite[Theorem 5.25]{Le1}, a configuration of lattices is positive if certain tropical functions $f^t_{i,j,k}$ satisfy the \emph{hive inequalities}.  By \cite{GS}, hives index the top-dimensional irreducible components of polygon spaces, and the functions $f^t_{i,j,k}$ are constant on generic points of each such component.  This is exactly the notion of generic in \cite{Akh}.
\end{proof}

\begin{remark}
In general, the positive configurations $([\Lambda_1],[\Lambda_2],\ldots,[\Lambda_{n}],[\Lambda_{n+1}]=[\Lambda_1])$ of \cite{Le} do not necessarily satisfy $d([\Lambda_i],[\Lambda_{i+1}]) \in \{\omega_1,\omega_2\}$.  However, \cite[Lemma 5.20]{Le1} allows one to fill in the gaps in such positive configurations.
\end{remark}

\begin{proposition}\label{prop:dagree}
For $[\Lambda],[\Lambda'] \in Q(P)$, we have $\dist_{Q(P)}([\Lambda],[\Lambda']) = k \cdot d_1([\Lambda],[\Lambda'])$.
\end{proposition}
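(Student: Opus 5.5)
We prove the two inequalities separately. Throughout, $k=3$ and $Q(P)$ is the oriented one-skeleton of the diskoid $D(P)\subset\B$. Its edges are oriented so that $[\Lambda]\to[\Lambda']$ exactly when $d_1([\Lambda],[\Lambda'])=1/3$, i.e.\ when it is an edge of $G(\B)$. A reverse traversal of an edge has $d_1=2/3$.

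\emph{Upper bound.} Let $\gamma$ be any path in $Q(P)$ from $[\Lambda]$ to $[\Lambda']$. Its $Q$-length counts $1$ per forward edge and $2$ per backward edge, which equals $3\sum_{\text{edges}} d_1$. The coweight-valued distance, and hence $d_1$, is subadditive along paths in $\B$; this is the triangle inequality already used in the proposition identifying the $G(\B)$-distance with $3d_1$. Therefore $\dist_{Q(P)}\ge 3d_1$ automatically, since $Q(P)$ is a subgraph of $G(\B)$ (with reversed edges allowed at cost $2$, matching $d_1(\cdot)=2/3$ on $\omega_2$-edges).

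\emph{Reverse inequality.} This is the main point: we must show that some path in $Q(P)$ realizes $3d_1([\Lambda],[\Lambda'])$. The plan is to find a single apartment $A$ containing both endpoints such that $D(P)\cap A$ contains a combinatorial geodesic of $A$ between them. Inside an apartment of $\PGL(3)$, i.e.\ the flat triangulated plane $\Z^3/\one$, the directed graph distance (forward $1$, backward $2$) is exactly $3d_1$ by a direct computation: for $\a'-\a$ with dominant form $m\omega_1+p\omega_2$, the distance is $2m+p$. Any straight combinatorial geodesic in the flat plane achieves this, because a shortest lattice path in $\Z^3/\one$ realizes the coweight distance.

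To produce such a flat strip, I would use Akhmejanov's description of the vertex set as $\conv(P)$, together with the CAT(0) structure of $D(P)$. First I take a combinatorial geodesic $\gamma$ in $Q(P)$. By \cref{prop:short}, $\gamma$ is also a shortest directed path in $Q(P)$, and by \cref{cor:edge-count} it has invariant forward and backward edge counts $(m',p')$. Next I would argue that $\gamma$ lies in one apartment of $\B$. Since the diskoid is CAT(0) and isometrically embedded in $\B$ (Fontaine--Kamnitzer--Kuperberg), the geodesic between two points of $D(P)$ in $D(P)$ is the $\B$-geodesic, and any $\B$-geodesic segment lies in an apartment. The combinatorial path $\gamma$ stays within the flat triangles around this metric geodesic, so it sits in that apartment. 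In that apartment the displacement is $m'$ steps of type $\omega_1$ and $p'$ of type $\omega_2$, not cancelling. Because $\gamma$ is a geodesic, the steps are confined to two adjacent Weyl directions, so $d = m'\omega_1 + p'\omega_2$ up to the Weyl action, and $3d_1 = 2m'+p'$, which equals the $Q$-length of $\gamma$.

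The main obstacle is justifying that the diskoid is convex in $\B$, i.e.\ that $D(P)$ is isometrically embedded so that $Q(P)$-geodesics are $\B$-geodesics. I would first try to cite the FKK/Akhmejanov result that diskoids are CAT(0) and that $\conv(P)$ is geodesically convex. Failing that, I would argue locally: each interior vertex has degree at least $6$, so the local development into an apartment is injective, and a local-to-global argument for CAT(0) complexes then yields global isometric embedding.
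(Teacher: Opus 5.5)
Your easy inequality is right: $\dist_{Q(P)}\ge 3d_1$ holds because forward and backward traversals of an edge of $Q(P)$ have $3d_1$ equal to $1$ and $2$, and $d_1$ satisfies the triangle inequality. (This is a lower bound on $\dist_{Q(P)}$, despite your heading.)

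The paper's proof is only a citation of Fontaine--Kamnitzer--Kuperberg (Lemma 5.3 and Theorem 5.5 there), whose content is that the diskoid of a generic polygon sits in $\B$ distance-preservingly. Your reverse inequality also rests on that embedding. What fails is the bridge you build from it to the directed distance.

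You fix an arbitrary combinatorial geodesic $\gamma$ of $Q(P)$ and claim it ``stays within the flat triangles around'' the CAT(0) geodesic $\sigma$, hence lies in one apartment. This is false even in a flat region. There the shortest directed paths from $x$ to $y$ sweep out the whole parallelogram spanned by $m$ steps of one edge type and $p$ steps of the adjacent type. The geodesic $\sigma$ is its diagonal, and the path running along two sides is a combinatorial geodesic far from $\sigma$.

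Your next step is circular: that because $\gamma$ is a geodesic, its steps are confined to two adjacent Weyl directions. Minimality among paths in $Q(P)$ gives minimality in the apartment $A$ only if $Q(P)\cap A$ is already known to contain an $A$-geodesic from $x$ to $y$, and that is the claim itself. Your bookkeeping of $(m',p')$ is also swapped: forward edges have $d=\omega_2$ and cost $1$.

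The fix is to build the path from $\sigma$ rather than from $\gamma$.
\begin{itemize}
\item Every closed triangle of $D(P)$ whose interior meets $\sigma$ lies in any apartment $A\supseteq\sigma$, because apartments are subcomplexes.
\item The direction of $\sigma$ lies in a closed $60^\circ$ Weyl chamber of $A$. So the edges of these triangles not crossed by $\sigma$ point only in the two edge directions bounding that chamber, one of type $\omega_1$ and one of type $\omega_2$.
\item Hence either boundary path of this strip, concatenated through any vertices $\sigma$ passes through, is a monotone lattice path of cost $3d_1(x,y)$ made of edges of $Q(P)$. This gives $\dist_{Q(P)}\le 3d_1$.
\end{itemize}

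Your fallback cannot work: replacing FKK by local injectivity at interior vertices plus a local-to-global principle. Genericity of $P$ is indispensable, and it acts at boundary vertices, where $D(P)$ can have angle $5\pi/3$ while isometry forces angle $\pi$ in $\B$.

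Concretely, fan five triangles around a boundary vertex $v$ with outer neighbours $u_5\to v\to u_0$. Then $\dist_{Q}(u_0,u_5)=4$. Now suppose $u_0,u_5$ correspond to an incident point and line of $\Lambda_v/t\Lambda_v$; this is a non-generic but still injective simplicial placement. Then $u_0,u_5$ span a chamber with $v$, and $3d_1(u_0,u_5)=1$. This map has no interior vertices, so an argument using only interior data would wrongly certify it.
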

\begin{proof}
This follows from \cite[Lemma 5.3 and Theorem 5.5]{FKK}.
\end{proof}

We now discuss our notion of focal point (\cref{def:focal}) in the context of buildings.  Recall that \cref{thm:focal} states that the set of focal points is exactly the set of distance minimizers.  For a lattice, define $V(\Lambda) = \Lambda/t\Lambda$, and let $V(\Lambda,\Lambda') \subset V(\Lambda)$ denote the subspace spanned by the images of tight generators with respect to $\Lambda'$.  Note that any non-zero vector in $V(\Lambda,\Lambda')$ is the image of a tight generator.

\begin{proposition}
Suppose that $[\Lambda]$ is a distance-minimizer for $([\Lambda_a],[\Lambda_b],[\Lambda_c])$ in $Q(P)$.  Then $[\Lambda]$ is also a distance-minimizer in the sense of \cref{thm:Le2}.  In particular, distance-minimizers can be found within $\conv(P)$.
\end{proposition}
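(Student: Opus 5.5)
The plan is to compare two minimization problems over nested sets and show that they take the same value. Work with $k=3$ and write $\Sigma_{Q(P)}$ for the Fermat--Le sum inside $Q(P)$ and $\Sigma_{\B}$ for the analogous sum over all vertices of $\B$ using $3d_1$. By \cref{prop:dagree}, for vertices of $Q(P)$ the directed distance $\dist_{Q(P)}$ equals $3d_1$. So the objective $D(x)=\sum_{y\in\{a,b,c\}}\dist(x,[\Lambda_y])$ on $Q(P)$ is the restriction of the building objective $3\sum_y d_1(\cdot,[\Lambda_y])$. Since $V(Q(P))\subset \B$, this gives $\Sigma_{\B}\le \Sigma_{Q(P)}$ at once. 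It remains to prove the reverse inequality $\Sigma_{Q(P)}\le\Sigma_{\B}$. Once we have it, any minimizer $[\Lambda]$ in $Q(P)$ attains the building minimum, which by \cref{thm:Le2} is $3\Delta([\Lambda_a],[\Lambda_b],[\Lambda_c])$, and so it is a minimizer in the sense of \cref{thm:Le2}. The ``in particular'' is then immediate, because $V(Q(P))=\conv(P)$ by Akhmejanov's theorem.

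For the reverse inequality, I would take an arbitrary building minimizer $[M]$ and project it into $\conv(P)$ without increasing the objective. The natural tool is the closure of $\conv(P)$ under sums and intersections of lattices. First I would check that the function $[\Lambda]\mapsto d_1([\Lambda],[\Lambda'])$ behaves well under these operations. Up to normalization, $d_1([\Lambda],[\Lambda'])$ is $c(\Lambda,\Lambda')$ plus a $\val\det$ correction; this is the same quantity that appears in \cref{thm:Le1}. The key inequality I would aim for is a submodularity statement: for lattices $\Lambda,N$,
$$
\textstyle\sum_y d_1([\Lambda+N],[\Lambda_y])+\sum_y d_1([\Lambda\cap N],[\Lambda_y])\le \sum_y d_1([\Lambda],[\Lambda_y])+\sum_y d_1([N],[\Lambda_y]).
$$
I would prove it termwise from the exact sequence $0\to \Lambda\cap N\to \Lambda\oplus N\to \Lambda+N\to 0$, which gives additivity of $\val\det$, together with the fact that the tight-generator index $c(\cdot,\Lambda_y)$ is monotone under inclusion.

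With submodularity in hand, the projection runs as follows. The target lattices $[\Lambda_a],[\Lambda_b],[\Lambda_c]$ lie in $P\subset\conv(P)$. Starting from $[M]$, I would apply alternating sum and intersection operations with suitable elements of $\conv(P)$ (the targets and their hulls) and show that this reaches a vertex of $\conv(P)$ without increasing the objective. A tempting shortcut is to use Le's characterization instead: the minimizing lattice in \cref{thm:Le1} can be built as $\sum_i t^{m_i}\Lambda_{y_i}$-type combinations, namely the span of tight generators of the targets, which lies in $\maxconv$. One would then intersect with scaled targets to land in $\minconv$ as well. Along the way I would use the space $V(\Lambda,\Lambda')$ to track tight generators. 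At the projected vertex, tight generators toward the three targets span $V(\Lambda)$, and this matches the SDR and non-parallel conditions of a focal point (\cref{def:focal}).

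The main obstacle is the hull step: showing that some building minimizer lies in $\conv(P)=\minconv\cap\maxconv$, and not just in one of the two halves. Submodularity only shows that the pair $(\Lambda+N,\Lambda\cap N)$ has total objective no larger than that of $(\Lambda, N)$; it does not make each term individually smaller. So I would try first to use \cref{thm:Le1} directly. Its minimizer is generated by tight generators coming from the three target lattices, which should place it in $\maxconv(\{\Lambda_a,\Lambda_b,\Lambda_c\})$. A dual argument applied to dual lattices should place it in $\minconv$. If that fails, I would fall back on a local argument. Take a vertex $[\Lambda]$ of $Q(P)$ that minimizes the objective over $Q(P)$; by \cref{thm:focal} it is a focal point. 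Then show, using the spanning property of $V(\Lambda,\Lambda_y)$ and the fact that $D(P)$ is CAT(0) and locally convex in $\B$, that no building neighbor of $[\Lambda]$ decreases the objective. Since $d_1$-sums are convex along building geodesics, a local minimum is then a global minimum.
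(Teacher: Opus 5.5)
Your reduction is sound as far as it goes: $\Sigma_{\B}\le\Sigma_{Q(P)}$ is immediate from \cref{prop:dagree}, and all of the content lies in the reverse inequality. None of your three routes establishes it.

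\emph{The projection route.} The submodularity you propose does hold: one checks $c(\Lambda,\Lambda_y)=\max\{\lambda\mid\Lambda_y\subseteq t^{\lambda}\Lambda\}$, which is monotone and supermodular, while $\val\det$ is modular. But, as you note, this only shows that the set of building minimizers is closed under $+$ and $\cap$. It gives no way to move a minimizer into $\conv(P)$.

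\emph{The ``shortcut''.} This does not work as stated. If $\Lambda=\O v_a+\O v_b+\O v_c$ with $t^{c_y}v_y\in\Lambda_y$, then $\Lambda$ is merely contained in $\sum_y t^{-c_y}\Lambda_y$; it is not a sum of rescaled targets. So nothing places it in $\maxconv$, and the $\minconv$ half is not argued at all.

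\emph{The fallback.} This rests on two unproved claims. The first is that $\sum_y d_1(\cdot,[\Lambda_y])$ is convex along building geodesics. Here $d_1$ is an asymmetric polyhedral Finsler distance, not the CAT(0) metric, so this is not the standard CAT(0) fact. The second is that a focal point is locally optimal ``using the spanning property'', which you never derive from \cref{def:focal}.

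The missing idea is that no projection or local-to-global argument is needed, because \cref{thm:Le1,thm:Le2} already certify global optimality. Suppose $\Lambda$ has an $\O$-basis $\{v_a,v_b,v_c\}$ in which each $v_y$ is a tight generator for $\Lambda$ with respect to $\Lambda_y$. Then $\Lambda$ attains equality in \cref{thm:Le1}, and is therefore a minimizer in the sense of \cref{thm:Le2}.

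So the real task is the following. By \cref{thm:focal}, any minimizer in $Q(P)$ is a focal point. You must show that at a focal point the spaces $V(\Lambda,\Lambda_a)$, $V(\Lambda,\Lambda_b)$, $V(\Lambda,\Lambda_c)$ contain vectors spanning $V(\Lambda)=\Lambda/t\Lambda$.

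The paper does this by comparing $V(\Lambda,\Lambda_y)$ with $V(\Lambda,\Lambda')$, where $[\Lambda']$ is the first vertex of the canonical geodesic toward $[\Lambda_y]$.
\begin{enumerate}
\item $V(\Lambda,\Lambda')$ is a line when the first step is an incoming edge and a plane when it is outgoing.
\item The lines from three distinct incoming edges span $V(\Lambda)$, and any two of them are independent.
\item In the exceptional case where the first step is forward but $V(\Lambda,\Lambda_y)$ is only a line, the non-parallel condition ensures that two such targets still span $V(\Lambda,\Lambda')$.
\end{enumerate}
The SDR condition then produces the required basis.

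You allude to this correspondence (``this matches the SDR and non-parallel conditions''). But it is precisely the step that has to be carried out, and once it is, the proof is finished without any of the convexity or hull machinery.
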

\begin{proof}
By \cref{prop:dagree}, the notions of distance in $Q(P)$ and in $\B$ agree.  However, in $\B$ we are minimizing over the infinitely many vertices of $\B$ which could potentially provide for lower distance sums. 

By \cref{thm:focal}, $[\Lambda]$ is a focal point.  We shall show that $\Lambda$ has an $\O$-basis $\{v_a,v_b,v_c\}$ such that $v_a$ is a tight generator for $\Lambda$ with respect to $\Lambda_a$, and similarly for $\Lambda_b$ and $\Lambda_c$.  Equivalently, we find vectors $v_a,v_b,v_c$ from $V(\Lambda,\Lambda_a),V(\Lambda,\Lambda_b),V(\Lambda,\Lambda_c)$ respectively that span the three-dimensional vector space $V(\Lambda)$.  We first note that if $[\Lambda]$ and $[\Lambda']$ are adjacent, then
$$
\dim V(\Lambda,\Lambda') = \begin{cases} 1 & \mbox{if $[\Lambda] \leftarrow [\Lambda']$,} \\
2 & \mbox{if $[\Lambda] \rightarrow [\Lambda']$.}
\end{cases}
$$
Furthermore, \cref{prop:dagree} implies that any three distinct incoming edges $[\Lambda] \leftarrow [\Lambda']$, $[\Lambda] \leftarrow [\Lambda'']$, and $[\Lambda] \leftarrow [\Lambda''']$ satisfy that $V(\Lambda,\Lambda'),  V(\Lambda,\Lambda''), V(\Lambda,\Lambda''')$ span $V(\Lambda)$, and any two such are linearly independent.  If we have a triangle $[\Lambda] \to [\Lambda'] \to [\Lambda'']\to[\Lambda]$ then $V(\Lambda,\Lambda') \supset V(\Lambda,\Lambda'')$.

Consider the standard geodesic $\gamma$ in $Q(P)$ from $[\Lambda]$ to $[\Lambda_a]$, which is also a combinatorial geodesic in $\B$.  Suppose that $\gamma$ has $e$ forward edges and $d$ backwards edges.  Then $d_{\B}([\Lambda],[\Lambda_a]) = d \omega_1 + e \omega_2$.  Let $[\Lambda']$ be the first vertex on the path $\gamma$ after $[\Lambda]$.

Suppose that $e = d = 0$.  Then $V(\Lambda,\Lambda_a) = V(\Lambda)$ is three-dimensional.

Suppose that $e = 0$ and $d > 0$.  Then $d_{\B}([\Lambda],[\Lambda_a]) = d \omega_1$, and after changing bases, we may assume that
$$
\Lambda = \sp_\O(e_1,e_2,e_3), \qquad \Lambda' = \sp_\O(t^{-1} e_1,  e_2, e_3), \qquad \Lambda_a = \sp_\O(t^{-d} e_1,e_2, e_3).
$$
In this case, $V(\Lambda,\Lambda') = V(\Lambda,\Lambda_a) = \sp(e_1)$.

Next suppose that $e > 0$ and $d = 0$.  The first step of $\gamma$ is a forwards edge $[\Lambda] \rightarrow [\Lambda']$.  Then $[\Lambda],[\Lambda'],[\Lambda_a]$ belong to a common apartment.  We may assume that
$$
\Lambda = \sp_\O(e_1,e_2,e_3), \qquad \Lambda' = \sp_\O(t^{-1} e_1, t^{-1} e_2, e_3), \qquad \Lambda_a = \sp_\O(t^{-e} e_1, t^{-e} e_2, e_3).
$$
In this case, $V(\Lambda,\Lambda') = V(\Lambda,\Lambda_a) = \sp(e_1, e_2)$.

Finally, if $e > 0$ and $d> 0$, typically the first step of $\gamma$ is a backwards edge, and
$$
\Lambda = \sp_\O(e_1,e_2,e_3), \qquad \Lambda' = \sp_\O(t^{-1} e_1, e_2, e_3), \qquad \Lambda_a = \sp_\O(t^{-e-d} e_1, t^{-e} e_2, e_3).
$$
In this case, $V(\Lambda,\Lambda') = V(\Lambda,\Lambda_a) = \sp(e_1)$.

However, it is possible for the first step of $\gamma$ to be a forwards edge, and
$$
\Lambda = \sp_\O(e_1,e_2,e_3), \qquad \Lambda' = \sp_\O(t^{-1} e_1, t^{-1} e_2, e_3), \qquad \Lambda_a = \sp_\O(t^{-e-d} e_1, t^{-e} e_2, e_3).
$$
Here, $\sp_\O(t^{-1} e_1, e_2, e_3)$ does not belong to $Q(P)$ (which is possible).  We then have $V(\Lambda,\Lambda') = \sp(e_1, e_2)$ but $V(\Lambda,\Lambda_a) = \sp(e_1)$.  We check that the definition of $[\Lambda_a]$ and $[\Lambda_b]$ parallel at $[\Lambda]$ implies the following: if $[\Lambda_a]$ and $[\Lambda_b]$ are not parallel then $V(\Lambda,\Lambda_a)$ and $V(\Lambda,\Lambda_b)$ span $V(\Lambda,\Lambda')$.

Since $[\Lambda]$ is a focal point, it has a system of distinct representatives.  Using the above case analysis, we see that this in turn gives a basis $\{v_a,v_b,v_c\}$ of $V(\Lambda)$.  It follows that $\Lambda$ is a lattice that satisfies the conclusion of \cref{thm:Le1} and therefore is a distance minimizer in the sense of \cref{thm:Le2}.
\end{proof}

\begin{figure}
	\centering
	\includegraphics[width=0.55\linewidth]{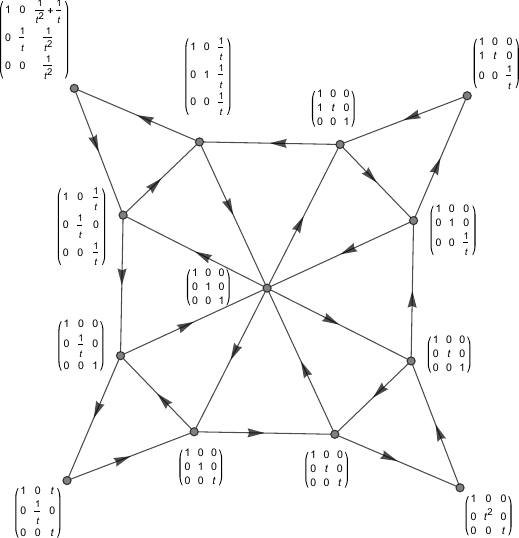}
	\caption{Embedding of the CAT(0) graph $Q$ into the $\emph{PGL}_3(\mathcal{K})$ affine building, with vertices written as matrices in $\text{PGL}_3(\mathcal{K})$.  The curvature is negative at the center vertex $v := (e_1,e_2,e_3)$, making it impossible to find a common basis which simultaneously diagonalizes $v$ and its eight neighbors.  This reflects the fact that $Q$ cannot be embedded into a single apartment.}
	\label{fig:312buildingex}
\end{figure}

	\section{Keel-Tevelev isomorphism}\label{sec:KT}
	Let $M = \{v_1,v_2,\ldots,v_n\} \subset \K^k$ be $n$ vectors that span $\K^k$.  The \emph{Keel-Tevelev membrane} is defined to be
	$$
	[M]:= \{\Lambda = \sp_\O(t^{-u_1} v_1,\ldots,t^{-u_n}v_n) \mid (u_1,u_2,\ldots,u_n) \in \Z^n\} \subset \B.
	$$	
	In \cite{KT}, lower rank lattices (belonging to a compactification $\overline \B$) are allowed; we will only need the part of the membrane consisting of lattices that belong to $\B$.   With $M$ fixed, we define
	$$u_j(\Lambda):= \max(u \mid t^{-u} v_j \in \Lambda) \in \Z$$
	for a lattice $\Lambda$.
	
	Viewing $M$ as a point in $\Gr(k,n)(\K)$, we obtain a tropical Pl\"ucker vector $\tropP_\bullet(M):= \val(\pluck_\bullet(M))$.
	\begin{theorem}[\cite{KT}]\label{thm:KT}
	The map 
	$$
	\iota: \Z^n \to \B, \qquad (u_1,u_2,\ldots,u_n) \mapsto \sp_\O(t^{-u_1} v_1,\ldots,t^{-u_n}v_n)
	$$
	induces an isomorphism of simplicial complexes between $L(\tropP_\bullet(M))$ and $[M]$.
	\end{theorem}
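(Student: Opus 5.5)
The Keel--Tevelev statement is quoted from \cite{KT}, but within the paper's framework the identification can be checked directly. The plan is to compare the two sides through the functions $u_j(\Lambda)$ and \cref{prop:Spe}.

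\textbf{Step 1 (a lattice determines its coordinates).} First show that every $\Lambda \in [M]$ is recovered from $(u_1(\Lambda),\ldots,u_n(\Lambda))$, namely
$$\Lambda = \sp_\O\bigl(t^{-u_1(\Lambda)}v_1,\ldots,t^{-u_n(\Lambda)}v_n\bigr).$$
If $\Lambda = \iota(\mathbf{u})$, then $u_j(\Lambda) \geq u_j$ for each $j$, so enlarging each $u_j$ to $u_j(\Lambda)$ does not change the span. It follows that $[M]$ is the image of the set of ``saturated'' vectors $\mathbf{u}$, those with $u_j(\iota(\mathbf{u})) = u_j$ for all $j$.

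\textbf{Step 2 (saturated vectors are the points of $L(\tropP_\bullet)$).} Show that $\mathbf{u}$ is saturated exactly when $\mathbf{u} \in L(\tropP_\bullet(M))$. For this, compute
$$\val\det \iota(\mathbf{u}) = \min_I \bigl(\tropP_I - \textstyle\sum_{i\in I}u_i\bigr) = \min_I \tropP^{\mathbf{u}}_I,$$
by Cauchy--Binet applied to $k$-subsets of the generators. The minimizing $I$ form the matroid $M(\tropP^{\mathbf{u}}_\bullet)$ of \cref{lem:mintropvec}. Now suppose $j$ is a loop of this matroid. Then in the reduction $\Lambda/t\Lambda$, the image of $t^{-u_j}v_j$ lies in the span of the other generators' images to higher order, which lets us replace it by $t^{-u_j-1}v_j$; so $\mathbf{u}$ is not saturated. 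Conversely, if $j$ is not a loop, then $t^{-u_j}v_j$ is part of a basis realizing the minimal determinant, and $t^{-u_j-1}v_j \notin \Lambda$. Hence saturation is equivalent to looplessness, which by \cref{prop:Spe} is membership in $L$. This uses $\mathbf{u}$ modulo $\one$, matching $\Lambda \sim t\Lambda$.

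\textbf{Step 3 (simplicial structure).} Compare edges. In the building, $[\Lambda]$ and $[\Lambda']$ are adjacent iff $t\Lambda \subsetneq \Lambda' \subsetneq \Lambda$ for suitable representatives. Under Step 1 this should correspond to $\mathbf{u}(\Lambda) - \mathbf{u}(\Lambda') = e_S \bmod \one$ for some proper nonempty $S$, which is the standard simplicial structure on $L_\Z$. One direction is immediate from monotonicity of $u_j$ under inclusion and $u_j(t\Lambda) = u_j(\Lambda)-1$. The other direction uses that $\iota(\mathbf{u}+e_S)$ lies between $\iota(\mathbf{u})$ and $t^{-1}\iota(\mathbf{u})$. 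Since both complexes are flag, matching edges suffices. The same argument applies after scaling to non-integral points if one wants the statement for all of $L(\tropP_\bullet)$.

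\textbf{Main obstacle.} The hardest step is Step 2: showing rigorously that a loop of $M(\tropP^{\mathbf{u}}_\bullet)$ allows one to enlarge $u_j$ without changing the lattice. I would handle it by reducing modulo $t$ and using the residue matroid of $\Lambda/t\Lambda$, which has the same bases as $M(\tropP^{\mathbf{u}}_\bullet)$.
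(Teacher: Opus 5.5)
Your proposal is correct, and it takes a different route from the paper, because the paper gives no proof of \cref{thm:KT}. It simply imports the result from Keel--Tevelev, remarking that it only needs the part of their membrane lying in $\B$ and not the lower-rank lattices in the compactification.

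Your argument replaces that citation with a self-contained verification in three steps:
\begin{enumerate}
\item The retraction $\mathbf{u}\mapsto (u_1(\iota(\mathbf{u})),\ldots,u_n(\iota(\mathbf{u})))$ shows that $\iota$ is a bijection from saturated vectors modulo $\one$ onto $[M]$.
\item Cauchy--Binet identifies the residue matroid of the generators in $\Lambda/t\Lambda$ with $M(\tropP^{\mathbf{u}}_\bullet)$. So saturation is looplessness, which \cref{prop:Spe} converts into membership in $L(\tropP_\bullet)$.
\item The sandwich $\iota(\mathbf{u})\subseteq \iota(\mathbf{u}+e_S)\subseteq t^{-1}\iota(\mathbf{u})=\iota(\mathbf{u}+\one)$ matches edges. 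Strictness of both inclusions follows because all three vectors are saturated and distinct. Flagness of $\B$, hence of the induced subcomplex $[M]$, and of the standard structure on $L_\Z$ then gives the isomorphism.
\end{enumerate}

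What your route buys is that the statement becomes a direct consequence of \cref{prop:Spe}, phrased in exactly the coordinates $u_j(\Lambda)$ that the paper later uses in \eqref{eq:muu}. What the citation buys is the full Keel--Tevelev theorem, including the compactified building, which the paper does not need.

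Three small remarks.
\begin{itemize}
\item The step you flag as the main obstacle is easier than you expect. A loop of the residue configuration is literally the zero vector in $\Lambda/t\Lambda$, i.e.\ $t^{-u_j}v_j\in t\Lambda$, which is precisely $t^{-u_j-1}v_j\in\Lambda$. Equivalently, compare $\val\det\iota(\mathbf{u})=\min_I \tropP^{\mathbf{u}}_I$ with $\val\det\iota(\mathbf{u}+e_j)$. By integrality of $\tropP_\bullet$ these agree exactly when $j$ is a loop, and equal $\val\det$ of nested lattices forces equality. You should replace the vague phrase about the image lying in the span of the other generators ``to higher order'' by this.
\item Your Step 3 does not state strictness of the inclusions explicitly. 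It is what uses that both endpoints lie in $L$, so make it explicit.
\item The remark about scaling to non-integral points is unnecessary, since a simplicial isomorphism is determined by its vertex map.
\end{itemize}
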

	Note that the membrane $[M]$ depends only on $M$ up to lineality, or equivalently, as a point in $X(k,n)(\K)$.  See also \cite{JSY} for a discussion of the relation between tropical geometry and buildings.

	Suppose that we are given a polygon $P = ([\Lambda_1],[\Lambda_2],\ldots,[\Lambda_{n}],[\Lambda_{n+1}]=[\Lambda_1])$ in $\B$, such that $[\Lambda_i] \in [M]$, where $M \in \Gr(k,n)(\K)$ has tropical Pl\"ucker vector $\tropP_\bullet(M)$.  We assume that $P$ is a positive configuration in the sense of \cite{Le} and thus by \cref{prop:LeAkh} a generic configuration in the sense of \cite{Akh}.  We further assume that the tropical coordinates $f_{ijk}^t(P)$ of \cite{Le,FL} recover $\tropP_\bullet(M)$ up to lineality, and that the tropical coordinates satisfy the \emph{hive inequalities}.  Let $(Q,\z)$ denote the (standard) CAT(0) planar graph associated to $P$.  Then Akhmejanov \cite{Akh} and Fontaine--Kamnitzer--Kuperberg \cite{FKK} define an embedding $\theta: Q \to \B$ so that the boundary vertices $z_i \in \partial Q$ are sent to $\theta(z_i) = [\Lambda_i] \in P$.  
	
	Let $v_1,\ldots,v_n$ be the columns of $M$.  Pick representatives $\Lambda_i \in [\Lambda_i]$ and ordered bases of $\Lambda_1,\Lambda_2,\ldots$ so that the leading generator of $\Lambda_j$ is $v_j$.  	After replacing $\Lambda_i$ by $t^{a_i} \Lambda_i$ and $v_i$ by $t^{a_i} v_i$ for each $i$, we may assume that 
	$$
	c(z_i) := \val(\det(\Lambda_i)) \in \{0,1,2\} \qquad \text{for all } i.
	$$
The assumption that $P$ is a positive configuration implies that
	$$
	c(\Lambda_i,\Lambda_j) = c(v_j, \Lambda_j) = -u_j(\Lambda_i).
	$$
	We have the following equality \cite[Remark 5.3]{Le}:
	$$
	d_1([\Lambda_i],[\Lambda_j]) = -c(\Lambda_i,\Lambda_j) -\frac{1}{3} \val(\det(\Lambda_i)) + \frac{1}{3} \val(\det(\Lambda_j)).
	$$
	Rearranging, we get
	$$
	u_j (\Lambda_i) = d_1([\Lambda_i],[\Lambda_j]) + \frac{1}{3}\left( \val(\det(\Lambda_i)) - \val(\det(\Lambda_j))\right).
	$$
	
	Comparing with \eqref{eq:mu} and using \cref{prop:dagree} we have $\mu(z_i) = (\mu_1(z_i),\ldots,\mu_n(z_i))$ where
	\begin{equation}\label{eq:muu}
	\mu_j(z_i) = \frac{1}{3}\left(\delta_Q(z_i,z_j) + c(z_i) - c(z_j)\right) = d_1([\Lambda_i],[\Lambda_j]) + \frac{1}{3}\left( \val(\det(\Lambda_i)) - \val(\det(\Lambda_j))\right) = u_j(\Lambda_i).
	\end{equation}

	\begin{proposition}\label{prop:LPKT}
	In the above situation, the two embeddings $\theta$ and $\iota \circ \mu$ of $(Q,\z)$ are identical.  Consequently, the Lam-Postnikov membrane $\mu(D)$ associated to $Q$ is embedded into the Keel-Tevelev membrane $[M]$ via $\iota$.
	\end{proposition}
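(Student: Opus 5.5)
The plan is to compare the two maps vertex by vertex, using that both are built from the same directed distance data. Both $\theta$ and $\iota\circ\mu$ send $V(Q)$ into the building $\B$. On the boundary, \eqref{eq:muu} already gives $\mu_j(z_i) = u_j(\Lambda_i)$ for all $i,j$. So the first step is to show that $\Lambda_i = \sp_\O(t^{-u_1(\Lambda_i)}v_1,\ldots,t^{-u_n(\Lambda_i)}v_n)$. The containment $\supseteq$ is immediate from the definition of $u_j$. For the reverse containment, $\Lambda_i$ has an $\O$-basis whose leading generator is $v_i$, and $u_i(\Lambda_i)=0$ after the normalization $c(z_i)\in\{0,1,2\}$. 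The remaining basis vectors can be taken (by positivity of the configuration, as in \cite{Le}) to be $t^{-u_j}v_j$ for suitable $j$. Equivalently, $\Lambda_i$ lies in $[M]$ and is recovered from its $u$-vector, since the Keel--Tevelev map $\iota$ restricted to $L(\tropP_\bullet)$ is injective (\cref{thm:KT}). This gives $\theta(z_i)=\iota(\mu(z_i))$.

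The second step propagates the agreement to interior vertices. For any $v\in V(Q)$ and every $j$, set $u_j(\theta(v)) := \max\{u \mid t^{-u}v_j\in\theta(v)\}$, and aim to prove
\[
u_j(\theta(v)) = \tfrac13\bigl(\delta_Q(v,z_j)+c(v)-c(z_j)\bigr) = \mu_j(v),
\]
which is the same computation as \eqref{eq:muu} with $z_i$ replaced by $v$. This uses three inputs:
\begin{enumerate}
\item \cref{prop:dagree}, giving $\delta_Q(v,z_j) = 3\,d_1(\theta(v),[\Lambda_j])$;
\item Le's identity $d_1([\Lambda],[\Lambda_j]) = -c(\Lambda,\Lambda_j)-\tfrac13\val\det\Lambda+\tfrac13\val\det\Lambda_j$;
\item the claim $c(\theta(v),\Lambda_j)=-u_j(\theta(v))$, i.e. that $v_j$ is a tight generator.
\end{enumerate}
The colors match because $c(\theta(v))=\val\det\theta(v) \bmod 3$, and $\theta$ is color-preserving once the boundary colors are fixed. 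Once the $u$-vector of $\theta(v)$ equals $\mu(v)$, it remains to show $\theta(v)=\iota(u(\theta(v)))$, i.e. $\theta(v)\in[M]$. For this we use Akhmejanov's description of $V(D(P))$ as $\conv(P)$. The membrane $[M]$ is closed under sums and intersections of lattices of the form $\sp_\O(t^{-u_j}v_j)$, because these correspond to coordinatewise max/min of $u$-vectors within $L(\tropP_\bullet)$. Hence $\conv(P)\subseteq[M]$. Any lattice in $[M]$ equals $\iota$ of its own $u$-vector, so $\theta(v)=\iota(\mu(v))$.

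I expect the tightness claim (item 3) at interior vertices to be the main obstacle. The intended route is to induct along canonical geodesics from $v$ toward $z_j$. Along such a path $d_1$ is additive by \cref{prop:short} and \cref{prop:dagree}, and $u_j$ changes by exactly the indicator that appears in \cref{def:membrane} (cf. \cref{thm:membrane}). So it suffices to check a single edge $w\to w'$: the change in $u_j$ equals the change in $\frac13(\delta+c)$, namely $0$ or $1$. This follows by putting both lattices in a common apartment containing $v_j$, as in the case analysis of the previous section. Finally, both maps send $Q$ into $[M]$ and agree on vertices, and both are simplicial, so the LP membrane $\mu(Q)$ maps via $\iota$ into the KT membrane.
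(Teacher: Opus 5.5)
Your outline agrees with the paper's proof in three places: the boundary step, the inclusion $\theta(Q)\subseteq\conv(P)\subseteq[M]$, and the reduction of the interior case to $u_j(\theta(v))=\mu_j(v)$ via \cref{prop:dagree}, Le's identity and tightness of $v_j$.

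The gap is item 3, which is the only nontrivial point. A one-edge check in a common apartment containing $v_j$ cannot establish tightness, because tightness is not determined by the data such a check sees. Here is an example. Let $\Lambda_j=\sp_\O(f_1,f_2,f_3)$ and $\Lambda=\sp_\O(t^{-1}f_1,f_2,f_3)$. Then $d_1([\Lambda],[\Lambda_j])=\tfrac13$, so $[\Lambda]\to[\Lambda_j]$ is a forward edge (trivially geodesic), and $c(\Lambda,\Lambda_j)=0$. If the leading generator of $\Lambda_j$ is $v_j=f_2$, then $u_j(\Lambda)=0$ and $v_j$ is tight. If it is $v_j=f_1$, then $u_j(\Lambda)=1$ and $v_j$ is not tight. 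Yet in both cases the lattices, the apartment (with $v_j$ a frame vector), the directed distance and the colors are identical.

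The same thing happens at every step. Along a geodesic step from $w$ to $w'$ towards $z_j$, Le's identity and additivity of $d_1$ give $c(\theta(w),\Lambda_j)=c(\theta(w),\theta(w'))+c(\theta(w'),\Lambda_j)$. So tightness passes from $w'$ to $w$ exactly when the image of $t^{-u_j}v_j$ in $\theta(w)/t\theta(w)$ lies in $V(\theta(w),\theta(w'))$. That is a condition on where the line $\K v_j$ sits, and $\delta_Q$ cannot see it. Citing \cref{thm:membrane} for the change of $u_j$ does not help: that theorem is about $\mu$, not $u\circ\theta$, so the appeal is circular.

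What is missing is a positivity input at each interior vertex. The paper supplies it as follows. For each interior $v$ and each $j$, it chooses a CAT(0) planar subgraph $Q'\subseteq Q$ with both $v$ and $z_j$ on $\partial Q'$. Then $\theta(\partial Q')$ is a generic polygon, hence a positive configuration by \cref{prop:LeAkh}. The boundary computation \eqref{eq:muu} then applies verbatim to the pair $(\theta(v),\Lambda_j)$.

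Two smaller points.

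First, $[M]$ is closed under sums, since $\iota(\a)+\iota(\b)=\iota(\max(\a,\b))$, but it is not, in general, closed under intersections. Take $(v_1,\dots,v_4)=(f_1,f_2,f_3,f_1+f_2+f_3)$. The lattice $\iota(0,1,1,0)\cap\iota(1,0,0,2)$ contains $t^{-1}(f_2+f_3)$, yet its $u$-vector is $0$ and $\iota(0)=\O^3$, so it is not in $[M]$. You only need $\conv(P)\subseteq\maxconv(P)\subseteq[M]$, which is what the paper uses.

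Second, with the definitions of \cref{sec:buildings}, the generator $t^{-u_j(\Lambda)}v_j$ shows $c(\Lambda,\Lambda_j)\le u_j(\Lambda)$, with equality exactly when $v_j$ is tight. So item 3 should read $c(\theta(v),\Lambda_j)=+u_j(\theta(v))$. The minus sign is inherited from the text preceding \eqref{eq:muu}. Correcting it matches $u\circ\theta$ with $v\mapsto-\bb_v$ as in \eqref{eq:embedding}, rather than with \eqref{eq:mu}. This convention issue is shared with the paper and does not affect the structure of the argument.
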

	\begin{proof}
	Since we have assumed that $\Lambda_i \in [M]$, using the equality \eqref{eq:muu} we have 
	$$
	\iota \circ \mu(z_i) = \sp_\O(t^{-u_1} v_1,\ldots,t^{-u_n}v_n) = \Lambda_i = \theta(z_i).
	$$
	It is known \cite{Akh,FKK} that the rest of the embedding $\theta$ is determined by metric considerations.  By \cite{Akh}, $\theta(Q) \subset \conv(P) \subset \maxconv(P) \subset [M]$.  The last inclusion follows from the fact that $[M]$ is closed under sums of lattices.  
	
	For each interior vertex $v \in Q$, and each boundary vertex $z_j \in \partial Q$, we can find a CAT(0) planar subgraph $Q'$ such that both $v$ and $z_j$ lie on the boundary.  The embedding $\theta(\partial Q')$ will be a generic polygon, and thus a positive configuration by \cref{prop:LeAkh}.  It follows that the computation \eqref{eq:muu} applies to $\theta(v)$, and we have $u_j(\theta(v)) = \mu_j(v)$.  We have observed that $\theta(v) \in [M]$ so we conclude that $\theta(v) = \iota(\mu(v))$.
	\end{proof}
	In particular, \cref{prop:LPKT} establishes \cref{thm:LPKT}.

		\section{Membranes and positroids}
		\label{sec:positroids}
		
		We describe the matroids $M_v$ for a normal CAT(0) planar graph $(Q,\z)$ explicitly.

		\begin{proposition}\label{prop:rank3}
			Let $M$ be a simple positroid of rank 3 on $[n]$.  Let $\A$ be the set of maximal (under containment) cyclic intervals $[a,b]$ in $[n]$ such that $M|_{[a,b]}$ has rank two, and such that $|[a,b]| \geq 3$.  Then 
			$\A$ is a collection of proper cyclic intervals of size at least three such that any two consecutive elements $a,a+1$ are contained in at most one interval of $\A$.  Conversely, any such collection $\A$ uniquely determines a simple positroid $M$ of rank 3.
		\end{proposition}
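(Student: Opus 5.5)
We work with the standard description of a positroid by its decorated permutation, or equivalently via Postnikov's characterization: a matroid $M$ on $[n]$ is a positroid if and only if it is determined by the ranks of its cyclic intervals, $M = \{I \mid |I\cap [a,b]| \le r_M([a,b]) \text{ for all cyclic intervals } [a,b]\}$. Here $M$ has rank $3$ and is simple (no loops, no parallel pairs), so every cyclic interval of size $\le 2$ has full rank, and every cyclic interval of size $\ge 3$ has rank $2$ or $3$ unless its rank drops further; rank $1$ on a set of size $\ge 2$ would force parallel elements, which is excluded. Hence the only cyclic-interval data not forced by simplicity is which intervals of size $\ge 3$ have rank exactly $2$. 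The collection of such intervals is closed under taking subintervals of size $\ge 3$, so it is determined by its maximal members $\A$. This gives uniqueness immediately from the positroid characterization: $M = \{I \in \binom{[n]}{3} \mid |I\cap A|\le 2 \text{ for all } A\in\A\}$.

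For the forward direction, the first step is that each $A\in\A$ is proper. This holds because $r(M)=3$ and $r([n])=3$. The main point is then that two distinct maximal intervals $A, A'\in\A$ cannot both contain a pair $\{a,a+1\}$. Suppose they did. Then $A\cap A'$ contains $a,a+1$, which span a rank-$2$ flat $L$ because $M$ is simple. Both $A$ and $A'$ have rank $2$ and contain $\{a,a+1\}$, so both lie in the closure of $\{a,a+1\}$, namely in $L$. Their union $A\cup A'$ is again a cyclic interval, because they overlap, and it also lies in $L$, so it has rank $2$. Properness of $A\cup A'$ needs care: if $A\cup A'=[n]$, the whole matroid would have rank $2$, which is a contradiction. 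Maximality then forces $A=A\cup A'=A'$.

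For the converse, given any such $\A$ we define $M_\A$ by the displayed formula. We must show that $M_\A$ is a simple rank-$3$ positroid whose maximal rank-$2$ intervals are exactly $\A$. Our plan is to construct a decorated permutation, or equivalently a totally nonnegative point, realizing it, rather than verifying the exchange axiom directly. Concretely, we place points on a convex curve in $\P^2$ in cyclic order and, for each $A\in\A$, collapse the points of $A$ onto a line segment. The condition that consecutive pairs lie in at most one interval means the intervals in $\A$ pairwise share at most one element (cyclic intervals sharing two elements share a consecutive pair) and are arranged cyclically. So the collapsed lines can be drawn as chords of a convex polygon meeting only at shared endpoints, keeping the configuration totally nonnegative with no extra collinearities. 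Its matroid is then a positroid with exactly the dependencies $|I\cap A|=3$.

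The main obstacle is this converse realization step: checking that no unintended rank drops arise, and that the realization is positive when intervals share an endpoint or wrap around the cycle. If the geometric argument gets messy, we will fall back on computing the Grassmann necklace of $M_\A$ directly from $\A$ and checking it against Postnikov's necklace-to-positroid bijection.
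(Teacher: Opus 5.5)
Your forward direction and your uniqueness argument are correct, and the closure argument is cleaner than the paper's Grassmann-necklace computation. Note that it never uses that the two shared elements are consecutive, so it actually shows that distinct members of $\A$ meet in at most one element.

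The gap is the parenthetical in the converse, ``cyclic intervals sharing two elements share a consecutive pair.'' This fails for $A=[a,b]$ and $A'=[b,a]$, which together cover the cycle and meet only in $\{a,b\}$. For instance, in $[4]$ take $\A=\{[1,3],[3,1]\}=\{\{1,2,3\},\{3,4,1\}\}$. Each consecutive pair lies in exactly one member, so the hypothesis as literally stated holds. Yet by your own closure argument no simple rank-$3$ matroid has both members of rank two, since $\{1,3\}$ would span a line containing all of $[4]$. Your $M_\A$ here is $\{124,234\}$, a positroid in which $1\parallel 3$ and $2,4$ are coloops. So the converse is false as stated, and no realization can handle this case: the segments for the two members would both have to lie on the line through $p_1$ and $p_3$.

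The paper's proof has the same hole. Its asserted equivalence of simplicity with $I_a=\{a,a+1,a'\}$ only excludes loops and parallel \emph{consecutive} pairs. Indeed, \eqref{eq:Ia} applied to this $\A$ yields the valid necklace $(124,234,234,124)$, which is the necklace of exactly that non-simple positroid.

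The repair is to assume that distinct members of $\A$ meet in at most one element. This differs from the stated condition only by excluding pairs $[a,b],[b,a]$, and it is what the forward direction really proves. Under this hypothesis your construction goes through, made precise as follows.

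Call $i$ a vertex if it is not a non-endpoint element of any member of $\A$. Every endpoint of a member is a vertex, and every non-vertex lies in exactly one member. Both facts hold because an interval of size at least three through a non-endpoint element of $A$ would meet $A$ in two elements. Properness (and $n\ge 3$) gives at least three vertices.

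Place the vertices in strictly convex position in counterclockwise cyclic order. Place the remaining elements of each $A=[a,b]\in\A$, in order, on the open segment from $p_a$ to $p_b$. A line meets the boundary of a strictly convex polygon in three points only if it contains a side. Hence the points are distinct, the collinear triples are exactly the triples inside members of $\A$, and the vectors $(1,p_i)$ have all $3\times 3$ minors in column order nonnegative. This gives a simple rank-$3$ positroid whose maximal rank-two cyclic intervals are exactly $\A$; uniqueness then follows from your cyclic-interval rank description.
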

		\begin{proof}
			By definition the collection $\A$ is an antichain.  Denote by $(I_1,I_2,\ldots,I_n)$ the Grassmann necklace of $M$.  Then the condition that $M$ is simple is equivalent to $I_a = \{a,a+1,a'\}$ for all $a$.   Let $<_a$ denote the order $a <_a a+1 <_a a+2 < \cdots <_a a-1$.  We have that
			\begin{align}
				a' = \begin{cases}  \max_{<_a}(c \mid [a,a+1] \subset [b,c] \in \A)+1 & \mbox{if $[a,a+1] \subset [b,c]$ for some $[b,c] \in \A$,} \\
					a+2 &\mbox{if $[a,a+1]$ is not contained in any interval in $\A$.}
				\end{cases}
				\label{eq:Ia}
			\end{align}
			Now, if $a' \neq a+2$ it follows that from the definition of Grassmann necklace that we must have $I_{a+2} = \{a+1,a+2,a'\}$, so there cannot be an interval $[a+1,c'] \in \A$.  It follows that the two consecutive elements $a+1,a+2$ is contained in at most one interval of $\A$.  This proves the forward direction of the theorem.  
			
			For the converse direction, note that \eqref{eq:Ia} uniquely determines a Grassmann necklace as long as $\A$ satisfies the stated conditions.
		\end{proof}

		\begin{proof}[Proof of \cref{prop:Mv-positroid}]
		Since $(Q,\z)$ is normal, $M_v$ is non-empty by \cref{prop:normal}.
			By \cref{thm:focal}, $v$ is a distance minimizer for $z_i,z_j,z_k$ if and only if it is a focal point for $z_i,z_j,z_k$.  Consider the sets $F(v,z_1),F(v,z_2),\ldots$.  By \cref{lem:cyclic}, these sets are arranged (weakly) cyclically.  We define a positroid $M$ by imposing the following rank conditions: 
			\begin{enumerate}
				\item
				whenever $z_i$ and $z_{i+1}$ are parallel we declare that $i$ and $i+1$ are parallel;
				\item
				whenever $|\bigcup_{i \in [a,b]} F(v,i)| = 2$ we declare that $a,a+1,\ldots,b$ has rank two.
			\end{enumerate}
			After removing parallel elements, whenever we have $|\bigcup_{i \in [a,b]} F(v,i)| = 2$, we must have that $|F(v,i)|=2$ for all $i \in [a+1,b-1]$.  It follows from this that the condition in \cref{prop:rank3} holds, and indeed we have a well-defined rank three positroid $M$.  It is then straightforward to see from \cref{def:focal} that $M = M_v$.
			
			Since each $F(v,z_i)$ is nonempty, it follows that for each $i$, there must be a triple $z_i,z_j,z_k$ for which $v$ is a focal point.  It follows that $M_v$ is loopless.
		\end{proof}
		
				\begin{proof}[Proof of \cref{thm:membrane}]
	By \cref{thm:main}(3), $(Q,\z)$ is a strong scaffold and thus $\mu$ defines an embedding into $L_\Z(\bp_\bullet)$.  
	It remains to check the condition: if $v \to w$ is an edge of $Q$ then $\mu(w) - \mu(v) = e_{[i+1,j]}$.  Here, $\eta_i$ (resp. $\eta_j$) is the strand passing through the dual edge $e$ of $W$ with $v$ on the right (resp. $v$ on the left).   Let $\mu(w) - \mu(v) \equiv e_J$ for $J \subset [n]$.
	
	Suppose $v$ is not on the strand strip labeled $a$.  Then by \cref{lem:strandstripF} it follows that $a-1$ and $a$ are either both in $J$ or both not in $J$.  On the other hand, if $v$ is on one side of the strand strip and $w$ is on the other side then $|J \cap \{a-1,a\}| = 1$.  This happens exactly when $a = i$ or $j$.  
			
	For $a = j$: by definition, $w$ is on the right of $\eta_j$, meaning $w$ is on the $z'_j$ side of the strand strip.  By \cref{lem:strandstrip}, $\dist(w,z_j) - \dist(v,z_j) = +2$, so $j \in J$.  For $a = i$: by definition, $v$ is on the right of $\eta_i$, meaning $v$ is on the $z'_i$ side of the strand strip.  By \cref{lem:strandstrip}, $\dist(w,z_i) - \dist(v,z_i) = -1$, so $i \notin J$.  We conclude that $J = [i+1,j]$.
	\end{proof}

\begin{corollary}\label{cor: matroid Mv}
Let $(Q,\z)$ be a normal CAT(0) planar graph and $v \to w$ be an edge such that $M_v$ and $M_w$ are distinct connected matroids, and let $(i,j)$ be as in \cref{def:membrane}.  Then in the matroid subdivision $\Delta(\tropP_\bullet(Q,\z))$, the two matroid polytopes $P_{M_v}$ and $P_{M_w}$ share a facet on the hyperplane $x_{[i+1,j]}:=x_{i+1} + \cdots + x_j =2$ with $P_{M_v}$ on the side $x_{[i+1,j]} \leq 2$ and $P_{M_w}$ on the side $x_{[i+1,j]} \geq 2$.
\end{corollary}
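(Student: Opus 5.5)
We first make the coordinates concrete. By \cref{prop:Mv} and the discussion following \cref{def:scaffold}, the matroid $M_v$ equals $M(\bp^{\mu(v)}_\bullet)$, up to the sign convention of the lineality action. Its matroid polytope $P_{M_v}$ is the cell of $\Delta(\bp_\bullet)$ on which the affine function $I \mapsto \bp_I - \sum_{i\in I}\mu(v)_i$ attains its minimum; the same description holds for $w$. Since $M_v$ and $M_w$ are connected, both polytopes are full-dimensional $(n-1)$-cells of the regular subdivision. By \cref{thm:membrane}, $\mu(w)-\mu(v) = e_{[i+1,j]}$ modulo $\one$. The plan is to show that the two lifting functionals differ exactly by $x_{[i+1,j]}$ plus a constant, and then to locate the common face.

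Write $\phi_v(x) = \langle \mu(v),x\rangle$, so that $P_{M_v}$ is the set of points of $\Delta(3,n)$ where the regular-subdivision height $h$ (the piecewise-linear extension of $\bp$) satisfies $h - \phi_v = \min$, and that minimum equals $0$ after normalizing with the lift $\ty$ of \cref{prop:support}. Then $\phi_w - \phi_v = x_{[i+1,j]} + \text{const}$ on the hyperplane $\sum x = 3$. I will pin down the constant by exhibiting a basis lying in both $M_v$ and $M_w$. For this I use the triangle $v \to w \to u \to v$ of $Q$ containing the edge. Concretely, I would look for a triple $I$ for which both $v$ and $w$ are minimizers; \cref{lem:Dsame}(1) shows this happens exactly when two of the three distances decrease along $v\to w$. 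Such an $I$ can be chosen with $|I\cap[i+1,j]| = 2$, because by \cref{lem:strandstrip} the coordinates in $[i+1,j]$ are precisely those whose distance changes by $-1$ up to normalization. This gives $\phi_w - \phi_v = x_{[i+1,j]} - 2$ on common bases, so on $P_{M_v}\cap P_{M_w}$ we have $x_{[i+1,j]}=2$.

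For the side conditions: on $P_{M_v}$ we have $h-\phi_v = 0 \le h - \phi_w$, which gives $\phi_w-\phi_v\le 0$, i.e. $x_{[i+1,j]}\le 2$. Symmetrically, $x_{[i+1,j]}\ge 2$ on $P_{M_w}$. So the two cells lie in opposite closed half-spaces, and their intersection is contained in the hyperplane.

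The main obstacle is showing that the intersection is a full facet, of dimension $n-2$, rather than a lower-dimensional face. Here I would use that $M_v\cap M_w$ is the set of bases $I$ with $|I\cap[i+1,j]|=2$ lying in $M_v$. Equivalently, it is the face of $P_{M_v}$ cut out by $x_{[i+1,j]}=2$, so we must show this is a facet. Using the focal-point description from the proof of \cref{prop:Mv-positroid}, I would check that the restriction of $M_v$ to this face is a direct sum of exactly two connected pieces. One is a rank-two matroid on $[i+1,j]$; the other is a rank-one matroid on its complement. Connectedness of each piece would follow from the cyclic arrangement of the sets $F(v,z_a)$ (\cref{lem:cyclic}), with the edge $v\to w$ separating the arcs of those $F$-sets for $a\in[i+1,j]$ from the rest. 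A face of a matroid polytope with exactly two connected components has dimension $n-2$, which finishes the argument. If this direct computation becomes messy, a fallback is to exploit that $[i+1,j]$ is a flat splitting into these ranks in both matroids, together with the genericity of the distinct connected cells $M_v\ne M_w$.
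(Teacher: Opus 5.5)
Your route differs from the paper's. The paper works on the tropical-linear-space side. Connectedness makes $\mu(v),\mu(w)$ vertices of the matroid complex on $L(\bp_\bullet)$, and the edge $v\to w$ is read as the $1$-cell dual to the common wall, with normal $\mu(w)-\mu(v)=e_{[i+1,j]}$. The constant and the sides are then taken from the distance analysis in \cref{thm:membrane}. You instead compare the two lifting functionals on $\Delta(3,n)$ directly.

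The hyperplane-and-sides half of your argument is sound, and it does not need a common basis to fix the constant. Take the normalized lifts $g_v(I)=\frac13(\sum_{a\in I}\dist(v,z_a)-\Sigma_Q(I))$ and $g_w(I)$. Then \cref{lem:edgedist} gives $g_w(I)-g_v(I)=2-|I\cap[i+1,j]|$ identically, hence $x_{[i+1,j]}\le 2$ on $P_{M_v}$ and $x_{[i+1,j]}\ge 2$ on $P_{M_w}$. This uses that $[i+1,j]$ is exactly the set of $a$ with $\dist(w,z_a)=\dist(v,z_a)-1$. Equivalently, $e_{[i+1,j]}$ is the support of $\mu(w)-\mu(v)=-(\b_w-\b_v)$ for $\mu=-\bb$ as in \eqref{eq:embedding}. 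Pin this sign down explicitly rather than saying ``up to convention'': \eqref{eq:mu} carries the opposite sign, and with that choice the same computation would put the wall at $x_{[i+1,j]}=1$.

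The genuine gap is the word \emph{facet}. You never show $P_{M_v}\cap P_{M_w}\neq\emptyset$; you only say you would look for a common basis. The connectedness of the two summands of $M_v\cap M_w=M_v|_{[i+1,j]}\oplus M_v/[i+1,j]$ is announced, not proved.

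Part of this is quick. Let $S=[j+1,i]$ and $D_I(x)=\sum_{a\in I}\dist(x,z_a)$.
\begin{enumerate}
\item If $I$ contains two elements of $S$, then $D_I(v)\le D_I(w)-3$. So the elements of $S$ are pairwise parallel in $M_w$.
\item $S\neq\emptyset$, since otherwise $D_I(w)=D_I(v)-3$ for all $I$ and $M_v=\emptyset$.
\item $M_w$ is loopless, so some basis of $M_w$ meets $S$ in exactly one element. By \cref{lem:Dsame} that basis also lies in $M_v$, which gives nonemptiness.
\item $M_v\cap M_w$ is also the face of $P_{M_w}$ maximizing $x_S$. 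So its $S$-summand is $M_w|_S$, which is connected.
\end{enumerate}

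What remains, and what your appeal to \cref{lem:cyclic} does not supply, is that the rank-two summand $M_v|_{[i+1,j]}$ (equivalently $M_w/S$) has at least three parallel classes. All $F(v,z_a)$ with $a\in[i+1,j]$ lie inside the pair of (possibly auxiliary) in-neighbours of $v$ flanking $w$. If these labels formed only two parallel classes, the two cells would meet in codimension two, and $[\mu(v),\mu(w)]$ would be a diagonal of a $2$-cell of $L(\bp_\bullet)$. This is not a formal consequence of $M_v$ and $M_w$ being connected, so you need an argument from the geometry of $Q$ here. It is exactly the point the paper handles by treating $[\mu(v),\mu(w)]$ as a $1$-cell of the matroid complex.
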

\begin{proof}
Since $M_v$ and $M_w$ are connected, the two matroid polytopes $P_{M_v}$ and $P_{M_w}$ are full-dimensional, and thus $\mu(v),\mu(w)$ are vertices in the matroid complex structure on $L(\bp_\bullet)$.  The vector $e_{[i+1,j]} = \mu(w)-\mu(v)$ is normal to the hyperplane separating these two polytopes in the subdivision $\Delta(\tropP_\bullet(Q,\z))$.  The last statement is obtained by going through the analysis in the proof of \cref{thm:membrane}.
\end{proof}

\begin{figure}[h!]
	\centering
	\includegraphics[width=0.7\linewidth]{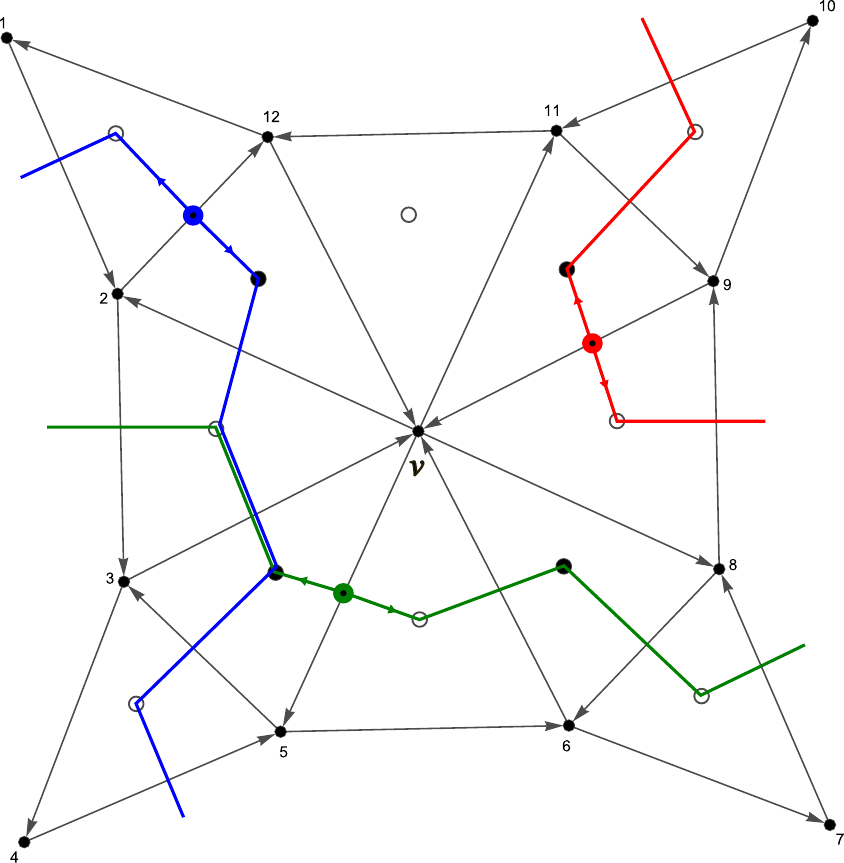}
	\caption{Illustration for \cref{example: octagon end}; the red, blue and green strands emanating from the edges lead us to compute the facets of the matroid polytope $P_{M_v}$ for the vertex $v$ at the center of $Q$.}
	\label{fig:bigexample312b}
\end{figure}

\begin{example}\label{example: octagon end}
	We illustrate \cref{cor: matroid Mv} by extracting the facet inequalities cutting out the matroid polytope $P_{M_v}$ for $M_v$ corresponding to the vertex $v$ at the center of the octagon in \cref{fig:bigexample312b}.  See also \cref{fig:bigexample312} from before. 
	
	
	Starting from the edge $(9,v)$ with the large red dot and following the red strand paths to the boundaries in both directions induces a partition into cyclic intervals $[11,8]\sqcup [9,10] = [12]$.  This defines a half-region $x_{[11,8]}\ge 2$ which is one of the facet inequalities for $P_{M_v}$.
	
	Similarly following the green strand paths from the edge $(v,5)$ with the large green dot to the two boundaries induces a partition into cyclic intervals $[8,2]\sqcup [3,7] = [12]$, and again we have defined one of the facet inequalities $x_{[8,2]}\ge 1$ cutting out $P_{M_v}$.
	
	Taking all index relabelings under $j\mapsto j+3$ mod $12$ gives all eight facet inequalities for $P_{M_v}$:
	\begin{eqnarray*}
	&x_{[2,11]} \ge 2,\ x_{[5,2]} \ge 2,\ x_{[8,5]} \ge 2,\ x_{[11,8]} \ge 2&\\
	&x_{[5,11]} \ge 1,\ x_{[8,2]} \ge 1,\ x_{[11,5]} \ge 1,\ x_{[2,8]} \ge 1&.
	\end{eqnarray*}

	Finally, following the blue strand paths from the edge $(2,12)$ with the large blue dot to the two boundaries induces a partition into cyclic intervals $[2,4]\sqcup [5,1] = [12]$.  This defines a half-region $x_{[5,2]}\ge 2$.
	
	Recall from \cref{fig:312canvas} the two additional fins occurring in the tropical linear space, corresponding to two additional connected positroid polytopes in the subdivision of $\Delta(3,12)$.  These cannot be immediately read off of $Q$.
\end{example}

\appendix

\section{Tabulation of Rays}\label{sec:appendix}
Here we tabulate normal models and noncrossing tableaux for all rays of $\Trop_{>0}X(3,n)$ for $n = 7,8$.  We identify rays with the primitive integer point $\r$ that generates the ray.  Using \cref{thm:main}(3) and \cref{thm:nc-to-web}, we identify such an integer point with a noncrossing tableau $\J = (J_1,\ldots,J_r)$.  The \emph{NC weight} of the ray is then equal to the size $r$ of the noncrossing tableau.

\subsection{$\Trop_{>0}X(3,7)$}
$\Trop_{>0}X(3,7)$ has $28$ rays of NC weight one, and 14 rays of NC weight two.  They have the following planar basis and noncrossing tableau expansions:
$$
\begin{array}{|c|c|c|}
\hline
	\h_{i_1i_2i_3} &-\h_{j_1j_3j_5}+\h_{j_2j_3j_5}+\h_{j_1j_4j_5}+\h_{j_1j_3j_6} & -\h_{j_2j_4j_6}+\h_{j_1j_2j_4}+\h_{j_2j_5j_6}+\h_{j_3j_4j_6} \\
	\t_{i_1i_2i_3} &\t_{j_1j_4j_5}+\t_{j_2j_3j_6} & \t_{j_1j_2j_4}+\t_{j_3j_5j_6}  \\
	\hline
\end{array}
$$
for all $i_1i_2i_3 \in \binom{[7]}{3}^{\text{ncyc}}$ and all subsets $\{j_1,\ldots, j_6\} \in \binom{\lbrack 7\rbrack}{6}$ with $j_1<\cdots<j_6$.

\subsection{$\Trop_{>0}X(3,8)$}
$\Trop_{>0}X(3,8)$ has the following rays:
\begin{itemize}
	\item $48$ rays with NC weight one (the scaffolds consist of a single CCW triangle with vertices decorated with ordered set partitions $([a+1,b],[b+1,c],[c+1,a])$ such that $a,b,c$ is not a cyclic interval),
	\item $56$ rays with NC weight two (the scaffolds are the one-skeleton of the second dilate of a triangle, as in the rightmost diagram in \cref{fig:examples36}), where the six vertices are variously labeled,
	\item $16$ rays of NC weight three.  There are two distinct orbits under cyclic relabeling, with representatives
	\begin{align}\label{equation: 38 rays}
	\begin{split}
		\pi_\bullet & =  -\h_{j_1 j_4 j_6}+\h_{j_2 j_4 j_6}+\h_{j_1 j_5 j_6}-\h_{j_1 j_3 j_7}+\h_{j_2 j_3 j_7}+\h_{j_1 j_4 j_7}+\h_{j_1 j_3 j_8}\\
		\pi'_\bullet & =  -\h_{j_1 j_4 j_6}+\h_{j_3 j_4 j_6}+\h_{j_1 j_5 j_6}-\h_{j_1 j_3 j_7}+\h_{j_2 j_3 j_7}+\h_{j_1 j_4 j_7}+\h_{j_1 j_3 j_8}.
	\end{split}
	\end{align}
\end{itemize}

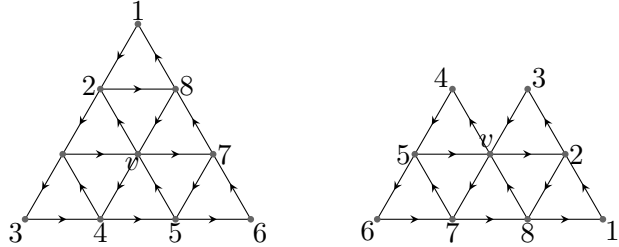
\begin{figure}[h!]
	\centering
	\begin{tikzpicture}[scale=0.5]
		\tikzset{->-/.style={decoration={markings, mark=at position 0.55 with {\arrow{stealth}}},postaction={decorate}}}
		
		\pgfmathsetmacro{\h}{1.732}
		
		\coordinate (1) at (3, 3*\h);
		\coordinate (2) at (2, 2*\h);
		\coordinate (8) at (4, 2*\h);
		\coordinate (u) at (1, \h);
		\coordinate (v) at (3, \h);
		\coordinate (7) at (5, \h);
		\coordinate (3) at (0, 0);
		\coordinate (4) at (2, 0);
		\coordinate (5) at (4, 0);
		\coordinate (6) at (6, 0);
		
		\draw[->-] (1) -- (2);
		\draw[->-] (2) -- (u);
		\draw[->-] (u) -- (3);
		\draw[->-] (3) -- (4);
		\draw[->-] (4) -- (5);
		\draw[->-] (5) -- (6);
		\draw[->-] (6) -- (7);
		\draw[->-] (7) -- (8);
		\draw[->-] (8) -- (1); 
		
		\draw[->-] (8) -- (v);
		\draw[->-] (v) -- (4);
		\draw[->-] (7) -- (5);
		\draw[->-] (5) -- (v);
		\draw[->-] (4) -- (u);
		\draw[->-] (v) -- (2);
		\draw[->-] (2) -- (8);
		\draw[->-] (v) -- (7);
		\draw[->-] (u) -- (v);
		
		\foreach \vx in {1,2,3,4,5,6,7,8,u,v} {
			\fill[black!60] (\vx) circle (2.5pt);
		}
		
		\node at ($(1)+(0,0.35)$)       {\small $1$};
		\node at ($(2)+(-0.3,0.1)$)     {\small $2$};
		\node at ($(8)+(0.3,0.1)$)      {\small $8$};
		\node at ($(7)+(0.3,0)$)        {\small $7$};
		\node at ($(3)+(-0.25,-0.35)$)  {\small $3$};
		\node at ($(4)+(0,-0.35)$)      {\small $4$};
		\node at ($(5)+(0,-0.35)$)      {\small $5$};
		\node at ($(6)+(0.25,-0.35)$)   {\small $6$};
		\node at ($(v)+(-0.15,-0.25)$)  {\small $v$};
	\end{tikzpicture}
	\qquad
	\begin{tikzpicture}[scale=0.5]
		\tikzset{->-/.style={decoration={markings, mark=at position 0.55 with {\arrow{stealth}}},postaction={decorate}}}
		
		\pgfmathsetmacro{\h}{1.732}
		
		\coordinate (5) at (2, 2*\h);
		\coordinate (3) at (4, 2*\h);
		\coordinate (v) at (1, \h);
		\coordinate (4) at (3, \h);
		\coordinate (2) at (5, \h);
		\coordinate (6) at (0, 0);
		\coordinate (7) at (2, 0);
		\coordinate (8) at (4, 0);
		\coordinate (1) at (6, 0);
		
		\draw[->-] (6) -- (7);
		\draw[->-] (7) -- (8);
		\draw[->-] (8) -- (1);
		\draw[->-] (1) -- (2);
		\draw[->-] (2) -- (3);
		\draw[->-] (3) -- (4);
		\draw[->-] (4) -- (5);
		\draw[->-] (5) -- (v);
		\draw[->-] (v) -- (6);
		
		\draw[->-] (v) -- (4);
		\draw[->-] (4) -- (2);
		\draw[->-] (7) -- (v);
		\draw[->-] (4) -- (7);
		\draw[->-] (8) -- (4);
		\draw[->-] (2) -- (8);
		
		\foreach \vx in {1,2,3,4,5,6,7,8,v} {
			\fill[black!60] (\vx) circle (2.5pt);
		}
		
		\node at ($(5)+(-0.3,0.3)$)    {\small $4$};
		\node at ($(3)+(0.3,0.3)$)     {\small $3$};
		\node at ($(v)+(-0.3,0)$)      {\small $5$};
		\node at ($(4)+(-0.1,0.35)$)   {\small $v$};
		\node at ($(2)+(0.3,0)$)       {\small $2$};
		\node at ($(6)+(-0.25,-0.3)$)  {\small $6$};
		\node at ($(7)+(0,-0.35)$)     {\small $7$};
		\node at ($(8)+(0,-0.35)$)     {\small $8$};
		\node at ($(1)+(0.25,-0.3)$)   {\small $1$};
	\end{tikzpicture}
	\caption{The two normal models for $\pi_\bullet$ and $\pi'_\bullet$, respectively, in Equation \eqref{equation: 38 rays}.}
	\label{fig:38 example A}
\end{figure}

\subsection{$\Trop_{>0}X(3,9)$}
By \cite{ELnc}, the bounded complex of a positive tropical linear space $L(\pi_\bullet)$ embeds into the dilated standard alcove $\PK(\pi_\bullet)\cdot \Delta_{\std}$.

The positive tropical Grassmannian $\Trop_{>0}X(3,9)$ has $75$ planar basis rays (of NC weight one) and $168$ rays with NC weight two, in bijection with the noncrossing but not weakly separated pairs.  There are $156$ NC weight three rays, $69$ NC weight four rays and $3$ NC weight five rays.  More compactly:

$$
\begin{tabular}{|c|c|c|c|c|}
	\hline
	1 & 2 & 3 & 4 & 5 \\
	\hline
	75 & 168 & 156 & 69 & 3 \\
	\hline
\end{tabular}
$$
\subsection{$\Trop_{>0}X(3,10)$}
The tabulation of the 3080 rays of $\Trop_{>0}X(3,10)$ by NC weight is as follows:
$$
\begin{tabular}{|c|c|c|c|c|c|c|}
	\hline
	1 & 2 & 3 & 4 & 5 & 6 & 7 \\
	\hline
	110 & 420 & 840 & 970 & 560 & 170 & 10 \\
	\hline
\end{tabular}
$$

	\end{document}